\setlist[1]{wide}
\setlist[2]{leftmargin=15mm}
\setlist[enumerate]{label=\rm{(\arabic*)}}
\setlist[enumerate,2]{label=\rm({\roman*}), }
\setlist[itemize]{label=\raisebox{0.25ex}{\tiny$\bullet$}}
\newcolumntype{L}{>{$}l<{$}}
\newcolumntype{C}{>{$}c<{$}}
\tikzset{map/.style={row sep=0em, column sep=0em}}
\newcommand{\nicecolor}{Navy}
\theoremstyle{plain}
\newtheorem{theorem}{Theorem}[section]
\newtheorem{corollary}[theorem]{Corollary}
\newtheorem{proposition}[theorem]{Proposition}
\newtheorem{lemma}[theorem]{Lemma}
\theoremstyle{definition}
\newtheorem{example}[theorem]{Example}
\newtheorem{remark}[theorem]{Remark}
\newtheorem{setup}[theorem]{Set-Up}
\numberwithin{equation}{theorem}  
\newcommand{\p}{\mathbb{P}}
\let \P \relax
\newcommand{\P}{\mathbb{P}}
\newcommand{\A}{\mathbb{A}}
\newcommand{\F}{\mathbb{F}}
\newcommand{\Z}{\mathbf Z}
\newcommand{\R}{\mathbf R}
\newcommand{\Q}{\mathbf Q}
\newcommand{\C}{\mathbf C}
\newcommand{\FF}{\mathbf F}
\renewcommand{\k}{\mathbf{k}}
\newcommand{\alg}[1]{#1^a}
\newcommand{\bk}{{\alg \k}}
\newcommand{\id}{\textup{id}}
\newcommand{\pt}{\textup{pt}}
\newcommand{\Cl}{\mathcal C}
\newcommand{\Dl}{\mathcal D}
\newcommand{\Pl}{\mathcal P}
\newcommand{\Ql}{\mathcal Q}
\newcommand{\rat}{\dashrightarrow}
\newcommand{\parent}[1]{\textup{(}#1\textup{)}}
\newcommand{\smallmat}[1]{\left(\begin{smallmatrix}#1\end{smallmatrix}\right)}
\newcommand{\mat}[1]{\begin{pmatrix}#1\end{pmatrix}}
\newcommand{\link}[2]{\stackrel{_{#1}\;_{#2}}{\dashrightarrow}}
\newcommand{\linkI}[1]{\stackrel{_{#1}}{\dashrightarrow}}
\newcommand{\linkIII}[1]{\stackrel{_{#1}}{\to}}
\newcommand{\linkIV}{\stackrel{_\textup{\tiny IV}}{\to}}
\newcommand{\piece}[3]{\Pl(#1;#2,#3)}
\newcommand{\pieceF}[1]{\Pl(\F_0;#1)}
\newcommand{\two}{\uptheta} 
\renewcommand{\phi}{\varphi}
\newcommand{\eps}{\varepsilon}
\renewcommand{\le}{\leqslant}
\renewcommand{\leq}{\leqslant}
\renewcommand{\ge}{\geqslant}
\renewcommand{\geq}{\geqslant}
\DeclareMathOperator{\Bir}{Bir}
\DeclareMathOperator{\BirMori}{BirMori}
\DeclareMathOperator{\Aut}{Aut}
\DeclareMathOperator{\Spec}{Spec}
\DeclareMathOperator{\Pic}{Pic}
\let \O \relax
\DeclareMathOperator{\O}{O}
\DeclareMathOperator{\PGO}{PGO}
\DeclareMathOperator{\GO}{GO}
\DeclareMathOperator{\SO}{SO}
\DeclareMathOperator{\PGL}{PGL}
\DeclareMathOperator{\PSL}{PSL}
\DeclareMathOperator{\GL}{GL}
\DeclareMathOperator{\Gal}{Gal}
\let \sl \relax
\DeclareMathOperator{\sl}{sl}
\DeclareMathOperator{\Char}{char}
\renewcommand{\to}{ \, \tikz[baseline=-.6ex] \draw[->,line width=.5] (0,0) -- +(.5,0); \, }
\renewcommand{\mapsto}{ \, \tikz[baseline=-.6ex] \draw[|->,line width=.5] (0,0) -- +(.5,0); \, }
\title{Generating the plane Cremona groups by involutions}
\author{Stéphane Lamy}
\author{Julia Schneider}
\address{Institut de Mathématiques de Toulouse UMR 5219, Université de Toulouse, UPS F-31062 Toulouse Cedex 9, France}
\email{slamy@math.univ-toulouse.fr}
\email{julia.schneider@math.univ-toulouse.fr}
\date{\today}
\thanks{
J.S. is supported by the Swiss National Science Foundation project P2BSP2\_200209 and hosted by the Institut de Math\'ematiques de Toulouse.}
\begin{document}

\begin{abstract}
 We prove that over any perfect field the plane Cremona group is generated by involutions.
\end{abstract}

\maketitle

\section{Introduction}

The plane Cremona group over a field $\k$ is the group $\Bir_\k(\P^2)$ of birational transformations of the projective plane.
In concrete terms a map $g \in \Bir_\k(\P^2)$ can be written in homogeneous coordinates as
\[
g \colon [x:y:z] \rat [P_0:P_1:P_2]
\]
where the $P_i \in \k[x,y,z]$ are homogeneous polynomials of the same degree and without non-constant common factor, and such that $g$ admits an inverse of the same form.
In a more geometrical way, over an algebraically closed field any birational map between surfaces can be understood as a sequence of blow-ups and inverses of blow-ups.
Similarly, in the context of surfaces defined over a perfect field, the elementary operations to factorize birational maps are blow-ups of Galois orbits.
It is remarkable that a single class of elementary transformations allows to reconstruct any birational  map.
However a blow-up is a transformation between two non-isomorphic surfaces, so it leaves open the question of finding a natural generating set for the group  $\Bir_\k(\P^2)$.
Over an algebraically closed field, Noether's Theorem gives a neat answer: the Cremona group is generated by $\Aut_\k(\P^2) = \PGL_3(\k)$ and by a single extra generator, the standard quadratic involution $\sigma\colon [x:y:z] \rat [yz:xz:xy]$.
Since in this case $\PGL_3(\k)$ is generated by involutions, we obtain in particular that the Cremona group is generated by involutions.

The main result of the present paper is a generalization of this last statement to the case of an arbitrary perfect field $\k$:

\begin{theorem}
\label{t:main}
Let $\k$ be a perfect field.
The Cremona group $\Bir_\k(\P^2)$ is generated by involutions.
\end{theorem}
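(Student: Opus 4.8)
The plan is to combine the Sarkisov program over $\k$ with a Noether--Castelnuovo-style degree descent, and to realise every resulting generator of $\Bir_\k(\P^2)$ as a product of involutions. By Iskovskikh's classification of minimal rational surfaces over a perfect field, every rational Mori fibre space birational to $\P^2$ is either $\P^2$ itself, a form of the quadric $\F_0=\P^1\times\P^1$, a del Pezzo surface of degree $5$ or $6$, or a conic bundle over a form of $\P^1$; the Sarkisov program factors any $g\in\Bir_\k(\P^2)$ through links between such spaces. My first step is therefore to extract from this factorisation a generating set of $\Bir_\k(\P^2)$ consisting of honest self-maps of $\P^2$: the linear group $\PGL_3(\k)$, the (possibly twisted) quadratic and de Jonqui\`eres transformations attached to Galois orbits of base points, and the Geiser and Bertini involutions arising from del Pezzo surfaces of degree $2$ and $1$ in type II links. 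It then suffices to write each of these as a product of involutions.

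Second, I treat the ``automorphism'' contributions. The linear part $\Aut_\k(\P^2)=\PGL_3(\k)$ is generated by involutions by elementary linear algebra (writing a matrix as a product of involutory ones, modulo scalars), and the same holds for the identity component $\PGL_2(\k)\times\PGL_2(\k)$ of $\Aut_\k(\F_0)$, whose remaining coordinate swap is itself an involution. For the del Pezzo surfaces of degree $5$ and $6$ and for the base $\PGL_2$ of a conic bundle, the automorphism groups are assembled from such linear pieces, from tori, and from permutations of the finitely many $(-1)$-curves, each of which contributes only involutions. Small finite fields, where a linear or orthogonal group might fail to be generated by involutions for naive reasons, are dealt with by hand.

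Third comes the geometric heart of the argument, the links themselves. The standard quadratic involution $\sigma$ is already an involution, and a quadratic map based at a Galois orbit of three conjugate points is conjugate to a twisted form of $\sigma$ by a linear involution. The Geiser and Bertini involutions are genuine involutions of $\Bir_\k(\P^2)$ by construction. The crux is the de Jonqui\`eres subgroup, i.e.\ the maps preserving a pencil of conics: here the plan is to show that the fibrewise involutions of the corresponding conic bundle, together with the involutions of the base, generate the whole relative group, and to run a degree descent that lowers the complexity of a given map by composing it with such involutions until a linear map remains.

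The main obstacle is precisely this conic bundle / de Jonqui\`eres case: it is where the known nontrivial abelianisations of $\Bir_\k(\P^2)$ are concentrated, so the argument must make manifest that every such quotient is $2$-torsion, by exhibiting enough honest involutions preserving the relevant pencil and no others. A secondary but pervasive difficulty is uniformity across all perfect fields together with the bookkeeping needed to reorganise a chain of Sarkisov links between different surfaces into a genuine product of self-involutions of $\P^2$; this must be arranged so that no parity obstruction survives in any of the intermediate automorphism groups, in particular over the smallest finite fields.
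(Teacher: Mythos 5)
Your high-level skeleton (Sarkisov factorization, extract generators, write each as a product of involutions) matches the paper's, but the proposal fails at each of the three places where the real work happens. First, $\PGL_3(\k)$ is \emph{not} generated by involutions by elementary linear algebra: it is generated by its involutions if and only if every element of $\k$ is a cube, so a dilatation $[x:y:z]\mapsto[ax:y:z]$ is in general not a product of involutory matrices modulo scalars. One has to pass to the ambient Cremona group and write, e.g., $(ax,y)=\bigl(\tfrac1x,y\bigr)\circ\bigl(\tfrac1{ax},y\bigr)$ as a product of two \emph{birational} involutions; the same issue recurs for the tori inside the Jonquières groups. Second, for the conic-bundle case you correctly flag it as the crux but supply no mechanism. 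The missing ideas are: (a) the only pencils of conics that actually occur are those through a Galois orbit of size $4$ or through two orbits of size $2$; (b) the fibrewise group $\Bir_\k(\P^2/\pi)$ is isomorphic to $\SO(\k(t)^3,q)$ for a quadratic form $q=q_1+tq_2$ that is \emph{anisotropic} precisely because no base point is rational, whence an anisotropic Cartan--Dieudonné argument gives generation by at most three involutions; and (c) the image in $\Aut_\k(\P^1)$ must be realized separately by explicit Galois-invariant antidiagonal involutions. Asserting that ``fibrewise involutions together with involutions of the base generate the relative group'' is exactly the statement that needs proof, and a degree descent does not produce it.

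Third, you implicitly treat every type II link between Del Pezzo surfaces as a Geiser or Bertini involution, but that holds only when the dominating rank $2$ fibration has degree $1$ or $2$. The irreducible sporadic elements with factorization types such as $\P^2\link66\P^2$, $\P^2\link21\Dl_8\link44\Dl_8\link12\P^2$, $\P^2\link51\Dl_5\link15\P^2$, or the chains through $\Dl_6$, are dominated by Del Pezzo surfaces of degree $3$, $4$ or $5$ and are not involutions up to automorphism (over a general perfect field). The paper decomposes them by embedding them into elementary relations between Sarkisov links (the two-dimensional polygonal pieces attached to Del Pezzo surfaces of Picard rank $3$), whose central symmetries are Geiser or Bertini involutions, together with nontrivial general-position existence lemmas (including separate arguments over small finite fields). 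None of this is recoverable from a Noether--Castelnuovo-style descent, which is precisely what fails over non-closed fields; this is the part of your plan with no substitute argument.
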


One motivation for this result was to understand the abelianization of the Cremona group, or in other words the possible surjective homomorphisms from the Cremona group to an abelian group.
Over many perfect fields including all number fields, finite fields, or the real numbers, we know \cite{Zimmermann, LamyZimmermann, SchneiderRelations} that the abelianization of the Cremona group contains an infinite direct sums of groups of order 2.
An immediate corollary of Theorem \ref{t:main} is that there is no surjective homomorphism from $\Bir_\k(\P^2)$ to $\Z$, or to $\Z/n\Z$ for $n \ge 3$. We can rephrase this remark as follows:

\begin{corollary}
\label{cor:main}
Let $\k$ be a perfect field.
The abelianization of the Cremona group $\Bir_\k(\P^2)$ is a group of exponent $2$ \parent{any non-trivial element has order $2$}.
\end{corollary}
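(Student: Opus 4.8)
The plan is to derive this purely formally from Theorem \ref{t:main}; all the genuine content sits in that theorem, and the corollary is a short group-theoretic consequence. Write $G = \Bir_\k(\P^2)$ and let $\pi \colon G \to G^{ab}$ be the abelianization map onto $G^{ab} = G/[G,G]$.

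The first step is the elementary observation that $\pi$ sends any involution to an element of order dividing $2$: if $g^2 = \id$ then $\pi(g)^2 = \pi(g^2)$ is trivial. By Theorem \ref{t:main} the group $G$ is generated by a set $S$ of involutions, and since $\pi$ is surjective the elements $\pi(s)$ for $s \in S$ generate $G^{ab}$. The second step is then to note that in an abelian group squaring is a homomorphism, so a product of elements of order dividing $2$ again squares to the identity; since every element of $G^{ab}$ is such a product, we conclude that $x^2 = e$ for all $x \in G^{ab}$, which is exactly the assertion that $G^{ab}$ has exponent $2$.

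I do not expect any real obstacle here. The only point worth stating with care is that the exponent bound propagates from the generating involutions to all of $G^{ab}$ precisely because the target is abelian — inside the full group $G$ a product of involutions need not itself be an involution, so the argument genuinely uses the passage to the abelian quotient. To confirm that the exponent equals $2$ rather than $1$, it suffices to know that $G^{ab}$ is non-trivial, which follows from the results on the abelianization recalled in the introduction (e.g.\ \cite{Zimmermann}), where it is even shown to contain an infinite direct sum of groups of order $2$.
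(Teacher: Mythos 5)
Your argument is correct and is exactly the reasoning the paper has in mind: it states the corollary as an immediate consequence of Theorem \ref{t:main}, with no further proof, and your two steps (images of involutions generate the abelianization, and squaring is a homomorphism in an abelian group) are the intended formal content. One small caveat: the parenthetical in the statement shows that ``exponent $2$'' is meant as ``every non-trivial element has order $2$'', so your final paragraph arguing that the exponent is exactly $2$ is unnecessary and in fact cannot hold over every perfect field (over an algebraically closed field the abelianization is trivial); the cited non-triviality results apply only to the fields listed in the introduction.
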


In contrast, in higher dimension it was recently proved by E. Shinder and H.-Y. Lin \cite{LinShinder} that $\Bir_\Q(\P^3)$ and $\Bir_\C(\P^4)$ are \emph{not} generated by involutions, as a consequence of the fact that these groups admit homomorphisms to $\Z$.
In another direction, C. Shramov has proved that some Severi--Brauer surfaces (forms of $\P^2$ without any rational point) admit an infinite group of birational maps without any involution \cite[Theorem 1.2]{Shramov}.
So it was not clear a priori whether the groups $\Bir_\k(\P^2)$ should always be generated by involutions.
In retrospect the result seems to be an accident of low dimension, and this might justify that the proof cannot be completely conceptual: even if we use the Sarkisov program as a general framework, at some points some combinatorial miracles have to occur in order for the result to hold true.

We mention that over the field of real numbers, the generation of $\Bir_\R(\P^2)$ by involutions was proved by S. Zimmermann in her PhD thesis \cite[Corollary II.4.12]{ZimmermannPhD}, together with a complete description of the abelianization of $\Bir_\R(\P^2)$ \cite{Zimmermann}.
Over the field with 2 elements, the generation of $\Bir_{\FF_2}(\P^2)$ by involutions was established by the second author \cite{SchneiderF2}, with a similar strategy as in the present paper, but with some cases relying on an exhaustive search assisted by computer.

We now explain our strategy of proof for Theorem \ref{t:main}.
To obtain a set of generators for $\Bir_\k(\P^2)$, our first step is to apply the Sarkisov program.
As a byproduct of the Sarkisov factorization, we get a natural invariant associated to each element $f \in \Bir_\k(\P^2)$, namely the minimal number $\sl(f) \ge 0$ of Sarkisov links necessary to factorize $f$.
Then we can define a notion of irreducible element with respect to this Sarkisov length, and we obtain in Proposition \ref{p:generated_by_irreducible} an abstract factorization result in terms of irreducible elements.

Then we look more closely at the geometry of the involved links.
There is a known list of possible Sarkisov links between rational surfaces, due to Iskovskikh \cite{Iskovskikh93}, see also \cite{Corti} and \cite{Iskovskikh96}.
Here we face the same problem as with blow-ups: a Sarkisov link is (in general) not a birational map between two isomorphic surfaces, so to deduce a set of generators for the Cremona group we have to explain how to concatenate Sarkisov links.
This was also done by Iskovskikh, but resulted in very long lists not easy to use (see for instance \cite[Theorem 2.6]{Iskovskikh96}, whose statement runs over 8 pages).
In Theorem \ref{t:russian} and Proposition \ref{p:irreducible} we propose a compact way to express these results, and also provide in Appendix~\ref{app:russian_bis} a mostly self-contained proof of the classification of Sarkisov links between rational surfaces.

We can distinguish two kinds of irreducible elements in $\Bir_\k(\P^2)$: those involving links between conic bundles, and so preserving a rational fibration, and other sporadic cases with no such fibration.
In Section \ref{sec:3fibrations} we describe the possible fibrations, which turn out to be one of the following types: pencil of lines through a rational point, or pencil of conics through a Galois orbit of size 4, or two Galois orbits of size 2.
Then in Proposition \ref{prop:CremonaGeneratedByDelPezzoAndFiberTypes} we get a more geometric set of generators.
We can see this as an analogue of another classical result, Castelnuovo's Theorem, which asserts that over an algebraically closed field the Cremona group is generated by linear automorphisms and Jonquières maps.

The last step towards the proof of Theorem \ref{t:main} is to factorize each of these generators into a product of involutions.
For some generators we identify some matrix groups to which they belong, such as a projective linear group or an orthogonal group.
Then we use classical results about the generation of these groups by involutions, such as the Cartan-Dieudonné's Theorem.
For the remaining sporadic generators, in Section~\ref{sec:delPezzo} we manage to write them as products of quadratic, Geiser or Bertini involutions. To find these involutions we use elementary relations between Sarkisov links, that we can visualize as polygonal pieces encoded by Del Pezzo surfaces of Picard rank 3.
In Appendix \ref{app:relations} we give an exhaustive lists of all such pieces, even if in the present paper we only use a few of them.

\section{Sarkisov links}
\label{sec:link}

In this section we recall the notion of Sarkisov link, and state the classification of Sarkisov links for rational surfaces over a perfect field.
We also introduce the notion of irreducible element in the Cremona group, and in Proposition \ref{p:irreducible} we get a first description of such irreducible elements.

\subsection{Generation by irreducible elements}

Let $\k$ be a perfect field, and $\alg \k$ an algebraic closure of $\k$.
Let $X$ be an algebraic variety defined over $\k$, and denote by $X(\k)$ the set of $\k$-rational points on $X$.
The absolute Galois group $\Gal(\alg \k / \k)$ acts on $X \times_{\Spec \k} \Spec \alg \k$ through the second factor, and in particular it acts on $X(\alg \k)$.
We call \emph{$d$-point} on $X$ an orbit of size $d$ in $X(\alg \k)$ under the action of $\Gal(\alg \k / \k)$.
When $d = 1$ we keep the terminology \emph{rational point} instead of $1$-point.
If $p = \{p_1, \dots, p_d \}$ is a $d$-point on a surface $X$, we say that each $p_i \in X(\alg \k)$ is a \emph{geometric component} of $p$ (or simply a component).
Given $g \in \Gal(\alg \k/\k)$, we denote $p_i^g$ the image of the component $p_i$ under the action of $g$.

Following \cite{LamyZimmermann}, we now recall the definition of rank $r$ fibration, which unifies the concepts of Del Pezzo surfaces and conic bundles, of Sarkisov links between such surfaces, and of elementary relations between such links.

By a surface we always mean a smooth projective surface defined over a perfect field $\k$.
We say that a surface $X$ is rational if it is birational to $\P^2$ over $\k$.
Let $X$ be a rational surface, and $r \ge 1$ an integer.
We say that $X$ is a \emph{rank~$r$ fibration} if there exists a surjective morphism $X \to B$ with connected fibers, anticanonical divisor relatively ample and relative Picard rank equal to $r$, where $B$ is a point or a smooth curve.
Since we assume $X$ rational, there are only two possibilities for $B$: either $B = \pt$ is a point and $X$ is a Del Pezzo surface of Picard rank $r$ over $\k$; or $B = \P^1$ and $X \to \P^1$ is a conic bundle with $r-1$ orbits of singular fibers.
In particular, a rank $1$ fibration is the same as a rational surface with a structure of Mori fiber space.
Since in this paper we are only interested in rational surfaces, we put the rationality condition in the definition in order to avoid repeating `rational' rank $r$ fibration everywhere.

A \emph{marked} rank $r$ fibration is a rank $r$ fibration $X/B$ together with a birational map $\phi\colon X \rat \P^2$.
Let $(X/B, \phi)$ and $(X'/B', \phi')$ be two marked fibrations, of respective rank $r$ and $r'$, and consider the birational map $\phi'^{-1} \circ \phi\colon X \rat X'$ induced by the markings.
We say that $X/B$ \emph{factorizes through} $X'/B'$, or that $X'/B'$ is \emph{dominated} by $X/B$, if this induced map is a morphism, and if there exists a morphism $B' \to B$ such that the following diagram commutes:
\[
\begin{tikzcd}[row sep=small]
X \ar[rrr] \ar[dr,"\phi'^{-1} \circ \phi",swap] &&& B \\
& X' \ar[r] & B' \ar[ur]
\end{tikzcd}
\]
This implies $r \ge r'$.
If $B' \to B$ and $\phi'^{-1} \circ \phi\colon X \to X'$ are both isomorphism, we say that the two fibrations are \emph{equivalent}.
The Cremona group acts on equivalence classes of marked fibration, via post-composition:
\[
f \cdot  (X/B, \phi) = (X/B, f \circ \phi).
\]
In the sequel all rank $r$ fibrations are supposed to be marked, but we usually keep the marking implicit.

We say that a $d$-point on a Del Pezzo surface (resp. on a conic bundle) is \emph{general} if the blow-up of the orbit is still a del Pezzo surface (resp. a conic bundle over the same base curve).

\begin{lemma}
\label{l:2_domination_type}
Assume that $X/B$ is a rank $r+1$ fibration that factorizes through a rank~$r$ fibration $X'/B'$.
Then one of the following holds:
\begin{enumerate}
\item \label{domination:blowup} Either $B \simeq B'$, and there exists a general $d$-point on $X'$ such that $X \to X'$ is the blow-up of $p$;
\item \label{domination:base} Or $B = \P^1$, $B' = \pt$ and $X \to X'$ is an isomorphism.
\end{enumerate}
\end{lemma}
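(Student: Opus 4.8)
The plan is to extract everything from the induced birational morphism $\psi := \phi'^{-1} \circ \phi \colon X \to X'$. Over a perfect field, every birational morphism of smooth projective surfaces factors Galois-equivariantly as a finite composition of contractions of closed points; read backwards, this says that $X$ is obtained from $X'$ by successively blowing up some number $k \ge 0$ of $d$-points. The numerical fact I would isolate first is that blowing up a single $d$-point raises the Picard rank $\rank \Pic$ by exactly $1$: over $\bk$ its exceptional locus is a disjoint union of $(-1)$-curves that $\Gal(\bk/\k)$ permutes transitively, the $d$-point being a single orbit, so the invariant part of the Picard group gains precisely one class. Hence $k = \rank\Pic(X) - \rank\Pic(X')$.

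I would then turn the hypotheses on the relative ranks into a count of $k$. Writing $c_B := \rank\Pic(B) \in \{0,1\}$ (namely $0$ if $B$ is a point and $1$ if $B = \P^1$), the relative Picard rank of a rank $s$ fibration $Y/C$ is $\rank\Pic(Y) - c_C = s$. Applying this to $X/B$ and to $X'/B'$ and subtracting yields the identity $k = 1 + c_B - c_{B'}$, which leaves only four possibilities for the pair $(c_B, c_{B'})$.

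Next I would run through these cases. If $c_B = c_{B'}$, then $k = 1$ and $\psi$ is the blow-up of a single $d$-point $p \in X'$. The commuting diagram expresses $X \to B$ as the composite of $X \to X' \to B'$ followed by $\beta \colon B' \to B$; since both $X \to B$ and $X \to X' \to B'$ have connected fibers, $\beta$ cannot be a self-map of $\P^1$ of degree $\ge 2$, hence is an isomorphism, and $B \simeq B'$. This is case \ref{domination:blowup}. That $p$ is \emph{general} then costs nothing: by hypothesis $X = \mathrm{Bl}_p X'$ is itself a del Pezzo surface (resp.\ a conic bundle over $B \simeq B'$), which is exactly the definition of a general $d$-point. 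If instead $c_B < c_{B'}$, that is $(c_B, c_{B'}) = (0,1)$, then $k = 0$ and $\psi$ is an isomorphism; the single surface $X \simeq X'$ then carries both a del Pezzo structure over a point and a conic bundle structure over $\P^1$, the del Pezzo side being the one of larger relative rank $r+1$. This is case \ref{domination:base}.

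The one place where the geometry, rather than pure rank-counting, enters is the analysis of the base morphism $\beta$. In the remaining configuration $(c_B, c_{B'}) = (1,0)$ the identity would force $k = 2$, but commutativity makes the surjective conic bundle $X \to \P^1$ factor through $\beta \colon \pt \to \P^1$ and hence be constant, which is absurd; so this case never occurs and the two listed alternatives are exhaustive. Everything else is bookkeeping resting on the two structural inputs flagged at the outset: the Galois-equivariant factorization of $\psi$ into blow-ups of $d$-points, and the exact increase of the invariant Picard rank by $1$ at each such blow-up.
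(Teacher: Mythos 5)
Your argument is correct and is in substance the same as the paper's: the paper's entire proof is the one-line observation that, by additivity of the relative Picard rank along $X \to X' \to B' \to B$, exactly one of the two morphisms $X \to X'$ and $B' \to B$ is an isomorphism and the other has relative Picard rank $1$. You have simply unpacked what this hides — the factorization of $X \to X'$ into blow-ups of Galois orbits, each raising the Picard rank by exactly one; the connected-fibers argument forcing $\beta$ to be an isomorphism when both bases are $\P^1$; and the remark that generality of $p$ is automatic because $X/B$ is assumed to be a rank $(r+1)$ fibration — so there is no gap.

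One point you should not pass over silently: your (correct) case analysis places the isomorphism case at $(B,B') = (\pt,\P^1)$ and excludes $(B,B') = (\P^1,\pt)$, whereas item (2) of the statement asserts the opposite orientation $B = \P^1$, $B' = \pt$. The statement is the one at fault: with the paper's definition the base morphism goes $B' \to B$, and in the type I, III and IV pieces of Table 1 the change-of-base domination always has the rank-$2$ fibration over a point dominating a conic bundle over $\P^1$, exactly as you derive. You have therefore tacitly corrected a typo in the statement; it would be cleaner to flag this explicitly rather than label your $(\pt,\P^1)$ conclusion as ``case (2)'' without comment.
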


\begin{proof}
By additivity of the relative Picard rank, one of the morphisms $X \to X'$ of $B' \to B$ is an isomorphism, and the other one has relative Picard rank 1.
This gives the two cases of the statement.
\end{proof}

The \emph{piece} \label{def:piece} of a rank $r$ fibration $X/B$ is the $(r-1)$-dimensional combinatorial polytope constructed as follows:
 Each rank $d$ fibration dominated by $X/B$ is a $(d-1)$-dimensional face, and for each pair of faces $X_i/B_i$, $i=1,2$, $X_2/B_2$ lies in $X_1/B_1$ if and only if $X_1/B_1$ dominates $X_2/B_2$.
We write $(r-1)$-piece when we want to emphasize the dimension of the piece (associated to a rank $r$ fibration).
We now consider more closely the case of $1$-pieces, which turn out to encode Sarkisov links.

Let $Y/B_Y$ be a rank 2 fibration.
As a consequence of the two-rays game, there are exactly two rank~$1$ fibrations $X/B$ and $X'/B'$ dominated by $Y/B_Y$.
We say that the induced birational map $X \rat X'$ is a \emph{Sarkisov link}.
Using Lemma \ref{l:2_domination_type} we can distinguish between four types of Sarkisov links, depending if the domination of $X/ B$ (resp. $X'/B'$) by $Y/B_Y$ is a blow-up or a change of base.
In Table \ref{tab:links} we describe these four types.
The first column is the usual numbering in the literature, where for type~II we moreover distinguish between the case where the common base of the fibrations is $\P^1$ or a point.
Observe here that by definition a link of type II over $\P^1$ sends a general fiber of $X/\P^1$ to a fiber of $X'/\P^1$.
The second column shows the classical diagram, which is often taken as a definition for the Sarkisov links. Here an arrow $\stackrel{_d}{\to}$ refers to the blow-up of a general $d$-point.
The third column shows the $1$-piece with the rank~$2$ fibration in the center of the edge, dominating the two rank 1 fibrations $X/ B$ and $X'/B'$ on the left and right vertices.
Finally the last column shows the shorthand that we shall use in the text.

\begin{table}[t]
\begin{tabular}{cCCC}
\toprule
Type & \text{Diagram} & \text{Piece} & \text{Short notation} \\
\midrule
I &
\begin{tikzcd}[row sep=small]
& X'\ar[dl,"d",swap]\ar[d] \\ X\ar[d] & \P^1\ar[dl] \\ \pt
\end{tikzcd} &
\begin{tikzcd}
X/\pt & X'/\pt\ar[l,-,ultra thick,"d",swap]\ar[r,-,ultra thick] & X'/\P^1
\end{tikzcd} &
X \linkI{d} X'
\\ \\
II/$\pt$ &
\begin{tikzcd}[row sep=small]
& Y\ar[dl,"d",swap]\ar[dr,"d'"] \\ X\ar[dr] && X' \ar[dl] \\ & \pt
\end{tikzcd} &
\begin{tikzcd}
X/\pt & Y/\pt\ar[l,-,ultra thick,"d",swap]\ar[r,-,ultra thick,"d'"] & X'/\pt
\end{tikzcd} &
X \link{d}{d'} X'
\\ \\
II/$\P^1$ &
\begin{tikzcd}[row sep=small]
& Y\ar[dl,"d",swap]\ar[dr,"d"] \\ X\ar[dr] && X' \ar[dl] \\ & \P^1
\end{tikzcd} &
\begin{tikzcd}
X/\P^1 & Y/\P^1\ar[l,-,ultra thick,"d",swap]\ar[r,-,ultra thick,"d"] & X'/\P^1
\end{tikzcd} &
X \link{d}{d} X'
\\ \\
III &
\begin{tikzcd}[row sep=small]
X\ar[dr,"d"]\ar[d] \\  \P^1\ar[dr] & X'\ar[d] \\ & \pt
\end{tikzcd} &
\begin{tikzcd}
X/\P^1 & X/\pt\ar[l,-,ultra thick]\ar[r,-,ultra thick,"d"] & X'/\pt
\end{tikzcd} &
X \linkIII{d} X'
\\ \\
IV &
\begin{tikzcd}[row sep=small]
& X\ar[dl]\ar[dr] \\ \P^1\ar[dr] && \P^1 \ar[dl] \\ & \pt
\end{tikzcd} &
\begin{tikzcd}
X/\P^1 & X/\pt\ar[l,-,ultra thick]\ar[r,-,ultra thick] & X/\P^1
\end{tikzcd} &
X \linkIV X\\
\end{tabular}
\caption{Sarkisov links}
\label{tab:links}
\end{table}

In \cite[Proposition 2.6]{LamyZimmermann} it was shown that a $2$-piece (or rather its geometric realization) is homeomorphic to a disk, and we will draw it as a regular polygon.
The boundary of the polygon corresponds to a sequences of Sarkisov links whose product is an automorphism: we say that the piece encodes an \emph{elementary relation} between Sarkisov links.
 See Appendix~\ref{app:relations} for a list of all $2$-dimensional pieces given by a rank $3$ fibration $X/\pt$ where $X$ is a Del Pezzo surface.
It is in fact true that for any $d$, a $d$-piece is homeomorphic to a convex polytope of dimension $d$, with vertices correspond to Mori fiber spaces, edges to Sarkisov links and $2$-dimensional faces to elementary relations.
We do not give details since in this paper we shall only use $d$-pieces with $d = 1$ or $2$.

We define $\BirMori_\k(\P^2)$ as the groupoid of birational maps between rank 1 fibrations (or equivalently, between rational Mori fiber spaces, hence the name).
The Sarkisov program can be phrased as follows (see \cite[Proposition 3.14]{LamyZimmermann}):

\begin{proposition}
\label{p:Sarkisov}
The groupoid $\BirMori_\k(\P^2)$ is generated by Sarkisov links and automorphisms.
\end{proposition}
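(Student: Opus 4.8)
The plan is to factor an arbitrary morphism of the groupoid, that is, a birational map $f\colon X/B \rat X'/B'$ between two rank $1$ fibrations, as a finite composition of Sarkisov links and isomorphisms of Mori fiber spaces; since such maps generate $\BirMori_\k(\P^2)$ by definition, this is enough. I would argue by induction on a Sarkisov-type complexity attached to $f$. To set it up, fix on $X'$ a movable linear system $\mathcal{H}'$ without fixed part and ample relative to $B'$, chosen base-point-free so that the pair $(X',\tfrac{1}{\mu'}\mathcal{H}')$ is canonical, and let $\mathcal{H}$ be its strict transform on $X$. Define the threshold $\mu \ge 0$ by $\mathcal{H}\cdot F = \mu\,(-K_X)\cdot F$ on a general fiber $F$ of $X/B$ (with $\mathcal{H} \equiv -\mu K_X$ when $B=\pt$, since then $X$ has relative Picard rank $1$). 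The complexity is the lexicographically ordered datum consisting of $\mu$, the worst multiplicity of a base point of $\mathcal{H}$ measured against $\mu$, and an auxiliary count of the valuations realizing that multiplicity.

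The engine is the Noether--Fano--Iskovskikh criterion: if $f$ is not already an isomorphism of Mori fiber spaces, then the pair $(X,\tfrac{1}{\mu}\mathcal{H})$ fails to be canonical, or else $\mu$ exceeds the value forced by the target $X'/B'$. In either case one extracts a distinguished valuation, realized after finitely many blow-ups, whose associated extremal ray I feed into the two-rays game. Concretely this produces a rank $2$ fibration $Y/B_Y$ dominating $X/B$; its other contraction, as described by Lemma~\ref{l:2_domination_type}, yields a Sarkisov link $X/B \rat X_1/B_1$ issuing from $X$. I would then show that the residual map $X_1/B_1 \rat X'/B'$, obtained by pre-composing $f$ with the inverse of this link, has strictly smaller complexity.

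Termination then closes the argument. The complexity takes values in a well-ordered set: $\mu$ ranges over a discrete set of rationals bounded below by the value attached to $X'/B'$, the base-point multiplicities are bounded by intersection-theoretic constraints on a surface of controlled degree, and the auxiliary count is a non-negative integer. Hence after finitely many links the complexity reaches its minimal value, at which point the Noether--Fano criterion forces the residual map to be an isomorphism of Mori fiber spaces, i.e.\ an automorphism. Concatenating the links produced along the way gives the desired factorization of $f$, and running the same reduction on each generator of the groupoid proves the proposition.

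I expect the main obstacle to be the termination step, and specifically the verification that each constructed link strictly decreases the complexity. This requires analysing how $\mu$ and the base-point multiplicities transform under the four types of links of Table~\ref{tab:links}: a link of type $\textup{II}$ over $\P^1$ preserves the fibration and leaves $\mu$ unchanged, so the decrease must come from the second coordinate, whereas the base-changing links (types $\textup{I}$, $\textup{III}$, $\textup{IV}$) move $\mu$ itself and must be controlled separately. The Noether--Fano inequality, though classical in dimension two, also has to be arranged so that the extracted valuation is genuinely contractible inside a rank $2$ fibration; this is exactly where the relative ampleness of $-K$ in the definition of a rank $r$ fibration, together with the two-rays game, is indispensable.
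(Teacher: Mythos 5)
The paper itself does not prove this proposition: it is quoted verbatim from \cite[Proposition 3.14]{LamyZimmermann}, where it is established by essentially the strategy you describe --- a Noether--Fano--Iskovskikh criterion, extraction of a maximal singularity fed into the two-rays game to produce an untwisting link, and a lexicographic descent on a Sarkisov degree of the form $(\mu,\lambda,e)$. So your skeleton is the standard one and matches the argument behind the cited result.

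As written, however, your text is a programme rather than a proof: the one mathematically substantial step --- verifying that each untwisting link strictly decreases the complexity and that the complexity takes values in a well-ordered set --- is exactly the step you defer. Two places where the details genuinely matter. First, over a perfect field the maximal singularity is not a single geometric point but a whole Galois orbit of base points of $\mathcal{H}$ of multiplicity $\lambda>\mu$, and you must check that blowing up the entire orbit still yields a rank $2$ fibration, i.e.\ that $-K$ remains relatively ample and the two-rays game applies; this is where the ``general $d$-point'' condition and Lemma~\ref{l:2_domination_type} enter, and it is not automatic. Second, your grouping of the link types is slightly off: a link of type I leaves $\mu$ unchanged just as a link of type II does (the strict decrease there must come from the pair $(\lambda,e)$), while it is the links of type III and IV --- applied when $K_X+\tfrac1\mu\mathcal{H}$ fails to be nef over the base rather than when the pair fails to be canonical --- that decrease $\mu$ itself. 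One also needs to justify that the set of values attained by $\mu$ is discrete (easy in dimension $2$ since $(-K_X)\cdot F$ takes finitely many values, but it is part of the argument) and that $e$ is finite at each stage. None of this is an obstruction in principle --- it is the classical two-dimensional Sarkisov termination --- but it constitutes the entire content of the proposition, so the proposal cannot stand as a proof until that analysis is actually carried out or an explicit reference such as \cite[Proposition 3.14]{LamyZimmermann} is invoked.
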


Given $g \in \BirMori_\k(\P^2)$, we call \emph{Sarkisov length} of $g$, denoted $\sl(g)$, the minimal number of Sarkisov links necessary to factorize $g$.
Now assume $f \in \Bir_\k(\P^2)$.
We say that $f$ is \emph{reducible} if we can write $f = f_2 \circ f_1$ with $f_i \in \Bir_\k(\P^2)$, $\sl(f_i) < \sl(f)$ for $i=1,2$.
Otherwise we say that $f$ is \emph{irreducible}.
In particular $\sl(f) = 0$ if and only if $f \in \Aut_\k(\P^2)$, and such elements are trivially irreducible.
As an immediate consequence of Proposition \ref{p:Sarkisov} and of the definition of irreducible elements we have:

\begin{proposition}
\label{p:generated_by_irreducible}
The Cremona group $\Bir_\k(\P^2)$ is generated by its irreducible elements.
\end{proposition}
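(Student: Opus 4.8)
The plan is to argue by strong induction on the Sarkisov length $\sl(f)$. First I would observe that $\sl$ is a well-defined function $\Bir_\k(\P^2) \to \Z_{\ge 0}$: since $\P^2$ carries the structure of a rank $1$ fibration over a point, any $f \in \Bir_\k(\P^2)$ is an endomorphism of the object $\P^2/\pt$ in the groupoid $\BirMori_\k(\P^2)$, and Proposition \ref{p:Sarkisov} guarantees that $f$ admits at least one factorization into Sarkisov links and automorphisms. Hence the number of links occurring in such a factorization is bounded below, and its minimum is a finite non-negative integer, namely $\sl(f)$. This is the only place where Proposition \ref{p:Sarkisov} is genuinely needed.

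Next I would run the induction. The base case is $\sl(f) = 0$, which by definition means $f \in \Aut_\k(\P^2)$; such an element is irreducible, hence is (trivially) a product of irreducible elements. For the inductive step, assume that every element of Sarkisov length strictly less than $n$ is a product of irreducible elements, and let $f$ satisfy $\sl(f) = n \ge 1$. If $f$ is already irreducible there is nothing more to prove. Otherwise $f$ is reducible, so by the very definition we may write $f = f_2 \circ f_1$ with $f_1, f_2 \in \Bir_\k(\P^2)$ and $\sl(f_i) < n$ for $i = 1,2$. By the induction hypothesis each $f_i$ is a product of irreducible elements, and therefore so is their composite $f$. Since $f$ was arbitrary, this proves the proposition.

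As the author already signals by calling it an immediate consequence of Proposition \ref{p:Sarkisov} and of the definition of irreducibility, there is no real obstacle here: once $\sl$ is known to take values in $\Z_{\ge 0}$, the statement is exactly the standard fact that a decomposition process which strictly decreases a well-founded rank must terminate in irreducible pieces. If anything deserves a moment's care, it is the finiteness of $\sl$, which is precisely what Proposition \ref{p:Sarkisov} supplies; the genuine mathematical content of the paper lies instead in later sections, where the irreducible elements are classified and each is expressed as a product of involutions.
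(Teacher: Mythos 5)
Your proof is correct and is exactly the argument the paper has in mind when it declares the proposition an immediate consequence of Proposition \ref{p:Sarkisov} and the definition of irreducibility: well-definedness of $\sl$ via the Sarkisov program, then strong induction on $\sl(f)$ using the defining property of reducible elements. Nothing is missing.
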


\subsection{The graph of Sarkisov links}

Let $\Dl$ be the set of isomorphy classes of $\k$-rational Del Pezzo surfaces of Picard rank~1, and $\Cl$ the set of isomorphy classes of $\k$-rational conic bundles of relative Picard rank~1.
We now define several subsets $\Dl_i \subset \Dl$ and $\Cl_j \subset \Cl$.
The index always refers to the degree of the corresponding surfaces, by which we mean the self-intersection of the canonical divisor.
Starting from $\P^2 \in \Dl$, each definition is in terms of explicit Sarkisov links, from an already defined class of surfaces.

\begin{itemize}
\item
Let $\Dl_8 \subset \Dl$ be the set of surfaces obtained by blowing-up a $2$-point on $\P^2$, and then blowing-down the transform of the line through this point.
Observe that any $X \in \Dl_8$ is isomorphic to $\P^1 \times \P^1$ over $\alg \k$, and so we can speak of the bidegree of a divisor on~$X$.
We call \emph{diagonal} any curve on $X$ of bidegree $(1,1)$.
Similarly, we call vertical ruling (resp. horizontal ruling ) any curve of bidegree $(1,0)$ (resp. $(0,1)$), necessarily defined over $\alg \k$ but not over $\k$.
\item
Let $\Dl_5  \subset \Dl$ be the set of surfaces obtained by blowing-up a general $5$-point on $\P^2$, and then blowing-down the transform of the smooth conic through this point.
\item
Let $\Dl_6  \subset \Dl$ be the set of surfaces obtained by blowing-up a general $3$-point on a surface $X \in \Dl_8$, and then blowing-down the transform of the smooth diagonal through this point.
\item
Let $\Cl_5  \subset \Cl$ be the set of conic bundles obtained by blowing-up a general $4$-point on $\P^2$, and taking the transform of conics through this point.
\item
Let $\Cl_6  \subset \Cl$ be the set of conic bundles obtained by blowing-up a general $2$-point on $X \in \Dl_8$, and taking the transform of diagonals through this point.
Observe that thinking of $X$ as coming from the blow-up of a $2$-point on $\P^2$ followed by the contraction of a line, the conic bundle on a surface in the set $\Cl_6$ corresponds to the transform of conics in $\P^2$ passing through two $2$-points.
\item
Let $\Cl_8 = \{\F_n \mid n \ge 0\} \subset \Cl$ be the set of Hirzebruch surfaces, with their structure of conic bundle (precisely, of $\P^1$-bundle over $\P^1$, with a section of self-intersection $-n$).
\end{itemize}

We say that a surface $X \in \Dl_8$ is of \emph{type} $\Dl_8$, and similarly with the other subsets.
In Appendix~\ref{app:russian_bis} we give a proof of the following theorem, which can also be extracted from \cite{Iskovskikh96}.
The content of the theorem is summed-up in Figure \ref{diag:links}, adapted from \cite{SchneiderRelations}.
The label on each edge indicates the type of Sarkisov link (according to the numbering in Table \ref{tab:links}), and the size of the blown-up Galois orbits.

\begin{figure}[t]
\[
\includegraphics[scale=1]{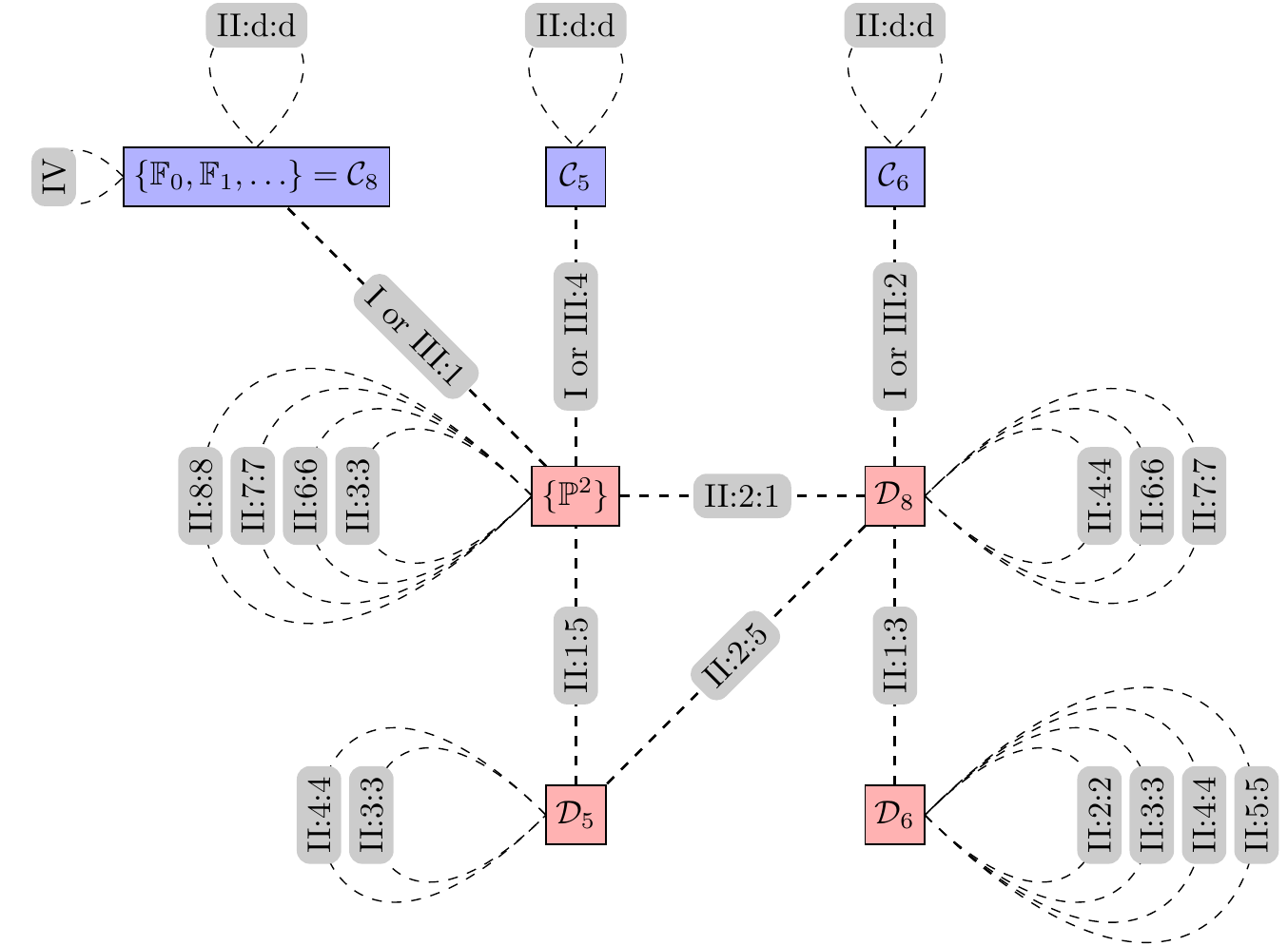}
\]
\caption{Sarkisov links between rational surfaces over a perfect field}
\label{diag:links}
\end{figure}

\begin{theorem}
\label{t:russian}
Let $\k$ be a perfect field.
\begin{enumerate}
\item \label{russian:1}
Given any \parent{marked} rank $1$ fibration $X/B$, the surface $X$ lies in one of the following seven pairwise disjoint sets, which form the vertices of the graph in Figure \textup{\ref{diag:links}}:
\[
\{\P^2\}, \Dl_5, \Dl_6, \Dl_8, \Cl_5, \Cl_6, \Cl_8.
\]
In particular $\Dl = \Dl_5 \cup \Dl_6 \cup \Dl_8 \cup \{\P^2\}$ and $\Cl = \Cl_5 \cup \Cl_6 \cup \Cl_8$.
\item \label{russian:2}
Let $X \rat X'$ be a Sarkisov link of type I or II between rank $1$ fibrations.
Then the type of $X'$ is uniquely defined by the type of $X$ and the size of the blown-up Galois orbit, as indicated by the edges on the graph in Figure \textup{\ref{diag:links}}.
\item \label{russian:3}
Let $X \rat X'$ be a Sarkisov link of type IV between rank $1$ fibrations.
Then $X = Y = \F_0 = \P^1 \times \P^1$ and the link is the change of ruling.
\end{enumerate}
\end{theorem}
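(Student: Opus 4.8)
The plan is to prove the three statements simultaneously, using the two-rays game to describe every Sarkisov link and the Sarkisov program to transport this description to all rational Mori fiber spaces. The organising invariant is the degree $K_X^2$ of a rank $1$ fibration $X/B$, which equals $9$ for $\P^2$ and, for a conic bundle $X \to \P^1$, equals $8 - m$ where $m$ is the number of geometric singular fibres. The main tool is Lemma \ref{l:2_domination_type}: a rank $2$ fibration $Y$ dominating a rank $1$ fibration $X/B$ is obtained either by blowing up a general $d$-point of $X$, keeping the base $B$, or by an isomorphism of surfaces that merely changes the base (viewing a del Pezzo surface of Picard rank $2$ as a conic bundle). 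Since each rank $2$ fibration $Y$ dominates exactly two rank $1$ fibrations, reading off its two contractions produces a Sarkisov link as tabulated in Table \ref{tab:links}, and conversely every link arises from such a $Y$. Thus the three parts of the theorem amount to enumerating, for each candidate class of surface, all the rank $2$ fibrations over it.

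Then I would work class by class. Starting from $\P^2$, I enumerate the admissible blow-ups: for the blow-up of a general $d$-point to be again a rank $2$ fibration (a del Pezzo surface of Picard rank $2$, or a conic bundle of relative Picard rank $2$), the orbit size $d$ is bounded and the orbit must be Galois-transitive, which pins down finitely many possibilities. For each I run the two-rays game on $Y$, identify the second rank $1$ fibration by its degree and its base (point versus $\P^1$), and determine its isomorphism type from the Galois action on the configuration of $(-1)$-curves. This produces the edges issuing from $\{\P^2\}$, and repeating the analysis for a member of each of $\Dl_5, \Dl_6, \Dl_8, \Cl_5, \Cl_6, \Cl_8$ yields all links: every target again lies in one of the seven classes, which proves \ref{russian:2} (types I and II, type III being the inverse of type I) and reproduces Figure \ref{diag:links}. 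Statement \ref{russian:3} is the case where both rays of $Y$ are isomorphisms onto conic bundles: then $Y$ is a del Pezzo surface of Picard rank $2$ carrying two distinct conic bundle structures, and a short analysis forces $Y = \F_0 = \P^1 \times \P^1$ with the change of ruling. The seven sets are pairwise disjoint because they have distinct degrees, except for $\Dl_8$ and $\Cl_8$ which are separated by the base of the fibration. Finally \ref{russian:1} follows formally: by Proposition \ref{p:Sarkisov} any rank $1$ fibration $X$, being rational, is joined to $\P^2$ by a chain of links; since $\{\P^2\}$ is in the list and, by the closure just established, links never leave the list, $X$ lies in one of the seven classes. In particular this shows a posteriori that no rational del Pezzo surface of Picard rank $1$ and degree $\le 4$, and no rational conic bundle of relative Picard rank $1$ with $m \notin \{0,2,3\}$, can exist.

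The main obstacle is the exhaustive link enumeration of the second step. For each class and each admissible orbit size $d$ one must decide precisely which $d$-points are general, run the two-rays game to identify the resulting surface, and verify both that the target stays in the list and that it matches the label on the corresponding edge of Figure \ref{diag:links}. The delicate points are the correct bounding of the admissible orbit sizes, the careful tracking of the Galois action on the finitely many $(-1)$-curves needed to recognise the isomorphism type of the target (for instance to distinguish the degree $8$ forms of $\P^1 \times \P^1$ from the Hirzebruch surfaces), and above all the completeness of the enumeration, since the whole argument for \ref{russian:1} rests on no link ever escaping the seven classes. This is where the combinatorial miracles occur: it is not a priori obvious that the degrees that arise close up into exactly this small graph rather than spilling into lower-degree del Pezzo surfaces or conic bundles with more singular fibres.
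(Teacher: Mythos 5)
Your proposal follows essentially the same route as the paper's Appendix~\ref{app:russian_bis}: a class-by-class enumeration of all rank~$2$ fibrations dominating each candidate rank~$1$ fibration, identification of the second contraction via degree and base (the paper packages this as Proposition~\ref{p:delpezzo}, whose proof is exactly your Galois analysis of $(-1)$-curves and singular fibres), closure of the seven classes under links, and then part~\ref{russian:1} by Proposition~\ref{p:Sarkisov}. The only slip is in your disjointness remark: it is not just $\Dl_8$ and $\Cl_8$ that share a degree but also the pairs $\Dl_5,\Cl_5$ and $\Dl_6,\Cl_6$, all of which are separated by whether the base is a point or $\P^1$.
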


From Theorem \ref{t:russian}, any factorization into Sarkisov links of a birational map between rank $1$ fibrations corresponds to a path in the graph of Figure \textup{\ref{diag:links}}.
In particular, any factorization of an element of $\Bir_\k(\P^2)$ corresponds to a closed path based at the vertex $\{\P^2\}$.
A natural question is to ask about the converse: is any path in the graph realized by at least one composition of Sarkisov links?
We shall use the following partial positive answer to this question, which follows from the definition of the sets $\Dl_i$ and $\Cl_i$:
\begin{itemize}
\item
Given $X \in \Dl_8 \cup \Dl_5 \cup \Cl_5$, there exists a Sarkisov link from $X$ to $\P^2$;
\item Given $X \in \Dl_6 \cup \Cl_6$, there exists a composition of two Sarkisov links from $X$ to $\P^2$, via an intermediate surface $X' \in \Dl_8$;
\item Given $X \in \Cl_8$, $X$ admits a link to $\P^2$ if and only if $X = \F_1$.
\end{itemize}

Let $f \in \Bir_\k(\P^2)$ be an irreducible element.
We say that $f$ is of \emph{Del Pezzo type} if it admits a minimal factorization using only links of type II over $\pt$, and $f$ is of \emph{fibering type} if it admits a minimal factorization containing a link of type I (and so also a link of type III).
In other words, in terms of the closed path associated with a minimal factorization: $f$ is of Del Pezzo type if the path visits at most the vertices $\{\P^2\}, \Dl_8, \Dl_6, \Dl_5$, and is of fibering type if the path visits at least one of the vertices $\Cl_8, \Cl_6$ or $\Cl_5$.
Observe that these two classes of irreducible elements have no reason to be disjoint, since a minimal factorization into Sarkisov links is not unique.
(From Figure \ref{P2_24} the interested reader can cook-up an example of a map $f \in \Bir_\k(\P^2)$ with $\sl(f) = 3$ and admitting two minimal factorizations of distinct types.)

\begin{table}[t]
\begin{center}
\begin{tabular}{CCl}
\toprule
\sl(f) & \text{Factorization type} & Note \\
\midrule
0 & - & Automorphism \\
\midrule
\multirow{4}{*}{1}
& \P^2 \link{3}{3} \P^2 & Quadratic simplification \\
& \P^2 \link{6}{6} \P^2 & See Lemma \ref{lem:linkP2-66} \\
& \P^2 \link{7}{7} \P^2 & Geiser simplification \\
& \P^2 \link{8}{8} \P^2 & Bertini simplification \\
\midrule
\multirow{2}{*}{2}
& \P^2 \link{2}{1} \Dl_8 \link{1}{2} \P^2 & Quadratic simplification \\
& \P^2 \link{5}{1} \Dl_5 \link{1}{5} \P^2 & See Lemma \ref{lem:linkD5} \\
\midrule
\multirow{7}{*}{3}
& \P^2 \link{2}{1} \Dl_8 \link{4}{4} \Dl_8 \link{1}{2} \P^2 &  See Lemma \ref{lem:linkD8-44} \\
& \P^2 \link{2}{1} \Dl_8 \link{6}{6} \Dl_8 \link{1}{2} \P^2 & Geiser simplification \\
& \P^2 \link{2}{1} \Dl_8 \link{7}{7} \Dl_8 \link{1}{2} \P^2 & Bertini simplification \\
& \P^2 \link{5}{1} \Dl_5 \link{3}{3} \Dl_5 \link{1}{5} \P^2 & Geiser simplification \\
& \P^2 \link{5}{1} \Dl_5 \link{4}{4} \Dl_5 \link{1}{5} \P^2 & Bertini simplification \\
& \P^2 \link{5}{1} \Dl_5 \link{2}{5} \Dl_8 \link{1}{2} \P^2 & See Lemma \ref{lem:loop}\\
& \P^2 \link{2}{1} \Dl_8 \link{5}{2} \Dl_5 \link{1}{5} \P^2 & inverse of the previous one \\
\midrule
4
& \P^2 \link{2}{1} \Dl_8 \link{3}{1} \Dl_6 \link{1}{3} \Dl_8 \link{1}{2} \P^2
& See Lemma \ref{lem:linkD6} \\
\midrule
\multirow{4}{*}{5}
& \P^2 \link{2}{1} \Dl_8 \link{3}{1} \Dl_6 \link{2}{2} \Dl_6 \link{1}{3} \Dl_8 \link{1}{2} \P^2
& See Lemma \ref{lem:linkD6-22} \\
& \P^2 \link{2}{1} \Dl_8 \link{3}{1} \Dl_6 \link{3}{3} \Dl_6 \link{1}{3} \Dl_8 \link{1}{2} \P^2
& See Lemma \ref{lem:linkD6-33} \\
& \P^2 \link{2}{1} \Dl_8 \link{3}{1} \Dl_6 \link{4}{4} \Dl_6 \link{1}{3} \Dl_8 \link{1}{2} \P^2
& Geiser simplification \\
& \P^2 \link{2}{1} \Dl_8 \link{3}{1} \Dl_6 \link{5}{5} \Dl_6 \link{1}{3} \Dl_8 \link{1}{2} \P^2
& Bertini simplification \\
\bottomrule\\
\end{tabular}
\end{center}
\caption{Irreducible generators of Del Pezzo type in Proposition~\ref{p:irreducible}\ref{irreducible_delpezzo} (the last column corresponds to the study in \S \ref{sec:delPezzo})}
\label{tab:del Pezzo type}
\end{table}

\begin{proposition}
\label{p:irreducible}
Let $f \in \Bir_\k(\P^2)$ be an irreducible element.
\begin{enumerate}
\item \label{no_type_IV}
A minimal factorization of $f$ never contains a link of type IV.
\item \label{no_type_I_III}
A minimal factorization of $f$ never contains a link of type III immediately following a link of type I.
\item \label{irreducible_fibering}
If $f$ is of fibering type, then $f$ admits a minimal factorization of one of the following forms:
\begin{enumerate}
\item\label{C8} $\P^2 \linkI{1} \F_1 \in \Cl_8$, followed by a certain number $r \ge 1$ of links of type II between Hirzebruch surfaces, and a final link $\F_1 \linkIII{1} \P^2$, so in particular $\sl(f) = 2 + r \ge 3$;
\item $\P^2 \linkI{4} \Cl_5 \link{d}{d} \Cl_5 \linkIII{4} \P^2$, so $\sl(f) = 3$;
\item $\P^2 \link21 \Dl_8 \linkI{2} \Cl_6 \link{d}{d} \Cl_6 \linkIII{2} \Dl_8 \link12 \P^2$, so $\sl(f) = 5$.
\end{enumerate}
\item \label{irreducible_delpezzo}
If $f$ is of Del Pezzo type, then $\sl(f) \le 5$, and $f$ admits a minimal factorization of one the forms listed in Table \textup{\ref{tab:del Pezzo type}}.
\end{enumerate}
\end{proposition}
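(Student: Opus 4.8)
The plan is to read every factorization of an element $f\in\Bir_\k(\P^2)$ into Sarkisov links as a closed path based at the vertex $\{\P^2\}$ in the graph of Figure~\ref{diag:links}, and to exploit the following \emph{splitting lemma} as the main engine. For each vertex type record the minimal number $\ell$ of links needed to reach $\P^2$ (so $\ell=1$ for $\Dl_8,\Dl_5,\Cl_5$ and for $\F_1$; $\ell=2$ for $\Dl_6,\Cl_6$ and for $\F_0$). If a minimal factorization $\P^2=X_0,X_1,\dots,X_n=\P^2$ of an irreducible $f$ passes through a vertex $X_k$ admitting a fixed chain $c$ of $\ell$ links to $\P^2$, then writing $f=f_2\circ f_1$ with $f_1=c\circ(X_0\cdots X_k)$ and $f_2=(X_k\cdots X_n)\circ c^{-1}$ gives $\sl(f_1)\le k+\ell$ and $\sl(f_2)\le (n-k)+\ell$; both are $<n$ as soon as $\ell+1\le k\le n-\ell-1$, contradicting irreducibility. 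Hence a vertex of return length $\ell$ occurs only at positions $k\le\ell$ or $k\ge n-\ell$. The case $\ell=0$ says an intermediate occurrence of $\P^2$ is impossible, while $\ell=1$ confines the ``good'' vertices $\Dl_8,\Dl_5,\Cl_5,\F_1$ to the positions $1$ and $n-1$.

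Parts \ref{no_type_IV} and \ref{no_type_I_III} are local. For \ref{no_type_I_III}, if a type~III link immediately follows a type~I link they share the intermediate rank~$1$ conic bundle $U/\P^1$; since $U$ is then a del Pezzo surface of rank $2$ (as required by the pieces of type~I and III in Table~\ref{tab:links}), the rank~$2$ fibration $U/\pt$ is determined by $U$ and is common to both links, so by the two-rays game the two links are mutually inverse and cancel, contradicting minimality. For \ref{no_type_IV}, the same argument shows that a type~IV link $\F_0\linkIV\F_0$, whose rank~$2$ fibration is $\F_0/\pt$, cannot be adjacent to a link of type~I, III, or IV (two type~IV links sharing $\F_0/\pt$ would cancel), so it is flanked on both sides by type~II links, i.e.\ by elementary transformations $\F_1\rat\F_0$ and $\F_0\rat\F_1$. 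One then invokes the elementary relation carried by the rank~$3$ del Pezzo surface $\mathrm{Bl}_{q}\F_0$ of degree~$7$---a pentagon with boundary $\P^2-\F_1-\F_0-\F_0-\F_1-\P^2$ whose edges have types $(\mathrm I,\mathrm{II},\mathrm{IV},\mathrm{II},\mathrm{III})$---to trade the segment $(\mathrm{II},\mathrm{IV},\mathrm{II})$ for the two links $(\mathrm{III},\mathrm I)$ through $\P^2$, strictly shortening the factorization. This is the step I expect to be the main obstacle: type~IV acts as the identity on the underlying surface, so it contributes nothing to the intrinsic complexity of $f$ and ought to be removable, but making this precise requires the two flanking elementary transformations to be carried by a single relation, and in general one must transport the type~IV link along the chain of Hirzebruch surfaces (applying the pentagon repeatedly) towards $\F_1$, where the uniqueness of the ruling forces its disappearance. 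This is exactly the kind of combinatorial coincidence flagged in the introduction.

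For \ref{irreducible_fibering}, suppose $f$ is of fibering type, so its path meets one of $\Cl_8,\Cl_6,\Cl_5$. By Theorem~\ref{t:russian} and Figure~\ref{diag:links}, the only links leaving these classes towards a del Pezzo vertex are $\F_1\linkIII1\P^2$ for $\Cl_8$, the link $\Cl_5\linkIII4\P^2$, and $\Cl_6\linkIII2\Dl_8$, while the type~II links internal to each class preserve the class. If the path meets $\Cl_8$, it is entered only from $\P^2$ through $\F_1$, so by the splitting lemma its whole interior lies in $\Cl_8$, consists of type~II links by \ref{no_type_IV}, and contains at least one by \ref{no_type_I_III}: this is form~\ref{C8}. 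If it meets $\Cl_5$, then $\Cl_5$ is a good vertex, hence confined to positions $1$ and $n-1$; since the whole interior lies in $\Cl_5$ this forces $n=3$, giving $\P^2\linkI4\Cl_5\link{d}{d}\Cl_5\linkIII4\P^2$. If it meets $\Cl_6$, the excursion is bracketed by the good vertex $\Dl_8$ at positions $1$ and $n-1$; as $\Cl_6$ has return length $2$, the splitting lemma allows it only at positions $2$ and $n-2$, which together with the minimal-length count forces $n=5$ and a single interior type~II link, giving $\P^2\link21\Dl_8\linkI2\Cl_6\link{d}{d}\Cl_6\linkIII2\Dl_8\link12\P^2$.

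For \ref{irreducible_delpezzo} the path stays among $\P^2,\Dl_8,\Dl_6,\Dl_5$. The good vertices $\Dl_8,\Dl_5$ occur only at positions $1,n-1$, so every position $2\le k\le n-2$ carries the unique remaining vertex $\Dl_6$. Since $\Dl_6$ has return length $2$, the splitting lemma forbids it in positions $3\le k\le n-3$, which forces $n\le 5$. It then remains to enumerate the admissible paths for $n=0,\dots,5$ using the adjacencies of the del Pezzo part of Figure~\ref{diag:links} (the edges $\P^2-\Dl_8$, $\P^2-\Dl_5$, $\Dl_8-\Dl_6$, $\Dl_8-\Dl_5$, together with the self-loops at each vertex given by the quadratic, Geiser and Bertini involutions): the self-loops at $\P^2$ give the four length-$1$ rows, the two edges out of $\P^2$ give the length-$2$ rows, the length-$3$ rows come from the pairs of good vertices allowed at positions $1,2$, the length-$4$ row from the single interior $\Dl_6$, and the interior self-loops at $\Dl_6$ (orbit sizes $2,3,4,5$) produce the four length-$5$ rows. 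Matching each case with Table~\ref{tab:del Pezzo type} completes the proof.
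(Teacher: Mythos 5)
Your parts \ref{no_type_I_III}, \ref{irreducible_fibering} and \ref{irreducible_delpezzo} are essentially the paper's own argument: your ``splitting lemma'' is precisely the device the paper uses (explicitly via the auxiliary chain $\gamma$ in the proof of \ref{no_type_IV}, and implicitly when it asserts that the path of a minimal factorization of an irreducible element is unimodal), your treatment of a type I link followed by a type III link is the same two-rays-game cancellation, and your enumeration for \ref{irreducible_delpezzo} reproduces Table \ref{tab:del Pezzo type}. Those parts are correct.

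The gap is in part \ref{no_type_IV}, which you rightly flag as the delicate step but do not close. Two concrete problems. First, the local analysis only shows that the type IV link is flanked by type II links over $\P^1$; these are elementary transformations $\F_m \rat \F_0$ and $\F_0 \rat \F_{m'}$ centered at $d$-points of arbitrary size, not necessarily $\F_1\rat\F_0$ at a rational point, so the pentagon $\P^2\,\text{--}\,\F_1\,\text{--}\,\F_0\,\text{--}\,\F_0\,\text{--}\,\F_1\,\text{--}\,\P^2$ is not directly applicable. Second, ``transporting the type IV link along the chain of Hirzebruch surfaces'' is not yet an argument: commuting it past an elementary transformation $\F_0\link{d}{d}\F_0$ or $\F_0\link{d}{d}\F_m$ requires a different elementary relation for each $d$ (the pieces $\pieceF{d}$), and for odd $d$ these relations pass through $\P^2$ rather than merely sliding the type IV edge, so the length bookkeeping is not the one you describe. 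The paper sidesteps all of this: it splits $f=\alpha^{-1}\circ\chi\circ\beta$ at the two copies of $\F_0$ using a fixed $2$-link chain $\gamma\colon\P^2\rat\F_0$ (your splitting lemma with $\ell=2$), deduces $\sl(\alpha)=\sl(\beta)=2$, so both halves are of the form $\P^2\linkI{1}\F_1\link{1}{1}\F_0$, and then uses that a type IV link is trivial on the underlying surface, so $\alpha^{-1}\chi\alpha\in\Aut_\k(\P^2)$ and $f$ factors through an automorphism and a strictly shorter map. You can repair your proof inside your own framework by applying your splitting lemma to the vertex $\F_0$ (return length $2$) \emph{before} attempting to remove the type IV link: this forces the path to be $\P^2,\F_1,\F_0,\F_0,\F_1,\P^2$ of length $5$, the flanking links are then elementary transformations at rational points, and a single pentagon $\piece{\P^2}11$ replaces the first three links by a $2$-link path to the second $\F_0$, yielding a length-$4$ factorization and the desired contradiction. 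As written, however, part \ref{no_type_IV} is incomplete, and part \ref{irreducible_fibering}\ref{C8} depends on it.
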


\begin{proof}
\ref{no_type_IV}.
By Theorem \ref{t:russian}\ref{russian:3}, we know that a link of type IV is the change of ruling on $\F_0$.
Assume that $f$ admits a minimal factorization $f = \alpha^{-1} \circ \chi \circ \beta$, where $\chi\colon \F_0 \to \F_0$ is a link of type IV, and $\alpha, \beta\colon \P^2 \to \F_0$ are compositions of Sarkisov links.
Observe that $\P^2$ and $\F_0$ cannot be connected by a single link, so $\sl(\alpha) \ge 2$, $\sl(\beta) \ge 2$.
Let $\gamma^{-1}\colon \F_0 \rat \P^2$ be the composition of two Sarkisov links, corresponding to the blow-up of a rational point $x \in \F_0$ followed by the contraction of the two rules through $x$.
Then we can write
\[
f = (\alpha^{-1} \circ \chi \circ \gamma) \circ (\gamma^{-1} \circ \beta).
\]
We have $\sl(\alpha^{-1} \circ \chi \circ \gamma) \le \sl(f)$ and $\sl(\gamma^{-1} \circ \beta) < \sl(f)$, so by irreducibility of $f$ the first inequality must be an equality and this gives $\sl(\beta) = \sl(\gamma) = 2$. Similarly $\sl(\alpha) = 2$, and both $\alpha$ and $\beta$ are composition of two links $\P^2 \link{}{} \F_1 \link{}{} \F_0$.
But then $\alpha^{-1} \chi \alpha \in \Aut_\k(\P^2)$ and writing
\[
f = (\alpha^{-1} \circ \chi \circ \alpha) \circ (\alpha^{-1} \circ \beta)
\]
we contradict our assumption that $f$ is irreducible.

\ref{no_type_I_III}.
If $X \in \Cl_i$ with $i \in \{5,6,8\}$, the Picard rank of $X$ is 2, and the two extremal rays correspond respectively to the fibration to $\P^1$ and to a link of type III.
In particular if $X_1 \linkI{d} X \linkIII{d} X_2$ is a composition of a link of type I followed by a link of type III, then $X_1 \to X_2$ is an isomorphism and this contradicts that such a composition of two such links can be part of a minimal factorization.

\ref{irreducible_fibering} and  \ref{irreducible_delpezzo}.
Let $m = \sl(f)$, and consider the successive vertices
\[
v_0 = \{\P^2\}, v_1, \dots, v_{m-1}, v_m = \{\P^2\}
\]
visited by the closed path associated with a minimal factorization of $f$.
So each pair of vertices $v_i, v_{i+1}$ are joined by an edge (which can be a loop) in the graph of Figure~\ref{diag:links}.
Since $f$ is irreducible, any intermediate vertex $v_i$, $1 \le i \le m-1$, is distinct from  $\{\P^2\}$.
If one of the vertices is in $\Cl_8$ then since we know by \ref{no_type_IV} that we cannot use any link of type IV, by inspection of the graph the factorization is as described in~\ref{C8}.
Since the factorization of $f$ is minimal, if the path does not visit the vertex $\Cl_8$ then by the remark after Theorem \ref{t:russian} the associated path is unimodal in the following sense:
\begin{itemize}
\item For each index $i$ such that $1 \le i \le \frac{m}{2}$, $d(v_i, v_0) = d(v_{i-1}, v_0) + 1$, where $d(\cdot, \cdot)$ is the distance in the graph;
\item For each index $i$ such that $\frac{m}{2} + 1 \le i \le m$, $d(v_i, v_0) = d(v_{i-1}, v_0) - 1$;
\item If $m = 2n+1$ is odd then $d(v_{n}, v_0) = d(v_{n+1},v_0)$.
\end{itemize}

Then the possible factorization types in each case (fibering or Del Pezzo type) are obtained by inspection of Figure \ref{diag:links}, together with assertion  \ref{no_type_I_III}.
\end{proof}

\begin{remark}\label{rem:MinimalFactorizations}
We do not claim that any factorization of the forms given in Table \ref{tab:del Pezzo type} is automatically minimal, or even that there exists an irreducible element corresponding to a given line of the table (in particular, the existence can depend on the field $\k$ we are working with).
See Figure \ref{P2_23} for an example of a map with a factorization of type $\P^2 \link{2}{1} \Dl_8 \link{3}{1} \Dl_6 \link{1}{3} \Dl_8 \link{1}{2} \P^2$ which is not minimal because it also admits a factorization of type $\P^2 \link33 \P^2$.
\end{remark}

\section{Subgroups of matrix type}
\label{sec:matrix}

Over an algebraically closed field it is a classical result that any element in $\Bir(\P^2)$ preserving a pencil of rational curves is conjugate to a Jonquières map, that is, a map preserving a pencil of lines.
Over a perfect field we have to introduce two more types of normal forms, that we call Jonquières type 2+2 and Jonquières type 4, and we call Jonquières maps of type 1 the classical ones.
In Proposition \ref{prop:CremonaGeneratedByDelPezzoAndFiberTypes} we  obtain a set of generators: first automorphisms and Jonquières maps of type 1, that are easily seen to be compositions of involutions; second Jonquières maps of type 2+2 or type 4, that are studied in the rest of this section; and finally irreducible elements of Del Pezzo type, which will be studied in Section \ref{sec:delPezzo}.
The proof that type 4 and type 2+2 Jonquières maps are compositions of involutions is the most technical part of this paper.
The argument naturally splits into two parts: Jonquières maps that preserve the fibration fiberwise correspond to an orthogonal group, and by explicit computations we obtain Jonquières involutions whose compositions can realize all possible permutations between fibers.

\subsection{Three types of fibrations}
\label{sec:3fibrations}

We call rational fibration on $\P^2$ any dominant rational map $\pi\colon\P^2\rat \P^1$.
We say that $f \in \Bir_\k(\P^2)$ \emph{preserves the fibration} if there exists $\alpha\in\Aut_\k(\P^1)$ such that $\pi\circ f=\alpha\circ\pi$, and we say it \emph{fixes the fibration} if $\alpha=\id_{\P^1}$:

\[
\begin{tikzcd}
\P^2 \ar[d,"\pi",dashed,swap] \ar[r,dashed,"f"]& \P^2 \ar[d,"\pi",dashed]\\
\P^1 \ar[r,"\alpha"]& \P^1
\end{tikzcd}
\]

We write $\Bir_\k(\P^2,\pi)$ for the group that preserves the fibration, $\Bir_\k(\P^2/\pi)$ for the one that fixes the fibration, and $\Aut_\k^{\pi}(\P^1)$ for the image in $\Aut_\k(\P^1)$ of the application $f\mapsto\alpha$.
So by definition we have an exact sequence
\begin{align*}
1\to\Bir_\k(\P^2/\pi)\to\Bir_\k(\P^2,\pi)\to \Aut_\k^{\pi}(\P^1)\to 1 \tag{Seq}\label{exact_sequence}.
\end{align*}

  We say that $f\in\Bir_\k(\P^2)$ is of
  \begin{itemize}
    \item \emph{Jonquières type $1$} if $f$ preserves the fibration given by the pencil of lines through a rational point;
    \item \emph{Jonquières type $2+2$}  with residue fields $L,L'$ if $f$ preserves the fibration given by the pencil of conics through two general $2$-points with respective residue fields $L$ and  $L'$;
    \item \emph{Jonquières type $4$} with residue field $F$ if $f$ preserves the fibration given by the pencil of conics through a general $4$-point with residue field $F$.
  \end{itemize}

Note that when we speak about ``the'' residue field we always mean up to $\k$-isomor\-phism.
For Jonquières type $1$ the sequence \eqref{exact_sequence} splits and gives a description as a semidirect product $\PGL_2(\k)\ltimes \PGL_2(\k(x))$.
For Jonquières type 2+2 and 4 it is not clear whether the sequence always splits.

\begin{remark}\label{rem:residuefielddeg4}
A field extension $F/\k$ is the residue field of a general $4$-point if and only if it is the residue field of an irreducible polynomial of degree $4$:

Choosing a conic $C$ defined over $\k$ going through the general $4$-point $p$, and applying a $\k$-coordinate change, we can assume that $C$ is given by $y^2-xz=0$. Hence, the components of $p$ are of the form $[a_i^2:a_i:1]\in\p^2(\bk)$ and its residue field is $\k(a_1)\simeq\k(a_2)\simeq\k(a_3)\simeq\k(a_4)$, which equals the residue field of the irreducible polynomial $f=(x-a_1)\cdots(x-a_4)\in\k$ of degree $4$.

For the converse direction, see Example~\ref{ex:SpecificFibrations} to see how one can construct a general $4$-point given an irreducible polynomial of degree $4$.
\end{remark}

\begin{lemma}\label{lem:FiberTypeAlways124}
Let $f\in\Bir_\k(\P^2)$ be an irreducible element of fibering type.
Then there exists $\alpha\in\PGL_3(\k)$ such that $\alpha\circ f$ is of Jonquières type $1$, $2+2$, or $4$.
\end{lemma}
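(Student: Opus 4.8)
The plan is to read off the preserved fibration directly from the minimal factorization furnished by Proposition~\ref{p:irreducible}\ref{irreducible_fibering}. Since $f$ is of fibering type, it admits a minimal factorization of one of the three forms listed there, according to whether the conic bundle involved lies in $\Cl_8$, $\Cl_5$, or $\Cl_6$. In each form the interior links are of type~II over $\P^1$, hence fibre-preserving, so the whole composition carries the rational fibration induced on the source $\P^2$ by the first link to the one induced on the target $\P^2$ by the last link. The first step is therefore to identify these two fibrations and to recognise them as being of Jonquières type $1$, $4$, or $2+2$.

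This identification is immediate from the definitions of the sets $\Cl_i$. In the $\Cl_8$ case the factorization begins with $\P^2 \linkI{1} \F_1$, i.e. the blow-up of a rational point $p$; the conic bundle $\F_1 \to \P^1$ pulls back to the pencil of lines through $p$, so the source fibration is of Jonquières type~$1$. In the $\Cl_5$ case it begins with $\P^2 \linkI{4} \Cl_5$, the blow-up of a general $4$-point $p$, and by definition the conic bundle is the pencil of conics through $p$, of Jonquières type~$4$ with residue field the residue field of $p$ (see Remark~\ref{rem:residuefielddeg4}). In the $\Cl_6$ case it begins with $\P^2 \link{2}{1} \Dl_8 \linkI{2} \Cl_6$, which by the description of $\Cl_6$ corresponds to the pencil of conics through two general $2$-points, of Jonquières type~$2+2$. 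The same analysis applied to the last link describes the target fibration, of the same type.

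It remains to correct the discrepancy between the source base locus $p$ and the target base locus $q$ by an element $\alpha \in \PGL_3(\k)$: if $\alpha(q) = p$ as Galois orbits, then $\alpha \circ f$ maps the source pencil to itself and is of the desired Jonquières type. The key point is that two base loci of the same kind with isomorphic residue field(s) are always $\PGL_3(\k)$-equivalent. Indeed, the geometric components form a frame of $\P^2$ (four points in general position, possibly grouped $2+2$), and $\PGL_3$ acts simply transitively on frames over $\bk$. Matching residue fields means the components of $p$ and of $q$ are isomorphic as $\Gal(\bk/\k)$-sets (respecting the grouping in the $2+2$ case), so there is a $\Gal$-equivariant bijection between them; the unique element of $\PGL_3(\bk)$ realising this bijection of frames is then Galois-invariant, hence lies in $\PGL_3(\k)$ and sends $q$ to $p$. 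For Jonquières type~$1$ the loci are single rational points and such an $\alpha$ trivially exists.

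The main obstacle is thus to show that the source and target base loci do have isomorphic residue fields. For type~$1$ this is vacuous. For types~$4$ and $2+2$ one must check that the interior type~II-over-$\P^1$ links do not alter the residue field of the orbit defining the fibration. These links are isomorphisms over the generic point of $\P^1$, so they preserve the birational class of the conic bundle over $\P^1$; from this class one recovers the Galois action on the defining orbit of horizontal sections, via its incidence with the configuration of reducible fibres, and this action is therefore transported unchanged across each link. Making this bookkeeping precise—verifying that each admissible value of $d$ occurring in the interior links indeed preserves the residue field—is the delicate combinatorial part of the argument, and is exactly where the ``low-dimensional miracle'' alluded to in the introduction is needed.
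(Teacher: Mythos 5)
Your setup is sound and agrees with the paper's: you use Proposition~\ref{p:irreducible}\ref{irreducible_fibering} to split into the three cases, you correctly read off the source and target fibrations from the first and last links, and you correctly reduce the problem to finding $\alpha\in\PGL_3(\k)$ carrying the target base locus to the source base locus. But the proof has a genuine gap exactly where you say it does: you never establish that the two base loci are $\PGL_3(\k)$-equivalent. You reduce this to ``the residue fields agree'' and then to ``each interior type II link over $\P^1$ transports the Galois action on the defining orbit unchanged,'' and you explicitly defer that verification as ``the delicate combinatorial part.'' That deferred step is the entire mathematical content of the lemma beyond bookkeeping; an argument that names the hard step without carrying it out is not a proof. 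Moreover, the heuristic you offer for it (recovering the Galois action on the four exceptional sections from the birational class of the conic bundle over $\P^1$, via incidence with the singular fibres) is not obviously correct as stated and would need its own careful argument.

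The paper closes this gap by a different and much shorter route: it does not pass through residue fields at all. Since $f$ induces a birational map between the two blown-up surfaces that preserves the two conic fibrations, Lemma~\ref{l:Julia_4pts} (that is, \cite[Lemma 6.12]{SchneiderRelations}) applies directly and yields an automorphism $\alpha\in\Aut_\k(\P^2)$ with $\alpha(\Pl)=\Ql$ in the $\Cl_5$ case; the $\Cl_6$ case is handled the same way through Lemma~\ref{l:XisoX'}, which identifies the parents of the twin surfaces. In other words, the $\PGL_3(\k)$-equivalence of the base loci is obtained in one stroke from the existence of the fibration-preserving map, with no link-by-link analysis. If you want to salvage your residue-field route, note that Lemma~\ref{l:residue} does give the implication ``isomorphic residue fields $\Rightarrow$ $\PGL_3(\k)$-equivalent'' for general $4$-points, but you would still have to prove the invariance of the residue field along the chain of links, and the natural way to do that is again Lemma~\ref{l:Julia_4pts} --- so the detour gains nothing.
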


\begin{proof}
We consider each of the three cases given by Proposition \ref{p:irreducible}\ref{irreducible_fibering}.

An irreducible element going through $\Cl_8$ is a map that sends the pencil of lines through one point onto the pencil of lines through possibly some other point.
Hence, there exists $\alpha\in\PGL_3(\k)$ such that $\alpha\circ f$ preserves the pencil of lines through a point and so $f$ is of Jonquières type 1.

If $f$ has factorization type $\P^2 \linkI{4} X \link{d}{d} X' \linkIII{4} \P^2$ with $X,X'\in \Cl_5$, then $f$ maps the pencil of conics through the $4$-point associated to $X$ onto the one through the $4$-point associated to $X'$.
By Lemma \ref{l:Julia_4pts} the surfaces $X$ and $X'$ are isomorphic and the associated $4$-points are equivalent under the action of $\PGL_3(\k)$. This gives the existence of $\alpha$.

Finally if $f$ has factorization type
$\P^2 \link21 Y \linkI{2} X \link{d}{d} X' \linkIII{2} Y' \link12 \P^2$ with $X,X'\in \Cl_6$ and $Y,Y'\in\Dl_8$ then $f$ send the pencil of conics through the two $2$-points associated to $X$ and $Y$ onto the one associated to $X'$ and $Y'$.
By Lemma \ref{l:XisoX'} we know that the two surfaces $X, X' \in \Cl_6$ of the middle link are dominated by isomorphic surfaces $Z, Z'$, each of which is the blow-up of $\P^2$ at two $2$-points, and the two sets of four geometric points are equivalent under the action of $\PGL_3(\k)$.
\end{proof}

Let $f,g$ be two elements of Jonquières type $1$, $2+2$, or $4$, preserving fibrations $\pi_f,\pi_g$.
We say that $f$ and $g$ are \emph{equivalent} if there exists $\alpha\in\PGL_3(\k)$ such that $\alpha^{-1}\circ g\circ \alpha$ preserves $\pi_f$.

In order to classify Jonquières maps up to equivalence, we first need a  reinforcement of \cite[Lemma 6.11]{SchneiderRelations},
which states that any Galois equivariant bijection between two Galois invariant sets of 4 points can be realized by an element of $\Aut_\k(\P^2)$:

\begin{lemma}
\label{l:residue}
Let $\k$ be a perfect field, and let $p,q \in \P^2$ be two general $4$-points.
Then the following are equivalent:
\begin{enumerate}
    \item\label{it:aut} There exists $\alpha\in\PGL_3(\k)$ such that $\alpha(p_i)=q_i$ for $i = 1, \dots, 4$.
    \item\label{it:property} For every $g\in\Gal(\bk/\k)$ there exists $\sigma\in S_4$ such that $g(p_i)=p_{\sigma(i)}$ and $g(q_i)=q_{\sigma(i)}$ for $i=1,\ldots,4$.
    \item\label{it:residue} The residue fields of $p$ and $q$ are $\k$-isomorphic.
  \end{enumerate}
\end{lemma}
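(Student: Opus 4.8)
The plan is to prove the cycle $\ref{it:aut} \Rightarrow \ref{it:property} \Rightarrow \ref{it:residue} \Rightarrow \ref{it:aut}$, working throughout with the profinite group $G = \Gal(\bk/\k)$ and its transitive actions on the components $\{p_1,\dots,p_4\}$ and $\{q_1,\dots,q_4\}$. First I would dispatch the two easy implications. For $\ref{it:aut} \Rightarrow \ref{it:property}$: given $\alpha \in \PGL_3(\k)$ with $\alpha(p_i)=q_i$, the key point is that $\alpha$ is defined over $\k$ and therefore commutes with the $G$-action on $\bk$-points; so if $g \in G$ acts on the orbit $p$ by $g(p_i)=p_{\sigma(i)}$ for the unique $\sigma \in S_4$, then $g(q_i)=g(\alpha(p_i))=\alpha(g(p_i))=\alpha(p_{\sigma(i)})=q_{\sigma(i)}$, so the same $\sigma$ works and \ref{it:property} holds. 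For $\ref{it:property} \Rightarrow \ref{it:residue}$: the synchronization in \ref{it:property} says precisely that the stabilizers $\{g \in G : g(p_1)=p_1\}$ and $\{g \in G : g(q_1)=q_1\}$ coincide, both being $\{g : \sigma_g(1)=1\}$ for the common permutation $\sigma_g$; since the residue field of a $4$-point is the fixed field of the stabilizer of one of its components, the two residue fields are equal as subfields of $\bk$, hence $\k$-isomorphic.

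The substance lies in $\ref{it:residue} \Rightarrow \ref{it:aut}$, which is the reinforcement of \cite[Lemma 6.11]{SchneiderRelations}. Write $H_p = \{g \in G : g(p_1)=p_1\}$ and $H_q = \{g \in G : g(q_1)=q_1\}$, the index-$4$ stabilizers, with residue fields $\bk^{H_p}$ and $\bk^{H_q}$. Since $\k$ is perfect these extensions are separable, so I can invoke the standard fact that two intermediate fields are $\k$-isomorphic if and only if the corresponding subgroups are conjugate in $G$: any $\k$-isomorphism $\bk^{H_p}\to\bk^{H_q}$ extends to a $\k$-automorphism $\tau$ of the algebraic closure, and then $\tau H_p \tau^{-1} = H_q$. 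Thus \ref{it:residue} yields $\tau \in G$ with $H_q = \tau H_p \tau^{-1}$. From this I would manufacture a $G$-equivariant bijection of the two orbits: identifying $\{p_i\}$ with $G/H_p$ and $\{q_i\}$ with $G/H_q$ through the orbit maps, the assignment $gH_p \mapsto g\tau^{-1}H_q$ is well-defined \parent{because $H_p = \tau^{-1}H_q\tau$}, $G$-equivariant, and bijective between two transitive $G$-sets of cardinality $4$. Re-indexing the components of $q$, I may assume this bijection is $p_i \mapsto q_i$, and by equivariance it satisfies $g(q_i)=q_{\sigma(i)}$ whenever $g(p_i)=p_{\sigma(i)}$.

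Finally, this matching is exactly a Galois-equivariant bijection between the two general $4$-points, so \cite[Lemma 6.11]{SchneiderRelations} hands me $\alpha \in \Aut_\k(\P^2) = \PGL_3(\k)$ realizing it, giving $\alpha(p_i)=q_i$ and hence \ref{it:aut}. \parent{Concretely, generality ensures the components form projective frames, so there is a unique projective transformation over $\bk$ matching them, and Galois-equivariance forces it to be defined over $\k$.} The main obstacle will be this passage from the \emph{abstract} isomorphism of residue fields to an \emph{honest} matching of components: one must upgrade the relation $\bk^{H_p}\cong_\k\bk^{H_q}$ to the conjugacy of $H_p$ and $H_q$ in $G$, and then build the equivariant bijection, after which the geometric realization is delegated to the cited lemma. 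Note also that the generality hypothesis is used only at this last realization step, to guarantee that the components form frames.
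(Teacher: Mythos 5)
Your proof is correct, and it has the same overall skeleton as the paper's: the geometric step is delegated to \cite[Lemma 6.11]{SchneiderRelations} (which is exactly the implication \ref{it:property}$\Rightarrow$\ref{it:aut}), and the real content is upgrading \ref{it:residue} to a synchronized Galois action on the two orbits. Where you diverge is in how that upgrade is carried out. The paper first normalizes both $4$-points onto the conic $y^2 = xz$ (via Remark~\ref{rem:residuefielddeg4}), so that the components become roots $a_i$, $b_i$ of two irreducible quartics, and then argues inside a finite Galois extension $L$ containing all of them: a $\k$-isomorphism of residue fields gives $h\in\Gal(L/\k)$ with $\k(b_{\tau(i)})=\k(a_i)$, re-indexing makes the fields equal, and $\Gal(L/F_i)=\Gal(L/F_i')$ then yields the common permutation $\sigma$. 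Your version is coordinate-free: you work directly with the absolute Galois group, identify the residue fields with the fixed fields of the component stabilizers, use the standard dictionary ($\k$-isomorphic subfields $\leftrightarrow$ conjugate closed subgroups) to get $H_q=\tau H_p\tau^{-1}$, and build the equivariant bijection $gH_p\mapsto g\tau^{-1}H_q$ of transitive $G$-sets. This buys you a cleaner argument that never touches coordinates and also gives \ref{it:property}$\Rightarrow$\ref{it:residue} for free from the equality of stabilizers (the paper instead proves \ref{it:aut}$\Rightarrow$\ref{it:residue} directly, which is even shorter); the paper's explicit normalization buys concreteness and reuses a remark it needs elsewhere anyway. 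One point both treatments share and that you should make explicit: condition \ref{it:residue} is independent of the indexing of the components while \ref{it:aut} and \ref{it:property} are not, so the passage from \ref{it:residue} necessarily involves a re-indexing of $q$ --- you do this ("re-indexing the components of $q$"), exactly as the paper does when it exchanges $b_i$ with $b_{\tau(i)}$.
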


\begin{proof}
The fact that \ref{it:property} implies \ref{it:aut} is \cite[Lemma 6.11]{SchneiderRelations}.
  As elements of $\PGL_3(\k)$ do not change the residue field, \ref{it:aut} implies \ref{it:residue}.
  It remains to see that \ref{it:residue} implies \ref{it:property}.
As in Remark~\ref{rem:residuefielddeg4} there exist $\alpha,\beta\in\PGL_3(k)$ such that $\alpha(p_i)=[a_i:a_i^2:1]$ and $\beta(q_i)=[b_i:b_i^2:1]$, where $a_1,\ldots,a_4$ and $b_1,\ldots,b_4$ are the roots of two irreducible polynomials $f,f'\in\k[x]$ of degree $4$.
In particular, the residue field of $p_i$ is $F_i=\k(a_i)$, and the one of $q_i$ is $F_i'=\k(b_i)$.
The residue field $F$ of $f$ is $\k$-isomorphic to $F_i$, and the residue field $F'$ is $\k$-isomorphic to $F_i'$.

  Let $L/\k$ be a finite Galois extension containing $a_1,\ldots,a_4,b_1,\ldots,b_4$.
  Having that $F_i$ and $F_i'$ are $\k$-isomorphic means that there exists $h\in\Gal(L/\k)$ (with corresponding $\tau\in S_4$ such that $h(b_i)=b_{\tau(i)}$) such that $F_{\tau(i)}'=\k(b_{\tau(i)})=\k(a_i)=F_i$. Therefore, after exchanging $b_i$ with $b_{\tau(i)}$, we find $F_i=F_i'$.
  By the fundamental theorem of Galois theory, this is equivalent to $\Gal(L/F_i)=\Gal(L/F_i')$.
  In other words: For every $g\in\Gal(L/\k)$ there exists $\sigma\in S_4$ such that $g(a_i)=a_{\sigma(i)}$ and $g(b_i)=b_{\sigma(i)}$ for $i=1,\ldots,4$.

  Therefore, also $\alpha(g(p_i)) = g(\alpha(p_i)) = g([a_i:a_i^2:1]) = [a_{\sigma(i)}:a_{\sigma(i)}^2:1] = \alpha(p_{\sigma(i)})$, and so by applying $\alpha^{-1}$ we find $g(p_i)=p_{\sigma(i)}$. Similarly, we find $\beta(g(q_i)) = \beta(q_{\sigma(i)})$ and so $g(q_i)=q_{\sigma(i)}$.
\end{proof}

\begin{lemma}\label{lem:SameFiberTypeAreEquivalent}
The classification up to equivalence of elements of Jonquières type is as follows:
\begin{enumerate}
\item\label{item:SameFiberTypeAreEquivalent--1} All elements of Jonquières type $1$ are equivalent.
\item\label{item:SameFiberTypeAreEquivalent--2} Two elements of Jonquières type $2+2$ are equivalent if and only if the pairs of associated residue fields are $\k$-isomorphic \parent{and in fact equal}.
\item\label{item:SameFiberTypeAreEquivalent--4} Two elements of Jonquières type $4$ are equivalent if and only if the associated residue fields are $\k$-isomorphic.
\end{enumerate}
\end{lemma}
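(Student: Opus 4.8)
The plan is to reduce the equivalence of two Jonquières maps to the question of whether their invariant fibrations lie in one $\PGL_3(\k)$-orbit, and then to read off the answer from the base loci of the pencils. For the \emph{if} directions, suppose $\alpha\in\PGL_3(\k)$ carries the base locus $B_f$ of $\pi_f$ onto the base locus $B_g$ of $\pi_g$. Then $\alpha$ sends the pencil $\pi_f$ to $\pi_g$, i.e.\ $\pi_g\circ\alpha=\beta\circ\pi_f$ for some $\beta\in\Aut_\k(\P^1)$, and a direct computation using $\pi_g\circ g=\gamma\circ\pi_g$ yields $\pi_f\circ(\alpha^{-1}\circ g\circ\alpha)=(\beta^{-1}\gamma\beta)\circ\pi_f$, so $\alpha^{-1}\circ g\circ\alpha$ preserves $\pi_f$ and $f,g$ are equivalent. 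For the \emph{only if} directions, the invariant pencil — of conics in types $2+2$ and $4$, of lines in type $1$ — is intrinsic to the map, so that $\alpha^{-1}\circ g\circ\alpha$ preserving $\pi_f$ forces $\alpha(B_f)=B_g$; since every element of $\PGL_3(\k)$ preserves the residue field of each point, the residue-field data attached to $B_f$ and $B_g$ must agree. Thus in each case it remains to decide when the two base loci are $\PGL_3(\k)$-equivalent.

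Parts \ref{item:SameFiberTypeAreEquivalent--1} and \ref{item:SameFiberTypeAreEquivalent--4} are then short. In type $1$ the base locus is a single rational point; as $\PGL_3(\k)$ acts transitively on $\P^2(\k)$, any two such points, hence any two pencils of lines, are $\PGL_3(\k)$-equivalent, which is \ref{item:SameFiberTypeAreEquivalent--1}. In type $4$ the base locus is a single general $4$-point, and Lemma~\ref{l:residue} states exactly that two general $4$-points are $\PGL_3(\k)$-equivalent if and only if their residue fields are $\k$-isomorphic; combined with the reduction above this gives \ref{item:SameFiberTypeAreEquivalent--4}.

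The substance is part \ref{item:SameFiberTypeAreEquivalent--2}. Here the base locus consists of two $2$-points $\{p_1,p_2\}$ and $\{p_1',p_2'\}$ in general position, with residue fields $L,L'$. Each pair spans a $\k$-rational line, say $\ell$ and $\ell'$; general position makes $\ell\neq\ell'$, and they meet in a $\k$-rational point $O$. Applying an element of $\PGL_3(\k)$, I normalise so that $\ell=\{y=0\}$, $\ell'=\{x=0\}$ and $O=[0:0:1]$. The subgroup of $\PGL_3(\k)$ fixing this flag consists of the matrices $\smallmat{a&0&0\\0&b&0\\c&d&1}$, which act on $\ell$ through the affine group $t\mapsto a^{-1}t+ca^{-1}$ (fixing $O$) and on $\ell'$ through $s\mapsto b^{-1}s+db^{-1}$, the two actions using independent parameters $(a,c)$ and $(b,d)$. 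Now any two $2$-points of an affine line with the same residue field $L$ are interchanged by an affine map defined over $\k$ (for $\{\theta,\bar\theta\}$ and $\{\theta',\bar\theta'\}$ with $\theta,\theta'\in L$, the slope $(\theta'-\bar\theta')/(\theta-\bar\theta)$ is Galois-invariant, hence in $\k$). Since $\k$ is perfect, a separable quadratic subfield of $\bk$ is normal, so two $\k$-isomorphic quadratic residue fields coincide as subfields of $\bk$ — this gives the parenthetical ``in fact equal''. Consequently, when $\{L,L'\}=\{M,M'\}$ the two independent affine factors bring both base loci to one common normal form, producing the required $\alpha$; conversely $\alpha\in\PGL_3(\k)$ preserves residue fields, yielding the ``only if''.

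The main obstacle is the simultaneous normalisation in part \ref{item:SameFiberTypeAreEquivalent--2}: one must check that the stabiliser of the pair of $\k$-lines is large enough to move the two $2$-points into standard position \emph{at the same time}, which is exactly the independence of the parameters $(a,c)$ and $(b,d)$ above. A secondary point requiring care is the ``only if'' reduction in all three parts, namely that $\alpha^{-1}\circ g\circ\alpha$ preserving $\pi_f$ really forces $\alpha(B_f)=B_g$; this rests on the fact that the invariant pencil, and hence its base locus, is determined by the map, after which invariance of residue fields under $\PGL_3(\k)$ closes the argument.
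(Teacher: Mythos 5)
Your proof is correct, but for part (2) it takes a genuinely more self-contained route than the paper. The paper's own proof is three lines: part (1) is declared clear, part (2) is quoted directly from \cite[Lemma 6.11]{SchneiderRelations} (a Galois-equivariant bijection between two Galois-invariant sets of four points is realized by an element of $\Aut_\k(\P^2)$; one checks the hypothesis using that the stabilizers of the components of a $2$-point with residue field $L$ all equal $\Gal(\bk/L)$), and part (3) is Lemma~\ref{l:residue}. Your treatment of parts (1) and (3) coincides with this. For part (2) you replace the citation by an explicit normalization: the two $2$-points span distinct $\k$-rational lines meeting in a rational point, the stabilizer of this flag in $\PGL_3(\k)$ acts on each punctured line through the full affine group of $\A^1$ with independent parameters, and any two $2$-points of $\A^1$ with the same quadratic residue field are $\k$-affinely equivalent (Galois-invariance of the slope, plus normality of separable quadratic extensions, which also yields the parenthetical ``in fact equal''). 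This buys an elementary, citation-free argument at the cost of a page of computation. Two minor points to tighten: when the pairs of residue fields match only after a swap ($L=M'$, $L'=M$) you should also compose with the transformation exchanging the two coordinate lines; and your ``only if'' reduction (that $\alpha^{-1}\circ g\circ\alpha$ preserving $\pi_f$ forces $\alpha(B_f)=B_g$) tacitly assumes the invariant pencil is uniquely determined by the map, which can fail for degenerate elements preserving several pencils --- harmless here, since the fibration is part of the data of a Jonquières element and only the ``if'' direction is used later, but worth a sentence.
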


\begin{proof}
\ref{item:SameFiberTypeAreEquivalent--1} is clear,
\ref{item:SameFiberTypeAreEquivalent--2} follows directly from \cite[Lemma 6.11]{SchneiderRelations},
and \ref{item:SameFiberTypeAreEquivalent--4} follows from Lemma \ref{l:residue}
\end{proof}

With the above lemma, for each fibering type and each (pair of) residue field(s) we choose one fibration $\pi\colon\P^2\rat\P^1$. We will denote it by $\pi_\times$ (Jonquières type 1), $\pi_{L,L'}$ (Jonquières type 2+2 with residue fields $L,L'$), $\pi_F$ (Jonquières type 4 with residue field $F$).
Such a choice can be made explicit:

\begin{example}
\label{ex:SpecificFibrations}
\text{ }
  \begin{description}
    \item[Jonquières type 1]
    Set
    \[
    \begin{tikzcd}[map]
    \pi_\times\colon & \P^2 & \rat & \P^1 \\
                  &{[x:y:z]}&\mapsto & {[x:z]},
    \end{tikzcd}
    \]
    which is the fibration corresponding to the pencil of lines through $[0:1:0]$.
    \item[Jonquières type 2+2]
    Let $L=\k(a_1)$ and $L'=\k(a_1')$ be two quadratic extensions over $\k$ with respective minimal polynomials $(t-a_1)(t-a_2)$, $(t-a_1')(t-a_2') \in\k[t]$.
    Let $p=\{[a_i:1:0]\}_{i=1,2}$ and $p'=\{[a_i':0:1]\}_{i=1,2}$ and set
    \[
    \begin{tikzcd}[map]
    \pi_{L,L'}\colon & \P^2 & \rat & \P^1\\
    & {[x:y:z]}&\mapsto& {[(x-a_1y)(x-a_2y) + (x-a_1'z)(x-a_2'z) - x^2:yz]},
    \end{tikzcd}
    \]
    which is a fibration corresponding to the pencil of conics through $p,p'$.
    \item[Jonquières type 4] Let $L=\k(a_1,a_2,a_3,a_4)$ be a splitting field of an irreducible polynomial of degree $4$ over $\k$ with minimal polynomial $(t-a_1)(t-a_2)(t-a_3)(t-a_4)=t^4+at^3+bt^2+ct+d\in\k[t]$.
    Then $p=\{[a_i^2:a_i:1]\}_{i=1,\ldots,4}$ is a general $4$-point, and we set
    \[
    \begin{tikzcd}[map]
    \pi_F\colon & \P^2 & \rat & \P^1\\
      &{[x:y:z]}&\mapsto &{[x^2+axy+by^2+cyz+dz^2:y^2-xz]},
    \end{tikzcd}
    \]
    which is a fibration corresponding to the pencil of conics through $p$.
    The associated residue field is $F=\k(a_1)\simeq\k(a_2)\simeq\k(a_3)\simeq\k(a_4)$, all of which are $\k$-isomorphic.
  \end{description}
\end{example}

\begin{remark}
  Note that two $d$-points with $d\leq 3$ have $\k$-isomorphic residue fields if and only if they have isomorphic splitting fields.
  For $d=4$ this is not true:
  Let $\k=\Q$ and consider the $4$-points given by $p_1=[\sqrt[4]{2}:\sqrt{4}^2:1]\in\Q[\sqrt[4]{2}]$ respectively $q_1=[\sqrt[4]{2}(1-i):\left(\sqrt[4]{2}(1-i)\right)^2:1]\in\Q[\sqrt[4]{2}(1-i)]$.
  They both have the splitting field $\Q[\sqrt[4]{2},i]$ but their residue fields are not $\k$-isomorphic.
  However, one can show that this can happen only if the Galois group over the splitting field is isomorphic to the dihedral group $D_8$.
\end{remark}

\subsection{A geometric generating set}

We can now describe a generating set for the Cremona group, valid over any perfect field:

\begin{proposition}
\label{prop:CremonaGeneratedByDelPezzoAndFiberTypes}
Let $\k$ be a perfect field.
The Cremona group $\Bir_\k(\P^2)$ is generated by the following elements:
\begin{itemize}
\item $\Aut_\k(\P^2) \simeq \PGL_3(\k)$;
\item Irreducible elements $f$ of Del Pezzo type with $1 \le \sl(f) \le 5$;
\item The Jonquières group $\Bir_\k(\P^2,\pi_\times)$ of type $1$;
\item The Jonquières groups $\Bir_\k(\P^2,\pi_{L,L'})$ of type $2+2$, for each pair of quadratic extensions $L/\k$, $L'/\k$;
\item The Jonquières groups $\Bir_\k(\P^2,\pi_F)$ of type $4$, for each residue field $F/\k$ of an irreducible polynomial of degree~$4$.
\end{itemize}
\end{proposition}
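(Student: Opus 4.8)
The plan is to reduce everything to the generation by irreducible elements and then feed each such element into the structural results already proved. By Proposition \ref{p:generated_by_irreducible} it suffices to show that every irreducible $f \in \Bir_\k(\P^2)$ lies in the subgroup $G$ generated by the five families listed. I would first recall that every irreducible element is of one of three kinds: an automorphism, an element of Del Pezzo type, or an element of fibering type. This trichotomy is exhaustive because in any minimal factorization a conic-bundle vertex of Figure \ref{diag:links} can only be entered through a link of type I (and, by Proposition \ref{p:irreducible}\ref{no_type_IV}, never through a link of type IV); hence a minimal factorization either stays among the Del Pezzo vertices using only links of type II over $\pt$, or it contains a link of type I. If $\sl(f) = 0$ then $f \in \Aut_\k(\P^2) = \PGL_3(\k)$, and if $f$ is of Del Pezzo type with $\sl(f) \ge 1$ then Proposition \ref{p:irreducible}\ref{irreducible_delpezzo} gives $1 \le \sl(f) \le 5$, so $f$ is itself one of the listed generators. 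In both cases $f \in G$.

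The remaining, and only substantive, case is $f$ of fibering type, which is handled by the two Jonquières lemmas. I would apply Lemma \ref{lem:FiberTypeAlways124} to obtain $\alpha \in \PGL_3(\k)$ such that $g := \alpha \circ f$ is of Jonquières type $1$, $2+2$, or $4$, preserving some fibration $\pi'$. Reading off the residue field(s) of $\pi'$ singles out one of the standard fibrations $\pi_\bullet \in \{\pi_\times, \pi_{L,L'}, \pi_F\}$, and Lemma \ref{lem:SameFiberTypeAreEquivalent} then shows that $g$ is equivalent to an element preserving $\pi_\bullet$: there exists $\beta \in \PGL_3(\k)$ with $h := \beta^{-1} \circ g \circ \beta \in \Bir_\k(\P^2, \pi_\bullet)$. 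Unwinding the two conjugations yields
\[
f = \alpha^{-1} \circ \beta \circ h \circ \beta^{-1},
\]
a product of automorphisms and a single element of a standard Jonquières group, so again $f \in G$.

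The proof is therefore essentially an assembly, with the genuine content lying in the results invoked (Proposition \ref{p:irreducible} together with Lemmas \ref{lem:FiberTypeAlways124} and \ref{lem:SameFiberTypeAreEquivalent}). The one step that requires care is the residue-field bookkeeping in the fibering case: I must apply Lemma \ref{lem:SameFiberTypeAreEquivalent} with the standard fibration chosen to carry exactly the residue field(s) of $\pi'$, so that the conjugate $h$ lands in one of the Jonquières groups actually appearing in the statement and not in a different equivalence class. Everything else is routine.
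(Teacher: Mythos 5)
Your argument is correct and follows essentially the same route as the paper: reduce to irreducible elements via Proposition~\ref{p:generated_by_irreducible}, handle the Del Pezzo case by Proposition~\ref{p:irreducible}\ref{irreducible_delpezzo}, and handle the fibering case by combining Lemma~\ref{lem:FiberTypeAlways124} with Lemma~\ref{lem:SameFiberTypeAreEquivalent}. The only detail you leave implicit is that every residue field arising from a general $4$-point is indeed the residue field of an irreducible degree-$4$ polynomial (Remark~\ref{rem:residuefielddeg4}), which is needed so that the type-$4$ fibration you land on is one of those listed in the statement.
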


\begin{proof}
By Proposition~\ref{p:generated_by_irreducible}, $\Bir_\k(\P^2)$ is generated by the irreducible elements, which are of Del Pezzo type and/or of fibering type.
An irreducible element of Del Pezzo type has Sarkisov length at most $5$ by Proposition \ref{p:irreducible}\ref{irreducible_delpezzo}, and the case $\sl(f) = 0$ corresponds to the case $f \in \Aut_\k(\P^2)$.
If an irreducible element is of fibering type, then by Lemma~\ref{lem:FiberTypeAlways124} up to an automorphism it is equal to an element of Jonquières type 1, 2+2 or 4.
Then Lemma~\ref{lem:SameFiberTypeAreEquivalent} implies the statement, where we use Remark~\ref{rem:residuefielddeg4}.
\end{proof}

Our aim in the rest of the paper is to study each type of generator in Proposition~\ref{prop:CremonaGeneratedByDelPezzoAndFiberTypes} and show that it can be written as a product of involutions.

We shall use the basic remark that in any group, the subgroup generated by involutions is a normal subgroup.
This implies that if $G$ is a simple group containing an involution, then $G$ is generated by involutions.
For instance, for any field $\k$ and any $n \ge 2$, (except for $n=2$ and $|\k| = 2$ or $3$), the group $\PSL_n(\k)$ is simple and contains the involution
$(x_1, x_2, \dots, x_n) \mapsto (-x_1, -x_2, x_3, \dots, x_n)$ (in characteristic $\neq2$) or $(x_1, x_2, \dots, x_n) \mapsto (x_2, x_1, x_3, \dots, x_n)$ (in characteristic $2$),
so it is generated by involutions.
Observe that $\PSL_2(\FF_2) \simeq S_3$ also is generated by involutions even if it is not simple, but $\PSL_2(\FF_3) \simeq A_4$ is not generated by involutions.
On the other hand, it is known that $\PGL_n(\k)$ is not always generated by involutions: for instance a necessary and sufficient condition for $\PGL_3(\k)$ to be generated by involutions is that all elements in $\k$ are cubes.
However using the ambient birational group we have:

\begin{lemma}
\label{l:PGL3_and_jonq}
Let $\k$ be any field.
The group $\Aut_\k(\P^2) = \PGL_3(\k)$ and the Jonquières group $\Bir_\k(\P^2, \pi_\times) = \PGL_2(\k)\ltimes \PGL_2(\k(x))$ are contained in the subgroup of $\Bir_\k(\P^2)$ generated by involutions.
\end{lemma}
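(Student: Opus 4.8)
The plan is to leverage the remark made just before the statement, namely that the subgroup generated by involutions is always normal, and to combine this with the explicit semidirect product decomposition. For the first group, I would recall the stated fact that $\PSL_3(\k)$ is simple (with the listed small exceptions) and contains an involution, hence is generated by involutions. Since $\PSL_3(\k) \subset \PGL_3(\k)$ and the subgroup generated by involutions of $\PGL_3(\k)$ is normal, it suffices to check that the quotient $\PGL_3(\k)/\PSL_3(\k)$ is generated by images of involutions. I would make this concrete: any class in the quotient is represented by a scalar adjustment, and I would exhibit an explicit involution in $\PGL_3(\k)$ mapping to a generator of the quotient (e.g.\ a permutation-type or sign-type involution whose determinant class is nontrivial), so that together $\PSL_3(\k)$ and this involution generate all of $\PGL_3(\k)$.

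For the Jonquières group I would use the decomposition $\Bir_\k(\P^2,\pi_\times) = \PGL_2(\k) \ltimes \PGL_2(\k(x))$ recorded in the statement. The key observation is that this whole group naturally sits inside $\Bir_\k(\P^2)$, and I want to show it lies in the \emph{ambient} involution-generated subgroup, which is the point of the phrase ``using the ambient birational group''. The factor $\PGL_2(\k)$ acts on the base $\P^1$ and the factor $\PGL_2(\k(x))$ acts fiberwise. I would first handle $\PSL_2(\k(x))$ and $\PSL_2(\k)$ by the same simplicity argument as above (noting $\PSL_2$ is simple once the field has more than $3$ elements, and $\k(x)$ always does). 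The remaining task is the quotients $\PGL_2 / \PSL_2$ in each factor, which are governed by the square classes of the respective fields.

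The crucial trick, and where the ambient group is essential, is that although a given element of $\PGL_2(\k(x))$ or $\PGL_2(\k)$ need not itself be an involution or a product of involutions \emph{within the Jonquières group}, it can be written as a product of genuine involutions of $\Bir_\k(\P^2)$ that individually leave the pencil of lines invariant. Concretely, I would exhibit explicit quadratic (or de Jonquières) involutions of the plane preserving $\pi_\times$: maps of the form $[x:y:z]\mapsto[x:z:y]$ on a fiber, or fiberwise Bertini-type inversions, whose products realize the classes outside $\PSL_2$. Because the involution-generated subgroup is normal and already contains $\PSL_3(\k) \supset \PSL_2(\k)$, conjugating these explicit Jonquières involutions by automorphisms lets me capture the full semidirect product.

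The main obstacle I anticipate is precisely handling the two non-simple quotients $\PGL_2(\k)/\PSL_2(\k)$ and $\PGL_2(\k(x))/\PSL_2(\k(x))$: over a field where not every element is a square these quotients are nontrivial, so I cannot rely on simplicity alone and must produce honest involutions hitting every square class. For the $\k(x)$ factor this requires an involution whose determinant, as a rational function, represents an arbitrary non-square class; finding a single plane involution preserving $\pi_\times$ that does this (rather than needing infinitely many) is the delicate computational point. I expect that a well-chosen fiberwise involution such as $(t,u)\mapsto(t,\lambda/u)$ for a parameter $\lambda$, reinterpreted as a birational involution of $\P^2$ respecting the fibration, will suffice, but verifying that these together with $\PSL_2(\k(x))$ and the already-available $\PGL_3(\k)$ generate the whole Jonquières group is the heart of the argument.
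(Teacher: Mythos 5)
There is a genuine gap in your treatment of $\PGL_3(\k)$. You propose to exhibit a linear involution in $\PGL_3(\k)$ ``whose determinant class is nontrivial'' so that it, together with $\PSL_3(\k)$, generates the whole group. No such involution exists: the quotient $\PGL_3(\k)/\PSL_3(\k)$ is isomorphic to $\k^*/(\k^*)^3$, a group of exponent $3$, while the image of any involution in that quotient has order dividing $2$, hence is trivial. (Concretely, if $A^2=\lambda I$ then $(\det A)^2=\lambda^3$ is a cube, and an element of a group of exponent $3$ whose square is trivial is itself trivial; a transposition has determinant $-1=(-1)^3$, a sign involution has determinant $1$.) This is exactly why the paper remarks that $\PGL_3(\k)$ is generated by its own involutions if and only if every element of $\k$ is a cube, and why the ambient Cremona group is needed already for the linear part: the paper writes $\PGL_3(\k)$ as generated by the involutions of $\PSL_3(\k)$ together with the dilatations $[x:y:z]\mapsto[ax:y:z]$, and factors each dilatation as a product of two \emph{birational} (non-linear) involutions, $(ax,y)=\bigl(\frac1x,y\bigr)\circ\bigl(\frac1{ax},y\bigr)$. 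Your plan must be repaired by importing this trick (which you do correctly anticipate for the fiberwise $\PGL_2(\k(x))$ factor) into the $\PGL_3$ part as well.

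A secondary gap: for the base factor $\PGL_2(\k)$ of the Jonquières group you invoke simplicity of $\PSL_2$, noting only that $\k(x)$ is infinite. When $\k=\FF_3$ the group $\PSL_2(\FF_3)\simeq A_4$ is \emph{not} generated by involutions, so your argument breaks there; the paper handles this case separately by embedding $\PSL_2(\FF_3)\subset\PSL_2(\FF_3(y))\subset\Bir_{\FF_3}(\P^2)$ and using that the larger group is generated by involutions. Apart from these two points your outline for the fiberwise factor (antidiagonal involutions $(x,y)\mapsto(x,\lambda/y)$ realizing arbitrary classes modulo squares) is sound and close in spirit to the paper's dilatation factorization.
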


\begin{proof}
The group $\PGL_3(\k)$ is generated by the involutions in $\PSL_3(\k)$ and by the subgroup of dilatations of the form $[x:y:z] \mapsto [ax:y:z]$, with $a \in \k^*$.
But in affine coordinates we have
\[
(ax,y) = \Bigl (\frac{1}{x}, y \Bigr) \circ \Bigl(\frac{1}{ax}, y\Bigr),
\]
so we see that any dilatation is a composition of two birational involutions.

Similarly, over any field $\k$ with $|\k| \neq 3$, the Jonquières group is generated by the involutions in $\PSL_2(\k) \ltimes \PSL_2(\k(x))$ and by dilatations of the form $(x,y) \mapsto (ax,y)$ and $(x,y) \rat (x,a(x)y)$, which we can write as a product of two birational involutions as above.

Finally we can handle the case of $\k = \FF_3$ by noticing that $\PSL_2(\FF_3) \subset \PSL_2(\FF_3(y)) \subset \Bir_{\FF_3}(\P^2)$, and using that $\PSL_2(\FF_3(y))$ is generated by involutions.
\end{proof}

\subsection{Study of \texorpdfstring{$\Bir_\k(\P^2/\pi)$}{Bir(P2/pi)} via quadratic forms}

In this section we show that in the context of Jonquières groups of type 4 or 2+2, $\Bir_\k(\P^2/\pi)$ is isomorphic to a special orthogonal group $\SO(\k(t)^3,q)$, where $q$ is a quadratic form on $\k(t)^3$ corresponding to the conic fibration.

\subsubsection{Quadratic forms and Cartan--Dieudonn\'e}


Here, after recalling basic facts about quadratic forms in arbitrary characteristic, we give a short proof of Cartan--Dieudonné's Theorem in the special case of anisotropic quadratic spaces.

Let $K$ be an arbitrary field, and consider the vector space $E=K^n$ for $n\geq 1$.
Given a symmetric (or skew-symmetric) bilinear form $b\colon E\times E \to K$ we set $E^\perp=\{x\in E\mid b(x,y)=0~\forall y\in E\}$.
We say that $b$ is \emph{non-degenerate} if $E^\perp=\{0\}$.

A quadratic form on $E$ is a homogeneous polynomial $q\in K[x_1,\ldots,x_n]_2$ of degree $2$.
To treat the cases of $\Char K\neq 2$ and $\Char K= 2$ simultaneously while keeping the classical notations in each case, we set \[\delta= \begin{cases}
  2 & \text{if $\Char K\neq 2$}\\
  1 & \text{if $\Char K = 2$.}
\end{cases}\]
We say that the symmetric bilinear form $b$ given by
\begin{align*}
  b(x,y) = \frac{1}{\delta}(q(x+y)-q(x)-q(y))
\end{align*}
is the \textit{polar form} of the quadratic form $q$.
Note that
\[b(x,x)=\frac{1}{\delta}(q(2x)-2q(x))=\frac{1}{\delta}(2q(x))=\begin{cases}
  q(x) & \text{if $\Char K\neq 2$}\\
  0 & \text{if $\Char K= 2$}.
\end{cases}\]
We call $(E,q)$ a \textit{quadratic space}, and say it is \emph{non-degenerate} if $b$ is.
A non-zero vector $x\in E$ is \emph{isotropic} if $q(x)= 0$ and \emph{anisotropic} otherwise.
Similarly a quadratic space $(E,q)$ is \emph{isotropic} if $E$ contains an isotropic vector, and \emph{anisotropic} otherwise.
We say that $q$ has \emph{defect $d\geq1$} if $d=\dim E^\perp$ and $E^\perp$ is anisotropic.
Defect $\geq 1$ occurs only in characteristic $2$ and can be thought of as a weaker form of non-degeneracy.

The quadratic spaces we are interested in will turn out to be non-degenerate or with defect $1$, and anisotropic (see Lemma~\ref{lem:RationalPointMeansIsotropic}).

\begin{example}\label{ex:quadraticform}
	Consider $q(x)=x_1^2+x_0x_2$ as a quadratic form on $E=K^3$, where $K$ is an arbitrary field.
	The polar form of $q$ is given by $b(x,y)=2 x_1y_1-\frac{1}{\delta}(x_0y_2+x_2y_0)$.
  If $\Char K\neq 2$,  $q$ is non-degenerate.
  If $\Char K=2$ we have $E^\perp = \{(0,a,0)\mid a\in K\}$. Since $q((0,a,0))\neq 0$ for $a\in K^*$, the quadratic space $(E,q)$ has defect $1$.
\end{example}

The \emph{orthogonal group} $\O(E,q)$ of the quadratic space $(E,q)$ is the group consisting of maps $\phi\in\GL(E)$ such that $q(\phi(x))=q(x)$ for all $x\in E$.
For an anisotropic vector $a\in E$ we define the map $\tau_a\colon E\to E$ by
  \[
    \tau_a(x) = x-\delta\frac{b(x,a)}{q(a)}a.
  \]
  Note that $\tau_a(a)=-a$, and $\tau_a(x)=x$ if and only if $x\in a^\perp$.
  When $(E,q)$ is \emph{not totally degenerate}, that is $b\neq0$, the kernel $a^\perp$ of the linear map $b(a,\cdot)$ has codimension 1 for $a\notin E^\perp$. In this case, we use the following terminology for $\tau_a$:
  \begin{itemize}
    \item If $\Char K\neq 2$, the linear map $\tau_a$ is the \emph{orthogonal reflection} along $a$, in particular $\tau_a\in\O(E,q)$ is an involution with determinant $-1$ \cite[Chapter 5]{grove}.
    \item If $\Char K=2$, assume that $a\notin E^\perp$. Then $\tau_a$ is the \emph{transvection} with fixed hyperplane $a^\perp$. In particular, $\tau_a\in\GL(E)$ is an involution with determinant $1$ \cite[Chapter 1]{grove}. Observe that $\tau_a\in\O(E,q)$.
    Indeed setting $\lambda=\frac{b(x,a)}{q(a)}$, we have $q(\tau_a(x))=q(x+\lambda a)=b(x,\lambda a)+\lambda^2q(a) + q(x)= \lambda (b(x,a)+\lambda q(a))+q(x)=q(x)$, because $\lambda q(a)=b(x,a)$.
    We will call $\tau_a$ an \emph{orthogonal transvection}.
  \end{itemize}

  Also note that a quadratic space $(E,q)$ of dimension $n$ is not totally degenerate as soon as $q$ is non-degenerate or with defect $<n$.

\begin{lemma}\label{lem:ReflectionExchangesPoints}
  Let $x,y$ be two anisotropic elements in the quadratic space $(E,q)$ such that $q(x)=q(y)$ and $x-y$ is anisotropic. Then $\tau_{x-y}(x)=y$.
\end{lemma}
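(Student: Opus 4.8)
The plan is to verify the claimed identity $\tau_{x-y}(x) = y$ by direct computation using the definition of the reflection/transvection $\tau_a$, applied to the anisotropic vector $a = x - y$. First I would recall that by definition
\[
\tau_{x-y}(x) = x - \delta \frac{b(x, x-y)}{q(x-y)}(x-y),
\]
so everything reduces to showing that the scalar coefficient $\delta \frac{b(x,x-y)}{q(x-y)}$ equals exactly $1$, since then $\tau_{x-y}(x) = x - (x-y) = y$, which is the desired conclusion.

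The key step is therefore to compute $q(x-y)$ and $b(x, x-y)$ and compare them. Expanding $q(x-y)$ via the polar form, I would use $q(x-y) = q(x) + q(-y) + \delta\, b(x,-y) = q(x) + q(y) - \delta\, b(x,y)$. Using the hypothesis $q(x) = q(y)$, this becomes $q(x-y) = 2q(x) - \delta\, b(x,y)$. On the other hand, by bilinearity of $b$, $b(x, x-y) = b(x,x) - b(x,y)$. Here I expect to need the characteristic-dependent value $b(x,x) = \frac{1}{\delta}(q(2x) - 2q(x)) = \frac{2}{\delta} q(x)$, which was computed in the excerpt just before the statement. Substituting, $b(x,x-y) = \frac{2}{\delta}q(x) - b(x,y)$, and hence $\delta\, b(x,x-y) = 2q(x) - \delta\, b(x,y) = q(x-y)$. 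This yields $\delta \frac{b(x,x-y)}{q(x-y)} = 1$, completing the proof.

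The main obstacle, such as it is, lies in handling both characteristics uniformly through the parameter $\delta$: in characteristic $2$ one has $b(x,x) = 0$ rather than $q(x)$, so I must be careful to use the formula $b(x,x) = \frac{2}{\delta}q(x)$ (which correctly specializes to $0$ when $\delta = 1$ and $2 = 0$) rather than the characteristic-$\neq 2$ shortcut $b(x,x) = q(x)$. The factor of $\delta$ appearing both in the definition of $\tau_a$ and implicitly in the polar-form expansion is precisely what makes the cancellation work in a characteristic-free way. The hypothesis that $x - y$ is anisotropic guarantees $q(x-y) \neq 0$, so the division is legitimate and $\tau_{x-y}$ is well defined; no further case analysis is needed.
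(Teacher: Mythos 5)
Your computation is correct and is essentially the paper's own proof: both arguments reduce to showing $\delta\, b(x,x-y)=q(x-y)$ by expanding $q(x-y)$ via the polar form and using $q(-y)=q(y)=q(x)$ together with $\delta\, b(x,x)=2q(x)$. The only difference is cosmetic (you pass through $2q(x)$ where the paper writes $\delta b(x,x)$), so there is nothing to add.
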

\begin{proof}
Using $q(x)=q(y) = q(-y)$ and $\delta b(x,x)=2q(x)$, in any characteristic we have
\begin{align*}
q(x-y) & = q(x)+q(-y) + \delta b(x,-y)\\
& = \delta b(x,x) -\delta b(x,y) \\
& = \delta b(x,x-y).
\end{align*}
Hence,
\[
\tau_{x-y}(x) = x - \frac{\delta b(x,x-y)}{q(x-y)}(x-y) = x - (x-y) = y.
\qedhere
\]
\end{proof}

The Cartan--Dieudonné's Theorem states that $\O(E,q)$ is generated by reflections, except when the underlying field has two elements, the dimension of $E$ is $4$ and $q$ is hyperbolic (see \cite[Theorem I.5.1]{Chevalley}).
When $(E,q)$ is anisotropic, there is an elementary proof:

\begin{lemma}[Anisotropic Cartan--Dieudonné]\label{lem:CartanDieudonne}
Let $(E,q)$ be an anisotropic quadratic space that is not totally degenerate.
Then every element $\phi$ of $\O(E,q)$ can be expressed as a product of $\mathrm{codim}(F)$ orthogonal reflections \parent{or transvections}, where $F=F_\phi\subset E$ is the space of fixed points of $\phi$.
\end{lemma}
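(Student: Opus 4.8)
The plan is to prove this by induction on the codimension of the fixed space $F_\phi$, using Lemma~\ref{lem:ReflectionExchangesPoints} as the engine that lets us decrease this codimension by one at each step. The base case is $\mathrm{codim}(F) = 0$, meaning $\phi$ fixes all of $E$, so $\phi = \id$ and is the empty product of reflections. For the inductive step, suppose $\phi \in \O(E,q)$ is not the identity. Since $\phi$ is not the identity there exists an anisotropic vector $x \in E$ with $\phi(x) \neq x$; here I would argue that such an $x$ exists because $E$ is anisotropic, so every nonzero vector is anisotropic, and $\phi$ being nontrivial moves some nonzero vector.

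The key step is to set $y = \phi(x)$ and consider the vector $x - y = x - \phi(x)$. We have $q(x) = q(\phi(x)) = q(y)$ since $\phi \in \O(E,q)$. Because $(E,q)$ is anisotropic and $x - y \neq 0$, the vector $x - y$ is automatically anisotropic, so Lemma~\ref{lem:ReflectionExchangesPoints} applies and yields $\tau_{x-y}(x) = y = \phi(x)$. Then $\tau_{x-y} \circ \phi$ fixes $x$ (since $\tau_{x-y}(\phi^{-1}(x))$—more precisely we check $\tau_{x-y}^{-1}\circ\phi$ or equivalently compose on the correct side so the composite fixes $x$). The crucial point is that the composite fixes a strictly larger space: it fixes $F_\phi$ together with $x$, so its fixed space strictly contains $F_\phi$, giving $\mathrm{codim}(F_{\tau_{x-y}\circ\phi}) \le \mathrm{codim}(F_\phi) - 1$. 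By induction $\tau_{x-y}\circ\phi$ is a product of at most $\mathrm{codim}(F_\phi) - 1$ reflections (or transvections), hence $\phi$ is a product of at most $\mathrm{codim}(F_\phi)$ of them.

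The delicate point I expect to need the most care is verifying that the new fixed space is genuinely larger, i.e.\ that the composite fixes all of $F_\phi$ in addition to the new vector. For this I would check that $\tau_{x-y}$ acts as the identity on $F_\phi$: indeed any $v \in F_\phi$ satisfies $\phi(v) = v$, and one computes $b(v, x-y) = b(v,x) - b(v,\phi(x)) = b(v,x) - b(\phi^{-1}(v), x)$, using that $\phi$ preserves $b$; since $\phi(v) = v$ (so also $\phi^{-1}(v) = v$), this is $b(v,x) - b(v,x) = 0$, whence $v \in (x-y)^\perp$ and $\tau_{x-y}(v) = v$. Combined with the observation that $\tau_{x-y}\circ\phi$ fixes $x$, this shows the fixed space strictly grows, which drives the induction.

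A secondary subtlety is that the statement should deliver exactly $\mathrm{codim}(F)$ factors rather than merely at most that many; but this follows from a standard lower-bound argument, since each reflection or transvection has a fixed hyperplane, so a product of $k$ of them fixes a space of codimension at most $k$, forcing the minimal number to equal $\mathrm{codim}(F)$. Throughout, the hypothesis that $(E,q)$ is \emph{not totally degenerate} is what guarantees the maps $\tau_a$ are honest reflections or transvections with hyperplane fixed loci (as set up before the lemma), and anisotropy is what makes $x - y$ automatically anisotropic so that Lemma~\ref{lem:ReflectionExchangesPoints} is always applicable without extra hypotheses.
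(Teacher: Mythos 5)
Your proposal is correct and follows essentially the same route as the paper: induction on $\mathrm{codim}(F_\phi)$, using $\tau_{x-\phi(x)}$ together with Lemma~\ref{lem:ReflectionExchangesPoints} to enlarge the fixed space, the same computation $b(v,x-\phi(x))=0$ for $v\in F_\phi$, and the same hyperplane-counting lower bound to get exactly $\mathrm{codim}(F)$ factors. The only point stated slightly loosely is why $x-\phi(x)\notin E^\perp$ (so that $\tau_{x-\phi(x)}$ is a genuine reflection or transvection); as in the paper, this follows at once because $\tau_{x-\phi(x)}$ moves $x$ and hence is not the identity.
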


\begin{proof}
First note that any product $\phi$ of $k$ orthogonal reflections \parent{or transvections} fixes a space of dimension at least $n-k$, namely the intersection of all the fixed hyperplanes from the involution. Hence, $\mathrm{codim}(F)\leq k$.

Let $\phi\in \O(E,q)$.
We proceed by induction on $\mathrm{codim}(F)$.
If $\mathrm{codim}(F)=0$, then $\phi$ is the identity.
Assume that $F$ has codimension $k\geq1$ in $E$ and let $v\in E\setminus F$,
so that $v-\phi(v)\neq 0$.
Since by assumption $E$ is anisotropic, applying Lemma~\ref{lem:ReflectionExchangesPoints} we find that $\tau_{v-\phi(v)}(v)=\phi(v)$.
In particular, $\tau_{v-\phi(v)}\neq\id$, so $v-\phi(v)\notin E^\perp$ and so the map is indeed an orthogonal reflection \parent{or transvection} since we assume $q$ to be not totally degenerate.

Let $x\in F$, that is, $\phi(x)=x$.
We compute
\[
b(x,v-\phi(v)) = b(x,v)-b(x,\phi(v))
= b(x,v)-b(\phi(x),\phi(v))= 0.
\]
Therefore, $\tau_{v-\phi(v)}(x)=x$, and so $\tau_{v-\phi(v)}$ fixes $F$.

This implies that $\phi'=\phi^{-1}\circ \tau_{v-\phi(v)}$ fixes both $F$ and $v$, hence it fixes a subspace containing $F$ and $v$, which has codimension at most $k-1$.
By the remark at the beginning of the proof, the fixed space of $\phi'$ has codimension exactly $k-1$, and hence by induction $\phi'$ is the composition of $k-1$ orthogonal reflections \parent{or transvections}, and so $\phi$ is the composition of $k=\mathrm{codim}(F)$ such involutions.
\end{proof}

\subsubsection{Similitudes and the special orthogonal group}


Let $(E,q)$ be a quadratic space over a field $K$.
A \emph{similitude} of $(E,q)$ is a map $f \in \GL(E)$ such that there exists $\lambda\in K^*$ with $q(f(x))=\lambda q(x)$ for all $x\in E$. The constant $\lambda$ is called the \emph{multiplier} of $f$.
We denote the group of similitudes by $\GO(E,q)\subset \GL(E)$.
The map $\GO(E,q)\to K^*$ given by $f\mapsto \lambda$ is a group homomorphism with kernel $\O(E,q)$.
We define $\PGO(E,q)=\GO(E,q)/K^*$ to be the image of $\GL(E)\to\PGL(E)$ restricted to $\GO(E,q)$, so we can write
\begin{align*}
  \PGO(E,q) &= \{[A]\in\PGL_n(K)\mid A\in \GO(E,q)\}\\
  &= \{[A]\in\PGL_n(K)\mid \exists \lambda\in K^*:q\circ A =\lambda q\}
\end{align*}
where $n$ is the dimension of $E$.

\begin{lemma}
	The following hold:
	\begin{enumerate}
		\item\label{det-b} Let $b$ be a \parent{skew-}symmetric non-degenerate bilinear form on $E$, and let $M\in\GL(E)$ such that
		$b(Mx,My)=b(x,y)$ for all $x,y\in E$. Then $\det M=\pm1$.
		\item\label{det-q} Let $(E,q)$ be a quadratic space that is non-degenerate or with defect $1$. Then $\det(M)=\pm1$ for all $M\in \O(E,q)$.
	\end{enumerate}
\end{lemma}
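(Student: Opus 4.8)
The plan is to reduce both statements to the matrix identity satisfied by an isometry. For part~\ref{det-b}, I would fix a basis of $E$ and let $B$ be the Gram matrix of $b$, so that $b(x,y)=x^{T}By$. The hypothesis $b(Mx,My)=b(x,y)$ for all $x,y$ then translates into $M^{T}BM=B$. Taking determinants gives $\det(M)^{2}\det(B)=\det(B)$, and since $b$ is non-degenerate we have $\det(B)\neq0$, whence $\det(M)^{2}=1$ and $\det(M)=\pm1$. Note that this argument never uses the (skew-)symmetry of $b$; it is recorded in that form only because that is the shape in which $b$ arises in our applications.

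For part~\ref{det-q}, the central observation is that every $M\in\O(E,q)$ automatically preserves the polar form $b$. Indeed, using $q\circ M=q$,
\[
b(Mx,My)=\tfrac{1}{\delta}\bigl(q(M(x+y))-q(Mx)-q(My)\bigr)=\tfrac{1}{\delta}\bigl(q(x+y)-q(x)-q(y)\bigr)=b(x,y),
\]
so $M$ is an isometry of $b$ in any characteristic. If $q$ is non-degenerate, then $b$ is non-degenerate by definition, and part~\ref{det-b} applies directly, giving $\det(M)=\pm1$.

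The remaining case is $q$ with defect $1$, which occurs only in characteristic $2$, and this is where the real work lies. Here $b$ is degenerate, with $E^\perp=Ke$ one-dimensional and spanned by an anisotropic vector $e$. Since $M$ preserves $b$ and is invertible, it preserves the radical $E^\perp$, so $Me=\lambda e$ for some $\lambda\in K^{*}$; evaluating $q(e)=q(Me)=\lambda^{2}q(e)$ and using $q(e)\neq0$ forces $\lambda^{2}=1$, hence $\det(M|_{E^\perp})=\lambda=\pm1$. On the quotient $E/E^\perp$ the form $b$ descends to a non-degenerate (skew-)symmetric form $\bar b$, and $M$ induces an isometry $\bar M$ of $\bar b$, so $\det\bar M=\pm1$ by part~\ref{det-b}. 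Choosing a basis of $E$ adapted to the inclusion $E^\perp\subset E$ makes $M$ block upper-triangular with diagonal blocks $M|_{E^\perp}$ and $\bar M$, whence $\det M=\lambda\cdot\det\bar M=\pm1$. I expect this defect-$1$ situation to be the main obstacle: because $b$ is no longer non-degenerate one cannot apply part~\ref{det-b} to $E$ itself, and the crux is to split off the one-dimensional radical, on which the isometry acts by $\pm1$, and reduce to the non-degenerate form induced on the quotient.
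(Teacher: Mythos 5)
Your proposal is correct and follows essentially the same route as the paper: the Gram-matrix identity $M^{T}BM=B$ for part~(1), and for the defect-$1$ case the same decomposition of $E$ into the one-dimensional radical (on which the isometry acts by a scalar $\lambda$ with $\lambda^{2}=1$) and the quotient $E/E^{\perp}$ carrying the induced non-degenerate form, to which part~(1) applies. The only cosmetic difference is that the paper records the sharper conclusion $\det M=1$ in characteristic $2$ (since $\pm1=1$ there), which your argument also yields.
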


\begin{proof}
	In \ref{det-b}, since $b$ is a non-degenerate bilinear form there exists $A\in\GL(E)$ such that $b(x,y)=x^tAy$ for all $x,y\in E$.
	The assumption on $M$ implies that $x^tAy=x^tM^tAMy$ for all $x,y\in E$. Therefore $A=M^tAM$. As $A$ is invertible this implies that $1=\det(M)^2$.

	For \ref{det-q} observe that if $M\in \O(E,q)$ then $b(Mx,My)=b(x,y)$ for all $x,y\in E$, where $b$ is the polar form of $q$.
	So if $q$ is non-degenerate, part \ref{det-b} implies the statement.
  Assume now that $q$ is with defect $1$, that is $E^\perp$ is $1$-dimensional and anisotropic. In particular, we are in characteristic $2$, and so we will show that $\det(M)=1$.

	First of all, if $M$ fixes a $1$-dimensional subspace $L$ on which $q$ does not vanish outside the origin, then it fixes $L$ pointwise:
	Let $x\in L$, $x\neq 0$ and let $\lambda\in K$ such that $M(x)=\lambda x$.
	Since $M\in \O(E,q)$, $q(x)=q(M(x))=q(\lambda x)=\lambda^2q(x)$, and as $q(x)\neq 0$ this implies that $\lambda=\pm1$. Being in characteristic $2$ implies that $M$ fixes the point.

	By extension of the basis we can write $E = E^\perp\oplus W$ for some $n-1$-dimensional subspace $W$.
	Writing $M$ in this basis we get a matrix $\left(\begin{smallmatrix} a & B\\ 0 & M'\end{smallmatrix}\right)$	with $a\in K$, $M'\in\GL_{n-1}(K)$.
	Note that $M$ fixes $E^\perp$.
	By the remark above this implies that $M$ fixes $E^\perp$ pointwise, therefore $a=1$.
	Next, we show that $\det(M')=1$. For this we consider $M'$ as an endomorphism on the quotient $\bar E = E/E^\perp$. Note that the polar bilinear form $b$ on $E$ induces a bilinear form $\bar b$ on $\bar E$ by $\bar b(\bar x,\bar y)=b(x,y)$ for $\bar x,\bar y\in\bar E$.
	Now, $\bar b$ is symmetric and non-degenerate, and for all $\bar x, \bar y$ we have $\bar b(M'\bar x,M'\bar y)=\bar b(\bar x,\bar y)$.
	By~\ref{det-b}, the determinant of $M'$ is $1$.
	Therefore, $\det(M)=a\det(M')=1$.
\end{proof}

We set \begin{align*}
  \SO(E,q) = \{f\in \O(E,q)\mid \det f=1\}.
\end{align*}

In odd dimensions there is the following relationship between similitudes and the orthogonal group:

\begin{proposition}[{\cite[Propositions 12.4 and 12.6]{BookOfInvolutions}}]\label{prop:PGOvsSO}
  Let $(E,q)$ be a quadratic space in odd dimension over an arbitrary field $K$. Assume that $q$ is non-degenerate or with defect $1$.
  Then
  \begin{align*}
    \GO(E,q) = \SO(E,q)\cdot K^* &\simeq \SO(E,q)\times K^*\\
    \PGO(E,q) &\simeq \SO(E,q).
  \end{align*}
\end{proposition}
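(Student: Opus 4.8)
The plan is to prove both isomorphisms by exploiting the fact that in odd dimension, scalar matrices serve as a canonical set of coset representatives for the multiplier homomorphism. First I would analyze the similitude multiplier map $\mu\colon \GO(E,q)\to K^*$ sending $f\mapsto\lambda$, whose kernel is exactly $\O(E,q)$ by the discussion preceding the statement. The key observation is that for a scalar matrix $c\cdot\id$ with $c\in K^*$, we have $q(c\,x)=c^2 q(x)$, so $c\cdot\id\in\GO(E,q)$ with multiplier $c^2$. This shows the image of $\mu$ contains all squares, but more usefully it lets me detect determinants: in dimension $n$ (odd), $\det(c\cdot\id)=c^n$.

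The crucial step is showing every similitude factors as (scalar)$\,\times\,$(special orthogonal element). Given $f\in\GO(E,q)$ with multiplier $\lambda$, I want a scalar $c$ so that $c^{-1}f\in\SO(E,q)$, i.e.\ $c^{-1}f$ preserves $q$ and has determinant $1$. Preserving $q$ forces $(c^{-1})^2\lambda=1$, so $\lambda=c^2$ must be a square; this is where oddness enters. By the determinant lemma proved just above, any element of $\O(E,q)$ has determinant $\pm1$, and one checks $\det f$ relates to $\lambda$ via $(\det f)^2=\lambda^{n}$ (computing $q\circ f=\lambda q$ at the level of Gram matrices gives $\det(f)^2\det A=\lambda^n\det A$, hence $\det(f)^2=\lambda^n$). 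Since $n$ is odd, $\lambda^n$ differs from a perfect square only by the factor $\lambda$, so $\lambda=(\det f)^2\lambda^{1-n}$ is forced to be a square in $K^*$. Writing $\lambda=c^2$, the element $g=c^{-1}f$ lies in $\O(E,q)$, and replacing $c$ by $-c$ if necessary (using oddness so that $\det(-\id)=-1$ flips the sign) arranges $\det g=1$, giving $g\in\SO(E,q)$ and $f=c\cdot g$.

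This factorization yields $\GO(E,q)=\SO(E,q)\cdot K^*$ at once. For the direct product decomposition I would verify that the intersection $\SO(E,q)\cap K^*$ is trivial: a scalar $c\cdot\id$ lies in $\SO$ iff $c^2=1$ and $c^n=1$, and since $n$ is odd these force $c=1$ (in characteristic $2$, $c^2=1$ already forces $c=1$). Since $K^*$ is central in $\GL(E)$, both factors are normal and commute, so the product is direct: $\GO(E,q)\simeq\SO(E,q)\times K^*$. Finally, passing to $\PGO(E,q)=\GO(E,q)/K^*$, the scalar factor is quotiented out, and the triviality of $\SO(E,q)\cap K^*$ shows the composite $\SO(E,q)\hookrightarrow\GO(E,q)\twoheadrightarrow\PGO(E,q)$ is an isomorphism.

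The main obstacle I expect is the bookkeeping in characteristic $2$, where $\det(-\id)=\det(\id)$ so the sign-adjustment trick via $-\id$ does nothing; there I must rely on the defect-$1$ determinant lemma, which guarantees $\det M=1$ for all $M\in\O(E,q)$ outright, so the factor $g=c^{-1}f$ is automatically in $\SO(E,q)$ with no sign correction needed. Thus the argument bifurcates cleanly on the characteristic, but since the paper cites \cite[Propositions 12.4 and 12.6]{BookOfInvolutions}, the rigorous verification of these square-class computations can be deferred to that reference rather than reproduced in full.
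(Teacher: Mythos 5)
The paper offers no proof of this proposition at all --- it is imported verbatim from \cite[Propositions 12.4 and 12.6]{BookOfInvolutions} --- so there is no in-paper argument to compare routes with; your attempt to supply a direct proof is therefore already doing more than the source text. The overall skeleton is right: the factorization $f=c\cdot g$ with $c^2=\lambda$ the multiplier, the sign adjustment $c\mapsto -c$ using $\det(-\id)=(-1)^n=-1$ in odd dimension, the triviality of $\SO(E,q)\cap K^*$, and the centrality of scalars together do yield all three isomorphisms in characteristic $\neq 2$.

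There is, however, a genuine gap exactly in the case you flag as delicate. In characteristic $2$ the polar form $b$ is alternating, and an alternating form on an odd-dimensional space is always degenerate; so in odd dimension the characteristic-$2$ case is \emph{always} the defect-$1$ case, and the Gram matrix $A$ satisfies $\det A=0$. Your identity $\det(f)^2\det A=\lambda^n\det A$ then reads $0=0$ and gives no information, so your only argument that the multiplier $\lambda$ is a square collapses precisely where you need it; the determinant lemma you invoke afterwards handles $\det(c^{-1}f)=1$ but presupposes that $c$ with $c^2=\lambda$ exists. The repair is short but must be said: since $b(fx,fy)=\lambda b(x,y)$ and $f$ is invertible, $f$ preserves the line $E^\perp$, hence acts on it as a scalar $a\in K^*$; choosing $0\neq e\in E^\perp$ with $q(e)\neq 0$ (anisotropy of $E^\perp$ is part of the defect-$1$ hypothesis), the relation $q(f(e))=a^2q(e)=\lambda q(e)$ forces $\lambda=a^2$. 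With that inserted, your argument goes through in both characteristics; deferring this step to the reference, as you propose, leaves the one point of the proof that actually uses the defect-$1$ hypothesis unproved.
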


\begin{lemma}\label{lem:SOGeneratedByInvolutions}
Let $(E,q)$ be an anisotropic quadratic space of odd dimension $n$ over an arbitrary field~$K$. Assume that $q$ is not totally degenerate.
Then each element of $\SO(E,q)$ can be written as the composition of at most $n$ involutions \parent{and in fact at most $n-1$ when $\Char K \neq 2$}.
\end{lemma}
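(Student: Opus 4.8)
The plan is to combine the Anisotropic Cartan--Dieudonné result (Lemma~\ref{lem:CartanDieudonne}) with the constraint imposed by the condition $\det = 1$ defining $\SO(E,q)$. By Lemma~\ref{lem:CartanDieudonne}, any $\phi \in \O(E,q)$ is a product of exactly $\mathrm{codim}(F_\phi) \le n$ orthogonal reflections or transvections. The subtlety is that these $\tau_a$ are not themselves involutions in $\SO(E,q)$: in characteristic $\neq 2$ a reflection has determinant $-1$, so only \emph{even} products of them lie in $\SO(E,q)$; and we want to express an element of $\SO$ using involutions \emph{of} $\SO$, not arbitrary reflections. So the first step is to understand what involutions are available inside $\SO(E,q)$, and the natural candidates are products of two commuting reflections, or in odd dimension the element $-\tau_a$ (the reflection composed with $-\id$), which has determinant $(-1)\cdot(-1)^{n}= 1$ when $n$ is odd.

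\medskip

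The key observation exploiting oddness of $n$ is that $-\id \in \O(E,q)$ always (since $q$ is even), and in odd dimension $\det(-\id) = (-1)^n = -1$, so $-\id \notin \SO(E,q)$; however, for any reflection $\tau_a$, the composite $-\tau_a$ has determinant $(-1)^{n}\cdot(-1) = 1$, hence lies in $\SO(E,q)$, and one checks it is an involution since $\tau_a$ and $-\id$ commute and both are involutions. Thus in characteristic $\neq 2$, given $\phi \in \SO(E,q)$ written as a product $\tau_{a_1}\cdots\tau_{a_k}$ of reflections, the determinant condition forces $k$ to be even, and I would pair them up as $\phi = (-\tau_{a_1})(-\tau_{a_2})\cdots$, inserting $(-\id)^2 = \id$ between consecutive pairs; each factor $-\tau_{a_i}$ is an involution in $\SO(E,q)$. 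Since $k = \mathrm{codim}(F_\phi) \le n$ and $k$ is even, we get at most $n$ involutions, and because $k$ even with $k \le n$ and $n$ odd forces $k \le n-1$, this already yields the sharper bound $n-1$ in the characteristic $\neq 2$ case.

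\medskip

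In characteristic $2$ the situation is cleaner: here $-\id = \id$, and the $\tau_a$ are orthogonal transvections which already have determinant $1$ and are themselves involutions lying in $\SO(E,q) = \O(E,q)$ (the determinant is automatically $1$ by the preceding lemma). So $\phi = \tau_{a_1}\cdots\tau_{a_k}$ is directly a product of $k = \mathrm{codim}(F_\phi) \le n$ involutions of $\SO(E,q)$, giving the bound $n$ without any parity improvement. I would present the proof by invoking Lemma~\ref{lem:CartanDieudonne} to get the factorization, then splitting into the two characteristic cases to exhibit the factors as genuine involutions of $\SO(E,q)$ and to read off the bounds.

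\medskip

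The main obstacle I anticipate is the bookkeeping around the determinant and the parity argument in characteristic $\neq 2$: one must verify carefully that the number $k$ of reflections is forced to be even (this follows from $\det \tau_{a_i} = -1$ and $\det \phi = 1$), and that replacing the even-length product of reflections by a product of the involutions $-\tau_{a_i}$ does not change the map, which is immediate since $(-\id)$ is central and the number of inserted sign factors is even. One should also confirm that each $-\tau_{a_i}$ genuinely lies in $\SO(E,q)$ and is an involution, which is routine given that $-\id \in \O(E,q)$ commutes with everything. The characteristic $2$ case requires no such care, since the transvections are already the right involutions; the only point to note there is that Lemma~\ref{lem:CartanDieudonne} permits transvections as the elementary factors, which is exactly what we use.
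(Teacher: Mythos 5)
Your proposal is correct and follows essentially the same route as the paper: apply the anisotropic Cartan--Dieudonn\'e lemma, handle characteristic $2$ by noting $\SO(E,q)=\O(E,q)$ with the transvections already being involutions of determinant $1$, and in characteristic $\neq 2$ use the determinant to force the number $k$ of reflections to be even (hence $k\le n-1$ since $n$ is odd) and replace each reflection $\tau_{a_i}$ by the involution $-\tau_{a_i}\in\SO(E,q)$. The paper packages this last step as the homomorphism $\eta\colon A\mapsto \pm A$, but that is exactly your pairwise insertion of $(-\id)^2$, so the two arguments coincide.
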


\begin{proof}
If $\Char K = 2$ then $\SO(E,q) = \O(E,q)$ and the result is Lemma \ref{lem:CartanDieudonne}.

So assume now that $\Char K \neq 2$ and let $A\in \SO(E,q)\subset \O(E,q)$.
Lemma~\ref{lem:CartanDieudonne} implies that $A$ is a composition of at most $n$ orthogonal reflections in $\O(E,q)$,
all of which have determinant $-1$.
  Hence there exist $B_1,\ldots, B_k\in \O(E,q)$ with determinant $-1$ and $k\leq n$ such that $A=B_1\cdots B_k$. Taking the determinant on both sides gives that $k$ has to be even, so $k\leq n-1$.
Since the dimension is odd, $\det(-A)=-\det(A)$ for $A\in\GL_n(K)$.
Use the surjective group homomorphism $\O(E,q)\to \SO(E,q)$ given by
\begin{align*}
\eta\colon A &\mapsto\begin{cases}
-A & \text{if } \det(A)=-1\\
A & \text{if } \det(A)=1,
\end{cases}
\end{align*}
  which sends $B_i$ onto involutions $\eta(B_i)\in \SO(E,q)$ whose composition is $\eta(A)=A$.
\end{proof}

\subsubsection{Conic fibrations and quadratic forms}

The following lemma shows in particular that defect $1$ is a useful notion in characteristic $2$, tightly related to the notion of ``strange curves'' \cite[Chapter IV, Section 3]{Hartshorne}:

\begin{lemma}\label{lem:qNonSing}
  Let $q\in K[x,y,z]$ be a homogeneous polynomial of degree $2$ over an arbitrary field $K$ \parent{not necessarily perfect}.
  Assume that $q$ is irreducible over $\alg K$ and consider the conic $C=V_{\P^2_K}(q)$.
  Then the quadratic space $(K^3,q)$ is non-degenerate if $\Char \k\neq 2$, and has defect $1$ if $\Char K = 2$.

  Moreover, if $\Char K =2$, the $1$-dimensional kernel $E^\perp$ corresponds to the point $p\in\P^2(K)$ where all tangent lines to $C$ meet.
\end{lemma}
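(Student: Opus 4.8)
The plan is to prove the two assertions of Lemma~\ref{lem:qNonSing} by direct computation with the general conic, treating the characteristic~$2$ case as the interesting one. Write $q = a x^2 + b y^2 + c z^2 + d xy + e xz + f yz$ with coefficients in $K$. The polar form $b(\cdot,\cdot)$ has Gram matrix obtained from the formula $b(u,v) = \frac{1}{\delta}(q(u+v) - q(u) - q(v))$; in characteristic $\neq 2$ this is the usual symmetric matrix $\frac12$ of the Hessian, which is nonsingular precisely because $q$ is irreducible (a singular symmetric matrix would force $q$ to be a square or a product of two linear forms, contradicting irreducibility over $\alg K$). So the first claim in odd characteristic is the classical fact that a smooth/irreducible conic gives a non-degenerate quadratic form, and I would dispatch it with this short argument.

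The substance is the characteristic~$2$ case. Here $b(x,x) = 0$ identically, and with $\delta = 1$ the Gram matrix of $b$ is alternating: the diagonal entries vanish and the off-diagonal part records $d, e, f$. First I would observe that in characteristic $2$ an alternating bilinear form on an odd-dimensional space is necessarily degenerate, since a nondegenerate alternating form forces the dimension to be even; hence $E^\perp \neq \{0\}$ automatically, and $\dim E^\perp$ is odd. I then want to rule out $\dim E^\perp = 3$ (which would mean $b \equiv 0$, i.e.\ $d = e = f = 0$, giving $q = ax^2 + by^2 + cz^2 = (\sqrt a\, x + \sqrt b\, y + \sqrt c\, z)^2$ over $\alg K$, contradicting irreducibility) and show $\dim E^\perp = 1$. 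Computing $E^\perp$ as the kernel of the $3\times 3$ alternating matrix with entries $d,e,f$, one finds the kernel is spanned by the single vector $(f, e, d)$ (up to the usual sign conventions, which are harmless in characteristic $2$), which is a one-dimensional space as soon as $(d,e,f) \neq (0,0,0)$. Finally I must check $E^\perp$ is anisotropic: evaluating $q$ on $(f,e,d)$ gives a nonzero value, for otherwise the radical vector would be an isotropic point lying on $C$ and the irreducibility of $q$ would again fail. This establishes defect~$1$.

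For the ``moreover'' statement I would compute the tangent line to $C$ at a smooth point. In characteristic $2$ the tangent direction at a point $p_0$ is governed by the partial derivatives of $q$, and the key algebraic identity is that the gradient $\nabla q = (\partial_x q, \partial_y q, \partial_z q)$, read as a linear map, sends every point to a fixed point independent of $p_0$ precisely because the quadratic part of the derivative washes out in characteristic $2$ (Euler's relation combined with $b(x,x)=0$). Concretely, $\partial_x q, \partial_y q, \partial_z q$ are linear forms whose common behaviour forces all tangent lines to pass through the single point corresponding to the radical vector $(f,e,d)$; I would identify this point with the generator of $E^\perp$ found above and verify it lies in $\P^2(K)$ since its coordinates are in $K$. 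This is the ``strange curve'' phenomenon, and writing out the tangent-line equations and intersecting them is the concrete check.

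The main obstacle will be organizing the characteristic~$2$ linear algebra cleanly without it degenerating into an unilluminating coefficient chase: the delicate points are (i) correctly reading off $E^\perp$ from the alternating Gram matrix and confirming it is exactly one-dimensional under the irreducibility hypothesis, and (ii) making the identification of $E^\perp$ with the common intersection point of the tangent lines precise rather than merely plausible. I expect the anisotropy of $E^\perp$ and the tangent-line computation to be where irreducibility over $\alg K$ is genuinely used, so I would be careful to invoke it at exactly those steps; everything else is forced by the parity obstruction to nondegenerate alternating forms in odd dimension.
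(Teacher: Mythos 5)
Your proposal is correct, and it reaches the same endpoint as the paper by a somewhat different organization. The paper first splits into the isotropic and anisotropic cases: when $(K^3,q)$ is isotropic it invokes the classification of conics with a rational point to reduce to the model $x_1^2-x_0x_2$ of Example \ref{ex:quadraticform}, and when it is anisotropic the non-degeneracy in characteristic $\neq 2$ is immediate from $q(x)=b(x,x)$, while in characteristic $2$ it does exactly your alternating-matrix computation (zero matrix forces $q$ to be a sum of squares, hence reducible over $\alg K$; otherwise the kernel is the line spanned by the vector you call $(f,e,d)$), with anisotropy of $E^\perp$ coming for free from anisotropy of the whole space. Your route avoids the case split entirely: in characteristic $\neq 2$ you argue that a singular Gram matrix makes $q$ a form in at most two variables, hence reducible over $\alg K$; and in characteristic $2$ you use the parity of the rank of an alternating form to pin $\dim E^\perp\in\{1,3\}$, exclude $3$ by irreducibility, and then must separately check that the kernel vector is anisotropic --- your justification (an isotropic radical vector is a singular point of the conic, forcing $q$ to split into two lines over $\alg K$) is valid and is the one extra step your uniform treatment costs you. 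The tangent-line computation in the ``moreover'' part is essentially the paper's. One phrase to tighten: the gradient $\nabla q(p_0)=Bp_0$ is not ``a fixed point independent of $p_0$''; what is true is that the tangent line $\{x:(Bp_0)^tx=0\}$ always contains the kernel vector $v$ of $B$, since $(Bp_0)^tv=p_0^tBv=0$. The concrete verification you propose at the end does establish exactly this, so the argument goes through.
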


\begin{proof}
Write $E=K^3$.
Note that $C$ contains a $K$-point if and only if there exists a non-zero element $x\in K^3$ such that $q(x)=0$, that is, if $(E,q)$ is isotropic.

If $(E,q)$ is isotropic then $C$ contains a rational point and hence there is a linear $K$-automorphism of $E$ sending $C$ onto the conic $V(x_1^2-x_0x_2)$ from Example~\ref{ex:quadraticform}. Hence, $q$ is non-degenerate if $\Char K\neq 2$ and with defect $1$ if $\Char K=2$.

Consider now the anisotropic case.
If $\Char K\neq 2$, having $q(x)=b(x,x)\neq0$ for all non-zero $x\in K^3$ directly implies that $q$ is non-degenerate.
Assume now $\Char K=2$.
Let $B = (b_{ij})_{1 \le i,j \le 3}$ be the matrix with $b_{ij} = b(e_i,e_j)$, which has the form $B = \left(\begin{smallmatrix}
    0 & a & b\\
    a & 0 & c\\
    b & c & 0
  \end{smallmatrix}\right)\in\GL_3(K)$.
Since we are in the anisotropic case it is enough to show that $\dim E^\perp=1$.
If $B$ is the zero matrix then $q$ is a sum of square monomials, hence $q$ is reducible over $\alg K$, a contradiction.
So the kernel $E^\perp=\{(cx,bx,ax)\in E\mid x\in K\}$ is $1$-dimensional.

Finally, note that in characteristic $2$ all $V(q)$ with the same polar form $b$ have the same tangent lines.
With coordinates as above, we can assume up to a permutation that $a\neq 0$ and so the tangent lines are given by $\mu x+ \lambda y+ \frac{\mu c+\lambda b}{a}z=0$ for $[\lambda:\mu]\in \P^1(\alg K)$. All of them go through the point $[c:b:a]$.
\end{proof}

The following lemma shows that any quadratic space $(\k(t)^3,q)$ corresponding to a Jonquières group $\Bir_\k(\P^2/\pi)$ of type 2+2 or 4 is anisotropic, and either is non-degenerate or has defect $1$.

\begin{lemma}\label{lem:RationalPointMeansIsotropic}
  Let $\{p_1,\ldots,p_4\}\subset\P^2(\bk)$ be a Galois-invariant set of four points such that no three are collinear and let $q_1,q_2\in\k[x,y,z]$ be two homogeneous polynomials of degree $2$ such that $C_1=V(q_1)$, $C_2=V(q_2)$ are two distinct conics intersecting at the four points.
 Let $q=q_1+tq_2\in\k[t,x,y,z]\subset\k(t)[x,y,z]$ and $C=V_{\P^2_{\k(t)}}(q)$.
  Then the quadratic space $(\k(t)^3,q)$ is non-degenerate \parent{$\Char \k\neq2$} or has defect $1$ \parent{$\Char \k =2$}, and the following are equivalent:
  \begin{enumerate}
    \item\label{item:RationalPointMeansIsotropic--4pts} $C_1\cap C_2$ contains a $\k$-point;
    \item\label{item:RationalPointMeansIsotropic--GeneralFiber} $C\subset\P^2_{\k(t)}$ contains a $\k(t)$-point;
    \item \label{item:RationalPointMeansIsotropic--QuadraticSpace} The quadratic space $(\k(t)^3,q)$ is isotropic.
  \end{enumerate}
\end{lemma}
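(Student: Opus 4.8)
The plan is to first establish the claim about (non-)degeneracy and defect, and then prove the equivalence of the three conditions by showing a cycle of implications. For the degeneracy statement, I observe that the polar form of $q = q_1 + tq_2$ over $\k(t)$ is $b = b_1 + tb_2$, where $b_i$ is the polar form of $q_i$. Since the four points $p_1,\dots,p_4$ with no three collinear impose that the pencil $\{q_1 + tq_2\}$ is a genuine pencil of smooth conics (the generic member is irreducible over $\alg{\k(t)}$, as a reducible member would be a pair of lines forcing three of the base points to be collinear), I can apply Lemma~\ref{lem:qNonSing} to the homogeneous degree-$2$ polynomial $q$ over the field $K = \k(t)$. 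This immediately gives non-degeneracy when $\Char \k \neq 2$ and defect $1$ when $\Char \k = 2$.

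For the equivalences, I would prove \ref{item:RationalPointMeansIsotropic--4pts} $\Rightarrow$ \ref{item:RationalPointMeansIsotropic--GeneralFiber} $\Rightarrow$ \ref{item:RationalPointMeansIsotropic--QuadraticSpace} $\Rightarrow$ \ref{item:RationalPointMeansIsotropic--4pts}. The implication \ref{item:RationalPointMeansIsotropic--GeneralFiber} $\Leftrightarrow$ \ref{item:RationalPointMeansIsotropic--QuadraticSpace} is essentially the definition: a $\k(t)$-point on $C = V(q)$ is exactly a nonzero $x \in \k(t)^3$ with $q(x) = 0$, i.e.\ an isotropic vector, just as in the first paragraph of the proof of Lemma~\ref{lem:qNonSing}. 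The easy direction \ref{item:RationalPointMeansIsotropic--4pts} $\Rightarrow$ \ref{item:RationalPointMeansIsotropic--GeneralFiber} goes as follows: a $\k$-point $p \in C_1 \cap C_2$ satisfies $q_1(p) = q_2(p) = 0$, hence $q(p) = q_1(p) + t q_2(p) = 0$, so $p$, viewed in $\P^2_{\k(t)}$, is a $\k(t)$-point (even a constant one) lying on $C$.

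The genuinely interesting implication is \ref{item:RationalPointMeansIsotropic--QuadraticSpace} $\Rightarrow$ \ref{item:RationalPointMeansIsotropic--4pts}, and I expect this to be the main obstacle. Suppose $C$ has a $\k(t)$-point; after clearing denominators I get a point $[P_0(t):P_1(t):P_2(t)]$ with $P_i \in \k[t]$ having no common factor, and $q_1(P) + t q_2(P) = 0$ identically in $t$. The idea is to specialize: if the $P_i$ were all constant then $p = [P_0:P_1:P_2]$ would be a $\k$-point with $q_1(p) = q_2(p) = 0$ (equating the $t^0$ and $t^1$ coefficients), giving a point of $C_1 \cap C_2$ defined over $\k$ and we are done. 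The difficulty is ruling out, or exploiting, the case where the $P_i$ genuinely depend on $t$: here one must show that a moving $\k(t)$-point on the conic bundle $C \to \Spec \k(t)$ forces the base locus $C_1 \cap C_2$ — which is the set of four points $\{p_1,\dots,p_4\}$, cut out over $\alg\k$ as the fixed locus common to all fibers — to contain a Galois-fixed point. Geometrically, a section of the conic bundle specializes at each $t = t_0$ to a point of the fiber, and tracking its behavior at the degenerate fibers (or using that the four base points are the only points lying on every member of the pencil) should pin down that an actual $\k$-rational base point exists; the hypothesis that no three of the $p_i$ are collinear guarantees the pencil is base-point-free away from these four points, so a rational section cannot pass through a varying smooth point without meeting the base locus rationally. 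I would make this precise by an argument comparing the arithmetic genus / intersection theory of the conic bundle, or more elementarily by the specialization-of-coefficients argument above combined with the fact that over $\alg\k$ the isotropic vector must lie in the span of the common zeros $\{p_i\}$, whose Galois structure then yields a rational point exactly when such a point already sits among the $p_i$.
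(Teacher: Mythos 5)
Your treatment of the non-degeneracy/defect claim (reduce to irreducibility of $q$ over $\alg{\k(t)}$ using that no three of the $p_i$ are collinear, then invoke Lemma~\ref{lem:qNonSing} over $K=\k(t)$), of the equivalence \ref{item:RationalPointMeansIsotropic--GeneralFiber} $\Leftrightarrow$ \ref{item:RationalPointMeansIsotropic--QuadraticSpace}, and of the easy implication \ref{item:RationalPointMeansIsotropic--4pts} $\Rightarrow$ \ref{item:RationalPointMeansIsotropic--GeneralFiber} all match the paper's proof. But the hard implication \ref{item:RationalPointMeansIsotropic--GeneralFiber} $\Rightarrow$ \ref{item:RationalPointMeansIsotropic--4pts} is not actually proved: you correctly isolate it as the obstacle, but none of the routes you sketch closes it. The specialization idea only handles the case where the section $[P_0(t):P_1(t):P_2(t)]$ is constant; for a genuinely moving section the assertion that ``the isotropic vector must lie in the span of the common zeros $\{p_i\}$'' is false (at a general value of $t$ the section is just some smooth point of the fibre $C_{t}$, with no relation to the base points), and the claim that a rational section must ``meet the base locus rationally'' is exactly what needs proof, not a consequence of base-point-freeness away from the $p_i$.

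The paper's argument for this direction is a short parity computation that your sketch does not contain. A $\k(t)$-point of $C$ gives a multisection $D$, defined over $\k$, of the fibration $X\to\A^1$ where $\rho\colon X\to\P^2$ is the blow-up of the four base points; the class of a fibre is $2L-(E_1+\cdots+E_4)$ (pulled back to a resolution of $D$). If no $p_i$ is $\k$-rational, the four points form one $4$-point or two $2$-points, so Galois-invariance of $D'=\rho(D)$ forces the multiplicities of $D'$ at conjugate components to be equal, say $m$ at $p_1,p_2$ and $m'$ at $p_3,p_4$. Then $D\cdot f = 2d-2m-2m'$ is even, so $D$ cannot be a section (which would require $D\cdot f=1$), a contradiction. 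This is the concrete mechanism you were reaching for with ``intersection theory of the conic bundle''; without it (or an equivalent argument) the proof of \ref{item:RationalPointMeansIsotropic--GeneralFiber} $\Rightarrow$ \ref{item:RationalPointMeansIsotropic--4pts} is incomplete.
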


\begin{proof}
  First, note that since no three of the four points are collinear, $q$ is irreducible over $\alg{\k(t)}$.
  By Lemma~\ref{lem:qNonSing}, $q$ is non-degenerate if $\Char K\neq 2$, and has defect $1$ if $\Char K=2$.

  The equivalence \ref{item:RationalPointMeansIsotropic--GeneralFiber} $\iff$ \ref{item:RationalPointMeansIsotropic--QuadraticSpace} is direct.
  Since $\k\subset\k(t)$, \ref{item:RationalPointMeansIsotropic--4pts} implies \ref{item:RationalPointMeansIsotropic--GeneralFiber}.
  We now show the converse direction.
  Assume that $C$ contains a $\k(t)$-point.
  Hence, $C$ is isomorphic to $\P^1_{\k(t)}$ over $\k(t)$.
  In particular,
  \[X=\{([x:y:z],t)\mid q(t,x,y,z)=0\}\subset\P^2\times\mathbb{A}^1\] contains sections that are defined over $\k$.
  Note that the projection $\rho\colon X\to\P^2$ is the blow-up at the four points $p_1,\ldots,p_4$.
  We assume now that $C_1\cap C_2$ contains no $\k$-point and we will show that $X$ contains no sections defined over $\k$, giving a contradiction.
  As none of the four points $p_1,\ldots,p_4\in\P^2(\bk)$ is a $\k$-point, they form either one $4$-point or two $2$-points (say $\{p_1,p_2\}$ and $\{p_3,p_4\}$).
  Let $D\subset X$ be a curve, defined over $\k$, and let $D'=\rho(D)\subset\P^2$.
  The essential remark is that all geometric components of a point have the same multiplicity on $D'$.
  Write $d$ for the degree of $D'$, and $m$ for the multiplicity of $D'$ at $p_1,p_2$, and $m'$ for the one at $p_3, p_4$. After resolving the singularities $x_1,\ldots,x_r$ of $D$ and writing $L$ for the strict transform of a general line in $\P^2$, and $E_i$ for the exceptional divisor of $p_i$ and $E_{x_i}$ for the one of $x_i$,
  we can write the intersection of $D$ with a general fiber $f$ of $X$ as
  \begin{align*}
    D\cdot f &= (dL-m(E_1+E_2)-m'(E_3+E_4)-\sum m_{x_i}(D)E_{x_i})(2L-(E_1+\ldots+E_4))\\
    & = 2d-2m-2m'.
  \end{align*}
  As $D\cdot f$ is even, $D$ is not a section.
\end{proof}

\begin{remark}\label{rem:qNonDeg}
	For $\Char \k=2$, we have seen in Lemma~\ref{lem:qNonSing} that the defect corresponds to the point where all the tangent lines to $C\subset \P^2_{\k(t)}$ meet. This in turn corresponds to the line in $\P^2_\k$ that is tangent to all conics that go through $p_1,\ldots,p_4$.
	This line played a crucial role in \cite{SchneiderF2}, where involutive generators of the groups $\Bir_{\F_2}(\P^2/\pi_{\F_4,\F_4})$ and $\Bir_{\F_2}(\P^2/\pi_{\F_{16}})$ were described explicitely.
\end{remark}

\begin{lemma}\label{lem:ProjectiveOrthogonal}
  Let $\{p_1,\ldots,p_4\}\subset\P^2(\bk)$ be a Galois-invariant set of four points such that no three are collinear and let $q_1,q_2\in\k[x,y,z]$ be two homogeneous polynomials of degree $2$ such that the pencil of conics through $p_1,\ldots,p_4$ is given by $\pi\colon\P^2\rat\P^1$, $\pi([x:y:z])=[q_1:q_2]$.
  Let $q=q_1+tq_2\in\k[t][x,y,z]\subset\k(t)[x,y,z]$, which we see as a quadratic form of the $3$-dimensional vector space $E = \k(t)^3$ over $\k(t)$.
  Then
  \[\Bir_\k(\P^2/\pi) \simeq \PGO(\k(t)^3,q).\]
\end{lemma}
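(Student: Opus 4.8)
The plan is to recognize both sides as the automorphism group of the generic fiber of $\pi$. Write $K=\k(t)$ for the function field of the base $\P^1$, so that the generic fiber of $\pi$ is the conic $C=V_{\P^2_K}(q)\subset\P^2_K$, with $q=q_1+tq_2$. Since the four points $p_1,\ldots,p_4$ are distinct with no three collinear, the base locus of the pencil is exactly $\{p_1,\ldots,p_4\}$ and its generic member is smooth; equivalently, as already noted in the proof of Lemma~\ref{lem:RationalPointMeansIsotropic}, $q$ is irreducible over $\alg K$, so $C$ is a smooth projective conic over $K$. This smoothness and irreducibility are what the whole argument rests on.

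First I would set up the dictionary between fiberwise birational maps and birational self-maps of the generic fiber. An element $f\in\Bir_\k(\P^2/\pi)$ is the same datum as a $\k$-automorphism of the field $\k(\P^2)$ fixing the subfield $\pi^*\k(\P^1)=K$ pointwise, since the condition $\pi\circ f=\pi$ says exactly that $f^*$ restricts to the identity on $K$. As $\k(\P^2)$ is the function field of the generic fiber, $\k(\P^2)=K(C)$, this gives a group isomorphism $\Bir_\k(\P^2/\pi)\simeq\Aut(K(C)/K)=\Bir_K(C)$ (with the usual care about the variance of $f\mapsto f^*$). Because $C$ is a smooth projective curve, every birational self-map of $C$ over $K$ is an automorphism, whence $\Bir_K(C)=\Aut_K(C)$.

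The heart of the proof is then the identification $\Aut_K(C)\simeq\PGO(K^3,q)$. One inclusion is immediate: any $[A]\in\PGO(K^3,q)$ satisfies $q\circ A=\lambda q$ for some $\lambda\in K^*$, hence preserves $C=V(q)$ and restricts to an element of $\Aut_K(C)$; this assignment is injective because $q$ is irreducible, so $[A]$ is determined by its restriction to $C$. For surjectivity I would use that $C$ is anticanonically embedded. By adjunction $\omega_C=(\omega_{\P^2_K}\otimes\Ol(C))|_C=\Ol_C(-1)$, so $\Ol_C(1)\simeq\omega_C^{-1}$ has degree $2$, and the restriction map $H^0(\P^2_K,\Ol(1))\to H^0(C,\Ol_C(1))$ is an isomorphism of $3$-dimensional $K$-vector spaces (the kernel $H^0(\Ol(-1))$ vanishes). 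Any $\varphi\in\Aut_K(C)$ preserves $\omega_C$, hence $\omega_C^{-1}\simeq\Ol_C(1)$, so it acts $K$-linearly on $H^0(C,\Ol_C(1))\simeq H^0(\P^2_K,\Ol(1))$; since the complete linear system $|\Ol_C(1)|$ recovers the given embedding $C\hookrightarrow\P^2_K$, the resulting class in $\PGL_3(K)$ extends $\varphi$ and preserves $C$, hence lies in $\PGO(K^3,q)$. Combining the two inclusions yields the stated isomorphism.

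The step I expect to be the main obstacle is precisely this surjectivity, i.e.\ checking that an abstract $K$-automorphism of the conic is induced by a projective linear map: this needs the anticanonical embedding argument above, and one must make sure it goes through over the non-closed field $K=\k(t)$ and in every characteristic (in particular in characteristic $2$, where $q$ may only have defect $1$). By contrast the field-theoretic identification in the first two steps is routine. Finally, the anisotropy and non-degeneracy/defect-$1$ properties of $(K^3,q)$ recorded in Lemma~\ref{lem:RationalPointMeansIsotropic} confirm that $\PGO(K^3,q)$ is the right object and, through Proposition~\ref{prop:PGOvsSO}, set up the later reduction of $\Bir_\k(\P^2/\pi)$ to the special orthogonal group $\SO(K^3,q)$.
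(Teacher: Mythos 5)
Your proof is correct and follows essentially the same route as the paper: both pass to the generic fiber $C=V(q)\subset\P^2_{\k(t)}$ and identify $\Bir_\k(\P^2/\pi)$ with the group of linear automorphisms of $\P^2_{\k(t)}$ preserving $C$, which is $\PGO(\k(t)^3,q)$. The only difference is that the paper obtains the key identification by citing \cite[Lemma 3.29]{SchneiderF2}, whereas you prove its content directly via the function-field dictionary and the anticanonical embedding of the smooth conic; your version is self-contained and, as you note, goes through uniformly in characteristic $2$.
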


\begin{proof}
  In Lemma~\ref{lem:RationalPointMeansIsotropic} we have observed that the quadratic space $(\k(t)^3,q)$ is non-degenerate or with defect $1$.
By construction the generic fiber $C$ of $\pi$ is given by the zero set of $q$ in $\P^2_{\k(t)}$.
Note that the splitting field $L$ of the four points $p_1,\ldots,p_4$ is a Galois extension of $\k$ over which the generic fiber $C$ is nonempty.
So \cite[Lemma 3.29]{SchneiderF2} yields
\[
    \Bir_\k(\P^2/\pi)  \simeq \Bir_{\k(\P^1)}(C) =\{f\in\Aut_{\k(\P^1)}(\P^2)\mid f(C)=C\}.
  \]
We have $f(C)= C$ if and only if $V(q\circ f)=f^{-1}(C)=C=V(q)$, so the above group is equal to
  \begin{align*}
  \{f\in\Aut_{\k(\P^1)}(\P^2)\mid \exists\lambda\in \k(\P^1)^*
  :q\circ f=\lambda q\}&= \PGO(\k(\P^1)^3,q).
  \end{align*}
  Since $\k(\P^1)\simeq\k(t)$, the statement follows.
\end{proof}

\begin{proposition}\label{prop:KernelGeneratedByInvo}
  The following hold:
  \begin{enumerate}
    \item Let $F/\k$ be a residue field of an irreducible polynomial of degree $4$. Then $\Bir_\k(\P^2/\pi_F)\simeq \SO(\k(t)^3,q_F)$.
    \item Let $L,L'$ be two field extensions of degree $2$ over $\k$. Then $\Bir_\k(\P^2/\pi_{L,L'})\simeq \SO(\k(t)^3,q_{L,L'})$.
  \end{enumerate}
  In particular, each element of these groups is a composition of at most three \parent{two if $\Char \k \neq 2$} involutions.
\end{proposition}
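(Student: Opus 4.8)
The plan is to obtain both isomorphisms, and the final involution count, by chaining together the three structural results established above — Lemma~\ref{lem:ProjectiveOrthogonal}, Proposition~\ref{prop:PGOvsSO}, and Lemma~\ref{lem:SOGeneratedByInvolutions} — once a single geometric input is verified, namely that the relevant quadratic space is anisotropic. I would treat both cases in parallel, since the only difference between type $4$ and type $2+2$ is whether the four base points of the pencil form a single $4$-point or a pair of $2$-points.

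First I would set up the quadratic form. For $\pi_F$ (resp. $\pi_{L,L'}$) I write $\pi = [q_1:q_2]$ as in Example~\ref{ex:SpecificFibrations} and put $q_F = q_1 + tq_2$ (resp. $q_{L,L'}$). In both cases the four base points are in general position — for type $4$ they lie on a smooth conic, and for type $2+2$ they split as two pairs on two distinct lines — so in particular no three are collinear, and the set is Galois-invariant. Thus Lemma~\ref{lem:ProjectiveOrthogonal} identifies $\Bir_\k(\P^2/\pi)$ with $\PGO(\k(t)^3,q)$, and the same lemma records that $(\k(t)^3,q)$ is non-degenerate when $\Char\k\neq2$ and of defect $1$ when $\Char\k=2$.

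The key step — the only place where the fact that we are dealing with a $4$-point or a pair of $2$-points (as opposed to four points in general) enters — is to show that $(\k(t)^3,q)$ is anisotropic. Here I would invoke the equivalence of Lemma~\ref{lem:RationalPointMeansIsotropic}: the space is isotropic if and only if $C_1\cap C_2$ contains a $\k$-point. But $C_1\cap C_2$ is exactly the set of four base points, which by hypothesis forms either a single $4$-point (residue field $F$ of degree $4$) or two $2$-points (residue fields $L,L'$ of degree $2$); in either case every geometric component has residue field of degree $>1$ over $\k$, so none is $\k$-rational. Hence $C_1\cap C_2$ has no $\k$-point and the quadratic space is anisotropic.

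With anisotropy in hand the conclusion is immediate. Since $\dim\k(t)^3=3$ is odd and $q$ is non-degenerate or of defect $1$, Proposition~\ref{prop:PGOvsSO} gives $\PGO(\k(t)^3,q)\simeq\SO(\k(t)^3,q)$, which combined with the first step yields the two asserted isomorphisms. For the final sentence I note that $q$ is not totally degenerate, its defect being $<3$ in both characteristics, so Lemma~\ref{lem:SOGeneratedByInvolutions} applies with $n=3$ and shows that every element of $\SO(\k(t)^3,q)$ is a product of at most $3$ involutions, and at most $2$ when $\Char\k\neq2$. I expect no genuine obstacle: the entire content sits in the anisotropy verification, which reduces to the elementary observation that a $4$-point and a pair of $2$-points contain no $\k$-rational point, with everything else being a direct application of the machinery already in place.
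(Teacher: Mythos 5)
Your proposal is correct and follows essentially the same route as the paper: verify anisotropy via Lemma~\ref{lem:RationalPointMeansIsotropic} (no base point is $\k$-rational), identify $\Bir_\k(\P^2/\pi)$ with $\PGO(\k(t)^3,q)\simeq\SO(\k(t)^3,q)$ via Lemma~\ref{lem:ProjectiveOrthogonal} and Proposition~\ref{prop:PGOvsSO}, and conclude with Lemma~\ref{lem:SOGeneratedByInvolutions}. No gaps.
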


\begin{proof}
  None of the four geometric points where $\pi_F$ respectively $\pi_{L,L'}$ are not defined is $\k$-rational. Therefore, Lemma~\ref{lem:RationalPointMeansIsotropic} implies that in both cases the associated quadratic space $(\k(t)^3,q)$ is anisotropic,
  and so by Lemma~\ref{lem:SOGeneratedByInvolutions} each element in the group $\SO(\k(t)^3,q)$ is the product of at most three (two if $\Char\k\neq 2$) involutions.
  With Lemma~\ref{lem:ProjectiveOrthogonal} and Proposition~\ref{prop:PGOvsSO} we get
  \begin{equation*}
    \Bir_\k(\P^2/\pi)  \simeq \PGO(\k(t)^3,q) \simeq \SO(\k(t)^3,q).\qedhere
  \end{equation*}
\end{proof}

\subsection{The image for Jonquières type 4}

Here we study the image $\Aut_\k^{\pi_F}(\P^1)$ from the exact sequence \eqref{exact_sequence} from page \pageref{exact_sequence}, for fibering type 4.
Denote by $\{p_1,\ldots,p_4\}\in\p^2(\bk)$ the base points of $\pi_F$, and by $L_{ij}$ the line through $p_i$ and $p_j$.
In the following lemma we show that the permutation of the three singular fibers by a map in $\Bir_\k(\p^2,\pi_F)$ is always achieved by an automorphism.

\begin{lemma}\label{lem:image_Jonq_4}
Let $F/\k$ be a residue field of an irreducible polynomial of degree $4$.
	Let $\varphi\in\Bir_\k(\p^2,\pi_F)$. Then there exists $\alpha\in\Aut_\k(\p^2)\cap\Bir_\k(\p^2,\pi_F)$ such that $\pi_F\circ\alpha =\pi_F\circ\varphi$.
\end{lemma}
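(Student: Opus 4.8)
The plan is to understand the group $\Aut_\k^{\pi_F}(\P^1)$ concretely and show that the permutation action of $\Bir_\k(\P^2,\pi_F)$ on the three singular fibers of $\pi_F$ is realized entirely by linear automorphisms. The three singular fibers of the pencil of conics through a general $4$-point $\{p_1,p_2,p_3,p_4\}$ are exactly the three degenerate conics (pairs of lines) $L_{12}\cup L_{34}$, $L_{13}\cup L_{24}$, $L_{14}\cup L_{23}$. These correspond to three points of $\P^1$ under $\pi_F$, and a map $\varphi \in \Bir_\k(\P^2,\pi_F)$ induces $\alpha_\varphi \in \Aut_\k(\P^1)$ which must permute these three marked points (as a set, respecting the Galois structure). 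So I first want to pin down which permutations of these three fibers can arise, and then produce a genuine linear automorphism of $\P^2$ achieving each one.

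**Key steps.** First I would make explicit, in the coordinates of Example~\ref{ex:SpecificFibrations}, where the three singular values of $\pi_F$ sit in $\P^1$ and how $\Gal(\bk/\k)$ acts on the three line-pairs: the Galois group permutes $\{p_1,\dots,p_4\}$ via a transitive subgroup $G \le S_4$, and the induced action on the three partitions into pairs is the standard map $S_4 \to S_3$ with kernel the Klein four-group $V$. Thus the Galois action on the three singular fibers factors through the image of $G$ in $S_3$, and $\alpha_\varphi$ must be a $\k$-automorphism of $\P^1$ commuting with this structure. Second, I would exhibit an explicit homomorphism from the symmetric group on the four base points into $\PGL_3(\k)$: any permutation of $\{p_1,\dots,p_4\}$ that is \emph{defined over} $\k$ (i.e.\ Galois-equivariant) is realized by some $\alpha \in \PGL_3(\k)$ fixing the $4$-point, by Lemma~\ref{l:residue} (with $p=q$ the same $4$-point, using \ref{it:property} $\Rightarrow$ \ref{it:aut}). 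Such an $\alpha$ preserves the pencil of conics through the $4$-point, hence lies in $\Aut_\k(\P^2)\cap\Bir_\k(\P^2,\pi_F)$, and its induced action on $\P^1$ realizes the corresponding element of $S_3$. Third, I would check that the permutation $\alpha_\varphi \in S_3$ induced by an arbitrary $\varphi$ is always one of the Galois-admissible permutations, so that a matching $\alpha$ from the previous step exists; then $\pi_F \circ (\alpha^{-1}\circ\varphi)$ fixes each singular fiber, and after adjusting I get $\pi_F\circ\alpha = \pi_F\circ\varphi$.

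**Main obstacle.** The delicate point is matching the two actions precisely: I must verify that \emph{every} automorphism $\alpha_\varphi \in \Aut_\k^{\pi_F}(\P^1)$ induced by a birational $\varphi$ is actually realized by a linear $\alpha$, rather than showing merely that linear maps give \emph{some} permutations. The cleanest route is to observe that $\alpha_\varphi$ is forced to permute the three singular values of $\pi_F$ (a birational self-map over the base must send singular fibers to singular fibers), that it preserves their Galois structure, and that $\Aut_\k(\P^1)$ acting on three Galois-structured points is rigid enough that $\alpha_\varphi$ is determined by its permutation in $S_3$ together with the fixed-point data. I then need Lemma~\ref{l:residue} to guarantee that each Galois-admissible permutation of the four base points \emph{is} linearly realizable, and a short argument that the three singular values, being a Galois orbit structure on $\P^1$ isomorphic to $\Spec$ of the cubic resolvent field, admit no extra $\k$-automorphisms beyond those coming from $S_4 \to S_3$. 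Verifying this last surjectivity-and-injectivity bookkeeping — that the image of $\PGL_3(\k)$-stabilizer in $\Aut_\k^{\pi_F}(\P^1)$ is all of $\Aut_\k^{\pi_F}(\P^1)$ — is where the real work lies; the explicit coordinate computation from Example~\ref{ex:SpecificFibrations} should make the three singular values and their Galois action transparent enough to close the gap.
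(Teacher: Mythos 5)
Your overall shape is right and partly matches the paper: both you and the authors reduce the problem to the permutation of the three singular fibers $L_{ij}\cup L_{kl}$ (using that an element of $\Aut_\k(\P^1)$ is determined by its values at three points), and both realize the needed permutation by an $\alpha\in\PGL_3(\k)$ obtained from a Galois-equivariant bijection of the four base points via Lemma~\ref{l:residue} (i.e.\ \cite[Lemma~6.11]{SchneiderRelations}). However, the step you yourself flag as ``where the real work lies'' is a genuine gap, and the route you sketch for closing it cannot work. You propose to bound the permutations arising from birational $\varphi$ by those commuting with the Galois structure on the three singular values in $\P^1$ (the cubic resolvent), and to match this against the permutations realized by Galois-equivariant relabellings of the $4$-point. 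These two bounds do not meet in general. For instance, if the Galois group acts on $\{p_1,\dots,p_4\}$ through the Klein four-group $V$, all three singular fibers are $\k$-rational points of $\P^1$, so your upper bound is all of $S_3$; but the centralizer of $V$ in $S_4$ is $V$ itself, which acts trivially on the three partitions, so your lower bound is trivial. Similar mismatches occur for $G=A_4$ and $G=D_8$. Rigidity of the Galois action on the three points of $\P^1$ is simply too weak a constraint on $\alpha_\varphi$.

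What actually closes the gap in the paper is a geometric argument on $\P^2$, not on $\P^1$: one considers the six image lines $F_{ij}=\varphi(L_{ij})$, which form a configuration of six of the lines $L_{kl}$ with four triple points, and \cite[Lemma~6.5(4)]{SchneiderRelations} shows there are exactly two possible configurations; in either one the four triple points $q_1,\dots,q_4$ lie in $\{p_1,\dots,p_4\}$ and satisfy $q_i^g=q_{\tau(i)}$ whenever $p_i^g=p_{\tau(i)}$, because $\varphi$ is defined over $\k$. This produces the Galois-equivariant assignment $p_i\mapsto q_i$ to which Lemma~\ref{l:residue} applies, and the resulting $\alpha$ induces the same permutation of the singular fibers as $\varphi$ (in the second configuration $\alpha$ sends $L_{ij}$ to $\varphi(L_{kl})$ rather than $\varphi(L_{ij})$, but this still preserves the line-\emph{pairs}). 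Without this configuration analysis you have no control on which elements of $S_3$ are actually induced by birational maps, so your argument does not go through as written.
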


\begin{proof}
	The blow-up of the $4$-point gives a link $\P^2\linkI{4} X_5$ with $X_5\in\Cl_5$.
  Hence, $\phi\in\Bir_\k(\P^2,\pi_F)$ factors through a birational map $X_5\rat X_5$ that preserves the fibration $X_5\to\P^1$.
  By \cite[Corollary 3.2]{SchneiderRelations}, $\phi$ sends the set of three singular fibers onto itself.
  As any element of $\Aut_\k(\P^1)$ is uniquely determined by its value at three points, the image of $\Bir_\k(\P^2,\pi)$ in $\Aut_\k(\P^1)$ is isomorphic to a subgroup of the symmetric group $S_3$ determined by the action on the projection of the three singular fibres $L_{ij}\cup L_{kl}$ for $\{i,j,k,l\}=\{1,2,3,4\}$.
  Hence, it is enough to find $\alpha\in\Aut_\k(\p^2)\cap \Bir_\k(\p^2,\pi_F)$ such that $\alpha(L_{ij}\cup L_{kl})=\varphi(L_{ij}\cup L_{kl})$ for $\{i,j,k,l\}=\{1,2,3,4\}$. We show that there is an $\alpha$ that satisfies $\alpha(L_{ij})=\varphi(L_{ij})$.

  We follow the proof of \cite[Proposition~6.12]{SchneiderRelations}.
  Write $F_{ij}=\varphi(L_{ij})$, and note that there are two configurations possible (see \cite[Lemma 6.5 (4)]{SchneiderRelations}):
  \begin{enumerate}
    \item\label{item:3linesintersect-a} For each $i$, the three lines $F_{ij}$ for $j\neq i$ intersect in one point, or
    \item\label{item:3linesintersect-b} for each $i$, the three lines $F_{jk}$ for $i\notin \{j,k\}$ intersect in one point.
  \end{enumerate}
  In both cases, write $q_i$ for the intersection point of the three lines, and note that $q_i\in\{p_1,\ldots,p_4\}$ since $\varphi(L_{ij})=L_{kl}$ for some $k,l$.
  Let $L$ be the Galois closure of $F/\k$.
  For $g\in\Gal(L/\k)$ let $\tau\in S_4$ be such that $p_i^g=p_{\tau(i)}$.
  In case \ref{item:3linesintersect-a}, $q_i^g$ is the intersection of the lines $\varphi(L_{ij})^g=\varphi(L_{\tau(i),\tau(j)})$ for $j\neq i$, which is $q_{\tau(i)}$.
  In case \ref{item:3linesintersect-b}, $q_i^g$ is the intersection of the three lines $\varphi(L_{jk})^g=\varphi(L_{\tau(j),\tau(k)})$ for $i \notin\{j,k\}$, which is again $q_{\tau(i)}$.
  Hence, in both cases there exists $\alpha\in\Aut_\k(\p^2)$ such that $\alpha(p_i)=q_i$ \cite[Lemma 6.11]{SchneiderRelations}.
  In particular, $\alpha$ preserves the pencil of conics through the $p_i$, which concludes the proof.
\end{proof}

\begin{corollary}
\label{c:Jonq_type_4}
Let $\pi_F\colon \P^2 \rat \P^1$ be a rational fibration of type $4$, associated with a residue field $F/\k$ of an irreducible polynomial of degree $4$ \parent{see Example \textup{\ref{ex:SpecificFibrations}}}.
Then $\Bir(\P^2, \pi_F)$ is contained in the subgroup of $\Bir_\k(\P^2)$ generated by involutions.
\end{corollary}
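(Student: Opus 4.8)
The plan is to exploit the exact sequence \eqref{exact_sequence} specialized to $\pi = \pi_F$, splitting an arbitrary $\varphi \in \Bir_\k(\P^2, \pi_F)$ into a piece that acts on the base $\P^1$ and a piece that fixes the fibration, and then handling each piece separately with results already established.

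First I would invoke Lemma~\ref{lem:image_Jonq_4}: given $\varphi \in \Bir_\k(\P^2, \pi_F)$, there exists an automorphism $\alpha \in \Aut_\k(\P^2) \cap \Bir_\k(\P^2, \pi_F)$ with $\pi_F \circ \alpha = \pi_F \circ \varphi$. In the language of \eqref{exact_sequence} this says precisely that $\alpha$ and $\varphi$ have the same image in $\Aut_\k^{\pi_F}(\P^1)$, so the two induced automorphisms of the base coincide. A short computation then gives $\pi_F \circ (\alpha^{-1} \circ \varphi) = \pi_F$, i.e. the element $\alpha^{-1} \circ \varphi$ lies in the kernel $\Bir_\k(\P^2/\pi_F)$, fixing the fibration fiberwise. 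This yields the decomposition $\varphi = \alpha \circ (\alpha^{-1} \circ \varphi)$.

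Next I would dispose of each factor. The kernel factor $\alpha^{-1} \circ \varphi$ lies in $\Bir_\k(\P^2/\pi_F) \simeq \SO(\k(t)^3, q_F)$ by Proposition~\ref{prop:KernelGeneratedByInvo}, which moreover asserts that every element of this group is a composition of at most three involutions of $\Bir_\k(\P^2)$; hence the kernel factor is in the subgroup generated by involutions. For the automorphism factor $\alpha \in \PGL_3(\k)$, Lemma~\ref{l:PGL3_and_jonq} states exactly that $\PGL_3(\k)$ is contained in the subgroup of $\Bir_\k(\P^2)$ generated by involutions. Composing the two factors shows that $\varphi$ itself lies in that subgroup, which is the assertion of the corollary.

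Since every nontrivial ingredient is already packaged in Lemmas~\ref{lem:image_Jonq_4} and~\ref{l:PGL3_and_jonq} and in Proposition~\ref{prop:KernelGeneratedByInvo}, I do not expect a serious obstacle; the proof is essentially a bookkeeping assembly of the exact sequence. The one point requiring care is to verify that $\alpha^{-1}\circ\varphi$ genuinely \emph{fixes} the fibration rather than merely preserving it, which is exactly where the strong conclusion $\pi_F \circ \alpha = \pi_F \circ \varphi$ of Lemma~\ref{lem:image_Jonq_4} (equality of the two maps to $\P^1$, not just equality of images up to some unaccounted base automorphism) is needed. One should also record that the involutions furnished by the special orthogonal description are genuine involutions of the ambient group $\Bir_\k(\P^2)$, so that all factors live in the same subgroup generated by involutions.
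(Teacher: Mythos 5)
Your proposal is correct and follows essentially the same route as the paper: the paper's proof likewise uses Lemma~\ref{lem:image_Jonq_4} together with the exact sequence \eqref{exact_sequence} to reduce to generators in $\Bir_\k(\P^2/\pi_F)$ and in $\Aut_\k(\P^2)\cap\Bir_\k(\P^2,\pi_F)$, then applies Proposition~\ref{prop:KernelGeneratedByInvo} to the kernel and Lemma~\ref{l:PGL3_and_jonq} to the automorphism factor. Your explicit verification that $\alpha^{-1}\circ\varphi$ fixes the fibration is a correct and welcome unpacking of what the paper leaves implicit.
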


\begin{proof}
By the exact sequence \eqref{exact_sequence} and Lemma \ref{lem:image_Jonq_4}, the group $\Bir(\P^2, \pi_F)$ is generated by $\Bir(\P^2/\pi_F)$ and by $\Aut_\k(\p^2)\cap \Bir_\k(\p^2,\pi_F)$.
By Proposition~\ref{prop:KernelGeneratedByInvo}, any element in $\Bir(\P^2/ \pi_F) \simeq \SO_3(\k(t)^3,q_F)$ is a product of at most three (two if $\Char \k\neq 2$) involutions.
On the other hand, by Lemma \ref{l:PGL3_and_jonq} the full automorphism group $\Aut_\k(\P^2) \simeq \PGL_3(\k)$ is contained in the subgroup of $\Bir_\k(\P^2)$ generated by involutions, which gives the result.
\end{proof}

\subsection{The image for Jonquières type 2+2}
\label{sec:2+2}

Let $L/\k$ and $L'/\k$ be two quadratic extensions and write $K=LL'$ for the composite field.
The main result of this section is Proposition \ref{prop:DescriptionOfJonq22}, which gives a family of involutions in $\Bir_\k(\p^2,\pi_{L,L'})$ that surject onto the image $\Aut_\k^{\pi_{L,L'}}(\p^1)$.

\subsubsection{The statement}

Denote by $\pi_1,\pi_2\colon\p^1\times\p^1 \to \P^1$ the projection onto the first respectively second $\p^1$. We call the fibers of $\pi_1$ \emph{vertical} and the fibers of $\pi_2$ \emph{horizontal} curves.
First, we will describe geometrically a birational map $\eps\colon\p^2\rat\p^1\times\p^1$ that is defined over $K=LL'$.
We will see that the fibration $\pi_{L,L'}$ corresponding to the pencil of conics through the two points of degree $2$ is sent onto the fibration that is given by $\pi_1\colon \p^1\times\p^1\to\p^1$ (Lemma~\ref{lem:GeometricDescriptionOfExorcist}).
Next, we will describe the map $\eps$ explicitly on affine charts and keep track of the induced Galois action $\eps\Gal(K/\k)\eps^{-1}$.

\begin{lemma}\label{lem:GeometricDescriptionOfExorcist}
  Let $\{p_1,\ldots,p_4\}\subset\p^2(\bk)$ be four points in general position and $\pi\colon\p^2\rat\p^1$ be the fibration corresponding to the pencil of conics through the four points.
  Denote by $K$ the splitting field of $p_1,\ldots,p_4$.
  Let $\eps\colon\p^2\rat \p^1\times\p^1$ be the composition $\gamma\circ\beta\circ\alpha$ of the following maps, all defined over $K$:
  \begin{itemize}
    \item $\alpha\colon\p^2\rat \p^1\times\p^1$ is the blow-up of $p_1,p_2$ followed by the contraction of the strict transform of the line through $p_1, p_2$,
    \item $\beta\colon\p^1\times\p^1\stackrel{_\sim}\to\p^1\times\p^1$ is an automorphism preserving the two rulings,
    \item $\gamma\colon\p^1\times\p^1\rat\p^1\times \p^1$ is the blow-up of $\beta \circ \alpha(p_3), \beta \circ \alpha(p_4)$, followed by the contraction of the strict transforms of the horizontal curves through $\beta \circ \alpha(p_3)$, respectively $\beta \circ \alpha(p_4)$.
  \end{itemize}
  Then, $\eps$ preserves the fibrations $\p^2/\pi$ and $\p^1\times\p^1/\pi_1$, that is, there exists $\varphi\in\Aut_\bk(\p^1)$ such that $\pi\circ\eps^{-1}=\varphi\circ\pi_1$.
\end{lemma}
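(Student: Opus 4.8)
The plan is to follow the pencil of conics $\pi$ through the three birational maps $\alpha$, $\beta$, $\gamma$ making up $\eps$, and to check that its strict transform lands on the ruling $\pi_1$ at the end. Since all three maps are birational, the pencil (a one-dimensional linear system with base points $p_1,\dots,p_4$) is determined by the class of its strict transform, so it suffices to track that class in each Picard group. The outcome I am aiming for is that a general member of $\pi$ is carried by $\eps$ to a general vertical fiber of $\pi_1$.

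First I would analyse $\alpha$. On $\mathrm{Bl}_{p_1,p_2}\P^2$, with hyperplane class $L$ and exceptional classes $E_1,E_2$, the strict transform of the conic pencil has class $2L-E_1-E_2=(L-E_1)+(L-E_2)$. Contracting the strict transform of the line through $p_1,p_2$ (class $L-E_1-E_2$) identifies the surface with an element of $\Dl_8$, that is $\P^1\times\P^1$, the two rulings being the images of the pencils of lines through $p_1$ and through $p_2$; under this identification $L-E_1$ and $L-E_2$ become the two rulings, so the conic pencil becomes the pencil of diagonals (curves of bidegree $(1,1)$) through $\alpha(p_3),\alpha(p_4)$. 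Here I would use that no three of $p_1,\dots,p_4$ are collinear to ensure that $p_3,p_4$ avoid the contracted line and land on honest points lying on distinct rulings of each family. The automorphism $\beta$ preserves the two rulings, hence sends diagonals to diagonals, so after $\beta$ we still have the diagonal pencil, now through $q_3:=\beta\alpha(p_3)$ and $q_4:=\beta\alpha(p_4)$.

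Then I would treat $\gamma$. Writing $v,h$ for the vertical ($\pi_1$) and horizontal ($\pi_2$) ruling classes, the diagonal pencil through $q_3,q_4$ has class $v+h$, so on $\mathrm{Bl}_{q_3,q_4}(\P^1\times\P^1)$ its strict transform is $v+h-E_3-E_4$, while the two horizontal curves contracted by $\gamma$ are $C_3=h-E_3$ and $C_4=h-E_4$. A direct intersection computation gives $C_3^2=C_4^2=-1$, $C_3\cdot C_4=0$, $(v+h-E_3-E_4)\cdot C_i=0$ and $(v+h-E_3-E_4)^2=0$, so the diagonal pencil descends to a base-point-free pencil of self-intersection $0$ on the target; since it meets the image of $h$ in a single point, it is exactly the ruling complementary to the preserved horizontal fibration, namely $\pi_1$ after the standard identification of the target with $\P^1\times\P^1$. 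The main delicate point is the bookkeeping in this step together with the verification that the general position of $p_1,\dots,p_4$ forces $q_3,q_4$ to lie on distinct horizontal (and distinct vertical) fibers, so that $C_3,C_4$ are two disjoint $(-1)$-curves and $\gamma$ is indeed the elementary modification described; once this holds, the identification of the final ruling as $\pi_1$ rather than $\pi_2$ is forced by the intersection numbers.

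Finally, combining the three steps shows that $\eps$ carries a general conic of $\pi$ to a general fiber of $\pi_1$. Consequently $\pi_1\circ\eps$ and $\pi$ share their general fibres, so the map induced on the base $\P^1$ is an automorphism $\psi\in\Aut_\bk(\P^1)$ with $\pi_1\circ\eps=\psi\circ\pi$; setting $\varphi=\psi^{-1}$ gives $\pi\circ\eps^{-1}=\varphi\circ\pi_1$, which is the assertion of the lemma.
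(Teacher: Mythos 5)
Your proposal is correct and follows the same route as the paper's proof, which simply observes that $\beta\circ\alpha$ carries the conic pencil to the pencil of diagonals through $\beta\alpha(p_3),\beta\alpha(p_4)$ and that $\gamma$ then carries these diagonals to the vertical ruling (with a figure in place of computation). Your divisor-class bookkeeping, including the check that general position keeps $p_3,p_4$ off the contracted line and puts $q_3,q_4$ on distinct horizontal and vertical fibers, just makes explicit what the paper leaves to the picture.
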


\begin{figure}[ht]
    \begin{center}
    \begin{tikzpicture}
      \coordinate (A) at (0,0);
      \draw[name path=conic1] (A) ellipse (2cm and 1cm);
      \draw[name path=conic2] (A) ellipse (1cm and 2cm);
      \path[name intersections={of=conic1 and conic2,by ={p1,p2,p3,p4}}];

      \node [draw] at (A) [circle through={(p1)}] {};
      \draw[blue,ultra thick,shorten <=-0.5cm, shorten >=-0.5cm] (p1) -- (p4);
      \draw[gray, shorten <=-0.5cm, shorten >=-0.5cm] (p2) -- (p1);
      \draw[gray, shorten <=-0.5cm, shorten >=-0.5cm] (p3) -- (p1);

      \fill[green] (p1) circle (3pt);
      \fill[green] (p4) circle (3pt);
      \fill[FireBrick] (p2) circle (3pt);
      \fill[FireBrick] (p3) circle (3pt);

		\node[] at (3,0) {$\stackrel{\beta\circ\alpha}{\rat}$};

      \coordinate (B1) at (4,1);
      \coordinate (B2) at (6,-1);
      \coordinate (B3) at (6.5,1.3);

      \draw[green,ultra thick,shorten <=-0.5cm,shorten >=-0.2cm] (B3) to (4,1.3);
      \draw[green,ultra thick,shorten <=-0.5cm,shorten >=-0.2cm] (B3) to (6.5,-1);
      \draw[gray,shorten <=-0.5cm,shorten >=-0.2cm] (B1) to (6.5,1);
      \draw[gray,shorten <=-0.5cm,shorten >=-0.5cm] (B2) to (4,-1);
      \draw[shorten <=-0.7cm,shorten >=-0.7cm] (B1) to (B2);
      \draw[shorten <=-0.7cm,shorten >=-0.7cm] (B1) to [bend left=30] (B2);
      \draw[shorten <=-0.7cm,shorten >=-0.7cm] (B1) to [bend right=30] (B2);
      \fill[FireBrick] (B1) circle (3pt);
      \fill[FireBrick] (B2) circle (3pt);
      \fill[blue] (B3) circle (3pt);

		\node[] at (8,0) {$\stackrel{\gamma}{\rat}$};

      \coordinate (C1) at (8.7,0.5);
      \coordinate (C2) at (10.2,-1);
      \coordinate (C3) at (10.5,1.3);
      \draw (9,-1.5) -- (9,1.5);
      \draw (9.5,-1.5) -- (9.5,1.5);
      \draw (10,-1.5) -- (10,1.5);
      \draw[green,ultra thick,shorten <=-0.2cm,shorten >=-0.5cm] (C3) to (9,1.3);
      \draw[FireBrick,ultra thick,shorten <=-0.2cm,shorten >=-0.5cm] (C1) -- (10.5,0.5);
      \draw[FireBrick,ultra thick,shorten <=-0.5cm,shorten >=-0.2cm] (C2) -- (8.5,-1);
      \draw[green,ultra thick] (C1) to[out=90,in=-160] (C3);
      \draw[green,ultra thick] (C1) to[out=-90,in=140] (C2);
      \fill[blue] (C3) circle (3pt);
      \fill[gray] (C1) circle (3pt);
      \fill[gray] (C2) circle (3pt);

    \end{tikzpicture}
    \caption{The birational map $\eps\colon\p^2\rat\p^1\times\p^1$ from Lemma~\ref{lem:GeometricDescriptionOfExorcist}.}
    \label{fig:alpha_gama}
  \end{center}
\end{figure}

\begin{proof}
  The birational map $\beta\circ\alpha$ sends the pencil of conics through $p_1,\ldots,p_4$ onto the pencil of diagonal curves going through $\beta\circ\alpha(p_3),\beta\circ\alpha(p_4)$, which are sent by $\gamma$ onto the pencil of vertical curves (see Figure \ref{fig:alpha_gama}).
\end{proof}

\begin{corollary}\label{cor:Jonq22BirVsPGL}
  Take the notation from Lemma~\textup{\ref{lem:GeometricDescriptionOfExorcist}}. Then
  \[
    \Bir_\k(\p^2,\pi)\simeq \left(\PGL_2(K)\ltimes \PGL_2(K(x))\right)^{\eps\Gal(K/\k)\eps^{-1}}.
  \]
\end{corollary}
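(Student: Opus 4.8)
The plan is to combine the $K$-isomorphism of fibrations from Lemma~\ref{lem:GeometricDescriptionOfExorcist} with Galois descent. Since $K$ is the splitting field of $p_1,\ldots,p_4$, over $K$ the four points are rational and $\eps\colon\P^2\rat\P^1\times\P^1$ is a genuine birational map carrying $\pi$ to $\pi_1$; hence conjugation $f\mapsto\eps\circ f\circ\eps^{-1}$ is a group isomorphism
\[
\Bir_K(\P^2,\pi)\ \simeq\ \Bir_K(\P^1\times\P^1,\pi_1).
\]
First I would identify the target: the generic fiber of $\pi_1$ is $\P^1_{K(x)}$ and its base is $\P^1_K$, so by the same splitting argument recalled for type $1$ after \eqref{exact_sequence} one has $\Bir_K(\P^1\times\P^1,\pi_1)=\PGL_2(K)\ltimes\PGL_2(K(x))$, with $\PGL_2(K)$ acting on the base and $\PGL_2(K(x))$ fiberwise.

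Next I would invoke descent: because $K/\k$ is Galois (a compositum of two quadratic extensions) and both $\pi$ and the four-point configuration are defined over $\k$, a birational self-map of the fibration descends to $\k$ precisely when it is fixed by the coefficientwise Galois action, so that $\Bir_\k(\P^2,\pi)=\Bir_{\bk}(\P^2,\pi)^{\Gal(\bk/\k)}$. The key simplification is that $\eps$ is already defined over $K$: for $g\in\Gal(\bk/K)$ one has $\eps^g=\eps$, so the conjugated action agrees with the plain Galois action and any fixed element is forced to be $K$-rational. This is why $K$, rather than $\bk$, already suffices in the statement.

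It then remains to transport the residual $\Gal(K/\k)$-action through $\eps$. Since $\eps$ is not defined over $\k$, conjugation by $\eps$ does not commute with Galois conjugation, and the natural action becomes the \emph{twisted} action
\[
g\star h = c_g\,h^g\,c_g^{-1},\qquad c_g:=\eps\circ(\eps^{-1})^{g},
\]
where $h^g$ denotes coefficientwise conjugation. Here $c_g$ preserves $\pi_1$ because both $\eps$ and $\eps^g$ send $\pi$ to $\pi_1$ (both fibrations being defined over $\k$), so $c_g\in\PGL_2(K)\ltimes\PGL_2(K(x))$, and $g\mapsto c_g$ is a $1$-cocycle, so that $\star$ is a genuine group action. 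Unwinding the definitions, $f\in\Bir_\k(\P^2,\pi)$ if and only if $h=\eps\circ f\circ\eps^{-1}$ is fixed by $\star$; by definition this set of fixed points is the twisted invariants $\bigl(\PGL_2(K)\ltimes\PGL_2(K(x))\bigr)^{\eps\Gal(K/\k)\eps^{-1}}$, which yields the claim.

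The hard part will be the bookkeeping of the twist rather than any geometry: one must check that $c_g=\eps\circ(\eps^{-1})^{g}$ genuinely preserves the ruling $\pi_1$ (so that the twisted action stays inside the Jonquières group) and that it satisfies the cocycle identity, so that $\star$ really defines an action whose fixed points are the image of $\Bir_\k(\P^2,\pi)$. Everything else—the identification of $\Bir_K(\P^1\times\P^1,\pi_1)$ as a semidirect product and the descent equality—is formal, the only nontrivial geometric input being Lemma~\ref{lem:GeometricDescriptionOfExorcist} itself.
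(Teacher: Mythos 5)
Your argument is correct and follows essentially the same route as the paper: conjugate by $\eps$ to identify $\Bir_K(\P^2,\pi)$ with $\Bir_K(\P^1\times\P^1,\pi_1)\simeq\PGL_2(K)\ltimes\PGL_2(K(x))$, then take invariants for the $\Gal(K/\k)$-action transported through $\eps$ (which is exactly what the paper's superscript $\eps\Gal(K/\k)\eps^{-1}$ denotes). Your explicit unpacking of the twisted action via the cocycle $c_g=\eps\circ(\eps^{-1})^{g}$ is just a more detailed spelling-out of what the paper leaves implicit in that notation (and is then carried out concretely in Lemmas \ref{lem:actiong} and \ref{lem:actionh}).
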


\begin{proof}
  Lemma~\ref{lem:GeometricDescriptionOfExorcist} implies that $\Bir_\k(\p^2,\pi)\simeq \Bir_K(\p^1\times\p^1,\pi_1)^{\eps\Gal(K/\k)\eps^{-1}}$. The statement follows since
  \[
    \Bir_K(\p^1\times\p^1,\pi_1)\simeq \PGL_2(K)\ltimes \PGL_2(K(x)),
  \]
  using that the elements $j\in\Bir_K(\p^1\times\p^1,\pi_1)$ are of the form $j(x,y)=\left(\frac{ax+b}{cx+d},\frac{A(x)y+B(x)}{C(x)y+D(x)}\right)$
  with $M=\left(\begin{smallmatrix}
    a & b\\ c& d
  \end{smallmatrix}\right)\in\PGL_2(K)$, $M'=\left(\begin{smallmatrix}
    A(x) & B(x)\\ C(x)& D(x)
  \end{smallmatrix}\right)\in\PGL_2(K(x))$.
\end{proof}

\begin{setup}\label{setup:22}
Let $L/\k$, $L'/\k$ be two quadratic extensions with $L=\k(\two)$, $L'=\k(\two')$ and write $K=LL'$ for the composite field.
\begin{enumerate}
  \item If $L=L'$ let $g$ be the generator of $\Gal(K/\k)=\Gal(L/\k)\simeq\Z/2\Z$.
  \item If $L\neq L'$ let $h,h'$ be the generators of the Galois groups $\Gal(L/\k)$, $\Gal(L'/\k)$ and set $g=hh'$. So $g,h$ generate $\Gal(K/\k)=\Gal(L/\k)\times \Gal(L'/\k)\simeq\Z/2\Z\times\Z/2\Z$.
\end{enumerate}
We write $x^g$ for the action of $g$ on $x\in\bk$, and $A^g$ for the action of $g$ on the coefficients of $A\in\bk(x)$; similarly for $h,h'$.

Note that $\two^g=\two^h$, $\two'^g=\two'^{h'}$ for $L\neq L'$.
As in Example~\ref{ex:SpecificFibrations}, we take $\pi_{L,L'}$ to be the fibration given by the pencil of conics through the $2$-points $p'=\{p_1,p_2\}=\{[\two':1:0],[\two'^{g}:1:0]\}$ and $p=\{p_3,p_4\}=\{[\two:0:1],[\two^g:0:1]\}$.
Hence $g\in\Gal(K/\k)$ acts on $\{p_1,\ldots,p_4\}$ as the permutation $(12)(34)$. In the case of $L\neq L'$, the elements $h,h'$ act respectively as the transpositions $(34)$ and $(12)$.


We shall use Lemma~\ref{lem:GeometricDescriptionOfExorcist} with the following maps $\alpha, \beta, \gamma$, which we express in the affine charts
\begin{align*}
  \A^2&\to\p^2 & \text{ and } && \A^2&\to\p^1\times\p^1\\
  (x,y)&\mapsto [x:y:1] &&& (x,y)&\mapsto ([x:1],[y:1]).
\end{align*}
We set:
\begin{align*}
\alpha(x,y) &= (x-\two' y, x- \two'^g y), &
\alpha^{-1}(x,y) &= \left( \frac{\two'^g x - \two' y}{\two'^g - \two'},\frac{x-y}{\two'^g - \two'}\right),\\
\beta(x,y) &= \left( \frac{x-\two}{-x + \two^{g}}, \frac{y-\two^{g}}{-y + \two}\right), &
\beta^{-1}(x,y) &= \left( \frac{\two^{g} x + \two}{x + 1}, \frac{\two y+\two^{g}}{y + 1}\right), \\
\gamma(x,y) &= (xy,y), &
\gamma^{-1}(x,y) &= \left(\frac{x}y,y\right).
\end{align*}
The map $\alpha\colon \P^2 \rat \P^1 \times \P^1$ blows-up the two components $p_1,p_2$ of $p'$, and contracts the line at infinity in $\P^2$.
Moreover, $\alpha$ sends the components $p_3=(\two,0),p_4 =(\two^g,0)$ of $p$ onto the points $(\two,\two), (\two^g,\two^g)$, which are then sent by $\beta \in \Aut(\P^1 \times \P^1)$ onto $([0:1],[1:0]), ([1:0],[0:1])$.
Observe that $\eps=\gamma\circ\beta\circ\alpha$ is defined over $K$.

In the computations we shall use the notation of the following diagram for the action of $g$; similarly for $h,h'$.
\begin{equation}
\label{diag:gg'}
\tag{Dia}
\begin{tikzcd}
\A^2 \ar[rrr,bend left,"\eps"] \ar[r,"\alpha"] \ar[d,"g"]& \A^2 \ar[r,"\beta"] \ar[d,"g_\alpha"] & \A^2\ar[r,"\gamma"] \ar[d,"g_\beta"] & \A^2 \ar[d,"g_\gamma"] \\
\A^2 \ar[r,"\alpha"] & \A^2 \ar[r,"\beta"] & \A^2\ar[r,"\gamma"] & \A^2.
\end{tikzcd}
\end{equation}
\end{setup}

In the rest of this section we prove:
\begin{proposition}\label{prop:DescriptionOfJonq22}
  Assume Set-Up~\textup{\ref{setup:22}}.
  Denote by $H_{L,L'}\subset\Bir_{K}(\A^2)$ the group generated by involutions of the form
 \begin{align*}
   (x,y) & \dashrightarrow \left(\frac{1}{\mu x} , \frac{1}{\lambda y}\right)
 \end{align*}
 where $\mu=\lambda\lambda^g$ and
 \begin{enumerate}
   \item $\lambda\in K^*=L^*$ if $L= L'$,
   \item $\lambda \in K^*$ such that $\lambda\lambda^h=1$ if $L\neq L'$.
 \end{enumerate}
 Then the group $\eps^{-1} H_{L,L'}\eps\subset\Bir_\k(\p^2,\pi_{L,L'})$ surjects onto $\Aut_\k^{\pi_{L,L'}}(\p^1)$.
\end{proposition}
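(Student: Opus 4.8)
The plan is to analyze the image $\Aut_\k^{\pi_{L,L'}}(\P^1)$ explicitly, translating the problem through the birational map $\eps$ so that we work on $\P^1 \times \P^1$ with the transported Galois action $\eps\Gal(K/\k)\eps^{-1}$. By Corollary~\ref{cor:Jonq22BirVsPGL}, an element of $\Bir_\k(\P^2,\pi_{L,L'})$ corresponds, after conjugation by $\eps$, to an element of $\PGL_2(K)\ltimes\PGL_2(K(x))$ that is invariant under this twisted action, and its image in $\Aut_\k^{\pi}(\P^1)$ is governed by the first factor $M \in \PGL_2(K)$ acting on the base coordinate $x$. So the real task is: determine which $M\in\PGL_2(K)$ arise from Galois-invariant elements (this describes $\Aut_\k^{\pi_{L,L'}}(\P^1)$ as a subgroup of $\PGL_2(K)$), and then check that the proposed involutions $(x,y)\dashrightarrow(\tfrac{1}{\mu x},\tfrac1{\lambda y})$, once transported by $\eps$ back to $\P^2$, realize enough base automorphisms $x\mapsto \tfrac{1}{\mu x}$ to surject onto that subgroup.

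First I would compute the twisted Galois action on the coordinates $(x,y)$ of $\P^1\times\P^1$. Using Diagram~\eqref{diag:gg'} and the explicit formulas for $\alpha,\beta,\gamma$ in Set-Up~\ref{setup:22}, I would track how $g$ (and $h,h'$ when $L\neq L'$) act after conjugation by $\eps$: each $g$ gives an automorphism $g_\gamma$ of the target $\A^2$, and composing the action on $\A^2$ through $\alpha,\beta,\gamma$ produces a concrete formula for the induced involution on $(x,y)$. I expect the transported action to be of the shape $x \mapsto c/x$ on the base (possibly with a twist of the coefficient field by $g$), reflecting that $g$ swaps the two base points $p_1,p_2$ and hence swaps the two fibers collapsed under $\alpha$. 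The invariance condition for an element $(M,M')$ then reads $M^{g_?}=M$ (coefficient-wise, after accounting for the coordinate inversion), which cuts out precisely the subgroup of $\PGL_2(K)$ describing $\Aut_\k^{\pi_{L,L'}}(\P^1)$.

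Next I would verify the involutions are well-defined and genuinely belong to $\eps^{-1}H_{L,L'}\eps\subset\Bir_\k(\P^2,\pi_{L,L'})$, i.e. that the given constraints on $\lambda$ (namely $\mu=\lambda\lambda^g$, together with $\lambda\in L^*$ if $L=L'$ or $\lambda\lambda^h=1$ if $L\neq L'$) are exactly what makes $(x,y)\dashrightarrow(\tfrac1{\mu x},\tfrac1{\lambda y})$ commute with the transported Galois action, hence descend to a map defined over $\k$. The point is that the $y$-coordinate transformation must be compatible with the conjugated action on the fiber, and the relation $\mu=\lambda\lambda^g$ is dictated by the cocycle condition coming from $g$. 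Having confirmed that each such map descends and is an involution (the map $x\mapsto 1/(\mu x)$, $y\mapsto 1/(\lambda y)$ squaring to the identity precisely when $\mu\mu=1$ and $\lambda\lambda=1$ — here one must be careful and check that involutivity in $\Bir_\k$ is what is needed, or rather that composition yields the required base automorphisms), I would compute the base automorphism it induces: $x\mapsto 1/(\mu x)$.

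Finally, the surjectivity step is the crux. I would identify $\Aut_\k^{\pi_{L,L'}}(\P^1)$ explicitly as a concrete subgroup of $\PGL_2(K)$ (a dihedral-type or torus-normalizer subgroup, fixed by the twisted action) and show that the base involutions $x\mapsto 1/(\mu x)$, as $\mu=\lambda\lambda^g$ ranges over the allowed values, generate all of it. The main obstacle I anticipate is precisely this surjectivity: one must show that the set of achievable multipliers $\{\lambda\lambda^g\}$ (a norm-type image under the constraints on $\lambda$) is large enough, and that the resulting inversions $x\mapsto 1/(\mu x)$, possibly composed with each other to give translations/scalings via products of two involutions, exhaust the target group. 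This is where the ``combinatorial miracle'' the introduction warns about is likely to appear, and it will require a careful case distinction between $L=L'$ and $L\neq L'$, using the structure of the norm map $\lambda\mapsto\lambda\lambda^g$ on $K^*$ together with the Hilbert 90 constraint $\lambda\lambda^h=1$ in the second case to pin down the image.
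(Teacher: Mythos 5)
Your first two steps (computing the transported Galois actions $g_\gamma$, $h_\gamma$ and checking that the stated conditions on $\lambda,\mu$ make the maps $(x,y)\dashrightarrow(1/(\mu x),1/(\lambda y))$ commute with them, hence descend to $\k$) match the paper's Lemmas \ref{lem:actiong} and \ref{lem:actionh} and the opening of its proof. The gap is in your surjectivity step. You propose to (a) identify $\Aut_\k^{\pi_{L,L'}}(\P^1)$ as an explicit subgroup of $\PGL_2(K)$ cut out by a Galois condition on the first factor $M$ alone, and then (b) show that the multipliers $\mu=\lambda\lambda^g$ realizable from the allowed $\lambda$ cover it. Neither part works as stated. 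For (a): the invariance condition (Lemma \ref{lem:22ConditionsGaloisInvarianceg}, Equation \eqref{eq:gKx}) couples $M$ with $M'\in\PGL_2(K(x))$, so whether a given $M$ lies in the image depends on the existence of a compatible $M'$; for instance, the fact that $M$ must be diagonal or antidiagonal (Lemma \ref{lem:22KSomeObservations}\ref{item:22KSomeObservations--5}) is extracted from a degree count on the entries of $M'$, and for an antidiagonal $M=\smallmat{0&1\\a&0}$ membership in the image requires the existence of $P\in K(x)$ with $P(x)P^g(x)=a$ --- a norm condition invisible from $M$ alone. For (b): the image of the norm-type map $\lambda\mapsto\lambda\lambda^g$ on the constrained $\lambda$'s is field-dependent and in general not all of the $g$-fixed elements, so ``large enough'' has no field-independent meaning and the comparison target is wrong anyway.

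The paper sidesteps both problems with a specialization trick that your plan is missing. By Corollary \ref{c:AlwaysAntidiagOrDiag} every invariant element can be replaced by one with diagonal or antidiagonal first factor, and diagonal invariant elements are products of two antidiagonal ones (Lemma \ref{lem:22KSomeObservations}); then Corollary \ref{cor:ExplicitEquationsAntidiag} shows that for an invariant antidiagonal pair $\left(\smallmat{0&1\\a&0},\smallmat{0&1\\P(x)&0}\right)$ one may evaluate $P$ at any $\mu\in\k^*$ and the constant $\lambda=P(\mu)$ automatically satisfies $\lambda\lambda^g=a$ and, when $L\neq L'$, $\lambda\lambda^h=1$. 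In other words one never computes the image or a norm group: the very element witnessing that $a$ occurs in $\Aut_\k^{\pi_{L,L'}}(\P^1)$ hands you the $\lambda$ needed to realize $a$ by a generator of $H_{L,L'}$. Without this device (or an equivalent one) your argument stalls exactly at the point you yourself flag as the crux.
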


\subsubsection{The induced Galois action}

\begin{lemma}\label{lem:actiong}
  Assume Set-Up~\textup{\ref{setup:22}}. Then the induced Galois action $\eps\circ g\circ \eps^{-1}$ is given by
  \begin{align*}
 g_{\gamma}(x,y) & = \Bigl(x^g,\frac{x^g}{y^g}\Bigr).
\end{align*}
\end{lemma}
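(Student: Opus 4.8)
The plan is to compute $g_\gamma=\eps\circ g\circ\eps^{-1}$ by propagating the Galois action through the factorization $\eps=\gamma\circ\beta\circ\alpha$ recorded in the diagram~\eqref{diag:gg'}, that is, by computing in turn
\[
g_\alpha=\alpha\circ g\circ\alpha^{-1},\qquad g_\beta=\beta\circ g_\alpha\circ\beta^{-1},\qquad g_\gamma=\gamma\circ g_\beta\circ\gamma^{-1}=\eps\circ g\circ\eps^{-1}.
\]
Since the Galois action is semilinear, I would evaluate each of these maps at a generic point $(x,y)$ by pulling back, then conjugating by $g$ \emph{both} the $K$-coefficients and the coordinates (so $x\mapsto x^g$, $y\mapsto y^g$, $\two\mapsto\two^g$, $\two'\mapsto\two'^g$), and finally pushing forward. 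The only arithmetic input is that $g$ restricts to an order-$2$ automorphism of each of $L=\k(\two)$ and $L'=\k(\two')$, so that $g^2=\id$ on $\two,\two'$ and conjugation simply interchanges $\two\leftrightarrow\two^g$ and $\two'\leftrightarrow\two'^g$ throughout the formulas of Set-Up~\ref{setup:22}.

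First I would compute $g_\alpha$. Starting from $\alpha^{-1}(x,y)=\bigl(\tfrac{\two'^g x-\two' y}{\two'^g-\two'},\,\tfrac{x-y}{\two'^g-\two'}\bigr)$, conjugation by $g$ interchanges $\two'\leftrightarrow\two'^g$ (reversing the sign of the denominator $\two'^g-\two'$) and replaces $x,y$ by $x^g,y^g$; feeding this back into $\alpha(x,y)=(x-\two' y,\,x-\two'^g y)$, all dependence on $\two'$ cancels and the two components collapse to $y^g$ and $x^g$. Thus $g_\alpha(x,y)=(y^g,x^g)$. This is the expected outcome: $g$ exchanges the blown-up components $p_1,p_2$, hence exchanges the two rulings of $\p^1\times\p^1$, which is exactly a coordinate swap carried along with the coordinatewise conjugation.

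Next I would push $g_\alpha$ through $\beta$. With $\beta^{-1}(x,y)=\bigl(\tfrac{\two^g x+\two}{x+1},\,\tfrac{\two y+\two^g}{y+1}\bigr)$, applying the swap-and-conjugate rule $g_\alpha(a,b)=(b^g,a^g)$ and then $\beta(x,y)=\bigl(\tfrac{x-\two}{-x+\two^g},\,\tfrac{y-\two^g}{-y+\two}\bigr)$ produces the same kind of cancellation (now driven by $\two\leftrightarrow\two^g$), so that $\beta$ intertwines the swap with itself and $g_\beta(x,y)=(y^g,x^g)$ is \emph{unchanged}. Finally I would conjugate through $\gamma(x,y)=(xy,y)$, using $\gamma^{-1}(x,y)=(x/y,y)$: applying the swap-and-conjugate rule to $(x/y,y)$ gives $\bigl(y^g,\,\tfrac{x^g}{y^g}\bigr)$, and then
\[
\gamma\Bigl(y^g,\tfrac{x^g}{y^g}\Bigr)=\Bigl(y^g\cdot\tfrac{x^g}{y^g},\ \tfrac{x^g}{y^g}\Bigr)=\Bigl(x^g,\tfrac{x^g}{y^g}\Bigr),
\]
which is precisely the claimed formula. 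The main obstacle here is not conceptual but bookkeeping: one must remember to conjugate the \emph{variables} along with the coefficients and to use $g^2=\id$ on $\two,\two'$ consistently. Once this convention is fixed, the computation is a chain of cancellations, each of which collapses a two-line expression to a single coordinate.
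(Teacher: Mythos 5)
Your proposal is correct and follows essentially the same route as the paper: compute $g_\alpha=\alpha g\alpha^{-1}=(y^g,x^g)$, observe that this form is preserved under conjugation by $\beta$, and then conjugate by $\gamma$ to obtain $(x^g,x^g/y^g)$. The intermediate formulas and cancellations match the paper's computation exactly.
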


\begin{proof}
We compute step by step the maps on Diagram \eqref{diag:gg'}. First:
\[
g_\alpha(x,y) = \alpha g \alpha^{-1}(x,y)
    = \alpha \left(\frac{\two' x^g - \two'^g y^g}{\two' - \two'^g},\frac{x^g-y^g}{\two' - \two'^g}\right) = (y^g,x^g).
\]
We observe that $(y^g,x^g)$ commutes with $\beta$, so
$g_\beta(x,y) = \beta g_\alpha \beta^{-1}(x,y) = (y^g,x^g)$.
Finally
\[
g_\gamma(x,y) = \gamma g_\beta \gamma^{-1}(x,y) = \gamma g_\beta\Bigl(\frac{x}{y},y \Bigr)
= \gamma \Bigl(y^g,\frac{x^g}{y^g}\Bigr) = \Bigl(x^g,\frac{x^g}{y^g}\Bigr).\qedhere
\]
\end{proof}

\begin{lemma}\label{lem:actionh}
  Assume Set-Up~\textup{\ref{setup:22}} with $L\neq L'$. Then the induced Galois action $\eps\circ h\circ \eps^{-1}$ is given by
  \begin{align*}
 h_{\gamma}(x,y) & = \left(\frac{1}{x^{h}}, \frac1{y^{h}} \right).
\end{align*}
\end{lemma}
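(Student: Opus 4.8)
The plan is to compute $h_\gamma = \eps \circ h \circ \eps^{-1}$ by propagating $h$ through Diagram~\eqref{diag:gg'} exactly as in the proof of Lemma~\ref{lem:actiong}, i.e.\ to compute in turn $h_\alpha = \alpha\, h\, \alpha^{-1}$, then $h_\beta = \beta\, h_\alpha\, \beta^{-1}$, and finally $h_\gamma = \gamma\, h_\beta\, \gamma^{-1}$. Throughout, the only data to keep track of is the induced action of $h$ on the constants appearing in $\alpha,\beta,\gamma$: since $h$ generates $\Gal(L/\k)$ and acts trivially on $L'$, it fixes $\two'$ and $\two'^g$, while on $L$ it swaps $\two \leftrightarrow \two^g$ (because $\two^g=\two^h$ and $h^2=\id$).

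First I would treat $\alpha$. As $\alpha$ and $\alpha^{-1}$ have coefficients in $L'=\k(\two')$, which is fixed pointwise by $h$, the map $h$ commutes with $\alpha$, and so $h_\alpha$ is simply the untwisted Galois action $h_\alpha(x,y)=(x^h,y^h)$. This is the analogue of, but simpler than, the computation $g_\alpha(x,y)=(y^g,x^g)$ in Lemma~\ref{lem:actiong}, the difference being that $g$ does not fix $L'$ whereas $h$ does.

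Next comes the one genuine computation, $h_\beta=\beta\, h_\alpha\, \beta^{-1}$. Here $h$ swaps the two constants $\two,\two^g$ that define $\beta$, and this swap is precisely the symmetry exchanging the two coordinate expressions of $\beta$. Substituting $\beta^{-1}(x,y)$, applying $h_\alpha$ (which conjugates both the coordinates and the constants by $h$), and re-applying $\beta$ collapses, after clearing denominators, to
\[
h_\beta(x,y)=\Bigl(\tfrac{1}{x^h},\tfrac{1}{y^h}\Bigr).
\]

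Finally, for $h_\gamma=\gamma\, h_\beta\, \gamma^{-1}$, I would observe that $\gamma(x,y)=(xy,y)$ has coefficients in the prime field and that $h_\beta$ commutes with $\gamma$: one checks that $\gamma\circ h_\beta$ and $h_\beta\circ\gamma$ both send $(x,y)$ to $\bigl(1/(x^hy^h),\,1/y^h\bigr)$, using $(xy)^h=x^hy^h$. Hence $h_\gamma=h_\beta=\bigl(1/x^h,1/y^h\bigr)$, which is the claim. I expect the main obstacle to be purely the bookkeeping: making sure that at each stage $h$ is applied consistently to both the variables and the coefficients of the intermediate maps, so that the semilinear compositions are composed correctly. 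There is no conceptual difficulty, and the computation is short and entirely parallel to Lemma~\ref{lem:actiong}.
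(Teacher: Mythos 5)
Your proposal is correct and follows essentially the same route as the paper's proof: conjugating $h$ step by step through Diagram~\eqref{diag:gg'}, noting that $h$ commutes with $\alpha$ (coefficients in $L'$, fixed by $h$) so $h_\alpha=(x^h,y^h)$, doing the one real computation at the $\beta$ stage using the swap $\two\leftrightarrow\two^g$ to get $h_\beta=(1/x^h,1/y^h)$, and observing that $h_\beta$ commutes with $\gamma$. Nothing is missing.
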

\begin{proof}
  We compute step by step the maps on Diagram \eqref{diag:gg'}:
  Since $\two'^{gh}=\two'^g$ and $\two'^h=\two'$ we have
  \begin{align*}
  h_\alpha(x,y) &= \alpha h \alpha^{-1}(x,y) =  (x^{h}, y^{h}).
  \end{align*}
On the other hand $\two^{gh}=\two$ and $\two^h=\two^g$, so we get
  \begin{multline*}
   h_\beta(x,y) = \beta h_\alpha \beta^{-1}(x,y)
  = \beta h_\alpha \left( \frac{\two^{g} x + \two}{x + 1}, \frac{\two y+\two^{g}}{y + 1}\right)\\
  = \beta \left( \frac{\two x^h+\two^g}{x^h+1}, \frac{\two^gy^h+\two}{y^h+1}\right)
  = \Bigl(\frac{1}{x^h},\frac{1}{y^h}\Bigr).
\end{multline*}
Finally since $\Bigl(\frac{1}{x^h},\frac{1}{y^h}\Bigr)$ commutes with $\gamma = (xy,y)$, we get
\[h_\gamma(x,y) = \gamma h_\beta \gamma^{-1} = \Bigl(\frac{1}{x^h}, \frac{1}{y^h}\Bigr).\qedhere\]
\end{proof}

\begin{lemma}\label{lem:22ConditionsGaloisInvarianceg}
  Assume Set-Up~\textup{\ref{setup:22}}.
  Let $M=\left(\begin{smallmatrix}
  a & b \\ c & d\\
  \end{smallmatrix}\right)\in\PGL_2(K),$
  and $M'=\left(\begin{smallmatrix}
  A & B \\ C & D\\
  \end{smallmatrix}\right)\in\PGL_2(K(x))$.
  Then $j = (M,M')\in \PGL_2(K)\ltimes \PGL_2(K(x))$ is $g_\gamma$-invariant if and only if $M=M^g$ in $\PGL_2(K)$, and the equality
  \begin{equation}
  \begin{pmatrix} A(x)x & B(x)\\ C(x)x & D(x) \end{pmatrix}
=\begin{pmatrix}
    D^g(x)(a^gx+b^g) & C^g(x)(a^gx+b^g)\\
    B^g(x)(c^gx+d^g) & A^g(x)(c^gx+d^g)
\end{pmatrix}\label{eq:gKx}
\end{equation}
   holds in $\PGL_2(K(x))$.
\end{lemma}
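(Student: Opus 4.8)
The plan is to translate the $g_\gamma$-invariance of $j$ into the commutation relation $g_\gamma\circ j=j\circ g_\gamma$ and then to verify this by an explicit computation. First I would recall, following Corollary~\ref{cor:Jonq22BirVsPGL}, that the element $j=(M,M')$ corresponds to the $K$-birational map $j(x,y)=\bigl(\frac{ax+b}{cx+d},\frac{A(x)y+B(x)}{C(x)y+D(x)}\bigr)$ of $\P^1\times\P^1$, and that being $g_\gamma$-invariant means precisely that $j$ commutes, as a birational map, with the induced Galois map $g_\gamma$. This is the content of the twisted Galois descent underlying Corollary~\ref{cor:Jonq22BirVsPGL}: conjugating by $\eps$ turns the condition that $f$ is defined over $\k$, i.e.\ that $f$ commutes with the coordinatewise action $g$, into the condition that $j=\eps f\eps^{-1}$ commutes with $g_\gamma=\eps g\eps^{-1}$.

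The key device is to separate the genuinely birational part of $g_\gamma$ from its Galois part. By Lemma~\ref{lem:actiong} we have $g_\gamma(x,y)=(x^g,x^g/y^g)$, which factors as $g_\gamma=\Theta\circ\gamma_0$, where $\gamma_0(x,y)=(x^g,y^g)$ is the coordinatewise Galois involution and $\Theta(x,y)=(x,x/y)$ is a birational involution defined over $\k$. Since $g$ has order $2$ in $\Gal(K/\k)$ we have $\gamma_0^2=\id$, and for any $K$-birational map $\phi$ the semilinear identity $\gamma_0\circ\phi=\phi^g\circ\gamma_0$ holds, where $\phi^g$ denotes $\phi$ with $g$ applied to its coefficients. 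Substituting $g_\gamma=\Theta\circ\gamma_0$ into $g_\gamma\circ j=j\circ g_\gamma$ and using these two facts, the Galois map $\gamma_0$ cancels from both sides and the invariance condition collapses to the honest $K$-birational identity $\Theta\circ j^g=j\circ\Theta$, which can now be checked by plain substitution.

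Then I would compute the two sides. On one hand $j\circ\Theta$ has first coordinate $\frac{ax+b}{cx+d}$ and, after clearing the denominator $y$, second coordinate $\frac{A(x)x+B(x)y}{C(x)x+D(x)y}$. On the other hand $\Theta\circ j^g$ has first coordinate $\frac{a^gx+b^g}{c^gx+d^g}$ and second coordinate $\frac{a^gx+b^g}{c^gx+d^g}\cdot\frac{C^g(x)y+D^g(x)}{A^g(x)y+B^g(x)}$. Equating the first coordinates gives $M=M^g$ in $\PGL_2(K)$. Equating the second coordinates, viewed as Möbius transformations in the variable $y$ with coefficients in $K(x)$ and hence as elements of $\PGL_2(K(x))$, and reading off the associated $2\times2$ matrices up to a common scalar, yields exactly the matrix identity~\eqref{eq:gKx}, up to swapping the two columns on both sides---which is immaterial in $\PGL_2(K(x))$, being right multiplication by the fixed permutation matrix $\left(\begin{smallmatrix}0&1\\1&0\end{smallmatrix}\right)$.

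The routine but error-prone part, and the only real obstacle, is the bookkeeping in this last translation: one must keep the Galois twist on the correct side, carefully distinguishing $g$ acting on coefficients from $g$ acting on the variable $x$, and one must correctly pass between a Möbius transformation in $y$ and its matrix representative modulo scalars, which is precisely where the column swap relating the direct computation to the stated form of~\eqref{eq:gKx} appears. Everything else reduces to a direct substitution.
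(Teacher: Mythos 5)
Your proof is correct and follows essentially the same route as the paper: both reduce $g_\gamma$-invariance to the commutation $g_\gamma\circ j=j\circ g_\gamma$ and verify it by direct substitution using the formula $g_\gamma(x,y)=(x^g,x^g/y^g)$ from Lemma~\ref{lem:actiong}, arriving at $M=M^g$ and at \eqref{eq:gKx} up to the harmless column swap. Your factorization $g_\gamma=\Theta\circ\gamma_0$ is only a tidier bookkeeping of the same computation the paper performs by evaluating both compositions at $(x^g,y^g)$.
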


\begin{proof}
  Using the formula $g_{\gamma}(x,y) = (x^g,\frac{x^g}{y^g})$ from Lemma \ref{lem:actiong} we compute:
  \begin{align*}
    g_\gamma(j(x^g,y^g)) &= g_\gamma \left( \frac{ax^g+b}{cx^g+d} ,\frac{A(x^g)y^g+B(x^g)}{C(x^g)y^g+D(x^g)} \right)\\
    &= \left(\frac{a^gx+b^g}{c^g x+d^g} ,\frac{(a^gx+b^g)(C^g(x)y +D^g(x))}{(c^g x+d^g)(A^g(x)y+B^g(x))} \right), \\
    j(g_\gamma(x^g,y^g)) &= j\Bigl(x^g,\frac{x^g}{y^g}\Bigr)\\
    &= \left( \frac{ax+b}{cx+d} ,\frac{A(x)\frac{x}{y}+B(x)}{C(x)\frac{x}{y}+D(x)} \right)\\
    &= \left( \frac{ax+b}{cx+d} ,\frac{A(x){x}+B(x){y}}{C(x){x}+D(x){y}} \right).
  \end{align*}
  Therefore, $j = (M,M')$ is $g_\gamma$-invariant if and only if Equation~\eqref{eq:gKx} holds, as well as $ \smallmat{ a & b\\ c&d }
    = \smallmat{ a^g & b^g\\ c^g&d^g } $
  in $\PGL_2(K)$.
\end{proof}

\begin{remark}
  The condition that $M^g=M$ is equivalent to $M\in\PGL_3(\k)$ in the case $L=L'$, and to $M\in\PGL_3(L'')$, where $\k\subset L''\subset K$ is the fixed field of $g=hh'$ in the case of $L\neq L'$.
\end{remark}

\begin{lemma}\label{lem:22ConditionsGaloisInvarianceh}
  Assume Set-Up~\textup{\ref{setup:22}} with $L\neq L'$.
  Then $(M,M')=\left( \smallmat{a & b \\ c & d}, \smallmat{A & B \\ C & D} \right) \in \PGL_2(K)\ltimes \PGL_2(K(x))$ is $h_\gamma$-invariant if and only if the following equalities in $\PGL_2(K)$ respectively in $\PGL_2(K(x))$ are satisfied:

  \begin{align}
  \begin{pmatrix} a & b\\ c & d \end{pmatrix} &=
  \begin{pmatrix} d^{h} & c^{h}\\ b^{h} & a^{h} \end{pmatrix},\label{eq:hK}\\
  \begin{pmatrix} A(\frac1x) & B(\frac1x)\\ C(\frac1x) & D(\frac1x) \end{pmatrix} &=
  \begin{pmatrix} D^{h}(x) & C^{h}(x)\\ B^{h}(x) & A^{h}(x)\end{pmatrix}. \label{eq:hKx}
  \end{align}
\end{lemma}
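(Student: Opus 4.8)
The plan is to run the same computation as in the proof of Lemma~\ref{lem:22ConditionsGaloisInvarianceg}, now using the formula $h_\gamma(x,y) = \left(\frac{1}{x^h},\frac{1}{y^h}\right)$ from Lemma~\ref{lem:actionh}. Since $h$ is an involution of $\Gal(K/\k)$, the relation $(x^h)^h=x$ shows that $h_\gamma$ squares to the identity, so being \emph{$h_\gamma$-invariant} means exactly that $j$ commutes with $h_\gamma$, i.e.\ $h_\gamma\circ j = j\circ h_\gamma$. Following the template of the $g_\gamma$ case, I would verify this equality of birational maps by evaluating both sides at the twisted argument $(x^h,y^h)$; this is harmless, since $x\mapsto x^h$ is a bijective reparametrisation, and it has the virtue that the outer Galois twist collapses via $(x^h)^h=x$, $(y^h)^h=y$, so that all final expressions come out in the plain variables $x,y$.

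First I would compute $h_\gamma\bigl(j(x^h,y^h)\bigr)$. Evaluating $j$ at $(x^h,y^h)$ and then applying $h_\gamma$, which sends each output coordinate $t$ to $1/t^h$, transforming the coefficients by $h$ and turning every $x^h$ back into $x$, yields first coordinate $\frac{c^h x+d^h}{a^h x+b^h}$ and second coordinate $\frac{C^h(x)\,y+D^h(x)}{A^h(x)\,y+B^h(x)}$, that is the matrix $\smallmat{C^h & D^h\\ A^h & B^h}$ read in the variable $y$. On the other side $h_\gamma(x^h,y^h)=\left(\frac1x,\frac1y\right)$, so I would evaluate $j$ at $\left(\frac1x,\frac1y\right)$: clearing denominators gives first coordinate $\frac{bx+a}{dx+c}$ and second coordinate $\frac{B(1/x)\,y+A(1/x)}{D(1/x)\,y+C(1/x)}$, that is the matrix $\smallmat{B(1/x) & A(1/x)\\ D(1/x) & C(1/x)}$.

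Equating first coordinates as elements of $\PGL_2(K)$ and second coordinates as elements of $\PGL_2(K(x))$ then gives
\[
\smallmat{c^h & d^h\\ a^h & b^h} = \smallmat{b & a\\ d & c}
\qquad\text{and}\qquad
\smallmat{C^h & D^h\\ A^h & B^h} = \smallmat{B(1/x) & A(1/x)\\ D(1/x) & C(1/x)},
\]
both understood up to a common nonzero scalar. Right-multiplying both sides of each identity by the swap matrix $\smallmat{0 & 1\\ 1 & 0}$ (a valid operation in $\PGL_2$) and exchanging the two sides transforms these into precisely \eqref{eq:hK} and \eqref{eq:hKx}, which proves the lemma.

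The computation is routine, being a near-verbatim analogue of Lemma~\ref{lem:22ConditionsGaloisInvarianceg}; the one place demanding care, and the main source of potential ordering errors, is the semilinear nature of $h_\gamma$. One must keep track simultaneously of three points: that applying $h$ twists the coefficients while the argument $x^h$ is sent to $(x^h)^h=x$; that the inversion $t\mapsto 1/t^h$ built into $h_\gamma$ is responsible both for the column swap produced on the $h_\gamma\circ j$ side and for the substitution $x\mapsto 1/x$ produced on the $j\circ h_\gamma$ side; and that every matrix identity holds only projectively. Matching these two distinct inversions is exactly what produces the reversed, anti-diagonal pattern on the right-hand sides of \eqref{eq:hK} and \eqref{eq:hKx}, in contrast to the shape obtained in the $g_\gamma$ case.
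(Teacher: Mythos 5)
Your proposal is correct and follows essentially the same route as the paper: both compute $h_\gamma(j(x^h,y^h))$ and $j(h_\gamma(x^h,y^h))$ using the formula from Lemma~\ref{lem:actionh}, arrive at the same two pairs of matrices, and read off the projective equalities \eqref{eq:hK} and \eqref{eq:hKx}. The only difference is that you make explicit (via right-multiplication by the swap matrix) the final rearrangement that the paper leaves as an implicit "hence".
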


\begin{proof}
  Using the formula $h_\gamma(x,y)=\left(\frac{1}{x^{h}}, \frac1{y^{h}} \right)$ from Lemma~\ref{lem:actionh} we study the action of $h_\gamma$:
  \begin{align*}
  h_{\gamma}(j(x^h,y^h)) & = \left( \frac{c^{h}x+d^{h}}{a^{h}x+b^{h}} , \frac{C^{h}(x)y+D^{h}(x)}{A^{h}(x)y+B^{h}(x)}\right),\\
  j(h_\gamma(x^h,y^h)) & = \left( \frac{a\frac{1}{x}+b}{c\frac{1}{x}+d} , \frac{A(\frac{1}{x})\frac{1}{y}+B(\frac{1}{x})}{C(\frac{1}{x})\frac{1}{y}+D(\frac{1}{x})}\right)\\
  & =  \left( \frac{b{x}+a}{d{x}+c} , \frac{B(\frac{1}{x})y+ A(\frac{1}{x})}{D(\frac{1}{x})y+ C(\frac{1}{x})} \right).
  \end{align*}
  Hence, $h_\gamma\circ (M,M')= (M,M')\circ h_\gamma$ if and only if the matrix equalities \eqref{eq:hK} and~\eqref{eq:hKx} hold in $\PGL_2(K)$ respectively $\PGL_2(K(x))$.
\end{proof}


Seeing $\PGL_2$ as a subvariety of $\p^{4}$, two matrices $\left(\begin{smallmatrix}a_1 & a_2\\ a_3 & a_4\end{smallmatrix}\right)$, $\left(\begin{smallmatrix}b_1 & b_2\\ b_3 & b_4\end{smallmatrix}\right)$ are equal in $\PGL_2$ if and only if $a_ib_j=a_jb_i$ for all $i,j=1,\ldots,4$.
If $a_i\neq0$ for a fixed $i$, then this is equivalent to $a_ib_j=a_jb_i$ for $j=1,\ldots,4$.

\begin{corollary}\label{cor:ExplicitEquationsAntidiag}
  Assume Set-Up~\textup{\ref{setup:22}}. Let $a \in K$ and $P(x) \in K(x)$.
  Then
\[\left(
  \mat{0 & 1\\ a & 0 }, \mat{ 0 & 1\\ P(x) & 0 }
  \right)\in\PGL_2(K)\ltimes\PGL_2(K(x))
  \]
  is $\eps\Gal(K/\k)\eps^{-1}$-invariant if and only if
  \begin{enumerate}
    \item\label{item:ExplicitEquationsAntidiag--g} $a^g=a$, $P(x)P^g(x)=a$,
    \item\label{item:ExplicitEquationsAntidiag--h} and also $aa^h=1$ and $P(1/x)P^h(x)=1$ if $L\neq L'$.
  \end{enumerate}
  Moreover, under these conditions also $\left(
  \smallmat{0 & 1\\ a & 0 }, \smallmat{ 0 & 1\\ P(\mu) & 0 }
  \right)$ is $\eps\Gal(K/\k)\eps^{-1}$-invariant for all $\mu\in \k^*$.
\end{corollary}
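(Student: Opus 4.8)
The plan is to unwind invariance under $\eps\Gal(K/\k)\eps^{-1}$ into invariance under the generators of $\Gal(K/\k)$ fixed in Set-Up~\ref{setup:22}, and then feed the two antidiagonal matrices into the criteria already established. By Set-Up~\ref{setup:22} an element of $\PGL_2(K)\ltimes\PGL_2(K(x))$ is $\eps\Gal(K/\k)\eps^{-1}$-invariant if and only if it is $g_\gamma$-invariant (always) and, when $L\neq L'$, also $h_\gamma$-invariant; so I would treat the two generators separately via Lemma~\ref{lem:22ConditionsGaloisInvarianceg} and Lemma~\ref{lem:22ConditionsGaloisInvarianceh}. The elementary fact I would use repeatedly is that two antidiagonal matrices $\smallmat{0 & \beta_1\\ \gamma_1 & 0}$, $\smallmat{0 & \beta_2\\ \gamma_2 & 0}$ coincide in $\PGL_2$ precisely when $\beta_1\gamma_2=\beta_2\gamma_1$, since both represent $z\mapsto\beta/(\gamma z)$.

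For the $g$-part I substitute $M=\smallmat{0&1\\a&0}$ and $M'=\smallmat{0&1\\P(x)&0}$ into Lemma~\ref{lem:22ConditionsGaloisInvarianceg}. The scalar condition $M=M^g$ reads $\smallmat{0&1\\a&0}=\smallmat{0&1\\a^g&0}$, equivalent by the antidiagonal fact to $a^g=a$. Feeding the entries $A=0,B=1,C=P,D=0$ into Equation~\eqref{eq:gKx} collapses it to $\smallmat{0&1\\P(x)x&0}=\smallmat{0&P^g(x)\\ ax&0}$ in $\PGL_2(K(x))$, which the antidiagonal fact turns into $P(x)P^g(x)=a$. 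This is exactly item~\ref{item:ExplicitEquationsAntidiag--g}, and since each reduction is an equivalence it characterizes $g_\gamma$-invariance.

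For $L\neq L'$ I repeat the computation with Lemma~\ref{lem:22ConditionsGaloisInvarianceh}. Equation~\eqref{eq:hK} becomes $\smallmat{0&1\\a&0}=\smallmat{0&a^h\\1&0}$, i.e.\ $aa^h=1$, while Equation~\eqref{eq:hKx} becomes $\smallmat{0&1\\P(1/x)&0}=\smallmat{0&P^h(x)\\1&0}$, i.e.\ $P(1/x)P^h(x)=1$. Together with the $g$-part this gives the stated equivalence, including item~\ref{item:ExplicitEquationsAntidiag--h}.

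For the ``Moreover'' I would specialize $x$ to the Galois-fixed scalar $\mu$. Since $\mu\in\k^*$ is fixed by every element of $\Gal(K/\k)$, evaluation at $\mu$ commutes with the coefficient action, so $(P(\mu))^g=P^g(\mu)$ and $(P(\mu))^h=P^h(\mu)$; hence the substituted matrix has a genuine constant lower-left entry $c=P(\mu)\in K$, and I can reapply the equivalence to $(M,\smallmat{0&1\\c&0})$. Because $x\mapsto1/x$ acts trivially on constants, invariance now reads $cc^g=a$ and (if $L\neq L'$) $cc^h=1$. The first is precisely item~\ref{item:ExplicitEquationsAntidiag--g} specialized at $x=\mu$, so for $L=L'$ the claim is immediate for every admissible $\mu$. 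The main obstacle is the remaining scalar identity $P(\mu)P^h(\mu)=1$ in the case $L\neq L'$: it is \emph{not} the naive specialization of item~\ref{item:ExplicitEquationsAntidiag--h}, whose left-hand side still carries the substitution $x\mapsto1/x$, and I expect checking it to be the delicate point. In the situation where the corollary is actually applied, namely where $P$ is the constant lower-left entry of the involutions of Proposition~\ref{prop:DescriptionOfJonq22}, one has $P(\mu)=P$ for all $\mu$, so the condition reduces to $PP^h=1$, already recorded in item~\ref{item:ExplicitEquationsAntidiag--h}, and the claim follows; the same argument works whenever $P(\mu)=P(1/\mu)$.
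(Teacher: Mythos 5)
Your handling of the equivalence coincides with the paper's proof, which likewise just feeds the antidiagonal matrices into Lemmas \ref{lem:22ConditionsGaloisInvarianceg} and \ref{lem:22ConditionsGaloisInvarianceh} and reads off the conditions, so that part is fine. One omission: you silently assume $P(\mu)$ is defined. The paper's proof of the second part is in fact mostly devoted to this point (write $P=A/B$ with $A,B$ coprime; condition \ref{item:ExplicitEquationsAntidiag--g} gives $AA^g=aBB^g$, hence $B\mid A^g$, so a root $\mu\in\k$ of $B$ would be a common root of $A$ and $B$), and you should include such an argument.

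The ``delicate point'' you flag in the case $L\neq L'$ is a genuine gap, and the paper's own proof does not address it either: it simply asserts that the constant $P(\mu)$ ``satisfies again (1) and (2)''. As you say, specializing \ref{item:ExplicitEquationsAntidiag--h} at $x=\mu$ yields $P(1/\mu)P^h(\mu)=1$, whereas invariance of the constant matrix requires $P(\mu)P^h(\mu)=1$, and these agree only when $P(\mu)=P(1/\mu)$. This is not automatic, and the ``for all $\mu\in\k^*$'' in the statement is actually too strong: take $\k=\Q$, $L=\Q(\sqrt2)$, $L'=\Q(\sqrt3)$, set $\beta=\sqrt3-2$ (so $\beta^h=\beta$ and $\beta\beta^g=(\sqrt3-2)(-\sqrt3-2)=1$) and
\[
P(x)=\frac{\beta x-1}{x-\beta}.
\]
Then $P(x)P^g(x)=1=a$ and $P(1/x)P^h(x)=1$, so the hypotheses \ref{item:ExplicitEquationsAntidiag--g} and \ref{item:ExplicitEquationsAntidiag--h} hold, yet $P(2)=\frac{2\sqrt3-5}{4-\sqrt3}$ lies in $L'$, is fixed by $h$, and satisfies $P(2)P(2)^h=P(2)^2\neq1$; so the matrix with entry $P(2)$ is not $h_\gamma$-invariant. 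The conclusion does hold for $\mu=\pm1$, where $\mu=1/\mu$ and your argument closes the gap, and a single such value is all that the proof of Proposition \ref{prop:DescriptionOfJonq22} uses (it only needs one $\lambda=P(\mu)$ with $\lambda\lambda^g=a$ and $\lambda\lambda^h=1$), so the downstream results are unaffected. In short: your proof of the equivalence matches the paper, and your refusal to claim the ``Moreover'' for arbitrary $\mu$ is not a weakness of your write-up but a correct diagnosis of an overstatement in the corollary.
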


\begin{proof}
  Note that if $L\neq L'$, $g=hh'$ and $h$ generate $\Gal(K/\k)$.
  The equations come from Lemma~\ref{lem:22ConditionsGaloisInvarianceg} and Lemma~\ref{lem:22ConditionsGaloisInvarianceh}.

  For the second part, write $P(x)=A(x)/B(x)$ with two polynomials $A,B\in K[x]$ without common factor. Condition \ref{item:ExplicitEquationsAntidiag--g} implies that $B^g(x)$ is a multiple of $A(x)$. If $\mu\in \k$ would be a root of $B$, then it is also a root of $B^g$ (since $B^g(\mu)= B^g(\mu^g)= (B(\mu))^g=0$), hence of $A$, a contradiction to $A,B$ without common factors.
  Therefore, evaluating $P(x)$ at $\mu\in \k^*$ we get a constant function $P(\mu)$ satisfying again \ref{item:ExplicitEquationsAntidiag--g} and \ref{item:ExplicitEquationsAntidiag--h}.
\end{proof}

\begin{remark}
  Over $\k=\Q$ and $L=L'=\Q(i)$, taking $a=1$ and $P(x)=\frac{x+i}{x-i}$ is $\eps\Gal(K/\k)\eps^{-1}$-invariant.
  More generally, if $L=L'=\k(\theta)$ one can take $a=1$ and $P(x)=\frac{x+\theta}{x+\theta^g}$.

  In the case $L=\k(\theta) \neq \k(\theta')=L'$, one can also construct a non-constant $P\in K(x)$ with $a=1$:
  Consider $Q(x)=\frac{x+\theta}{x+\theta^g}$ and choose $P(x)=Q(x)Q(\frac{1}{x})$.
\end{remark}

\subsubsection{Proof of Proposition~\textup{\ref{prop:DescriptionOfJonq22}}}


\begin{lemma}\label{lem:22KSomeObservations}
	Assume Set-Up~\textup{\ref{setup:22}}.
  For $A,B,C,D\in K[x]$ with $AD-BC\neq0$ and $a,b,c,d\in K$ with $ad-bc\neq0$ then the following hold:
  \begin{enumerate}
    \item\label{item:22KSomeObservations--1} If $\left(\left(\begin{smallmatrix}
    a & b \\ c & d\\
  \end{smallmatrix}\right), \left(\begin{smallmatrix}
    A & B \\ C & D\\
  \end{smallmatrix}\right)\right)$ is ${\eps\Gal(K/\k)\eps^{-1}}$-invariant, then if $A\neq0$ also $\left(\left(\begin{smallmatrix}
    a & b \\ c & d\\
  \end{smallmatrix}\right), \left(\begin{smallmatrix}
    A & 0 \\ 0 & D\\
    \end{smallmatrix}\right)\right)$ is, and if $B\neq0$ then also
    $\left(\left(\begin{smallmatrix}
      a & b \\ c & d\\
    \end{smallmatrix}\right), \left(\begin{smallmatrix}
      0 & B \\ C & 0\\
      \end{smallmatrix}\right)\right)$ is.
    \item\label{item:22KSomeObservations--2} $\left(\left(\begin{smallmatrix}
    a & 0 \\ 0 & d\\
  \end{smallmatrix}\right), \left(\begin{smallmatrix}
    A & 0 \\ 0 & D\\
    \end{smallmatrix}\right)\right)$ is ${\eps\Gal(K/\k)\eps^{-1}}$-invariant if and only if $\left(\left(\begin{smallmatrix}
    0 & a \\ d & 0\\
  \end{smallmatrix}\right), \left(\begin{smallmatrix}
    0 & A \\ D & 0\\
    \end{smallmatrix}\right)\right)$ is.
\item\label{item:22KSomeObservations--3}
$\left(\left(\begin{smallmatrix}
a & 0 \\ 0 & d\\
\end{smallmatrix}\right), \left(\begin{smallmatrix}
0 & B \\ C & 0\\
\end{smallmatrix}\right)\right)$ is ${\eps\Gal(K/\k)\eps^{-1}}$-invariant if and only if $\left(\left(\begin{smallmatrix}
0 & d \\ a & 0\\
\end{smallmatrix}\right), \left(\begin{smallmatrix}
C & 0 \\ 0 & B\\
\end{smallmatrix}\right)\right)$ is.
Moreover, this is equivalent to $\left(\left(\begin{smallmatrix}
0 & d \\ a & 0\\
\end{smallmatrix}\right), \left(\begin{smallmatrix}
0 & xC(x) \\ B(x) & 0\\
\end{smallmatrix}\right)\right)$ being ${\eps\Gal(K/\k)\eps^{-1}}$-invariant.
\item\label{item:22KSomeObservations--4} $\left(\left(\begin{smallmatrix}
0 & 1 \\ 1 & 0\\
\end{smallmatrix}\right), \left(\begin{smallmatrix}
0 & 1 \\ 1 & 0\\
\end{smallmatrix}\right)\right)$ is ${\eps\Gal(K/\k)\eps^{-1}}$-invariant.
\item\label{item:22KSomeObservations--5} If $A=D=0$ or $B=C=0$, then either $a=d=0$ or $b=c=0$.
  \end{enumerate}
\end{lemma}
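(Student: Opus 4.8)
The plan is to read off every item directly from the explicit invariance equations established just above: Lemma~\ref{lem:22ConditionsGaloisInvarianceg} (the condition $M=M^g$ together with \eqref{eq:gKx}) and, when $L\neq L'$, Lemma~\ref{lem:22ConditionsGaloisInvarianceh} (the conditions \eqref{eq:hK} and \eqref{eq:hKx}). The structural fact I will use throughout is the one recorded before Corollary~\ref{cor:ExplicitEquationsAntidiag}: an equality in $\PGL_2(K(x))$ means that the two quadruples of entries are proportional by a \emph{single} scalar $\kappa\in K(x)^*$. Writing $M=\smallmat{a&b\\c&d}$ and $M'=\smallmat{A&B\\C&D}$, the point is that in \eqref{eq:gKx} the diagonal of the right-hand side involves only the diagonal entries $A,D$ of $M'$, and the antidiagonal involves only $B,C$ (the same holds in \eqref{eq:hKx}). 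This block-separation is the engine behind items \ref{item:22KSomeObservations--1}--\ref{item:22KSomeObservations--4}.

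For item~\ref{item:22KSomeObservations--1}, the scalar $\kappa$ realizing \eqref{eq:gKx} for the invariant pair also realizes the same identity for $\smallmat{A&0\\0&D}$, since the off-diagonal positions then read $0=\kappa\cdot 0$; likewise the scalar $\lambda$ of \eqref{eq:hKx} serves the diagonal truncation, and $M=M^g$, \eqref{eq:hK} are untouched. If $A\neq 0$ then $D=\kappa A^g(c^gx+d^g)\neq 0$ because $(c,d)\neq(0,0)$, so $\smallmat{A&0\\0&D}\in\PGL_2(K(x))$ and the truncated pair is invariant; the antidiagonal claim (needing $B\neq 0$) is symmetric. Items~\ref{item:22KSomeObservations--2} and~\ref{item:22KSomeObservations--3} I would prove by writing out the four invariance conditions for each listed pair and observing they coincide: both pairs force $a:d=a^g:d^g$ (resp.\ $aa^h=dd^h$) and the single proportionality from \eqref{eq:gKx}/\eqref{eq:hKx}, so one pair is invariant iff the other is. For the ``moreover'' in~\ref{item:22KSomeObservations--3}, comparing $\smallmat{C&0\\0&B}$ with $\smallmat{0&xC(x)\\B(x)&0}$ gives literally identical $g_\gamma$-conditions, while the $h_\gamma$-conditions differ only by replacing the proportionality constant $\lambda$ by $\lambda/x\in K(x)^*$, hence are equivalent. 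Item~\ref{item:22KSomeObservations--4} is a one-line substitution: for $\smallmat{0&1\\1&0}$ both sides of \eqref{eq:gKx} equal $\smallmat{0&1\\x&0}$ and both sides of \eqref{eq:hK}, \eqref{eq:hKx} equal $\smallmat{0&1\\1&0}$.

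The main obstacle is item~\ref{item:22KSomeObservations--5}, which is not a symmetry but a genuine constraint; I would prove it under the (standing) invariance hypothesis, using only \eqref{eq:gKx}, so that the argument is uniform in $L=L'$ and $L\neq L'$. If $M'$ is diagonal ($B=C=0$), then \eqref{eq:gKx} collapses to the single polynomial identity
\[
(AA^g)\,x\,(c^gx+d^g)=(DD^g)\,(a^gx+b^g),
\]
and if $M'$ is antidiagonal ($A=D=0$) to $(BB^g)(c^gx+d^g)=(CC^g)\,x\,(a^gx+b^g)$. The key observation is that a norm polynomial $PP^g$ (for $0\neq P\in K[x]$) has \emph{even} degree and \emph{even} order of vanishing at $0$: indeed $\deg(PP^g)=2\deg P$, and writing $P=x^kP_0$ with $P_0(0)\neq 0$ gives $PP^g=x^{2k}P_0P_0^g$ with $P_0P_0^g(0)=P_0(0)P_0(0)^g\neq 0$.

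Comparing the parity of $\mathrm{ord}_0$ on the two sides of the identity then yields $1+[d=0]\equiv[b=0]\pmod 2$, forcing exactly one of $b,d$ to vanish; comparing the parity of $\deg$ yields $1+[c\neq 0]\equiv[a\neq 0]\pmod 2$, forcing exactly one of $a,c$ to vanish. Of the four resulting combinations, two are killed by $ad-bc\neq 0$, leaving only $b=c=0$ ($M$ diagonal) or $a=d=0$ ($M$ antidiagonal), which is the claim; the antidiagonal case for $M'$ gives the same two parity congruences. I expect the careful bookkeeping of these two parity counts — and the verification that they rest solely on $g_\gamma$-invariance — to be the delicate step, everything else being block-separation and substitution.
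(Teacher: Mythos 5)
Your proposal is correct and follows essentially the same route as the paper: items \ref{item:22KSomeObservations--1}--\ref{item:22KSomeObservations--4} by block-separation of the diagonal and antidiagonal entries in \eqref{eq:gKx} (and \eqref{eq:hK}, \eqref{eq:hKx}), and item \ref{item:22KSomeObservations--5} by a parity count on the single polynomial identity extracted from \eqref{eq:gKx}. The only difference is that your treatment of \ref{item:22KSomeObservations--5} is actually more complete than the paper's, which records only the degree-parity congruence (relating $a$ and $c$) before asserting the conclusion; your additional $\mathrm{ord}_0$-parity congruence (relating $b$ and $d$) is exactly what is needed, together with $ad-bc\neq 0$, to rule out the two mixed cases uniformly for $L=L'$ and $L\neq L'$.
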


\begin{proof}
  We use the conditions of ${\eps\Gal(K/\k)\eps^{-1}}$-invariancy from Lemma~\ref{lem:22ConditionsGaloisInvarianceg} (for the action of $g_\gamma$) and Lemma~\ref{lem:22ConditionsGaloisInvarianceh} (for the action of $h_\gamma$ if $L\neq L'$).
  Note that \eqref{eq:gKx} implies that $A\neq0$ if and only if $D\neq 0$, and $B\neq0$ if and only if $C\neq0$.
  Assertions \ref{item:22KSomeObservations--1} and \ref{item:22KSomeObservations--4} are then immediate.
  For \ref{item:22KSomeObservations--2}, and similarly for \ref{item:22KSomeObservations--3}, one checks that the conditions given by \eqref{eq:gKx} on the two couples of matrices in the statement are the same (the other equations are clear).

  To prove \ref{item:22KSomeObservations--5}, we write $\delta(a)=0$ if $a=0$, and $\delta(a)=1$ if $a\neq0$; same for $b,c,d$.
  Since $\smallmat{A & B \\ C & D}$ is invertible, we cannot have $A = 0$ and $B = 0$ simultaneously.
  Assume that $A\neq0$, $B = C = 0$ (the other case $B\neq 0$, $A = D = 0$ is similar and left to the reader).
  Equation \eqref{eq:gKx} gives
  \begin{align*}
   A^g(x)xA^g(x)(c^gx+d^g) &= D(x)D^g(x)(a^gx+b^g).
  \end{align*}
Taking the degree in $x$, we get
\begin{align*}
  2\deg(A)+\delta(c)+1 &= 2\deg(D)+\delta(a)
\end{align*}
  Reducing this last equality modulo 2 gives two possibilities:
  Either $\delta(c)=0$ and $\delta(a)=1$,  hence $b=c=0$ and $a,d\neq 0$, or $\delta(a)=0$ and $\delta(c)=1$,  hence $a=d=0$ and $b,c\neq 0$.
\end{proof}

The previous lemma has the following corollary, which says that the diagonal and antidiagonal elements in $\left(\PGL_2(K)\ltimes \PGL_2(K(x))\right)^{\eps\Gal(K/\k)\eps^{-1}}$ surject onto $\Aut_\k^{\pi_{L,L'}}(\P^1)$, which is the image of the projection to \[\left(\PGL_2(K)\ltimes \PGL_2(K(x))\right)^{\eps\Gal(K/\k)\eps^{-1}}\to \PGL_2(K):\]

\begin{corollary}\label{c:AlwaysAntidiagOrDiag}
  Assume Set-Up~\textup{\ref{setup:22}}.
  Let $\left( \smallmat{a & b\\ c & d } , \smallmat{ A & B\\ C & D } \right) \in \PGL_2(K)\ltimes \PGL_2(K(x))$ be $\eps\Gal(K/\k)\eps^{-1}$-invariant.
  Then one of the following holds:
  \begin{enumerate}
    \item $b=c=0$ and there exist $A',D'\in K(x)$ such that the map
    $\left( \smallmat{ a & 0\\ 0 & d }, \smallmat{ A' & 0\\ 0 & D'} \right)$ is $\eps\Gal(K/\k)\eps^{-1}$-invariant.
    \item $a=d=0$ and there exist $B',C'\in K(x)$ such that the map
    $\left( \smallmat{ 0 & b\\ c & 0 }, \smallmat{ 0 & B'\\ C' & 0 }\right)$ is $\eps\Gal(K/\k)\eps^{-1}$-invariant.
  \end{enumerate}
\end{corollary}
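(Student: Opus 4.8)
The plan is to normalise $M'$ to diagonal or antidiagonal form first, then read off the shape of $M$, and finally reconcile the two shapes so that they match.

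First, since $M'\in\PGL_2(K(x))$ is invertible its first row $(A,B)$ cannot vanish. If $A\neq0$, then Lemma~\ref{lem:22KSomeObservations}\ref{item:22KSomeObservations--1} lets me replace $M'$ by its diagonal part $\smallmat{A & 0\\ 0 & D}$, keeping the same $M$ and preserving $\eps\Gal(K/\k)\eps^{-1}$-invariance; if $A=0$ then $B\neq0$, and since $B\neq0$ forces $C\neq0$ by \eqref{eq:gKx}, I may instead replace $M'$ by its antidiagonal part $\smallmat{0 & B\\ C & 0}$. In either case I reduce to an invariant pair whose $M'$ satisfies $B=C=0$ or $A=D=0$. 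Then Lemma~\ref{lem:22KSomeObservations}\ref{item:22KSomeObservations--5} forces $b=c=0$ or $a=d=0$ for the (unchanged) matrix $M$; as $M$ is invertible exactly one of these holds, and it dictates whether I must land in conclusion~(1) or~(2).

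This leaves four shape combinations for the reduced pair $(M,M')$. If both $M$ and $M'$ are diagonal we are already in case~(1), and if both are antidiagonal we are in case~(2). If $M$ is antidiagonal and $M'$ diagonal, say $(\smallmat{0 & b\\ c & 0},\smallmat{A & 0\\ 0 & D})$, the ``moreover'' clause of Lemma~\ref{lem:22KSomeObservations}\ref{item:22KSomeObservations--3} converts the diagonal $M'$ into the antidiagonal $\smallmat{0 & xA\\ D & 0}$ \emph{without changing $M$}, placing us in case~(2). The remaining combination, $M$ diagonal and $M'$ antidiagonal, say $(\smallmat{a & 0\\ 0 & d},\smallmat{0 & B\\ C & 0})$, is the one I expect to be the real obstacle: none of the symmetry operations in Lemma~\ref{lem:22KSomeObservations} returns a \emph{diagonal} $M'$ over the \emph{same} diagonal $M$, since parts \ref{item:22KSomeObservations--2} and~\ref{item:22KSomeObservations--3} all swap the diagonal/antidiagonal type of $M$ or permute its entries (routing through them would replace $\smallmat{a & 0\\ 0 & d}$ by $\smallmat{d & 0\\ 0 & a}$). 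I therefore plan to verify directly that $(\smallmat{a & 0\\ 0 & d},\smallmat{B & 0\\ 0 & xC})$ is invariant, which yields case~(1).

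The point of this last verification is that substituting either pair into \eqref{eq:gKx} and comparing entries in $\PGL_2(K(x))$ collapses the invariance condition, in both cases, to the single scalar identity $B\,B^g d^g=C\,C^g a^g x^2$; likewise, when $L\neq L'$, substituting into \eqref{eq:hKx} gives in both cases $B(1/x)\,B^h(x)=C(1/x)\,C^h(x)$, while \eqref{eq:hK} and the condition $M=M^g$ involve only $M$ and are untouched. Since these are literally the same relations, one pair is invariant precisely when the other is. Thus the only non-formal input is this short computation in the mixed case; everywhere else the corollary follows by bookkeeping from Lemma~\ref{lem:22KSomeObservations}. I would write out the four cases explicitly, so as to make transparent both that the shape of $M$ (hence the alternative between~(1) and~(2)) is forced and that in each case a matching diagonal or antidiagonal $M'$ can be exhibited over the same $M$.
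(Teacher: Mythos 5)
Your proof is correct, and up to the final case it coincides with the paper's: both arguments clear $M'$ to its diagonal or antidiagonal part via Lemma~\ref{lem:22KSomeObservations}\ref{item:22KSomeObservations--1}, read off the shape of $M$ from part~\ref{item:22KSomeObservations--5}, and dispose of the case ($M$ antidiagonal, $M'$ diagonal) with the ``moreover'' clause of part~\ref{item:22KSomeObservations--3}. (One small omission: Lemma~\ref{lem:22KSomeObservations} is stated for polynomial entries, so you should first multiply $M'$ by a common denominator, as the paper does.) The only genuine divergence is the case you rightly single out as the obstacle, namely $M$ diagonal and $M'$ antidiagonal. The paper stays entirely inside the toolbox of Lemma~\ref{lem:22KSomeObservations}: it applies part~\ref{item:22KSomeObservations--3} and then part~\ref{item:22KSomeObservations--2} to reach the diagonal pair $\left(\smallmat{d&0\\0&a},\smallmat{xC&0\\0&B}\right)$, and then undoes the unwanted swap $a\leftrightarrow d$ by conjugating with the invariant involution $\iota\colon(x,y)\mapsto(1/x,1/y)$ from part~\ref{item:22KSomeObservations--4}, landing on $\left(\smallmat{a&0\\0&d},\smallmat{B(1/x)x&0\\0&C(1/x)}\right)$. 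You instead verify directly that $\left(\smallmat{a&0\\0&d},\smallmat{B&0\\0&xC}\right)$ is invariant over the unchanged $M$; this checks out, since for the antidiagonal pair and for your diagonal companion alike, \eqref{eq:gKx} reduces to $BB^gd^g=CC^ga^gx^2$ and \eqref{eq:hKx} to $B(1/x)B^h(x)=C(1/x)C^h(x)$, while \eqref{eq:hK} and $M=M^g$ involve only $M$. Your route costs one short explicit computation but yields a more transparent companion matrix; the paper's route avoids any new computation at the price of the slightly less obvious conjugation trick. Either is acceptable.
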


\begin{proof}
  Up to multiplying by a common denominator we can assume that $A, B, C ,D$ are polynomials.
  If $A\neq 0$ we can assume $B=C=0$ by Lemma~\ref{lem:22KSomeObservations}\ref{item:22KSomeObservations--1}.
  By Lemma~\ref{lem:22KSomeObservations}\ref{item:22KSomeObservations--5}, either $b=c=0$ and we are in the diagonal case, or $a=d=0$.
  In the latter, Lemma~\ref{lem:22KSomeObservations}\ref{item:22KSomeObservations--3} implies that $\left( \smallmat{ 0 & b\\ c & 0 }, \smallmat{ 0 & xA\\ D & 0 } \right)$ is invariant and we are in the antidiagonal case.

  If $B\neq0$ we proceed similarly:
  We can assume $A=D=0$ and find that either $a=d=0$ and we are in the antidiagonal case, or $b=c=0$.
  In the latter, the matrices are of the form
  $\left( \smallmat{ a & 0\\ 0 & d }, \smallmat{ 0 &  B\\ C & 0 } \right)$.
  Lemma~\ref{lem:22KSomeObservations}\ref{item:22KSomeObservations--3} implies that the antidiagonal $\left( \smallmat{ 0 & d\\ a & 0 }, \smallmat{ 0 & xC \\ B & 0 } \right)$ is invariant, but so also $\left( \smallmat{
    d & 0\\ 0 & a } , \smallmat{ xC & 0\\ 0 & B} \right)$ is, by Lemma~\ref{lem:22KSomeObservations}\ref{item:22KSomeObservations--2}.
  Note that this corresponds to the birational map $f\colon(x,y)\mapsto (\frac{dx}{a}, \frac{xC(x)y}{B})$.
  Finally, conjugating this with the Galois-invariant $\left(\left(\begin{smallmatrix}
0 & 1 \\ 1 & 0\\
\end{smallmatrix}\right), \left(\begin{smallmatrix}
0 & 1 \\ 1 & 0\\
\end{smallmatrix}\right)\right)$ (Lemma~\ref{lem:22KSomeObservations}\ref{item:22KSomeObservations--4}), that is, the map $\iota\colon(x,y)\mapsto (1/x,1/y)$, we find
\begin{align*}
  \iota\circ f\circ \iota(x,y) = \iota(f(1/x,1/y))
  &=\iota \Bigl( \frac{d}{ax}, \frac{C(1/x)}{xB(1/x)y} \Bigr) = \Bigl(\frac{ax}d, \frac{xB(1/x)y}{C(1/x)} \Bigr).
\end{align*}
So $\left( \smallmat{ a & 0\\ 0 & d } , \smallmat{  B(1/x)x & 0\\ 0 & C(1/x)} \right)$ is $\eps\Gal(K/\k)\eps^{-1}$-invariant and we are in the diagonal case again.
\end{proof}

We are now ready for the proof of Proposition~\ref{prop:DescriptionOfJonq22}.

\begin{proof}[Proof of Proposition~\textup{\ref{prop:DescriptionOfJonq22}}]
  First of all, observe that maps of the form $f(x,y)=(\frac{1}{\mu x},\frac{1}{\lambda x})$ are indeed involutions, and that the conditions on $\lambda,\mu$ assert that $f$ is $g_\gamma$-invariant (Lemma~\ref{lem:actiong}), and also $h_\gamma$-invariant if $L\neq L'$ (Lemma~\ref{lem:actionh}).
  So indeed $\eps H_{L,L'}\eps^{-1}\subset \Bir_\k(\p^2,\pi_{L,L'})$.

  By Corollary \ref{cor:Jonq22BirVsPGL}, we have
    \[
      \Bir_\k(\p^2,\pi)\simeq \left(\PGL_2(K)\ltimes \PGL_2(K(x))\right)^{\eps\Gal(K/\k)\eps^{-1}}
    \]
    and $\Aut_\k^{\pi_{L,L'}}(\p^1)$ corresponds to the image of the projection onto $\PGL_2(K)$.
    Corollary~\ref{c:AlwaysAntidiagOrDiag} gives that diagonal and antidiagonal elements surject onto the image of the projection.
    The diagonal elements are generated by antidiagonal ones, namely
    \begin{align*}
      \left(\smallmat{ \mu & 0\\ 0 & 1 }, \smallmat{ P(x) & 0\\ 0 & 1 } \right) =
      \left(\smallmat{ 0 & 1\\ 1 & 0 }, \smallmat{ 0 & 1\\ 1 & 0 } \right)
      \circ \left( \smallmat{ 0 & 1\\ \mu & 0 }, \smallmat{ 0 & 1\\ P(x) & 0 } \right)
      \end{align*}
      is a composition of two $\eps\Gal(K/\k)\eps^{-1}$-invariant antidiagonal elements (Lemma~\ref{lem:22KSomeObservations}\ref{item:22KSomeObservations--2} and \ref{item:22KSomeObservations--4}).
    Corollary~\ref{cor:ExplicitEquationsAntidiag} describes the conditions on the entries of the antidiagonal matrices $\left(\smallmat{ 0 & 1\\ a & 0 }, \smallmat{ 0 & 1\\ P(x) & 0 } \right)$ to be $\eps\Gal(K/\k)\eps^{-1}$-invariant.
    In particular, it says that evaluating $P$ at any $a\in\k^*$ gives again a $\eps\Gal(K/\k)\eps^{-1}$-invariant antidiagonal matrix $\left(\smallmat{ 0 & 1\\ a & 0 }, \smallmat{ 0 & 1\\ P(a) & 0 } \right)$. Setting $P(a)=\lambda\in K$ gives the conditions $\mu^g=\mu$, $\lambda\lambda^g=\mu$, and if $L\neq L'$ also $\mu\mu^h=1=\lambda\lambda^h$. Note that $\mu=\lambda\lambda^g$ implies $\mu^g=\mu$, and so $\lambda\lambda^h=1$ implies $\mu\mu^h=1$.
    Hence $H_{L,L'}$ surjects onto the image of the projection to $\PGL_2(K)$ and the statement follows.
\end{proof}

\section{Irreducible elements of Del Pezzo type}
\label{sec:delPezzo}

In this last section we study irreducible elements of Del Pezzo type, whose possible factorization types in terms of Sarkisov links were given in Table \ref{tab:del Pezzo type}.
First we identify some ``easy cases'' where we can lower the Sarkisov length by composing by a quadratic, Geiser or Bertini involution.
Then for the remaining ``hard cases'' we produce factorizations into involutions by using elementary relations between Sarkisov links.
Most of these factorizations rely on an assumption about existence of points in general position, and in \S\ref{sec:existence} we show that these conditions are satisfied.
Finally in \S\ref{sec:proof} we put everything together and finish the proof of Theorem \ref{t:main}.

\subsection{Easy simplifications}

First, we discuss quadratic involutions, using the following result:

\begin{lemma}
\label{l:quadratic}
Let $f \in \Bir_{\alg \k}(\P^2)$ be a quadratic map with three proper base points $p_1, p_2, p_3$.
For each $\{i,j,k\} = \{1,2,3\}$, let $q_k = f(L_{ij})$ where $L_{ij} \subset \P^2$ is the line through $p_i, p_j$.
Then:
\begin{enumerate}
\item \label{quad:1}
There exists $\alpha \in \Aut_{\alg \k}(\P^2)$ that sends $q_i$ to $p_i$ for each $i = 1,2,3$.
\item \label{quad:2}
For any such automorphism $\alpha$, the quadratic map $\alpha \circ f$ is an involution.
\item \label{quad:3}
If the set $\{p_1, p_2, p_3\}$ is invariant under the Galois action, then in~\ref{quad:1} we can choose $\alpha$ in $\Aut_{\k}(\P^2)$.
\end{enumerate}
\end{lemma}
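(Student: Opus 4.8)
The plan is to treat (1) and (2) together by reducing to a normal form over $\bk$, and to establish the arithmetic refinement (3) by a descent argument, which I expect to be the only genuine difficulty. First I would conjugate the whole configuration by a suitable element of $\PGL_3(\bk)$ so that $p_1,p_2,p_3$ become the coordinate points $e_1,e_2,e_3$; this is harmless for (1) and (2), since both the defining property of $\alpha$ in (1) and the property that $\alpha\circ f$ be an involution in (2) are invariant under simultaneous conjugation. As $f$ is a genuine quadratic map its three proper base points are in general position, so its homaloidal net is the net of conics through $e_1,e_2,e_3$, spanned by $yz,zx,xy$; hence $f=A\circ\sigma$ for some $A\in\PGL_3(\bk)$, where $\sigma$ is the standard involution. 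Since $\sigma(L_{ij})=e_k$, we get $q_k=f(L_{ij})=A(e_k)$. This already gives (1): $\alpha=A^{-1}$ sends each $q_i$ to $p_i$, and in fact the solutions of (1) are exactly the $\alpha\in T\cdot A^{-1}$, where $T\subset\PGL_3$ is the diagonal torus, i.e.\ the pointwise stabilizer of $e_1,e_2,e_3$.

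For (2), any such $\alpha$ gives $\alpha\circ f=(\alpha A)\circ\sigma=D\circ\sigma$ with $D=\alpha A=\mathrm{diag}(d_1,d_2,d_3)\in T$, and a direct computation shows
\[
(D\sigma)^2[x:y:z]=[d_1d_2d_3\,x:d_1d_2d_3\,y:d_1d_2d_3\,z]=[x:y:z],
\]
so $\alpha\circ f$ is a nontrivial, hence genuine, involution. Undoing the conjugation yields (1) and (2) as stated.

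The subtle point is (3), where the normal form is unavailable: the $p_i$ are typically not $\k$-rational, so no element of $\PGL_3(\k)$ carries them to the coordinate points, and the argument must be made intrinsically. I would first record that $\{q_1,q_2,q_3\}$ is, like $\{p_1,p_2,p_3\}$, Galois-stable with the same induced permutation action of $\Gal(\bk/\k)$; this holds once $f$ is defined over $\k$, which is the situation in which the lemma is applied (Galois-invariance of $\{p_i\}$ alone does not suffice, as one checks with $f=A\sigma$, $p_i=e_i$, $A\notin\PGL_3(\k)$). Granting this, fix a solution $\alpha_0\in\PGL_3(\bk)$ of (1); then $\alpha_0^{\rho}$ is again a solution for every $\rho$, so the cocycle $c_\rho=\alpha_0^{-1}\alpha_0^{\rho}$ fixes each $q_i$ and thus takes values in the two-dimensional $\k$-torus $T'=\mathrm{Stab}_{\PGL_3}(q_1,q_2,q_3)\cong(\mathrm{Res}_{E/\k}\mathbb{G}_m)/\mathbb{G}_m$, with $E$ the cubic étale $\k$-algebra of the configuration. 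A solution of (3) is exactly a trivialization of $c_\rho$ inside $T'$, i.e.\ the vanishing of $[c_\rho]\in H^1(\k,T')$.

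This vanishing is the main obstacle, and I would close it as follows. By construction $c_\rho$ is a coboundary already in $\PGL_3$, so its image in $H^1(\k,\PGL_3)$ is trivial. Meanwhile Hilbert 90 for $\mathrm{Res}_{E/\k}\mathbb{G}_m$ gives an injection $H^1(\k,T')\hookrightarrow\mathrm{Br}(\k)$, compatible, via $\mathrm{Res}_{E/\k}\mathbb{G}_m\hookrightarrow\GL_3$, with the boundary $H^1(\k,\PGL_3)\rightarrow\mathrm{Br}(\k)$ of $1\rightarrow\mathbb{G}_m\rightarrow\GL_3\rightarrow\PGL_3\rightarrow1$. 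Chasing these two boundary maps forces $[c_\rho]=0$; trivializing $c_\rho=s^{-1}s^{\rho}$ with $s\in T'(\bk)$ and replacing $\alpha_0$ by $\alpha_0 s^{-1}$ produces the required $\alpha\in\PGL_3(\k)$. A more geometric variant is to blow up the $\k$-subscheme $\{p_1,p_2,p_3\}$ to a degree-$6$ del Pezzo surface and observe that the standard Cremona involution descends because this surface is $\k$-rational; in either approach the crux is the triviality of a torus-torsor rather than anything about the projective geometry.
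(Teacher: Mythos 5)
Your proof is correct. For parts (1)--(2) you and the paper do essentially the same thing in different languages: the paper lifts $\alpha\circ f$ to an automorphism of the degree-$6$ Del Pezzo surface obtained by blowing up $p_1,p_2,p_3$, uses $\Aut_{\bk}(X)=(\mathbb{G}_m)^2\rtimes(S_3\times\Z/2)$ and the action on the hexagon of $(-1)$-curves to identify $\alpha\circ f$ with a map $[ayz:bxz:xy]$, and observes that all such maps are involutions; your normal form $\alpha\circ f=D\circ\sigma$ with $D$ in the diagonal torus is exactly the same family of maps, read off from the homaloidal net instead of from the surface, and your computation $(D\sigma)^2=\id$ is the paper's closing observation. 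Where you genuinely diverge is (3): the paper simply invokes \cite[Lemma 6.11]{SchneiderRelations} (a Galois-equivariant bijection between Galois-invariant sets in general position is realized by an element of $\PGL_3(\k)$), whereas you reprove this by descent --- the solutions of (1) form a torsor under the $\k$-torus $T'=(\mathrm{Res}_{E/\k}\mathbb{G}_m)/\mathbb{G}_m$, and its class vanishes because $H^1(\k,T')$ injects into $\mathrm{Br}(\k)$ compatibly with the boundary map of $1\to\mathbb{G}_m\to\GL_3\to\PGL_3\to 1$, where the cocycle $\alpha_0^{-1}\alpha_0^{\rho}$ is visibly a coboundary. This is correct and self-contained, and it generalizes readily, at the cost of Galois-cohomological machinery that the citation avoids. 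Your side remark is also well taken and worth recording: as literally stated, (3) needs the implicit extra hypothesis that $\{q_i\}$ is Galois-invariant with the same induced permutation as $\{p_i\}$ --- equivalently that $f$ commutes with the Galois action, which holds in every application since $f$ is there defined over $\k$, and which is precisely the hypothesis of the cited lemma; indeed any $\alpha\in\PGL_3(\k)$ with $\alpha(q_i)=p_i$ forces $\{q_i\}=\alpha^{-1}(\{p_i\})$ to be Galois-stable, so without this hypothesis the conclusion of (3) can fail.
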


\begin{proof}
Assertion \ref{quad:1} is just the classical fact that $\PGL_3(\alg \k)$ acts transitively on triple of non colinear points. For \ref{quad:3}, see \cite[Lemma 6.11]{SchneiderRelations}.

To prove \ref{quad:2}, up to conjugation we can assume $p_1=[1:0:0]$, $p_2 = [0:1:0]$, $p_3 = [0:0:1]$.
Let $\pi\colon X \to \P^2$ be the blow-up of $p_1,p_2,p_3$.
Since by construction $\alpha \circ f$ has the same base points as its inverse, we have a commutative diagram
\[
\begin{tikzcd}
X \ar[d, "\pi",swap] \ar[r, "g"] & X  \ar[d, "\pi"]\\
\P^2 \ar[r,"\alpha \circ f", dashed] & \P^2
\end{tikzcd}
\]
with $g \in \Aut_\bk(X)$.
The automorphism group of the Del Pezzo surface $X$ of degree 6 is well known \cite[Theorem 8.4.2]{DolgachevClassical}:
\[
\Aut_\bk(X) = (\mathbb{G}_m)^2 \rtimes (S_3 \times \Z/2)
\]
where  $(\mathbb{G}_m)^2$ is the standard toric action $[ax:by:z]$ with $a,b\in {\bk}^*$, the symmetric group $S_3$ permutes the homogeneous coordinates and the generator of $\Z/2$ is the standard quadratic involution $\sigma = [yz:xz:xy]$.
In particular the action of $g$ on the six $(-1)$-curves of $X$ uniquely determines $g$ up to composition by an element of $(\mathbb{G}_m)^2$.
Finally since $p_k=\alpha\circ f(L_{ij})$, we observe that $g$ has the same action as $\sigma$, and that for any $(a,b) \in (\mathbb{G}_m)^2$ the map $[ayz:bxz:xy]$ is an involution, which ends the proof.
\end{proof}

\begin{corollary}
\label{c:quad_involution}
Let $f\colon \P^2 \rat \P^2$ be a map of minimal factorization type
\begin{enumerate}[\textup{(Q}$1$\textup{)}]
\item\label{Q33} $\P^2 \link33 \P^2$, or
\item\label{Q21} $\P^2 \link21 \Dl_8 \link12 \P^2$.
\end{enumerate}
Then there exists $\alpha \in \Aut_\k(\P^2)$ such that $\alpha \circ f$ is a quadratic involution.
\end{corollary}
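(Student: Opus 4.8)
The plan is to show that in both cases $f$ is a quadratic map with three proper base points whose underlying set is Galois-invariant, and then to conclude by Lemma~\ref{l:quadratic}: parts~\ref{quad:1} and~\ref{quad:2} produce an automorphism over $\bk$ turning $f$ into a quadratic involution, and part~\ref{quad:3} upgrades this automorphism to one defined over $\k$ precisely because the base locus is Galois-invariant. Since $\alpha$ and $f$ are then both defined over $\k$, the product $\alpha\circ f$ is the desired quadratic involution in $\Bir_\k(\P^2)$.

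In case~\ref{Q33} there is almost nothing to do. By definition of the type~II link over a point, the factorization $\P^2 \link33 \P^2$ is the blow-up of a general $3$-point $p=\{p_1,p_2,p_3\}$ followed by the contraction of the strict transforms of the three lines $L_{ij}$; thus $f$ is the quadratic transformation based at $p$. As $p$ is a single Galois orbit it is Galois-invariant, and as it is general the three points are proper and in general position. So Lemma~\ref{l:quadratic} applies directly.

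The main work is case~\ref{Q21}, where I would identify the three base points of the composite $f=\phi_2\circ\phi_1$ of the two links $\phi_1\colon\P^2\link21\Dl_8$ and $\phi_2\colon\Dl_8\link12\P^2$. First I record the geometry of $\phi_1$: by definition of $\Dl_8$, it blows up a general $2$-point $\{p_1,p_2\}$ and contracts the strict transform of the line $L_{12}$, so over $\bk$, with $p_1=[1:0:0]$ and $p_2=[0:1:0]$, it can be written as $[x:y:z]\mapsto([x:z],[y:z])$, sending $L_{12}=\{z=0\}$ to a point of $X=\P^1\times\P^1$. The second link $\phi_2\colon X\rat\P^2$ is the blow-up of a general rational point $r\in X(\k)$ followed by the contraction of the two rulings through $r$, and is therefore given by the linear system of curves of bidegree $(1,1)$ passing through $r$. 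The key step is to pull this system back by $\phi_1$: a curve of bidegree $(1,1)$ pulls back to a conic through $p_1,p_2$, and the condition of passing through $r$ becomes the condition of passing through the point $p_0:=\phi_1^{-1}(r)$. Hence $f$ is defined by the linear system of conics through $p_0,p_1,p_2$, i.e. it is the quadratic transformation based at $\{p_0,p_1,p_2\}$.

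It then remains to check that this base locus has the required properties: $p_0$ is rational because $r$ is rational and $\phi_1$ is defined over $\k$, while $\{p_1,p_2\}$ is a $2$-point, so $\{p_0,p_1,p_2\}$ is Galois-invariant; and by generality of $r$ the point $p_0$ is proper, distinct from $p_1,p_2$, and not on $L_{12}$, so the three base points are in general position and $f$ is genuinely quadratic (consistent with $\sl(f)=2$). Lemma~\ref{l:quadratic} then yields $\alpha\in\Aut_\k(\P^2)$ with $\alpha\circ f$ a quadratic involution. The only genuine subtlety, and the point I would treat with care, is this bookkeeping in case~\ref{Q21}: tracking the rationality of the third base point $p_0$ and extracting the generality needed to keep the three points in general position. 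Everything else is formal, and case~\ref{Q33} is immediate.
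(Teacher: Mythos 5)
Your proposal is correct and follows essentially the same route as the paper: in both cases identify the three base points (the general $3$-point for (Q1); the $2$-point of the first link together with the rational point corresponding to the point blown up by the second link for (Q2)), note they form a Galois-invariant non-collinear triple, and conclude by Lemma~\ref{l:quadratic}. The paper's proof is just a terser version of your argument; the only slight difference is that the paper implicitly uses minimality of the factorization to guarantee non-collinearity in case (Q2), whereas you phrase it as generality of the blown-up rational point, which amounts to the same thing.
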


\begin{proof}
In the case \ref{Q33}, we can directly apply Lemma \ref{l:quadratic}.
In the case \ref{Q21}, we take $p = \{p_1, p_2\}$ the $2$-point blown-up by the first link, and $p_3$ the rational point on $\P^2$ corresponding to the point blown up by the second link. Note that they are not collinear.
Again we conclude by Lemma \ref{l:quadratic}.
\end{proof}

Now we discuss Geiser and Bertini involutions, see \cite[8.7.2 and 8.8.2]{DolgachevClassical} for details.
Let $X$ be a Del Pezzo surface of degree $1$ (resp. of degree $2$).
Then the linear system $|-2K_X|$ (resp. $|-K_X|$) corresponds to a 2 to 1 morphism $X \to \P^2$, and the deck transformation $\sigma\colon X \to X$ is an automorphism of $X$ called a Bertini involution (resp. a Geiser involution).
The involution $\sigma$ acts on the Néron--Severi space $\Pic(X) \otimes \R$ preserving the class of $K_X$, and it acts as minus identity on $(K_X)^\perp$.

\begin{lemma}\label{l:bertini_geiser}
  Let $X$ be a Del Pezzo surface of degree $1$ \parent{resp. of degree $2$}, and let $\sigma \in \Aut(X)$ be the Bertini involution \parent{resp. the Geiser involution}.
  Then for any collection $\{C_i\}_{i\in I}$ of rational curves with $C_i^2\in \{-1,0\}$ and $C_i\cdot C_j=0$ for $i\neq j$, we have $\sigma(C_i) \neq C_j$ for all $i\in I$.
\end{lemma}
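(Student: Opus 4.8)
The plan is to work entirely in the Néron--Severi space $N = \Pic(X)\otimes\R$ with its intersection form, and to exploit the description of $\sigma$ recalled just above the statement: $\sigma$ fixes the class $K_X$ and acts as $-\mathrm{id}$ on $(K_X)^\perp$, and being induced by an automorphism it preserves the intersection form. Writing $d = K_X^2 \in \{1,2\}$, which is strictly positive, the line $\R K_X$ is nondegenerate, so I would use the orthogonal decomposition $N = \R K_X \oplus (K_X)^\perp$ and express every class along it.

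I would argue by contradiction, assuming $\sigma(C_i) = C_j$ for some $i,j \in I$ and passing to the corresponding divisor classes. Since $\sigma$ is an isometry fixing $K_X$, this first forces $C_j^2 = C_i^2 =: s \in \{-1,0\}$ and $C_j \cdot K_X = C_i \cdot K_X =: k$. Decomposing $C_i = \tfrac{k}{d}K_X + u_i$ and $C_j = \tfrac{k}{d}K_X + u_j$ with $u_i, u_j \in (K_X)^\perp$, the relation $\sigma(C_i)=C_j$ translates into $u_j = -u_i$. Using $u_i^2 = s - k^2/d$, a short computation then gives
\[
C_i \cdot C_j = \frac{k^2}{d} + u_i\cdot u_j = \frac{k^2}{d} - u_i^2 = \frac{2k^2}{d} - s.
\]

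To close the argument I would invoke the ampleness of $-K_X$ (as $X$ is del Pezzo): for the effective curve $C_i$ one has $-K_X \cdot C_i > 0$, hence $k < 0$ and in particular $k \neq 0$. For $i \neq j$ the hypothesis $C_i\cdot C_j = 0$ yields $2k^2 = sd$, which is impossible since the left-hand side is strictly positive while $sd \le 0$ for $s\in\{-1,0\}$ and $d>0$. For $i = j$ the two relations $u_j = u_i$ and $u_j = -u_i$ force $u_i = 0$, so $C_i = \tfrac{k}{d}K_X$ and $k^2 = sd$, which is the same contradiction. This disposes of both cases, so $\sigma(C_i) \neq C_j$ for all $i,j \in I$.

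The argument presents no substantial obstacle once the orthogonal decomposition is set up; the single delicate point is the sign of $k = C_i \cdot K_X$, and the key observation is that ampleness of the anticanonical class is exactly what excludes the borderline solution $s=0$, $k=0$. I would emphasize that the final inequality does not depend on the precise value of $d$, so the Bertini case ($d=1$) and the Geiser case ($d=2$) are handled by one and the same computation.
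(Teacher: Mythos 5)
Your proof is correct and rests on the same two ingredients as the paper's own argument: the eigenspace decomposition of $\sigma$ (it fixes $K_X$ and acts as $-\id$ on $(K_X)^\perp$) together with the positivity $K_X^2>0$. The paper reaches the contradiction slightly faster by noting that $C_i+C_j$ is a $\sigma$-fixed class, hence a multiple of $K_X$ with positive square, while $(C_i+C_j)^2=C_i^2+C_j^2\le 0$; your explicit computation in the orthogonal decomposition (with ampleness of $-K_X$ playing the role of the observation that a nonzero effective class cannot be numerically trivial, and correctly handling the borderline case $s=0$, $k=0$) amounts to the same argument in coordinates.
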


\begin{proof}
As mentioned above, the class of a curve $C \in \Pic(X)$ is fixed by $\sigma$ if and only if $C$ is a multiple of $-K_X$.
In particular, any curve that is fixed by $\sigma$ must have positive self-intersection.
So the assumption $C_i^2\leq 0$ implies directly that $\sigma(C_i)\neq C_i$ for $i=1,\ldots,r$.
Now assume by contradiction that $\sigma(C_i)=C_j$ for some indexes $i\neq j$.
Then $C_i+C_j$ is fixed by the involution $\sigma$, and since $C_i\cdot C_j=0$ by assumption, we get $(C_i+C_j)^2=C_i^2+C_j^2\leq0$, hence $C_i+C_j$ is not fixed by $\sigma$, a contradiction.
\end{proof}

\begin{corollary}
\label{c:central_symmetry}
Let $\Pl$ be the piece associated to a Del Pezzo surface of degree $1$ \parent{resp. of degree $2$}.
Then the Bertini involution \parent{resp. the Geiser involution} acts on the piece without fixing any proper face.
If the piece is $1$- or $2$-dimensional, this means that the involution acts on the piece as $-\id$ \parent{``central symmetry''}.
\end{corollary}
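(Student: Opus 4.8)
The plan is to lift the action of $\sigma$ from $X$ to the piece $\Pl$, and then to rule out fixed proper faces one type at a time by feeding the relevant curves into Lemma~\ref{l:bertini_geiser}.

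First I would make the action precise. Each face of $\Pl$ is a fibration $X'/B'$ dominated by $X/\pt$, recorded by a contraction $\eta\colon X\to X'$. Since $\sigma\in\Aut(X)$, precomposition $\eta\mapsto \eta\circ\sigma$ sends contractions to contractions and respects the domination relation, so it induces a combinatorial symmetry of the polytope $\Pl$; as $\sigma^2=\id$ this symmetry has order dividing $2$. By definition, $\sigma$ fixes the face $X'/B'$ if and only if $\eta$ and $\eta\circ\sigma$ define the same fibration, that is, if and only if $\sigma$ preserves the data (contracted curves, and base fibration) attached to the face. It therefore suffices to show that $\sigma$ preserves no proper face.

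Next I would examine a proper face, given by a fibration $X'/B'$ of rank $d<r$ with contraction $\eta\colon X\to X'$, and split according to Lemma~\ref{l:2_domination_type}. If $B'=\pt$, then $X'$ is again a del Pezzo surface and, because both $X$ and $X'$ are del Pezzo, $\eta$ contracts a nonempty collection of pairwise disjoint $(-1)$-curves $\{C_i\}_{i\in I}$, so that $C_i^2=-1$ and $C_i\cdot C_j=0$ for $i\neq j$. Were the face fixed, $\sigma$ would permute this collection, giving $\sigma(C_i)=C_j$ for some $i,j$, in direct contradiction with Lemma~\ref{l:bertini_geiser}. If instead $B'=\P^1$, then $X'/\P^1$ is a conic bundle, and the composite $X\to X'\to\P^1$ is a fibration whose general fiber is a rational curve $F\subset X$ with $F^2=0$. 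Fixing this face forces $\sigma$ to preserve the fibration, hence $\sigma(F)=F$ in $\Pic(X)$, again contradicting Lemma~\ref{l:bertini_geiser} applied to the single curve $F$. Thus no proper face is fixed.

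Finally I would extract the central symmetry statement. Since $\Pl$ has dimension $\ge 1$ it has proper faces, so $\sigma$ acts nontrivially and hence as an involution of $\Pl$. If $\Pl$ is a segment, its two vertices are the two rank $1$ fibrations dominated by $X$; as $\sigma$ fixes neither, it swaps them, which is exactly the central symmetry $-\id$. If $\Pl$ is a polygon, $\sigma$ is an order $2$ element of its dihedral symmetry group that fixes no vertex and no edge; since every reflection fixes at least one vertex or one edge, the only remaining possibility is the half-turn, namely $-\id$. I expect the crux to be the middle paragraph: pinning down exactly which curve configuration encodes a given face (the contracted $(-1)$-curves when $B'=\pt$, the fiber class when $B'=\P^1$), verifying that these meet the hypotheses $C_i^2\in\{-1,0\}$ and $C_i\cdot C_j=0$ of Lemma~\ref{l:bertini_geiser}, and translating ``$\sigma$ fixes the face'' into ``$\sigma$ preserves the configuration''. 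With this dictionary in place, the invocation of Lemma~\ref{l:bertini_geiser} and the concluding combinatorial argument are routine.
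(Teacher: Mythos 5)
Your proof is correct and follows essentially the same route as the paper: encode each proper face by a configuration of curves on $X$ satisfying the hypotheses of Lemma~\ref{l:bertini_geiser}, deduce that $\sigma$ cannot preserve any such configuration, and finish with the Euclidean/dihedral argument on the interval or polygon. The only (harmless) difference is that you read the relevant curves directly off the contraction $X \to X'$ (or off the fiber class when $B'=\P^1$), whereas the paper collects them from the intermediate rank-$(n-1)$ fibrations sitting between $X$ and the given face.
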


\begin{proof}
Let $X$ be a Del Pezzo surface of degree $1$, or $2$, of Picard rank $n\geq 2$.
So $X/\mathrm{pt}$ is a rank $n$ fibration giving rise to an $n-1$-dimensional piece.
Let $r\leq n$ and let $Y/B$ be a rank $r$ fibration that is dominated by $X$, giving rise to a $r-1$-dimensional face.
For $r=n$ there is nothing to prove.
Assume now $r<n$.
Consider all the rank $n-1$ fibrations $Z_j/B_j$ that dominate $Y$ and are dominated by $X$, with dominating maps $f_j\colon X\to Z_j$.
Consider the set $\{C_{j,i}\}_{i\in I_j}$ of curves on $X$ that are contracted by $f_j$ if $B_j=\pt$, or by $\pi_j\circ f_j$ where $\pi_j\colon Z_j\to B_j$ if $B_j=\p^1$.
So the set of $C_{j,i}$ is either a Galois orbit of pairwise disjoint $(-1)$-curves, or the transform of fibers of $Z_j\to B_j$.
Now take the union $\bigcup_j \{C_{j,i}\}_{i\in I_j}$. They satisfy $C_{ji}\cdot C_{j'i'}=0$ for $C_{ji}\neq C_{j'i'}$. So the curves $\{C_{ji}\}$ are as in Lemma~\ref{l:bertini_geiser}.
Hence, the Bertini respectively Geiser involution does not fix $Y$.
This proves that no proper face is fixed by the involution.

If the piece has dimension $n = 1$ or $2$, the combinatorial piece can be embedded into $\R^n$ as an interval or a regular polygon centered at the origin, so that any combinatorial bijection comes from the restriction of an Euclidean isometry of $\R^n$.
Since  $-\id$ is the only Euclidean involution preserving the polygon that does not fix any proper face, we get the result.
\end{proof}

We say that $\iota \in \Bir_\k(\P^2)$ is conjugate to a Bertini involution (resp. to a Geiser involution) if there exists a birational map $\phi \colon \P^2 \rat X$, where $X$ is a Del Pezzo surface of degree $1$ (resp. of degree $2$), such that
$\iota = \phi^{-1} \sigma \phi$ where $\sigma\colon X \to X$ is the Bertini involution (resp. the Geiser involution) on $X$.
The following result explains the meaning of ``Bertini simplification'' or ``Geiser simplification'' in the last column of Table \ref{tab:del Pezzo type}.

\begin{proposition}\label{prop:BertiniGeiserSimplification}
Let $f\in \Bir_\k(\P^2)$ be an irreducible map of minimal factorization type one of the following:
\begin{enumerate}[\textup{(G}$1$\textup{)}]
\item \label{G1}
$\P^2 \link77 \P^2$;
\item \label{G2}
$\P^2 \link21 \Dl_8 \link 66 \Dl_8 \link12 \P^2$;
\item \label{G3}
$\P^2 \link51 \Dl_5 \link 33 \Dl_5 \link15 \P^2$;
\item \label{G4}
$\P^2 \link21 \Dl_8 \link31 \Dl_6 \link 44 \Dl_6 \link13 \Dl_8 \link12 \P^2$;
\end{enumerate}
\begin{enumerate}[\textup{(B}$1$\textup{)}]
\item \label{B1} $\P^2 \link88 \P^2$;
\item \label{B2} $\P^2 \link21 \Dl_8 \link 77 \Dl_8 \link12 \P^2$;
\item \label{B3} $\P^2 \link51 \Dl_5 \link 44 \Dl_5 \link15 \P^2$;
\item \label{B4} $\P^2 \link21 \Dl_8 \link31 \Dl_6 \link 55 \Dl_6 \link13 \Dl_8 \link12 \P^2$.
\end{enumerate}
Then we can write $f = g  \circ \iota$, where $\iota$ is conjugate to a Geiser or Bertini involution, and $\sl(g) < \sl(f)$.
\end{proposition}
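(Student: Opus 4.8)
The plan is to treat the eight cases (G1)--(G4) and (B1)--(B4) uniformly, exploiting that each listed factorization has \emph{odd} Sarkisov length and hence a well-defined central link. First I would split a given minimal factorization at its middle link and write $f = \phi' \circ \chi \circ \phi$, where $\chi\colon W \rat W'$ is the central link $W \link{d}{d} W'$, while $\phi\colon \P^2 \rat W$ and $\phi'\colon W' \rat \P^2$ are its two halves, with $\sl(\phi) = \sl(\phi') = \tfrac{1}{2}(\sl(f)-1)$. The link $\chi$ is of type~II over a point, so it is realized by a rank $2$ fibration $Y/\pt$, that is, a Del Pezzo surface of Picard rank $2$ with two extremal contractions $\psi\colon Y \to W$ and $\psi'\colon Y \to W'$ satisfying $\chi = \psi' \circ \psi^{-1}$. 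A one-line computation $K_Y^2 = K_W^2 - d$ (blowing up a $d$-point drops $K^2$ by $d$) gives degree $2$ in the cases (G1)--(G4) and degree $1$ in the cases (B1)--(B4); thus $Y$ carries its canonical Geiser, resp. Bertini, involution $\sigma \in \Aut_\k(Y)$, which is automatically defined over $\k$.

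The heart of the argument is then to apply Corollary~\ref{c:central_symmetry}. The piece of $Y$ is the one-dimensional segment whose two vertices are the rank $1$ fibrations $W$ and $W'$, and $\sigma$ acts on it as the central symmetry, hence interchanges these two vertices. Concretely, I would argue that $\sigma$ swaps the two extremal rays of $Y$, so that $\psi \circ \sigma$ and $\psi'$ contract exactly the same curves; by uniqueness of the contraction of an extremal ray there is an isomorphism $\theta\colon W \xrightarrow{\sim} W'$ with $\psi' = \theta \circ \psi \circ \sigma$, and therefore
\[
\chi = \psi' \circ \psi^{-1} = \theta \circ \psi \circ \sigma \circ \psi^{-1}.
\]

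Finally I would set $\Phi = \psi^{-1} \circ \phi\colon \P^2 \rat Y$ and $\iota = \Phi^{-1} \circ \sigma \circ \Phi$, which is an involution conjugate to a Geiser (resp. Bertini) involution in the sense defined just before the statement, and is defined over $\k$ since $\sigma$ and $\Phi$ are. A direct substitution, using $\phi \circ \phi^{-1} = \id$ and $\sigma^2 = \id$, collapses the middle link:
\[
f \circ \iota = \phi' \circ \chi \circ \psi \circ \sigma \circ \psi^{-1} \circ \phi = \phi' \circ \theta \circ \phi =: g .
\]
Because $\theta$ is an isomorphism, concatenating the factorizations of $\phi$ and $\phi'$ (absorbing $\theta$) exhibits a factorization of $g$ of length $\sl(\phi) + \sl(\phi') = \sl(f) - 1$, so $\sl(g) < \sl(f)$. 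As $\iota$ is an involution this yields $f = g \circ \iota$, which is exactly the claim.

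The only genuinely delicate point is the second step: turning the abstract statement of Corollary~\ref{c:central_symmetry} (``$\sigma$ acts as $-\id$ on the piece'') into the concrete relation $\psi' = \theta \circ \psi \circ \sigma$ between the two contractions of $Y$. Everything else is bookkeeping: the degree computation is immediate, the splitting $f = \phi' \circ \chi \circ \phi$ is legitimate precisely because $\sl(f)$ is odd in each of the eight types, and the final identity is a formal cancellation. I expect the write-up to spend essentially all its substance on making the swap of contractions precise, after which the eight cases are dispatched simultaneously.
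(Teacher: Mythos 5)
Your proposal is correct and follows essentially the same route as the paper: identify the rank $2$ fibration (a Del Pezzo surface of degree $2$ in the Geiser cases, degree $1$ in the Bertini cases) dominating the central link, conjugate its Geiser or Bertini involution back to $\P^2$ through the first half of the factorization, and observe that composing with it cancels the middle link. The only differences are presentational: you make explicit, via Corollary \ref{c:central_symmetry}, the fact that $\sigma$ swaps the two extremal contractions of $Y$ (which the paper leaves implicit in its diagram), and your uniform use of the isomorphism $\theta$ absorbs the cases \ref{G1} and \ref{B1} that the paper treats by a separate remark.
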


\begin{proof}
The diagram below explains the proof for the case \ref{G3}.
By the minimality of the factorization, we have $\sl(f)=3$.
\[
\begin{tikzcd}
& X_2 \ar[d,"3",swap] \ar[r,"\sigma"]
& X_2  \ar[d,"3"] \ar[r,dashed,"\phi^{-1}"]
& \P^2  \ar[dd,dashed,bend left,"\;g \text{ with } \sl(g) \le 2"] \\
\P^2 \ar[r,dashed,"5\;1"] \ar[ur,dashed,"\phi"]
\ar[urrr,dashed,bend left= 2cm,"\iota\; = \;\phi^{-1}\sigma\phi"]
 \ar[drrr,dashed,bend right,"f"]
& X_5  \ar[r,dashed,"3\;3"]
& X_5  \ar[ur,dashed,"1\;5",swap] \ar[dr,dashed,"1\;5"]\\
&&& \P^2
\end{tikzcd}
\]
Here $\sigma\colon X_2 \to X_2$ is the Geiser involution on the fibration of rank $2$ dominating the link $X_5 \link33 X_5$.
The cases \ref{G2}, \ref{G4}, \ref{B2}, \ref{B3}, \ref{B4} are similar.

For the case \ref{G1}, we have to adapt slightly the argument, since here $g$ will be an automorphism that we usually keep hidden in the definition of Sarkisov link ``up to isomorphism''. 
We use the diagram
\[
\begin{tikzcd}
X_2 \ar[d,"3",swap] \ar[r,"\sigma"]
& X_2  \ar[d,"3"]  \\
\P^2  \ar[dashed]{r}{3\;3}[swap]{\iota} \ar[rrr, bend right,dashed,"f"] 
& \P^2  \ar[rr,dashed,"g \in \Aut(\P^2)"] && \P^2
\end{tikzcd}
\]
where again $\sigma$ is the Geiser involution on the Del Pezzo surface $X_2$.
The argument for the case \ref{B1} is similar.
\end{proof}

\subsection{Remaining cases}
\label{sec:remaining}

For the remaining cases of Table \ref{tab:del Pezzo type}, we show that any map $f$ with factorization type in the following list is generated by involutions, provided that certain generality conditions are satisfied:

\begin{enumerate}[label=\rm({\roman*})]
  \item\label{item:linkD5} $\p^2\link{5}{1} \Dl_5\link{1}{5} \p^2$,
  \item\label{item:linkD6} $\p^2\link{2}{1} \Dl_8\link{3}{1} \Dl_6\link{1}{3}\Dl_8\link{1}{2}\p^2$,
  \item\label{item:linkD8-44} $\p^2\link{2}{1} \Dl_8\link{4}{4}\Dl_8\link{1}{2}\p^2$,
  \item\label{item:linkD6-33} $\p^2\link{2}{1} \Dl_8\link{3}{1} \Dl_6\link{3}{3} \Dl_6\link{1}{3}\Dl_8\link{1}{2}\p^2$,
  \item\label{item:linkD6-22} $\p^2\link{2}{1} \Dl_8\link{3}{1} \Dl_6\link{2}{2} \Dl_6\link{1}{3}\Dl_8\link{1}{2}\p^2$,
  \item\label{item:loop} $\p^2\link{5}{1} \Dl_5\link{2}{5}\Dl_8\link{1}{2}\p^2$,
  \item\label{item:linkP2-66} $\p^2\link{6}{6} \p^2$.
\end{enumerate}

If there is an $a$-point and a $b$-point on $X\in \Dl$ such that the two points are in general position, we denote by $\piece{X}ab$ the piece of the corresponding relation involving the two links starting at $X$ associated to the blow-up of the $a$-point, respectively the $b$-point.
(This is coherent with the notation used in Appendix \ref{app:relations}).

\begin{lemma}[Case~\ref{item:linkD5}]\label{lem:linkD5}
Let $f$ be a map of minimal factorization type $\p^2\link{5}{1} \Dl_5\link{1}{5} \p^2$. The following holds:
  \begin{enumerate}
    \item\label{item:linkD5--fibering} Assume that the two rational points on $X_5$ are general. Then the map $f$ is a composition of two maps of Jonqui\`eres type $1$ (and an automorphism of $\p^2$ at the end).
    \item\label{item:linkD5--delPezzo} Assume that there is a $2$-point on $X_5$ that is general with the two rational points.
    Then the map $f$ is a composition of two Geiser involutions from Del Pezzo surfaces with Picard rank $3$ and a quadratic map~\ref{Q21}.
  \end{enumerate}
\end{lemma}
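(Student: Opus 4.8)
The plan is to use the elementary relations encoded by Del Pezzo pieces of Picard rank~$3$ in order to reroute the two Del Pezzo links of $f$. Throughout, write $r_1$ for the rational point on $X_5$ blown up by the first link $\P^2 \link{5}{1} \Dl_5$ (the point onto which the transform of the conic is contracted) and $r_2$ for the rational point blown up by the second link $\Dl_5 \link{1}{5} \P^2$. Thus $f$ is the composition of the two consecutive Sarkisov links sharing the vertex $X_5$, and the middle rank~$2$ fibrations $Y_1,Y_2$ are the blow-ups of $X_5$ at $r_1$, respectively $r_2$.

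For part~\ref{item:linkD5--fibering}, I would consider the surface $W$ obtained by blowing up both $r_1$ and $r_2$ on $X_5$. Since the two points are assumed general, $W$ is a Del Pezzo surface of degree $3$ and Picard rank $3$, so its associated $2$-piece $\piece{X_5}{1}{1}$ is a polygon encoding a single elementary relation. Both $Y_1$ and $Y_2$ occur as edges of this polygon (obtained from $W$ by contracting the exceptional curve of $r_2$, resp.\ $r_1$), so $f$ traverses the two edges joining the source $\P^2$, the vertex $X_5$, and the target $\P^2$. The elementary relation asserts that the product of the links around the whole boundary is an automorphism, hence $f$ equals, up to an automorphism of $\P^2$, the birational map realized by the complementary boundary path. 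The key step is then to read off this complementary path from the Galois action on the $(-1)$-curves of $W$: I expect it to visit Hirzebruch-surface vertices in $\Cl_8$ and to pass through a further $\P^2$-vertex, so that it splits as two subpaths, each of fibering type through $\Cl_8$. By Lemma~\ref{lem:FiberTypeAlways124} each subpath is, after an automorphism, of Jonquières type~$1$, which gives the claimed decomposition.

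For part~\ref{item:linkD5--delPezzo}, I would instead use the $2$-point $s$ on $X_5$ that is general with $r_1,r_2$. Blowing up $\{r_1,s\}$ (resp.\ $\{r_2,s\}$) on $X_5$ produces a Del Pezzo surface $Z_1$ (resp.\ $Z_2$) of degree $2$ and Picard rank $3$, carrying a Geiser involution $\sigma_i$. By Corollary~\ref{c:central_symmetry}, $\sigma_i$ acts on the $2$-piece of $Z_i$ as a central symmetry. Each $Z_i$ dominates $Y_i$ (by contracting the exceptional curve of the $2$-point $s$), so the link of $f$ through $Y_i$ is an edge of the polygon of $Z_i$, and the central symmetry sends this edge to the opposite one. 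Conjugating $\sigma_i$ by the marking $\P^2 \rat Z_i$ gives an involution $\iota_i \in \Bir_\k(\P^2)$ conjugate to a Geiser involution, and the plan is to fit the two symmetries together so that $\iota_2^{-1} \circ f \circ \iota_1^{-1}$ has all its degree~$5$ Del Pezzo links cancelled, leaving only links $\P^2 \link{2}{1} \Dl_8 \link{1}{2} \P^2$, i.e.\ a quadratic map $q$ of type~\ref{Q21} (Corollary~\ref{c:quad_involution}). This exhibits $f = \iota_2 \circ q \circ \iota_1$ as a composition of two Geiser involutions and a quadratic map.

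The main obstacle in both parts is the combinatorial bookkeeping of the pieces: one must correctly identify the vertices of the degree~$3$ (resp.\ degree~$2$) polygon by tracking the Galois orbits of $(-1)$-curves, and verify that the complementary path (resp.\ the image of the edge under the central symmetry) is exactly of the stated type. A secondary but essential point is that all the intermediate blow-ups must stay general, so that the surfaces $W$, $Z_1$, $Z_2$ are genuinely Del Pezzo and the claimed links exist; this is precisely where the generality hypotheses enter, their realizability over a given field being deferred to \S\ref{sec:existence}.
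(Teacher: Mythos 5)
Your part~\ref{item:linkD5--delPezzo} is essentially the paper's argument: the paper also takes the two degree-$2$ Del Pezzo surfaces obtained by adjoining the $2$-point to each rational point, i.e.\ the two pieces $\piece{X_5}12=\piece{\P^2}25$ glued along the edge $\Dl_5\link{2}{5}\Dl_8$ given by blowing up $s$, and uses Corollary~\ref{c:central_symmetry} to replace each half of $f$ by a Geiser involution, leaving a quadratic map of type~\ref{Q21} in the middle.

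Part~\ref{item:linkD5--fibering}, however, has a concrete gap. The single piece $\piece{X_5}11=\piece{\P^2}15$ is the heptagon of Lemma~\ref{l:pieceD511}: its vertices are the two copies of $\P^2$ (the source and target of $f$, both adjacent to $X_5$), the vertex $X_5$, and four conic bundles $\F_1,\F_0,\F_0,\F_1$ in $\Cl_8$. So the complementary boundary path you want to use is
\[
\P^2 \linkI{1} \F_1 \rat \F_0 \linkIV \F_0 \rat \F_1 \linkIII{1} \P^2,
\]
which does \emph{not} pass through a further $\P^2$-vertex; instead it contains the type~IV change-of-rulings link on $\F_0$ in the middle. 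Because of that type~IV link the composite does not preserve a single pencil of lines (the ruling is swapped halfway), so it is not of Jonquières type~$1$, and it does not split inside the heptagon into two fibering-type subpaths as you expect. The missing step is exactly the paper's second gluing: attach the pentagon $\piece{\P^2}11$ along the ruling-exchange edge of $\F_0$ (see Figure~\ref{fig:2pieces}). The boundary of that pentagon replaces the type~IV link by a path through its own $\P^2$-vertex (blow up a rational point of $\F_0$ and contract the two rulings through it, as in the proof of Proposition~\ref{p:irreducible}\ref{no_type_IV}), and only after this substitution does the path decompose, at that new $\P^2$, into two maps each preserving a pencil of lines, i.e.\ two Jonquières maps of type~$1$ up to an automorphism. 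Your appeal to Lemma~\ref{lem:FiberTypeAlways124} does not repair this by itself, since that lemma presupposes a factorization without type~IV links.
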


\begin{proof}
Since the factorization is minimal, the two rational points on $X_5$ are distinct.
  The assumption in \ref{item:linkD5--fibering} implies that the map $f$ lies inside the piece  \hyperref[P2_15]{$\piece{X_5}11 = \piece{\p^2}15$}.
It can be glued with the piece \hyperref[P2_11]{$\piece{\p^2}11$} along the edge corresponding to the exchange of the rulings of $\F_0$.
This gives a decomposition of $f$ into two maps of Jonqui\`eres type $1$, up to an automorphism: see Figure \ref{fig:2pieces}.

\begin{figure}
\begin{center}
\begin{tikzpicture}[font=\footnotesize]
\clip(-4,-4.2) rectangle (8,4.5);
\node[name=s, regular polygon, rotate=140, regular polygon sides=7,inner sep=1.872cm] at (0,0) {};
\draw[-,ultra thick,HotPink] (s.center) to["5"] (s.side 1);
\draw[-,ultra thick,black] (s.center) to[""] (s.side 2);
\draw[-,ultra thick,HotPink] (s.center) to["5"] (s.side 3);
\draw[-,ultra thick,black] (s.center) to[""] (s.side 4);
\draw[-,ultra thick,HotPink] (s.center) to["5"] (s.side 5);
\draw[-,ultra thick,RoyalBlue] (s.center) to["1"] (s.side 6);
\draw[-,ultra thick,RoyalBlue] (s.center) to["1"] (s.side 7);
\draw[-,ultra thick,black] (s.side 1) to[swap,""] (s.corner 2);
\draw[-,ultra thick,RoyalBlue] (s.side 1) to["1"] (s.corner 1);
\draw[-,ultra thick,HotPink] (s.side 2) to[swap,"5"] (s.corner 3);
\draw[-,ultra thick,HotPink] (s.side 2) to["5"] (s.corner 2);
\draw[-,ultra thick,black] (s.side 3) to[swap,""] (s.corner 4);
\draw[-,ultra thick,black] (s.side 3) to[""] (s.corner 3);
\draw[-,ultra thick,HotPink] (s.side 4) to[swap,"5"] (s.corner 5);
\draw[-,ultra thick,HotPink] (s.side 4) to["5"] (s.corner 4);
\draw[-,ultra thick,RoyalBlue] (s.side 5) to[swap,"1"] (s.corner 6);
\draw[-,ultra thick,black] (s.side 5) to[""] (s.corner 5);
\draw[-,ultra thick,RoyalBlue] (s.side 6) to[swap,"1"] (s.corner 7);
\draw[-,ultra thick,HotPink] (s.side 6) to["5"] (s.corner 6);
\draw[-,ultra thick,HotPink] (s.side 7) to[swap,"5"] (s.corner 1);
\draw[-,ultra thick,RoyalBlue] (s.side 7) to["1"] (s.corner 7);

\node[name=t, regular polygon, rotate=-90, regular polygon sides=5,inner sep=1.242cm] at (4.5,0) {};
\draw[-,ultra thick,RoyalBlue] (t.center) to["1"] (t.side 1);
\draw[-,ultra thick,black] (t.center) to[""] (t.side 2);
\draw[-,ultra thick,RoyalBlue] (t.center) to["1"] (s.side 3);
\draw[-,ultra thick,black] (t.center) to[""] (t.side 4);
\draw[-,ultra thick,RoyalBlue] (t.center) to["1"] (t.side 5);
\draw[-,ultra thick,black] (t.side 1) to[swap,""] (t.corner 2);
\draw[-,ultra thick,RoyalBlue] (t.side 1) to["1"] (t.corner 1);
\draw[-,ultra thick,RoyalBlue] (t.side 2) to[swap,"1"] (s.corner 4);
\draw[-,ultra thick,RoyalBlue] (t.side 2) to["1"] (t.corner 2);
\draw[-,ultra thick,RoyalBlue] (t.side 4) to[swap,"1"] (t.corner 5);
\draw[-,ultra thick,RoyalBlue] (t.side 4) to["1"] (s.corner 3);
\draw[-,ultra thick,RoyalBlue] (t.side 5) to[swap,"1"] (t.corner 1);
\draw[-,ultra thick,black] (t.side 5) to[""] (t.corner 5);

\node[fill=white,circle,inner sep=2] at (s.center) {${X_3}$};
\node[fill=white,inner sep=2] at (s.corner 1) {${\p^2}$};
\node[fill=white,inner sep=2] at (s.side 1) {${\F_1}$};
\node[fill=white,inner sep=2] at (s.corner 2) {${\F_1}_{\!/\p^1}$};
\node[fill=white,inner sep=2] at (s.side 2) {${X_3}_{/\p^1}$};
\node[fill=white,inner sep=2] at (s.corner 3) {${\F_0}_{\!/\p^1}$};
\node[fill=white,inner sep=2] at (s.side 3) {${\F_0}$};
\node[fill=white,inner sep=2] at (s.corner 4) {${\F_0}_{\!/\p^1}$};
\node[fill=white,inner sep=2] at (s.side 4) {${X_3}_{/\p^1}$};
\node[fill=white,inner sep=2] at (s.corner 5) {${\F_1}_{\!/\p^1}$};
\node[fill=white,inner sep=2] at (s.side 5) {${\F_1}$};
\node[fill=white,inner sep=2] at (s.corner 6) {${\p^2}$};
\node[fill=white,inner sep=2] at (s.side 6) {${X_4}$};
\node[fill=white,inner sep=2] at (s.corner 7) {${X_5}$};
\node[fill=white,inner sep=2] at (s.side 7) {${X_4'}$};

\node[fill=white,circle,inner sep=2] at (t.center) {${X_7}$};
\node[fill=white,inner sep=2] at (t.corner 1) {${\p^2}$};
\node[fill=white,inner sep=2] at (t.side 1) {${\F_1}$};
\node[fill=white,inner sep=2] at (t.corner 2) {${\F_1}_{\!/\p^1}$};
\node[fill=white,inner sep=2] at (t.side 2) {${X_7}_{/\p^1}$};
\node[fill=white,inner sep=2] at (t.side 4) {${X_7}_{/\p^1}$};
\node[fill=white,inner sep=2] at (t.corner 5) {${\F_1}_{\!/\p^1}$};
\node[fill=white,inner sep=2] at (t.side 5) {${\F_1}$};

\draw[dashed, bend right=3cm,->,shorten <= 4mm, shorten >= 4mm] (s.corner 1) to["Jonquières",swap] (t.corner 1);
\draw[dashed, bend left=3cm,<-, shorten <= 4mm, shorten >= 4mm] (s.corner 6) to["Jonquières"] (t.corner 1);
\end{tikzpicture}
\end{center}
\caption{The two pieces of Lemma \ref{lem:linkD5}\ref{item:linkD5--fibering}}
\label{fig:2pieces}
\end{figure}

  By the assumption in \ref{item:linkD5--delPezzo} the map $f$ fits into two pieces \hyperref[P2_25]{$\piece{X_5}12=\piece{\p^2}25$} glued along the edge $\Dl_5\link{2}{5}\Dl_8$ that meets the factorization of $f$.
By Corollary \ref{c:central_symmetry}, the central symmetry of \hyperref[P2_25]{$\piece{\p^2}25$} corresponds to a Geiser involution. Hence, the map $f$ is given by the Geiser involution of the first piece, followed by a quadratic map \ref{Q21} followed by the Geiser involution of the second piece.
\end{proof}

\begin{lemma}[Case~\ref{item:linkD6}]\label{lem:linkD6}
Let $f$ be a map of minimal factorization type $\p^2\link{2}{1} \Dl_8\link{3}{1} \Dl_6\link{1}{3}\Dl_8\link{1}{2}\p^2$.
Assume that the two rational points on $X_6$ are general. Then $f$ is the composition of two quadratic maps~\ref{Q21} and one quadratic map~\ref{Q33}.
\end{lemma}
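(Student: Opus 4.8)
The plan is to fit the entire map $f$ into a single elementary relation, in the spirit of the del Pezzo cases treated in Lemma~\ref{lem:linkD5}. Write $f=\ell_4\circ\ell_3\circ\ell_2\circ\ell_1$ with $\ell_1\colon\p^2\link{2}{1}\Dl_8$, $\ell_2\colon\Dl_8\link{3}{1}\Dl_6$, $\ell_3\colon\Dl_6\link{1}{3}\Dl_8$ and $\ell_4\colon\Dl_8\link{1}{2}\p^2$, and set $X_6\in\Dl_6$ for the middle surface. Read from the $X_6$ side, $\ell_2$ and $\ell_3$ are the blow-ups of the two rational points $r_1,r_2$ of $X_6$ (their inverses contract the two exceptional $3$-orbits). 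The hypothesis that $r_1,r_2$ are general is exactly the condition for the simultaneous blow-up $\piece{X_6}{1}{1}$ to be a rank~$3$ fibration, hence to furnish an elementary relation; this is the only place the generality assumption is used.

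First I describe this relation. Since $X_6$ has degree $6$, blowing up $r_1,r_2$ gives a del Pezzo surface of degree $4$ and Picard rank~$3$, whose piece $\piece{X_6}{1}{1}$ is, by the classification of Figure~\ref{diag:links} and the list in Appendix~\ref{app:relations}, the pentagon with boundary cycle
\[
\Dl_8\link{3}{1}\Dl_6\link{1}{3}\Dl_8\link{1}{2}\p^2\link{3}{3}\p^2\link{2}{1}\Dl_8 .
\]
The two edges $\link{3}{1}$ and $\link{1}{3}$ adjacent to the vertex $X_6=\Dl_6$ are precisely $\ell_2$ and $\ell_3$, so the middle stretch $\ell_3\circ\ell_2$ of $f$ runs along two consecutive edges. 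As the boundary of a piece composes to an automorphism, going the other way around the pentagon gives, up to an automorphism of $\Dl_8$,
\[
\ell_3\circ\ell_2=\bigl(\p^2\link{2}{1}\Dl_8\bigr)\circ\bigl(\p^2\link{3}{3}\p^2\bigr)\circ\bigl(\Dl_8\link{1}{2}\p^2\bigr).
\]

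Finally I substitute this into $f=\ell_4\circ(\ell_3\circ\ell_2)\circ\ell_1$ and regroup around the two intermediate copies of $\p^2$ that have appeared. The rightmost new link $\Dl_8\link{1}{2}\p^2$ combines with $\ell_1\colon\p^2\link{2}{1}\Dl_8$ (and the stray automorphism) into a single map $q_1$ of type~\ref{Q21}; the central $\p^2\link{3}{3}\p^2$ is a map $\sigma$ of type~\ref{Q33}; and the leftmost new link $\p^2\link{2}{1}\Dl_8$ combines with $\ell_4\colon\Dl_8\link{1}{2}\p^2$ into a map $q_2$ of type~\ref{Q21}. This gives $f=q_2\circ\sigma\circ q_1$, the desired composition of two quadratic maps~\ref{Q21} and one quadratic map~\ref{Q33}; the automorphisms of $\Dl_8$ and $\p^2$ are absorbed into the adjacent links without changing their type, since Sarkisov links are only defined up to isomorphism. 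The step I expect to be most delicate is the identification of $\piece{X_6}{1}{1}$ with exactly this pentagon: one must check that the complementary path consists of the links $\Dl_8\link{1}{2}\p^2$, $\p^2\link{3}{3}\p^2$, $\p^2\link{2}{1}\Dl_8$ (and not some longer cycle, which would produce extra quadratic factors), and that the regrouping really returns to $\p^2$ between consecutive factors, so that $q_1,\sigma,q_2$ are honest plane quadratic transformations.
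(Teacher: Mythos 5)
Your proposal is correct and follows essentially the same route as the paper: the generality of the two rational points places the middle stretch $\Dl_8\link{3}{1}\Dl_6\link{1}{3}\Dl_8$ inside the pentagonal piece $\piece{X_6}11=\piece{\p^2}23$ (Figure~\ref{P2_23}, Lemma~\ref{l:pieceD611}), whose complementary boundary path $\Dl_8\link{1}{2}\p^2\link{3}{3}\p^2\link{2}{1}\Dl_8$ is then regrouped with the outer links $\ell_1,\ell_4$ to produce the two maps of type~\ref{Q21} around the central map of type~\ref{Q33}. The only cosmetic discrepancy is your opening claim that the \emph{entire} map fits in one relation; as your own execution shows, only the middle two links do, and the outer links must be absorbed separately, exactly as in the paper.
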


\begin{proof}
By minimality of the factorization, the two rational points on $X_6$ are distinct, hence general by the assumption. This implies that the middle part of $f$, namely $\Dl_8\link{3}{1} \Dl_6\link{1}{3}\Dl_8$, is inside the piece \hyperref[P2_23]{$\piece{X_6}11=\piece{\p^2}23$}.
  Hence, the map $f$ is given by a quadratic map \ref{Q21}, followed by a quadratic map \ref{Q33}, followed by another quadratic map \ref{Q21}.
\end{proof}

\begin{lemma}[Case~\ref{item:linkD8-44}]\label{lem:linkD8-44}
Let $f$ be a map of minimal factorization type $\p^2\link{2}{1} \Dl_8\link{4}{4}\Dl_8\link{1}{2}\p^2$.
  \begin{enumerate}
    \item\label{item:linkD8-44--fibering} Assume that there exists a rational point on $X_8$ that is general with the $4$-point.
    Then the map $f$ is the product of (at most) two quadratic maps~\ref{Q21} and a map of fibering type in $\Bir_\k(\p^2,\pi_{F})$, where $F/\k$ is the residue field of the $4$-point.
    \item\label{item:linkD8-44--fibering22} Assume that there is a $2$-point that is general with the $4$-point on $X_8$. Then the map $f$ is the product of a Geiser involution from a Del Pezzo surface of Picard rank $3$ and a map of Jonqui\`eres type $2+2$ in $\Bir_\k(\p^2,\pi_{L,L'})$ (and an automorphism of $\p^2$), where $L/\k$ is the residue field of the $2$-point on $\p^2$ and $L'/\k$ is the residue field of the $2$-point on $X_8$.
    \item\label{item:linkD8-44--delPezzo} Assume that there exists a $3$-point on $X_8$ that is general with the $4$-point.
    Then the map $f$ is the product of a Bertini involution from a Del Pezzo surface of Picard rank $3$ and a Geiser simplification \ref{G4}.
  \end{enumerate}
\end{lemma}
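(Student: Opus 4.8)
The plan is to handle all three parts by the same device already used in Lemmas~\ref{lem:linkD5} and~\ref{lem:linkD6}: embed the middle link $X_8\link44 X_8'$ of $f$ as one edge of a $2$-dimensional piece, and read off the factorization from the elementary relation that the boundary of this piece encodes. Since the factorization of $f$ is minimal the blown-up $4$-point $q$ is general, and the respective hypothesis furnishes a general $k$-point ($k=1,2,3$) on $X_8$; blowing up $q$ together with this $k$-point produces a rank~$3$ fibration, namely a Del Pezzo surface of Picard rank~$3$ and degree $8-4-k\in\{3,2,1\}$, whose associated piece I write $\piece{X_8}{k}{4}$. First I would set up this piece, locate in its boundary the edge $X_8\link44 X_8'$ coming from $f$ together with the two outer links $\P^2\link21 X_8$ and $X_8'\link12\P^2$, and identify the remaining edges using the list of rank~$3$ Del Pezzo relations of Appendix~\ref{app:relations}.

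For part~\ref{item:linkD8-44--fibering22} the piece $\piece{X_8}{2}{4}$ has degree~$2$, so by Corollary~\ref{c:central_symmetry} its central symmetry is realised by a Geiser involution $\iota$ of the underlying Del Pezzo surface. The crucial geometric point is that $X_8$ is itself $\P^2$ blown up at a $2$-point with the joining line contracted, so that prescribing the further general $2$-point amounts, back on $\P^2$, to a pencil of conics through two $2$-points; this is exactly a fibration $\pi_{L,L'}$, with $L$ the residue field of the $2$-point on $\P^2$ and $L'$ that of the $2$-point on $X_8$. I would then check that the boundary half opposite, under the central symmetry, to the $X_8\link44 X_8'$ portion of $f$ passes through the $\Cl_6$ vertices and, reattached to the outer links, is a map of Jonquières type $2+2$ in $\Bir_\k(\P^2,\pi_{L,L'})$; composing with $\iota$ then gives $f=\iota\circ j\circ\alpha$ as required. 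Part~\ref{item:linkD8-44--delPezzo} is strictly parallel: now $\piece{X_8}{3}{4}$ has degree~$1$, its central symmetry is a Bertini involution, and the extra general $3$-point turns the end of the relation into the passage $\Dl_8\link31\Dl_6$, so that the opposite half is a map of factorization type~\ref{G4}; this yields $f=\iota\circ g$ with $\iota$ a Bertini involution and $g$ a Geiser simplification already treated in Proposition~\ref{prop:BertiniGeiserSimplification}.

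For part~\ref{item:linkD8-44--fibering} the piece $\piece{X_8}{1}{4}$ has degree~$3$ and hence carries no central symmetry, so here I would argue as in Lemma~\ref{lem:linkD5}\ref{item:linkD5--fibering} instead: the relation encoded by the boundary lets me replace the sub-path $X_8\link44 X_8'$ by the complementary path of the polygon, which runs through a conic-bundle vertex of type $\Cl_5$ and is therefore a Jonquières map of type $4$ for the pencil of conics $\pi_F$ through the $4$-point, $F$ being its residue field. The two outer links $\P^2\link21 X_8$ and $X_8'\link12\P^2$ are quadratic of type~\ref{Q21} up to an automorphism by Corollary~\ref{c:quad_involution}, and they absorb the two endpoints of this fibering path; this gives $f=Q_1\circ j\circ Q_2$ with $j\in\Bir_\k(\P^2,\pi_F)$, the possibility of a single quadratic accounting for the ``at most two'' in the statement.

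The part I expect to be genuinely delicate is not this gluing bookkeeping but the verification that the prescribed configurations are really \emph{general}, i.e.\ that blowing up $q$ together with the auxiliary $k$-point indeed yields a Del Pezzo surface of Picard rank~$3$ of the expected degree rather than a degenerate surface; this existence question over the given field $\k$ is exactly what is deferred to \S\ref{sec:existence}. A secondary, more combinatorial difficulty is to pin down precisely which half of each polygon is the fibering map (respectively the Geiser simplification) and to match the residue fields $F$, respectively $L,L'$, which rests on the explicit description of the degree~$1$, $2$ and $3$ pieces of Picard rank~$3$ in Appendix~\ref{app:relations}.
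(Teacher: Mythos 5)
Your proposal follows essentially the same route as the paper's proof: the middle link $X_8\link{4}{4}X_8'$ is embedded in the piece $\piece{X_8}14=\piece{\P^2}24$, $\piece{X_8}24$, or $\piece{X_8}34$ respectively, with the central symmetry of the degree-$2$ and degree-$1$ pieces (Geiser, resp.\ Bertini, via Corollary~\ref{c:central_symmetry}) giving parts \ref{item:linkD8-44--fibering22} and \ref{item:linkD8-44--delPezzo}, and with the complementary boundary path through the two $\Cl_5$ vertices giving the Jonquières type $4$ map in part \ref{item:linkD8-44--fibering}, the outer links of $f$ combining with the piece's exit links $\Dl_8\link12\P^2$ into at most two quadratic maps \ref{Q21}. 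You also correctly identify that the general-position hypotheses are the point deferred to \S\ref{sec:existence}, so there is nothing to add.
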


\begin{proof}
  For~\ref{item:linkD8-44--fibering} observe that the assumption implies that the middle part of $f$, namely $\Dl_8\link{4}{4}\Dl_8$ fits into a piece $\hyperref[P2_24]{\piece{X_8}14=\piece{\p^2}24}$. Two opposite vertices of this piece are $\p^2$, and the map between these two $\p^2$ is of fibering type 4 with residue field $F$.
  The beginning and the end of $f$, that is $\p^2\link{2}{1} \Dl_8$ and $\Dl_8\link{1}{2}\p^2$, are not necessarily contained in the piece. However, composing it with a quadratic map \ref{Q21} at the beginning and/or the end gives the statement.

  By the assumption in~\ref{item:linkD8-44--fibering22} the middle part of $f$ is part of a piece $\hyperref[D8_24]{\piece{X_8}24}$, whose central symmetry is a Geiser involution (Corollary \ref{c:central_symmetry}).
  So $f$ can be written as a Geiser involution followed by a map $\p^2\link{2}{1}X_8\linkI{2}\Dl_6\link{4}{4}\linkIII{2}X_8\link{1}{2}\p^2$, which is a map of Jonqui\`eres type $2+2$ in $\Bir_\k(\p^2,\pi_{L,L'})$ (after composing with an automorphism of $\p^2$).

  By the assumption in~\ref{item:linkD8-44--delPezzo} the middle part of $f$ is part of a piece $\hyperref[D8_34]{\piece{X_8}34}$. 
Observe that by Corollary \ref{c:central_symmetry}, its central symmetry is a Bertini involution.
  So $f$ can be decomposed into this Bertini involution and a Geiser simplification \ref{G4}.
\end{proof}

\begin{lemma}[Case~\ref{item:linkD6-33}]\label{lem:linkD6-33}
Let $f$ be a map of minimal factorization type
\[\p^2\link{2}{1} \Dl_8\link{3}{1} \Dl_6\link{3}{3} \Dl_6\link{1}{3}\Dl_8\link{1}{2}\p^2.\]
  Assume that there exists a rational point on $X_6$ that is general with the $3$-point on $X_6$.
  Then the map $f$ is the composition of two maps of the form~\ref{item:linkD6} and a Geiser involution.
\end{lemma}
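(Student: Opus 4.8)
The plan is to imitate the proofs of Lemma~\ref{lem:linkD5}\ref{item:linkD5--delPezzo} and Lemma~\ref{lem:linkD8-44}\ref{item:linkD8-44--delPezzo}: exhibit the central link $\Dl_6\link{3}{3}\Dl_6$ of $f$ as one edge of a suitable rank~$3$ piece whose central symmetry is a Geiser involution, and then fold $f$ across that piece. Write the minimal factorization as $f=\ell_5\circ\ell_4\circ\ell_3\circ\ell_2\circ\ell_1$, so that $\ell_1\colon\p^2\link{2}{1}\Dl_8$, $\ell_2\colon\Dl_8\link{3}{1}X_6$, $\ell_3\colon X_6\link{3}{3}X_6'$ is the blow-up of a $3$-point $t$, and $\ell_4,\ell_5$ bring $X_6'$ back to $\p^2$. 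By minimality the rational point supplied by the hypothesis is distinct from the components of $t$, so together with $t$ it is in general position and I may build the piece $\piece{X_6}{1}{3}$. Since $6-1-3=2$, this is the piece of a Del Pezzo surface of degree~$2$ and Picard rank~$3$, whose central symmetry is a Geiser involution $\sigma$ by Corollary~\ref{c:central_symmetry}.

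First I would read off the shape of $\piece{X_6}{1}{3}$ from Appendix~\ref{app:relations} (equivalently, from Theorem~\ref{t:russian} applied edge by edge): it is a hexagon whose vertices alternate between surfaces of type $\Dl_6$ and of type $\Dl_8$, with its two $\Dl_6\link{3}{3}\Dl_6$ edges opposite to one another and with $X_6,X_6'$ the endpoints of one of them. The edge issuing from $X_6$ through the rational point is a link $X_6\link{1}{3}W_8$ with $W_8\in\Dl_8$, and symmetrically there is an edge $X_6'\link{1}{3}\overline W_8$. Using the standard links $W_8\link{1}{2}\p^2$ and $\overline W_8\link{1}{2}\p^2$ I then set
\[
g_1=\bigl(\p^2\link{2}{1}\Dl_8\link{3}{1}X_6\link{1}{3}W_8\link{1}{2}\p^2\bigr),\qquad g_2=\bigl(\p^2\link{2}{1}\overline W_8\link{3}{1}X_6'\link{1}{3}\Dl_8\link{1}{2}\p^2\bigr),
\]
both of the form~\ref{item:linkD6}; here the first two links of $g_1$ and the last two of $g_2$ are the outer links $\ell_1,\ell_2$ and $\ell_4,\ell_5$ of $f$.

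The heart of the argument is the identity $f=g_2\circ\sigma\circ g_1$, which I would prove by computing $g_2^{-1}\circ f\circ g_1^{-1}$. After the cancellations coming from the shared outer links, this composition collapses to the half-hexagon path $Q$ from $W_8$ to $\overline W_8$ passing through $X_6$, the central $3$-point link, and $X_6'$, pre- and post-composed with the blow-downs to $\p^2$. The key geometric fact to establish is that $Q$, conjugated by those blow-downs, is precisely the Geiser involution $\sigma$: since $\sigma$ realizes the central symmetry it interchanges the two halves of the hexagon boundary, so the elementary relation carried by the piece forces $Q$ to coincide with the birational map $\p^2\rat\p^2$ induced by $\sigma$ through the common degree~$2$ model, up to an automorphism of $\p^2$ that can be absorbed into $g_2$.

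The main obstacle is exactly this last identification: checking that folding the $3$-point edge across the central symmetry returns the \emph{Geiser} involution, and not merely some birational involution, and that the two rerouted halves are genuinely of type~\ref{item:linkD6}. This is where the generality hypothesis is essential, as it guarantees that the four rank~$1$ vertices $X_6,X_6',W_8,\overline W_8$ and all intervening links of the hexagon actually exist over $\k$; it also makes the subsequent applications of Lemma~\ref{lem:linkD6} to $g_1$ and $g_2$ legitimate, so that $f$ is in the end a product of quadratic involutions and one Geiser involution.
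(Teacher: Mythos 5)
Your proposal follows the paper's own proof: the middle link $\Dl_6\link33\Dl_6$ is placed inside the piece $\piece{X_6}13$ (a hexagon with vertices $X_8,X_6,X_6',X_8,X_6,X_6'$), whose central symmetry is a Geiser involution by Corollary~\ref{c:central_symmetry}, and folding $f$ out through the two $\Dl_8$ vertices to $\p^2$ gives exactly the decomposition \parent{type~\ref{item:linkD6}} $\circ$ Geiser $\circ$ \parent{type~\ref{item:linkD6}}. Two small corrections: the hexagon's vertices do not alternate between $\Dl_6$ and $\Dl_8$ (there are four of the former and two of the latter, the two $\Dl_8$'s being antipodal, which is all you actually use), and the general position of the rational point with the $3$-point is the hypothesis of the lemma rather than a consequence of minimality --- distinctness of the points alone would not imply it.
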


\begin{proof}
  The assumption implies that the middle part of $f$, namely $\Dl_6\link{3}{3} \Dl_6$, is part of a piece $\hyperref[D6_13]{\piece{X_6}13}$. 
By Corollary \ref{c:central_symmetry} the central symmetry of the piece is a Geiser involution.
  Choosing a rational point on a $\Dl_8$ in the piece gives an edge $\Dl_8\link{1}{2}\p^2$ going out of the piece, and the central Geiser involution gives an opposite edge to $\p^2$.
  Therefore, the map $f$ is the product of a Geiser involution and maps of the form~\ref{item:linkD6} at the beginning and/or at the end.
\end{proof}

\begin{lemma}[Case~\ref{item:linkD6-22}]\label{lem:linkD6-22}
Let $f$ be a map of minimal factorization type
\[\p^2\link{2}{1} \Dl_8\link{3}{1} \Dl_6\link{2}{2} \Dl_6\link{1}{3}\Dl_8\link{1}{2}\p^2.\]
  \begin{enumerate}
    \item \label{item:linkD6-22--fibering} Assume that there is a rational point on $X_6$ that is general with the $2$-point on $X_6$. Then the map $f$ is the composition of (at most) two maps of the form \ref{item:linkD6} and a map of fibering type, more precisely a map in $\Bir_\k(\p^2,\pi_{L,L'})$ where $L/\k$ is the residue field of the $2$-point in $X_6$
    and $L'/\k$ is the degree $2$ extension coming from the link $\p^2\rat X_8$.
    \item\label{item:linkD6-22--delPezzo} Assume that there is a $3$-point on $X_6$ that is general with the $2$-point on $X_6$. Then the map $f$ is the composition of a Bertini involution of a Del Pezzo surface of Picard rank $3$ and (if needed) a map of the form~\ref{item:linkD6-33}.
  \end{enumerate}
\end{lemma}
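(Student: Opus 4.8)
The plan is to treat both parts by the technique of Lemmas~\ref{lem:linkD5}, \ref{lem:linkD6}, \ref{lem:linkD8-44} and \ref{lem:linkD6-33}: one embeds the central link $\Dl_6\link22\Dl_6$ of $f$ into a $2$-dimensional piece based at $X_6$ and reads the announced factorization off the boundary of that piece and its symmetries. The two generality hypotheses single out two different rank~$3$ Del Pezzo surfaces dominating $X_6$, hence two different pieces, and each leads to one of the two decompositions.

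For part~\ref{item:linkD6-22--fibering}, I would work in the piece $\piece{X_6}12$ obtained by blowing up the general rational point together with the $2$-point of residue field $L$ on $X_6$; it has degree $6-1-2=3$. The hypothesis guarantees that the two points are in general position, so the central link $\Dl_6\link22\Dl_6$ is an edge of this piece. The key step is to locate the remaining vertices: the $1$-point direction $\Dl_6\link13\Dl_8$ reaches two $\Dl_8$ vertices $Y,Y'$, and on each of them the transform of the $2$-point of residue field $L$ may be blown up through a type~I link $\Dl_8\linkI2\Cl_6$. On $\p^2$ these conic bundles in $\Cl_6$ are precisely the pencils of conics through two $2$-points, of residue fields $L$ and $L'$, where $L'$ is the residue field of the $2$-point blown up by the opening link $\p^2\link21\Dl_8$. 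The piece is then a hexagon whose boundary is the union of the crossing $\Dl_8\link31\Dl_6\link22\Dl_6\link13\Dl_8$ used by $f$ and the complementary crossing $\Dl_8\linkI2\Cl_6\link{d}{d}\Cl_6'\linkIII2\Dl_8$; closed up to $\p^2$ by $\Dl_8\link12\p^2$ at both ends, the latter is a map $\tilde f\in\Bir_\k(\p^2,\pi_{L,L'})$ by Lemma~\ref{lem:SameFiberTypeAreEquivalent}. Since the two crossings differ only by the elementary relation carried by the piece, $f$ and $\tilde f$ agree up to the way the two $\Dl_8$ vertices are joined to $\p^2$, and correcting this at each end costs at most one map of type~\ref{item:linkD6}; this yields $f=m_1\circ\tilde f\circ m_2$.

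For part~\ref{item:linkD6-22--delPezzo}, I would instead blow up the $2$-point and the general $3$-point on $X_6$, obtaining the piece $\piece{X_6}23$ of degree $6-2-3=1$, whose central symmetry is a Bertini involution $\iota$ by Corollary~\ref{c:central_symmetry}. The hypothesis places the central link $\Dl_6\link22\Dl_6$ inside the piece, while the $3$-point furnishes the edge $\Dl_6\link33\Dl_6$. Following Lemma~\ref{lem:linkD8-44}\ref{item:linkD8-44--delPezzo}, I would set $g=\iota\circ f$ and argue, from the combinatorics of the piece and the exits $\Dl_8\link12\p^2$ provided by rational points on its $\Dl_8$ vertices, that composition with $\iota$ carries the crossing of $f$ onto the crossing along the $\link33$ edge, so that $g$ has factorization type~\ref{item:linkD6-33}. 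As $\iota$ is an involution this writes $f=\iota\circ g$ and reduces the case to Lemma~\ref{lem:linkD6-33}.

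The routine ingredients are Corollary~\ref{c:central_symmetry} and the description of $\Cl_6$-fibrations as pencils of conics through two $2$-points. The real work, and the main obstacle, is the combinatorial analysis of the two pieces, which relies on the generality hypotheses keeping all blown-up orbits in general position. For part~\ref{item:linkD6-22--fibering} one must confirm that $\piece{X_6}12$ genuinely contains the two $\Cl_6$ vertices with the correct pair of residue fields $L,L'$, so that the complementary crossing is of Jonquières type~$2+2$ and not of some other fibering type. For part~\ref{item:linkD6-22--delPezzo} one must pin down the exact shape of the degree~$1$ piece $\piece{X_6}23$ from the list in Appendix~\ref{app:relations} and verify that $\iota$ sends the crossing of $f$ to a $\link33$ crossing rather than to a second $\link22$ crossing; this is the delicate point, since the central symmetry acts as $-\id$ and hence \emph{a priori} preserves the size of the blown-up orbit, so resolving it genuinely requires the specific geometry of the degree~$1$ piece.
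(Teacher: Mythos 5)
Your proposal follows the paper's proof for both parts: embed the central $\link22$ link in the piece $\piece{X_6}12=\piece{X_8}23$ (resp.\ $\piece{X_6}23$) and read the claimed factorization off the boundary relation (resp.\ the central symmetry), correcting at the ends by maps of type~\ref{item:linkD6}; your part~\ref{item:linkD6-22--fibering} is essentially the argument given in the paper (the only imprecision is that the mismatch at the exit is not merely ``how the $\Dl_8$ vertex is joined to $\P^2$'' but which rational point of the second $\Dl_6$ is blown up --- which is exactly why the correction is of type~\ref{item:linkD6} rather than a quadratic map, as you correctly conclude). The one substantive point is the ``delicate point'' you leave open in part~\ref{item:linkD6-22--delPezzo}, and it rests on a slightly wrong picture of how the Bertini involution intervenes. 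You do not need $\iota$ to send the edge carrying the $\link22$ crossing to a $\link33$ edge; indeed the central symmetry sends that edge to the antipodal edge, which is again a $\link22$. The correct mechanism is that of Proposition~\ref{prop:BertiniGeiserSimplification}, one dimension up. Label the vertices of the square $\piece{X_6}23$ cyclically $c_1,\dots,c_4$ with $c_1\rat c_2$ the $\link22$ link used by $f$, let $\pi\colon X_1\to c_1$ be the domination morphism and $\phi\colon\P^2\rat c_1$ the initial segment of $f$, and set $\iota=\phi^{-1}(\pi\sigma\pi^{-1})\phi$. In $f\circ\iota$ the composite $(c_1\rat c_2)\circ\pi\circ\sigma$ contracts precisely the curves contracted by $X_1\to\sigma(c_2)=c_4$, so the link $c_1\rat c_2$ is replaced by the link $c_1\rat c_4$ followed by the isomorphism $c_4\simeq c_2$ induced by $\sigma$; since the square of Figure~\ref{D6_23} alternates $\link22$ and $\link33$ edges, $c_1\rat c_4$ is a $\link33$ and $f\circ\iota$ has type~\ref{item:linkD6-33}. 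With this substitution your part~\ref{item:linkD6-22--delPezzo} coincides with the paper's proof.
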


\begin{proof}
  The assumption in~\ref{item:linkD6-22--fibering} implies that the middle part of $f$, namely $\Dl_6\link{2}{2} \Dl_6$, is part of a piece $\hyperref[D8_23]{\piece{X_6}12}=\piece{X_8}23$. We choose a rational point on one of the $\Dl_8$ in the piece, obtaining an edge to $\p^2$ going out of the piece. On the opposite side of the piece we can choose an edge to $\p^2$ such that the induced map of the piece between the opposite $\p^2$'s is of fibering type.
  Hence, by composing this map if needed with a map of the form \ref{item:linkD6} at the beginning and/or the end, statement~\ref{item:linkD6-22--fibering} follows.

  The assumption in~\ref{item:linkD6-22--delPezzo} implies that the middle part of $f$ is part of a piece $\hyperref[D6_23]{\piece{X_6}23}$. 
By Corollary \ref{c:central_symmetry} the central symmetry of the piece is a Bertini involution. Hence, the map $f$ is the composition of the central Bertini involution, followed by a map of the form~\ref{item:linkD6-33}.
\end{proof}

\begin{lemma}[Case~\ref{item:loop}]\label{lem:loop}
Let $f$ be a map of factorization type $\p^2\link{5}{1} \Dl_5\link{2}{5}\Dl_8\link{1}{2}\p^2$.
  Assume that there is a rational point on $X_5$ that is general with the $2$-point on $X_5$.
  Then $f$ is the product of a Geiser involution of a Del Pezzo surface of Picard rank $3$ and (if needed) maps of the form~\ref{Q21} and~\ref{item:linkD5}.
\end{lemma}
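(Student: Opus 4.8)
The plan is to place the middle link of $f$ inside a single $2$-piece and to read off the required Geiser involution as the central symmetry of that piece, following the template of Lemma~\ref{lem:linkD5}\ref{item:linkD5--delPezzo} and Lemma~\ref{lem:linkD6-33}.

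First I would invoke the hypothesis: a rational point on $X_5$ general with the $2$-point blown up by the middle link produces the $2$-piece \hyperref[P2_25]{$\piece{X_5}12 = \piece{\p^2}25$}, whose central edge $\Dl_5 \link{2}{5} \Dl_8$ is exactly the middle link of $f$. The rank $3$ fibration carrying this piece is the blow-up of $\p^2$ at a $2$-point and a $5$-point, that is a Del Pezzo surface of degree $9 - 7 = 2$ and Picard rank $3$, so by Corollary~\ref{c:central_symmetry} its central symmetry is a Geiser involution $\iota$.

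Next I would describe the boundary. Because antipodal vertices of a centrally symmetric polygon carry isomorphic fibrations, the piece is a hexagon with vertices of types $\p^2, \Dl_5, \Dl_8, \p^2, \Dl_5, \Dl_8$ in cyclic order, the two $\p^2$-vertices being antipodal. The half of the boundary joining them is $\p^2 \link{5}{1} X_5 \link{2}{5} X_8 \link{1}{2} \p^2$, and by Corollary~\ref{c:central_symmetry} the induced birational self-map of $\p^2$ is $\iota$ (up to an automorphism of $\p^2$, which is harmless by Lemma~\ref{l:PGL3_and_jonq}). Writing $e_1, e_2, e_3$ for these three boundary edges and $\ell_1, \ell_2, \ell_3$ for the three links of the given factorization of $f$, the middle edges coincide, $e_2 = \ell_2$, since both are the type II link out of $X_5$ determined by the same $2$-point; hence
\[
f = \ell_3 \ell_2 \ell_1 = (\ell_3 e_3^{-1}) \circ (e_3 e_2 e_1) \circ (e_1^{-1} \ell_1) = (\ell_3 e_3^{-1}) \circ \iota \circ (e_1^{-1} \ell_1).
\]
It then remains to identify the two outer factors: $e_1^{-1} \ell_1$ is of the form $\p^2 \link{5}{1} X_5 \link{1}{5} \p^2$, a map of type~\ref{item:linkD5} (trivial when the point created by $\ell_1$ is already the one used to build the piece), while $\ell_3 e_3^{-1}$ is of the form $\p^2 \link{2}{1} X_8 \link{1}{2} \p^2$, a quadratic map of type~\ref{Q21} (trivial when $\ell_3 = e_3$). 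This is the asserted decomposition, the phrase ``if needed'' accounting for the possibly trivial outer factors.

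The main obstacle I expect is the combinatorial input: confirming that $\piece{\p^2}25$ is the hexagon with the stated cyclic sequence of vertex types, and that the central symmetry provided abstractly by Corollary~\ref{c:central_symmetry} is realized concretely by traversing half of the boundary from one $\p^2$-vertex to its antipode (in particular that the two $\Dl_8$-vertices, one reached by the middle link and one adjacent to the starting $\p^2$, are genuinely distinct). This rests on the classification of rank $3$ Del Pezzo pieces in Appendix~\ref{app:relations}; once it is in place, matching $e_2 = \ell_2$ and reading off the outer factors is immediate.
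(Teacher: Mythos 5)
Your proof is correct and follows essentially the same route as the paper's: use the generality hypothesis to embed the middle link as an edge of the piece $\piece{\P^2}25$, invoke Corollary~\ref{c:central_symmetry} to recognize the central symmetry of that hexagon as a Geiser involution realized by half of its boundary, and absorb the mismatches at the two endpoints into correction factors of types~\ref{item:linkD5} and~\ref{Q21}. You merely spell out the edge-matching ($e_2=\ell_2$ because both links are determined by the same rank~$2$ fibration over $X_5$) that the paper leaves implicit, and your identification of the final correction factor as~\ref{Q21} agrees with the lemma's statement (the paper's own proof text writes~\ref{Q33} at that spot, which appears to be a slip).
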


\begin{proof}
  The assumption implies that the middle part of $f$, namely $\Dl_5\link{2}{5}\Dl_8$, is part of a piece $\hyperref[P2_25]{\piece{X_5}12=\piece{\p^2}25}$. 
By Corollary \ref{c:central_symmetry} the central symmetry of the piece is a Geiser involution. Hence, $f$ can be written as the product of a Geiser involution and, if needed, a map of the form~\ref{item:linkD5} at the beginning and a quadratic map \ref{Q33} at the end.
\end{proof}

\begin{lemma}[Case~\ref{item:linkP2-66}]\label{lem:linkP2-66}
  Let $f$ be a link $\p^2\link{6}{6}\p^2$.
\begin{enumerate}
\item\label{item:linkP2-66--fibering} Assume that there exists a rational point on $\p^2$ that is general with the $6$-point on $\p^2$. Then $f$ is the composition of a Geiser involution, a map of Jonqui\`eres type~$1$ and an automorphism of $\p^2$.
\item\label{item:linkP2-66--delPezzo} Assume that there exists a $2$-point on $\p^2$ that is general with the $6$-point on $\p^2$.
  Then, the map $f$ is the composition of a Bertini involution of a Del Pezzo surface of Picard rank $3$ and a Geiser simplification \ref{G2}.
\end{enumerate}
\end{lemma}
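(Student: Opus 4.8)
The plan is to handle both parts by the template already used in Lemmas~\ref{lem:linkD5}--\ref{lem:loop}: I would realize the single link $f\colon\p^2\link66\p^2$ as one edge of a suitable $2$-dimensional piece attached to a Del Pezzo surface of Picard rank~$3$, and then exploit the central symmetry of that piece furnished by Corollary~\ref{c:central_symmetry}. The degree of the Del Pezzo surface is governed by the total number of geometric points blown up, and this is exactly what the two generality hypotheses control: a rational point added to the $6$-point gives a surface of degree $9-1-6=2$, hence a Geiser involution, while a $2$-point added gives degree $9-2-6=1$, hence a Bertini involution.

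For part~\ref{item:linkP2-66--fibering} I would use the rational point general with the $6$-point to form the piece \hyperref[P2_16]{$\piece{\p^2}16$}, associated with a Del Pezzo surface of degree~$2$ and Picard rank~$3$. The link $f$ occurs as the edge obtained by contracting the exceptional divisor over the rational point, so that its underlying rank~$2$ fibration is the cubic surface obtained by blowing up only the $6$-point, which dominates $\p^2\link66\p^2$. By Corollary~\ref{c:central_symmetry} the central symmetry of this piece is a Geiser involution. The second edge of the piece meeting the vertex $\p^2$ is the type~I link $\p^2\linkI1\F_1$ blowing up the rational point, whose opposite vertex lies in $\Cl_8$; by Lemma~\ref{lem:FiberTypeAlways124} the induced map through $\Cl_8$ is, after composing with an automorphism of $\p^2$, of Jonquières type~$1$. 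Reading the boundary relation of the piece I expect to obtain $f$ as the composition of this Jonquières type~$1$ map, the central Geiser involution, and an automorphism, as stated.

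For part~\ref{item:linkP2-66--delPezzo} I would instead use the $2$-point general with the $6$-point to form the piece \hyperref[P2_26]{$\piece{\p^2}26$}, associated with a Del Pezzo surface of degree~$1$ and Picard rank~$3$, into which $f$ again fits as an edge, now obtained by contracting the exceptional divisor over the $2$-point. By Corollary~\ref{c:central_symmetry} its central symmetry is a Bertini involution. The edge of the piece meeting the vertex $\p^2$ other than $f$ is the link $\p^2\link21\Dl_8$, which is precisely the first step of the Geiser simplification~\ref{G2}, namely $\p^2\link21\Dl_8\link66\Dl_8\link12\p^2$. Composing $f$ with the central Bertini involution should then produce this length~$3$ map, so that $f$ decomposes as a Bertini involution followed by a Geiser simplification~\ref{G2}.

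The hard part will be the combinatorial bookkeeping needed to read off the two decompositions from the pieces. Concretely, I must check that going the long way around \hyperref[P2_16]{$\piece{\p^2}16$} genuinely realizes a Jonquières type~$1$ map through the $\Cl_8$ vertex (rather than some other fibering map), and that the three consecutive edges of \hyperref[P2_26]{$\piece{\p^2}26$} opposite to $f$ assemble exactly into the simplification~\ref{G2}. This requires matching the vertices and edge labels of both pieces with those tabulated in Appendix~\ref{app:relations}, and verifying that the assumed general position of the auxiliary point makes the relevant blow-up a Del Pezzo surface of Picard rank~$3$, so that Corollary~\ref{c:central_symmetry} applies and the central involution fixes no proper face.
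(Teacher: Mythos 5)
Your proposal is correct and follows essentially the same route as the paper: part \ref{item:linkP2-66--fibering} embeds $f$ into the piece \hyperref[P2_16]{$\piece{\p^2}16$} and uses its central Geiser symmetry together with the Jonquières type~$1$ path through the $\Cl_8$ vertices, and part \ref{item:linkP2-66--delPezzo} embeds $f$ into \hyperref[P2_26]{$\piece{\p^2}26$} and composes the central Bertini involution with the opposite path $\p^2\link21\Dl_8\link66\Dl_8\link12\p^2$, which is exactly the Geiser simplification \ref{G2}. The bookkeeping you flag as remaining is precisely what the tabulated pieces in Appendix~\ref{app:relations} supply.
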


\begin{proof}
The assumption in \ref{item:linkP2-66--fibering} implies that $f$ is part of a piece $\hyperref[P2_16]{\piece{\p^2}16}$, whose central symmetry is a Geiser involution.
Hence, $f$ can be decomposed into a Geiser involution and a map of Jonqui\`eres type 1 (and possibly an automorphism of $\p^2$).

  The assumption in \ref{item:linkP2-66--delPezzo} implies that $f$ is part of a piece $\hyperref[P2_26]{\piece{\p^2}26}$. Note that the central symmetry is a Bertini involution.
  Hence, doing first the central Bertini involution and then the Geiser simplification \ref{G2} along the edge $\Dl_8\link{6}{6}\Dl_8$ gives the statement.
\end{proof}

\subsection{General position}

\label{sec:existence}

In this section we show that the assumptions of general position in the Lemmas in Section~\ref{sec:remaining} are satisfied, at least for the first part of the respective lemmas.
As always the ground field $\k$ is assumed to be an arbitrary perfect field.

%

\begin{lemma}[Cases~\ref{item:linkD5} and~\ref{item:linkD6}]\label{lem:ExistenceXgeneral-11}
Let $X\in\Dl_5\cup\Dl_6$. Then any two rational points on $X$ are general.
\end{lemma}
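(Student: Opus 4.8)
The plan is to show that blowing up two distinct rational points $p,q\in X(\k)$ produces a del Pezzo surface $Z$ of degree $d-2$, where $d=K_X^2\in\{5,6\}$; by definition of \emph{general} $d$-point this is exactly what has to be checked. Over $\bk$, a birational morphism $X_{\bk}\to\P^2$ contracting $9-d$ disjoint $(-1)$-curves identifies $Z_{\bk}$ with the blow-up of $\P^2$ at $m:=11-d\le 6$ points: the $9-d$ points defining $X$, together with the images $\bar p,\bar q$ of $p,q$. I would then invoke the standard criterion that the blow-up of $\le 6$ distinct points of $\P^2$ is del Pezzo if and only if no three of the points are collinear and (when there are six) they do not all lie on a conic. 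The conditions involving only the $9-d$ original points are automatic, since they express the smoothness of $X$ as a del Pezzo surface of degree $d$; so everything reduces to the conditions involving $\bar p$ and/or $\bar q$.

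Translating these remaining conditions back to $X$ via the contraction gives exactly two types of obstruction. A collinearity $p_i,p_j,\bar p$ (or with $\bar q$) means that $\bar p$ lies on the line $L_{ij}=H-E_i-E_j$, i.e.\ that $p$ lies on a $(-1)$-curve of $X_{\bk}$; similarly $\bar p=p_i$ means $p$ lies on the $(-1)$-curve $E_i$. A collinearity $p_i,\bar p,\bar q$ means that $p$ and $q$ both lie on the conic $H-E_i$, and for $d=5$ the forbidden conic $2H-E_1-E_2-E_3-E_4$ through $p_1,\dots,p_4,\bar p,\bar q$ again says that $p,q$ both lie on a conic; here by \emph{conic} I mean an irreducible curve $C$ with $C^2=0$ and $C\cdot(-K_X)=2$, i.e.\ a fibre of one of the conic-bundle structures on $X$. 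Thus it suffices to prove: (1) no $(-1)$-curve passes through $p$ or through $q$, and (2) no conic passes through both $p$ and $q$.

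The arithmetic input is that $X$ has Picard rank $1$, so $\Pic(X)=\Pic(X_{\bk})^{\Gal(\bk/\k)}$ is generated by $-K_X$, and every nonzero effective Galois-invariant class meets $-K_X$ in degree a positive multiple of $d$. For (1): if a $(-1)$-curve $\ell$ passed through the rational point $p$, then so would every Galois conjugate $\ell^g$, and the invariant effective class $\Sigma=\sum_g\ell^g$ would satisfy $\Sigma\cdot(-K_X)=\#\{\text{conjugates}\}$, which is a multiple of $d$, hence $\ge d\ge 5$. This forces at least five $(-1)$-curves through $p$, impossible on a del Pezzo surface of degree $5$ or $6$, where at most two lines pass through any point. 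For (2): if a conic $C$ passes through both $p$ and $q$, so do all its conjugates $C^g$; were some $C^g$ in a different class, then $C$ and $C^g$ would be distinct irreducible curves sharing the two points $p,q$, forcing $C\cdot C^g\ge 2$, whereas two distinct conic classes satisfy $C\cdot C^g=1$. Hence every $C^g$ lies in the class of $C$, so that class is Galois-invariant; but a conic class has self-intersection $0$ and so is not a multiple of $-K_X$, contradicting Picard rank $1$.

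The step needing the most care is the completeness of the obstruction list together with the conic case (2). One must be sure that no higher-multiplicity $(-2)$-curve can appear: this follows from the short computation that on the blow-up of at most six points of $\P^2$ the only effective $(-2)$-classes are $E_i-E_j$, $H-E_i-E_j-E_k$ and $2H-E_1-\cdots-E_6$, all with multiplicities $\le 1$, so blowing up the \emph{distinct} reduced points $p,q$ creates only the line and conic obstructions above. The remaining delicate points in (2)—that the two members through $p$ lie in distinct classes and that distinct conic classes meet in exactly one point—are verified on the explicit list of conic classes (the five classes $H-E_i$ and $2H-E_1-\cdots-E_4$ in degree $5$, and the three classes $H-E_i$ in degree $6$), which is a finite, direct check.
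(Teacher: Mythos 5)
Your proof is correct, but it follows a genuinely different route from the paper's. The paper delegates this statement to Lemmas \ref{l:D5-11} and \ref{l:D6-11} in the appendix: there one uses the defining link $X \link{1}{5} \P^2$ (resp. $X\link13 \Dl_8$) to transport the two rational points to a configuration of $6$ (resp. $5$) points on $\P^2$ (resp. $\P^1\times\P^1$), checks generality there by Galois-orbit arguments on lines through components (Lemma \ref{lem:P2-5And1Or2}), and then bootstraps via the explicit piece $\piece{\P^2}15$ (resp. $\piece{\Dl_8}13$) to conclude that the second rational point defines a link of the same type, so that the argument applies to an arbitrary pair. You instead argue intrinsically on $X$: you reduce generality to the absence of a $(-1)$-curve through $p$ or $q$ and of a degree-$2$, square-zero curve through both, and kill each obstruction using that $\Pic(X_{\bk})^{\Gal(\bk/\k)}=\Z(-K_X)$ forces any Galois orbit of lines through a rational point to have size a multiple of $d\ge 5$ (impossible, since the line-intersection graphs of degrees $5$ and $6$ are triangle-free, so at most two lines meet at a point), and forces the class of a conic through $p$ and $q$ to be Galois-invariant of self-intersection $0$ (impossible in a rank-one lattice generated by $-K_X$). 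All the ingredients you need --- the index of $-K_X$ being $1$ for $d=5,6$, the classification of $(-2)$-classes on blow-ups of at most six points, and the pairwise intersection numbers of the conic classes --- check out. Your argument is more self-contained and avoids the appendix pieces entirely; the paper's version has the advantage of simultaneously proving the extra assertions of Lemmas \ref{l:D5-11} and \ref{l:D6-11} on the type of the link defined by the second point, which are needed elsewhere in the proof of Theorem \ref{t:russian}.
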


\begin{proof}
See the appendix, Lemmas \ref{l:D5-11} and \ref{l:D6-11}.
\end{proof}

\begin{lemma}[Case~\ref{item:linkD8-44}, Lemma \ref{lem:linkD8-44}\ref{item:linkD8-44--fibering}]
\label{lem:ExistenceX8general-14}
Let $p\in X_8$ be a general $4$-point, where $X_8\in\Dl_8$. Then, any rational point $q\in X_8$ is general with $p$.
\end{lemma}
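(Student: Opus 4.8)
The plan is to translate ``general with $p$'' into a condition on five geometric points on $\P^1 \times \P^1$ and then kill the bad configurations with the Galois action. Write $\bar X = X_8 \times_\k \bk \cong \P^1 \times \P^1$. Since $X_8 \in \Dl$ has geometric Picard rank $2$ but arithmetic Picard rank $1$, the group $\Gamma = \Gal(\bk/\k)$ acts on the two ruling families through a surjection $\Gamma \twoheadrightarrow \Z/2\Z$ with kernel $N = \Gal(\bk/L')$, where $L'/\k$ is the quadratic residue field of the $2$-point defining $X_8$; elements of $N$ act diagonally on $\P^1\times\P^1$ while the nontrivial coset swaps the two factors. Write $p_i = (a_i,b_i)$ for the geometric components of $p$; generality of $p$ means the $a_i$ are pairwise distinct, the $b_i$ are pairwise distinct, and no $(1,1)$-curve passes through all four $p_i$. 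Blowing up $p$ and $q$ gives a surface of degree $8-4-1=3$, which is del Pezzo — i.e.\ $q$ is general with $p$ — exactly when, on top of the generality of $p$, one has (R) $q$ lies on no ruling through any $p_i$, and (D) $q$ lies on no $(1,1)$-curve through three of the $p_i$; these rule out the only possible $(-2)$-curves (transforms of fibers and of diagonals), every other class having self-intersection at least $-1$. Finally, a $\Gamma$-fixed point must be of the form $q = (u, u^{g_0})$ with $u \in \P^1(L')$ and $g_0$ the nontrivial element of $\Gal(L'/\k)$, so in particular both coordinates of $q$ lie in $\P^1(L')$.

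For (R), suppose $q$ lies on the vertical ruling through some $p_i$, that is $u = a_i$. Then $a_i \in \P^1(L')$ is fixed by $N$, so for every $g \in N$ the image $g(p_i) = (a_i, b_i^g)$ has first coordinate $a_i$; as $g(p_i)$ is again one of $p_1, \dots, p_4$ and the first coordinates are pairwise distinct, this forces $g(p_i) = p_i$. Hence $N$ stabilizes $p_i$ and the $\Gamma$-orbit of $p_i$ has size at most $[\Gamma : N] = 2$, contradicting that $\{p_1, \dots, p_4\}$ is a single orbit of size $4$. The horizontal rulings are handled symmetrically using distinctness of the $b_i$.

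For (D), let $D_i$ be the $(1,1)$-curve through $\{p_j : j \neq i\}$; because the $a_j$ and the $b_j$ are distinct it is the graph of a M\"obius transformation, hence irreducible. Every element of $\Gamma$ preserves the symmetric class $(1,1)$ whether or not it swaps rulings, so $\Gamma$ permutes $\{D_1, \dots, D_4\}$ exactly as it permutes $\{p_1, \dots, p_4\}$; thus if $q$ lay on one $D_i$, transitivity would put $q$ on all four. But then any two of the $D_i$ would share the three distinct points consisting of $q$ and the two common $p_j$, whereas distinct $(1,1)$-curves meet in only $(1,1)\cdot(1,1) = 2$ points. Hence all $D_i$ coincide in a single $(1,1)$-curve through all four $p_i$, contradicting the generality of $p$.

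The only real difficulty is the bookkeeping for the twisted Galois action, where the ruling swap (governed by $L'$) and the transitive permutation of the four components (governed by the quartic residue field) must be tracked together. The decisive point is that they are incompatible on the bad locus: a rational point on a forbidden ruling or diagonal would force a component $p_i$ to become defined over $L'$, collapsing its Galois orbit below size $4$. Once the normal form $q = (u, u^{g_0})$ and the distinctness of the coordinates are in place, both (R) and (D) follow immediately.
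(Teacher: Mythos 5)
Your proof is correct and follows essentially the same route as the paper: both reduce to the criterion that five points on $\P^1\times\P^1$ are general iff no two share a ruling and no four lie on a diagonal, and then exploit the Galois action (your diagonal step is the paper's verbatim; for the ruling step you collapse the orbit of $p_i$ via its stabilizer containing $N$, where the paper instead notes that all four $p_i$ would land on the Galois-stable pair of rulings through $q$ and invokes pigeonhole). The coordinate normalization $q=(u,u^{g_0})$ with $u\in\P^1(L')$ is harmless but not needed --- the abstract fact that the vertical ruling through the rational point $q$ is stable under the index-two subgroup preserving the rulings already gives $g(p_i)=p_i$ for all $g\in N$.
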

\begin{proof}
  Recall that five points on $\p^1\times\p^1$ are general if and only if no two lie on the same vertical or horizontal curve, and no four lie on a diagonal (see for example \cite[Lemma 4.3]{SchneiderF2}).
  Denote by $s$ the horizontal curve through $q$, and $f$ the vertical curve through $q$. Observe that $s$ and $f$ form a Galois orbit. If $p_1$ would lie on $s\cup f$, then all four $p_i$ would lie on $s\cup f$, hence (at least) two of the $p_i$ lie on the same vertical or horizontal line, contradicting the generality of $p$.

  If $q$ would lie on the diagonal through three of the $p_i$, then $q$ lies on all the diagonals through three of the four $p_i$. Since the diagonal through three points is unique, this means that all these diagonals are equal and so that $p$ lies on a diagonal, again contradicting the generality of $p$.
\end{proof}

\begin{lemma}[Case~\ref{item:linkD6-33}, Lemma \ref{lem:linkD6-33}]
\label{lem:ExistenceX6general-13}
Assume the ground field $\k$ is not the field with two elements, and let $X_6\in \Dl_6$. For any general $3$-point $p\in X_6$, there exists a rational point $q$ on $X_6$ that is general with $p$.
\end{lemma}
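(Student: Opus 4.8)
The plan is to translate the generality of $q$ with respect to $p$ into the condition that $q$ avoids an explicit, Galois-stable proper closed subset $Z \subset X_6$, and then to produce a rational point off $Z$ by a density argument over infinite fields and by point-counting over finite fields. Concretely, let $\pi\colon Y \to X_6$ be the blow-up of the general $3$-point $p$, so that $Y$ is a Del Pezzo surface of degree $3$. A rational point $q \in X_6(\k)$ is general with $p$ precisely when its lift $\tilde q \in Y$ avoids the $27$ lines of $Y$; equivalently, $q$ must avoid the curve $Z \subset X_6$ obtained as the union of the images under $\pi$ of the $24$ lines other than the three exceptional curves contracted to $p$. These $24$ images are rational curves (six of them are the $(-1)$-curves of $X_6$, the others being the strict transforms of the lines and conics determined by the components of $p$), and since $p$ is a $\k$-rational $3$-point the whole union $Z$ is defined over $\k$. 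Thus it suffices to find $q \in X_6(\k) \setminus Z$.

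If $\k$ is infinite, then $X_6$, being $\k$-rational, has a Zariski-dense set of rational points, while $Z$ is a proper closed subset; hence $X_6(\k) \setminus Z \neq \emptyset$ and any such point yields $q$.

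Assume now $\k = \FF_\ell$ is finite with $\ell \geq 3$. Since $X_6 \in \Dl_6$ has Picard rank $1$, the geometric Frobenius acts on the rank-$4$ lattice $\Pic(X_{6,\bk})$ with one-dimensional invariant subspace; the corresponding element of the symmetry group $S_3 \times \Z/2$ of the hexagon of $(-1)$-curves is then forced to be the product of a $3$-cycle and the central involution, whose trace one computes to be $-1$. By the Lefschetz formula this gives $|X_6(\FF_\ell)| = \ell^2 - \ell + 1$. It then remains to bound $|Z(\FF_\ell)|$ from above. The key point is that, because $X_6$ (hence $Y$, of Picard rank $2$ over $\k$) is heavily twisted, very few of the $24$ components of $Z$ are individually defined over $\k$: a rational point lying on a component whose Galois orbit has size $\geq 2$ must lie in the intersection of the whole orbit, and these intersection points are the non-rational components of $p$ or a bounded exceptional set. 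Adding up the contributions of the few Galois-stable components and of these orbit-intersections gives $|Z(\FF_\ell)| < \ell^2 - \ell + 1$ for every $\ell \geq 3$, so $X_6(\FF_\ell) \not\subseteq Z$ and a good $q$ exists.

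The main obstacle is precisely this last estimate for the smallest fields $\ell \in \{3,4,5\}$: one must determine the exact Galois-orbit decomposition of the $27$ lines of $Y$ (equivalently of the $24$ components of $Z$) in order to control $|Z(\FF_\ell)|$ sharply, since the crude bound $|Z(\FF_\ell)| \leq 24(\ell+1)$ is far too weak when $\ell$ is small. This computation is of the same nature as the point-counts in Lemmas~\ref{l:D5-11} and~\ref{l:D6-11}, and it shows that the inequality $|Z(\FF_\ell)| < |X_6(\FF_\ell)|$ is sharp: it just fails for $\ell = 2$, where $|X_6(\FF_2)| = 3$ is too small for the finitely many bad curves to miss a rational point, which is exactly why the field with two elements is excluded from the statement.
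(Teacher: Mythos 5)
Your overall strategy coincides with the paper's: blow up $p$ to get a cubic surface, reduce to finding a rational point off the $27$ lines, handle infinite fields by density, and handle finite fields by comparing $|X_6(\FF_\ell)| = \ell^2-\ell+1$ with the number of rational points on the lines. However, there is a genuine gap at exactly the step you yourself flag as ``the main obstacle'': you assert that the Galois-orbit analysis yields $|Z(\FF_\ell)| < \ell^2-\ell+1$ for all $\ell \ge 3$, but you never carry out that analysis, and the vague description you give of it (rational points on a non-trivial orbit lie ``in the intersection of the whole orbit'', which consists of ``the non-rational components of $p$ or a bounded exceptional set'') does not produce a usable bound. Since the whole difficulty of the lemma for $\ell \in \{3,4,5\}$ lives in this estimate, the proof is not complete as written.

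For comparison, here is how the paper closes this gap, using only partial information about the orbit structure rather than the full decomposition you call for. Because the link through $p$ is of the form $\Dl_6 \link{3}{3} \Dl_6$, the only Galois orbits of pairwise disjoint $(-1)$-curves on the cubic surface are the two orbits of size $3$ coming from the two $3$-points; in particular no line is defined over $\k$, so a rational point on the union of the lines lies on an orbit of size $d$ with $2 \le d \le 3$ (it lies on every line of the orbit, and at most three lines of a cubic surface meet in a point). This immediately kills the $6$ exceptional curves of the two $3$-points (disjoint orbits of size $3$) and the two hexagons of $X_6$ and $X_6'$ (each a single orbit of size $6$), i.e.\ $18$ of the $27$ lines carry no rational point at all. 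Each of the remaining $9$ lines carries at most one rational point (two rational points on a line would force it to be defined over $\k$), and each rational point on the union lies on at least two of these $9$ lines, so the union carries at most $\lfloor 9/2 \rfloor = 4$ rational points. Since $\ell^2-\ell+1-4 \ge 3$ for $\ell \ge 3$, a good $q$ exists. You would need to supply an argument of this kind (or an equivalent explicit orbit computation) to make your finite-field case rigorous; note also that the paper needs neither the ``exact'' orbit decomposition nor the Lefschetz trace computation you sketch, since $|X_6(\FF_\ell)|$ is already available from Remark \ref{r:at_least_3}.
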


\begin{proof}
Consider $X_3\to X_6$ the blow-up of $X_6$ at $p$.
It is enough to see that there is a rational point that does not lie on any of the $27$ $(-1)$-curves of the cubic surface $X_3$.
Since each $(-1)$-curve contains at most one rational point (if it would contain two it is defined over $\k$ since two lines intersect at most once), this is clear if $\k$ is infinite.

Assume now that $\k$ is finite.
We have (see Remark~\ref{r:at_least_3})
\[|X_3(\k)| = |X_6(\k)| = |\k|^2-|\k|+1.\]
In the following we give a bound on the number of rational points that lie on the $27$ $(-1)$-curves of $X_3$ and see that this number is smaller than $|X_3(\k)|$.
Assume that $p\in X_3(\k)$ lies on a $(-1)$-curve that is defined over $\bk$, and let $E_1,\ldots, E_d$ be its Galois orbit.
Since $p$ is a rational point, it lies on each $E_i$.
The $27$ $(-1)$-curves on $X_3$ are such that at most three lines intersect in one point. Hence, $d\leq 3$.
  Since the link given by $p$ is of the form $X_6\link{3}{3}X_6'$ the only Galois orbits of pairwise disjoint $(-1)$-curves on $X_3$ are two Galois orbits of size $3$.
In particular, there is no $(-1)$-curve defined over $\k$ and so $d\geq2$.
  The exceptional divisors of the two points of degree $3$ that come from the link give six $(-1)$-curves that do not contain a rational point.
  The hexagon of the six $(-1)$-curves on $X_6$ is one Galois orbit (in \cite[Figure 1, Section 4.1]{SchneiderZimmermann} all configurations of the hexagon give rise to a birational morphism except when the hexagon forms one Galois orbit).
Similarly, the hexagon of the six $(-1)$-curves on $X_6'$ form also one Galois orbit, and its strict transform on $X_3$ is different from the hexagon on $X_6$.

  Hence, at most $27-18=9$ $(-1)$-curves contain a rational point.
  Since each $(-1)$-curve contains at most one rational point and each rational point lies on at least two $(-1)$-curves, the the total number of rational points lying on the set of $(-1)$-curves is at most $\frac{9}{2}$, so it is at most $4$.
  We find
  \[
    |\k|^2-|\k|+1 -4\geq 3.\qedhere
  \]
\end{proof}

\begin{remark}\label{rem:NotForF2}
  Lemma~\ref{lem:ExistenceX6general-13} does not hold for $\k=\F_2$: The blow-up of the two general $3$-points on $X_6$ found in \cite[Lemma 4.33, (2)]{SchneiderF2} gives a cubic surface such that all three rational points of $X_6$ lie on $(-1)$-curves. Hence Lemma~\ref{lem:linkD6-33} is not applicable.
  Moreover, there is also no $2$-point that is general with the $3$-point, and so also Lemma~\ref{lem:linkD6-22}\ref{item:linkD6-22--delPezzo} is not applicable, which would have allowed to decompose a link of the form $\Dl_6\link33\Dl_6$ into a Bertini involution and a link $\Dl_6\link22\Dl_6$.
  Therefore, this approach fails to give a decomposition of maps of the form \ref{item:linkD6-33} into involutions.
  Luckily, it was already proved in \cite[Corollary 1.3]{SchneiderF2} that over $\FF_2$  these maps are in fact involutions.
\end{remark}

\begin{lemma}\label{lem:X8-2And3General}
Let $p\in X_8$ be a general $2$-point, and let $q\in X_8$ be a general $3$-point.
Assume that the diagonal through $q$ does not contain any of the components of $p$.
Then $q$ is general with $p$.
\end{lemma}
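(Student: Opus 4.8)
The plan is to verify the combinatorial criterion for generality recalled in the proof of Lemma~\ref{lem:ExistenceX8general-14}: a set of five geometric points on $\P^1\times\P^1$ is general if and only if no two of them lie on a common ruling (vertical or horizontal) and no four of them lie on a common diagonal. Here the five geometric points are the components $p_1,p_2$ of $p$ and $q_1,q_2,q_3$ of $q$. The incidences among the $p_i$ are controlled by the generality of the $2$-point $p$, and those among the $q_i$ by the generality of the $3$-point $q$; note that no diagonal condition is internal to $p$ or to $q$, since each has fewer than four components. So the work is to rule out (i) a diagonal through four of the five points, and (ii) a ruling through one component of $p$ and one component of $q$.

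For (i), I would first note that the unique diagonal through the three components of $q$ is precisely the Galois-invariant curve $\Delta_q$ occurring in the hypothesis. A diagonal through four of the five points either consists of one $p_i$ together with all three $q_j$ — hence contains all three $q_j$, so equals $\Delta_q$, which by hypothesis avoids $p_1,p_2$, a contradiction — or it contains $p_1,p_2$ and exactly two of the $q_i$, say $D\supseteq\{p_1,p_2,q_1,q_2\}$. In the latter case I would apply $g\in\Gal(\bk/\k)$ acting as a nontrivial permutation of $\{q_1,q_2,q_3\}$, which exists since $q$ is a genuine $3$-point. Then $D^g$ is again a diagonal through $p_1,p_2$ and through two of the $q_i$, and since any two $2$-element subsets of $\{q_1,q_2,q_3\}$ meet, $D$ and $D^g$ share at least the three points $p_1,p_2$ and one $q_i$. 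As two distinct diagonals meet in only $(1,1)\cdot(1,1)=2$ points, this forces $D=D^g$, so $D$ contains all three $q_i$, i.e. $D=\Delta_q\ni p_1$, contradicting the hypothesis again.

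The main obstacle is (ii). I would exploit that on $X_8\in\Dl_8$ the Galois group swaps the two families of rulings — as already observed in the proof of Lemma~\ref{lem:ExistenceX8general-14}, the vertical and horizontal rulings through a point form a Galois orbit. Hence the ``cross'' $C(x)=V(x)\cup H(x)$ of the two rulings through a point $x$ satisfies $C(x)^g=C(x^g)$ for every $g$, so the union $\bigcup_i C(p_i)$ of the four rulings through $p_1,p_2$ is Galois-invariant. If some $q_j$ lay on a ruling through some $p_i$, then $q_j\in\bigcup_i C(p_i)$, and by invariance together with transitivity of $\Gal(\bk/\k)$ on $\{q_1,q_2,q_3\}$ all three $q_i$ would lie on these four rulings. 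Since $q$ is general no two $q_i$ share a ruling, so each $q_i$ lies on exactly one of the four rulings unless it is one of the two ``corner'' points $V(p_1)\cap H(p_2)$ or $V(p_2)\cap H(p_1)$. The family-swapping elements of $\Gal(\bk/\k)$ set up a bijection between the $q_i$ lying on a single vertical ruling and those lying on a single horizontal ruling, so writing $m$ for their common number and $c$ for the number of corner $q_i$ one obtains $2m+c=3$, forcing $c$ odd. But $c=3$ is impossible (there are only two corner points), and if $c=1$ then, since every element of $\Gal(\bk/\k)$ sends a corner point to a corner point (both the family-preserving and the family-swapping ones, because each permutes $\{p_1,p_2\}$ bijectively), the orbit of that $q_i$ has size at most $2$, contradicting that $q$ is a $3$-point. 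This rules out (ii), and together with (i) it shows that the five geometric points are general, i.e. $q$ is general with $p$. I expect the parity-and-corner bookkeeping in (ii) to be the delicate point, precisely because the ruling through a component of $p$ and a component of $q$ need not be Galois-stable on its own and must be tracked through the ruling swap.
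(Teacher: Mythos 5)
Your argument is correct in substance and follows essentially the same route as the paper: both verify the generality criterion for the five geometric points (no two on a common ruling, no four on a common diagonal) by exploiting Galois equivariance of the incidence configuration, using that the four rulings through the components of $p$ form a single Galois orbit and that the Galois action is transitive on the components of $q$. The paper settles the ruling case with a one-line orbit-size observation (a size-$4$ orbit of rulings is incompatible with the size-$3$ orbit of the $q_i$), where you give a more hands-on parity-and-corner count; both work, yours just being longer. The one point to repair is in your diagonal case: from $D=D^g$ for a single \emph{arbitrary} nontrivial $g$ you cannot conclude that $D$ contains all three $q_i$ --- if $g$ acts as the transposition swapping $q_1,q_2$ and fixing $q_3$, then $\{q_1,q_2\}^g=\{q_1,q_2\}$ and the equality $D=D^g$ carries no new information. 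You need a $g$ that does not stabilize the subset $\{q_1,q_2\}$, which exists by transitivity (take $g$ with $q_1^g=q_3$), or, as the paper does, a $3$-cycle $\sigma$ (a transitive subgroup of $S_3$ always contains the cyclic group of order $3$); with that one-word adjustment your pigeonhole on $2$-element subsets closes the argument exactly as intended.
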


\begin{proof}
  The Galois orbit of the horizontal and vertical curves containing $p$ has four components. Therefore, none of the vertical or horizontal curves through one of the $p_i$ contains any of the $q_i$, since the latter is a component of a $3$-point.

  Assume that two points of $q$, say $q_2,q_3$, lie on one diagonal with $p$, call it $D$.
  Note that since $\Gal(L/\k)$ is a transitive subgroup of $S_3$, it contains the cyclic group, where $L/\k$ is the splitting field of $q$.
  Let $\sigma$ be the generator of the cyclic group in $\Gal(L/\k)$. So $\sigma(D)$ contains $p$ as well as $q_1,q_3$, and therefore $D=\sigma(D)$ is the diagonal through $q$, a contradiction.
\end{proof}

\begin{lemma}[Case~\ref{item:linkD6-22}, Lemma \ref{lem:linkD6-22}\ref{item:linkD6-22--fibering}]
\label{lem:ExistenceX6general-12}
  Let $p$ be a general $2$-point on $X_6\in\Dl_6$. Then any rational point on $X_6$ is general with $p$.
\end{lemma}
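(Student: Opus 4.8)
The plan is to reduce the generality of the triple $\{q,p_1,p_2\}$ to a statement about $(-1)$-curves on an auxiliary degree-$4$ surface, and then to eliminate the bad configurations by a Galois-orbit argument. First I would set $Y=\mathrm{Bl}_p X_6$; since $p$ is a general $2$-point, $Y$ is a Del Pezzo surface of degree $4$ defined over $\k$. As $q$ is rational while the two components $p_1,p_2$ of $p$ are not, we have $q\neq p_1,p_2$, so $q$ lifts to a rational point $\tilde q\in Y$. Blowing up one point on a degree-$4$ Del Pezzo surface yields a surface of degree $3$ that is Del Pezzo exactly when the point lies on none of the $(-1)$-curves; hence $q$ is general with $p$ if and only if $\tilde q$ lies on none of the sixteen $(-1)$-curves of $Y$. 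It therefore suffices to show that no rational point of $Y$ lies on a $(-1)$-curve, apart from the two exceptional divisors $\mathcal E_1,\mathcal E_2$ over $p_1,p_2$ (which $\tilde q$ avoids automatically).

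Next I would sort the sixteen $(-1)$-curves according to their intersection with $\mathcal E_1\cup\mathcal E_2$. Working over $\bk$ in a blow-down $X_6\to\p^2$ with centres $a_1,a_2,a_3$, the classes are: the two curves $\mathcal E_1,\mathcal E_2$; the six strict transforms of the hexagon of $X_6$ (the $E_i$ and the $H-E_i-E_j$), none meeting $\mathcal E_1\cup\mathcal E_2$; the six curves $F_{ij}=H-E_i-\mathcal E_j$ with $i\in\{1,2,3\}$, $j\in\{1,2\}$, each meeting exactly one $\mathcal E_j$; and the two curves $F_0=H-\mathcal E_1-\mathcal E_2$ and $\Gamma=2H-E_1-E_2-E_3-\mathcal E_1-\mathcal E_2$, each meeting both. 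For the hexagon I would use that its six curves form a single Galois orbit (as recalled in the proof of Lemma~\ref{lem:ExistenceX6general-13}) with empty common intersection: a rational, hence Galois-invariant, point lying on one of them would lie on all six, which is impossible.

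The heart of the matter is the eight curves meeting the exceptional locus of $p$. For the six curves $F_{ij}$ I would argue that the subgroup $\Gal(\bk/L)\subset\Gal(\bk/\k)$ fixing the components of $p$ (where $L$ is the quadratic residue field of $p$) preserves each index $j$, since it fixes $\mathcal E_j$, and permutes the first index through its action on the three antipodal pairs of the hexagon. As the full Galois group acts transitively on the hexagon and the antipodal pairing is a block system, the action on the three pairs is transitive; since this image is a transitive subgroup of $S_3$ and $\Gal(\bk/L)$ has index at most $2$, the restricted action remains transitive—the key point being that $\Z/3$ has no subgroup of index $2$. Hence the orbit of any $F_{ij}$ contains $F_{1j},F_{2j},F_{3j}$, which satisfy $F_{ij}\cdot F_{i'j}=0$ for $i\neq i'$ and are therefore pairwise disjoint; a rational point on $F_{ij}$ would lie on all three, a contradiction. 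For $F_0,\Gamma$—the only curves meeting both $\mathcal E_1$ and $\mathcal E_2$—the set $\{F_0,\Gamma\}$ is Galois-stable, and I would show the action swaps them: their difference $\Gamma-F_0=H-E_1-E_2-E_3$ is Galois-invariant only if $H$, hence the triple $\{E_1,E_2,E_3\}$, is preserved, i.e.\ only if Galois respects the bipartition of the hexagon, which is incompatible with the hexagon being a single orbit and thus with $X_6$ having Picard rank $1$. Since $F_0\cdot\Gamma=0$ they are disjoint, so no rational point lies on $F_0\cup\Gamma$.

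Putting these together, no rational point of $Y$ lies on any of the fourteen non-exceptional $(-1)$-curves, so $\tilde q$ avoids all sixteen and $q$ is general with $p$. The main obstacle is coordinating the two a priori unrelated Galois actions, that of the $2$-point $p$ (through $L$) and that of the hexagon of $X_6$: the argument succeeds only because a transitive subgroup of $S_3$ admits no index-$2$ subgroup that fails to be transitive, and because the two $(-1)$-curves through both components of $p$ are forced to be exchanged by the Picard-rank-$1$ hypothesis. I expect the routine parts to be the bookkeeping of which $(-1)$-curves meet $\mathcal E_1\cup\mathcal E_2$ and the verification of the intersection relations $F_{ij}\cdot F_{i'j}=0$ and $F_0\cdot\Gamma=0$.
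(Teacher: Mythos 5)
Your proof is correct, but it takes a genuinely different route from the paper's. The paper transfers the whole configuration to a surface in $\Dl_8$ via the link $X_6\link{1}{3}X_8$ determined by the rational point $q$ itself (whose existence is Lemma~\ref{l:D6-11}); there $q$ becomes a general $3$-point, $p$ remains a general $2$-point, the contracted diagonal separates them, and Lemma~\ref{lem:X8-2And3General} --- a short Galois argument with rulings and diagonals on $\P^1\times\P^1$ --- concludes. You instead stay on the degree-$4$ surface $Y=\mathrm{Bl}_pX_6$ and rule out, family by family, that a rational point can lie on one of its sixteen $(-1)$-curves. Both arguments ultimately rest on the same two inputs: the hexagon of $X_6$ is a single Galois orbit (forced by Picard rank $1$, as recalled in the proof of Lemma~\ref{lem:ExistenceX6general-13}), and a Galois-invariant point lying on a curve lies on every curve of its orbit, hence cannot lie on a curve whose orbit contains two disjoint members. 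What your version buys is self-containedness: it needs neither Lemma~\ref{lem:X8-2And3General} nor the passage to $X_8$. What it costs is the explicit orbit bookkeeping, namely the index-$2$ transitivity argument on the three antipodal pairs for the curves $F_{ij}$ (which is sound: a transitive subgroup of $S_3$ has no non-transitive subgroup of index $2$), and the forced swap of $F_0$ and $\Gamma$. That last step is the only place where your write-up is compressed; the clean justification is that if no Galois element exchanged $F_0$ and $\Gamma$ then $\Gamma-F_0=H-E_1-E_2-E_3$ would be Galois-invariant, and subtracting it from the always-invariant class $-K_{X_6}=3H-E_1-E_2-E_3$ would make $H$ invariant, forcing Galois to preserve the bipartition $\{E_1,E_2,E_3\}\sqcup\{L_{12},L_{13},L_{23}\}$ of the hexagon and contradicting its being a single orbit --- which is essentially what you say, and the intersection numbers you rely on ($F_{ij}\cdot F_{i'j}=0$ for $i\neq i'$ and $F_0\cdot\Gamma=0$) check out.
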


\begin{proof}
  Let $q\in X_6$ be a rational point, and denote the image of $p$ under the link $X_6\link{1}{3}X_8$ again by $p$. Write $q=\{q_1,q_2,q_3\}\subset X_8$ for the contracted $3$-point. In particular, $q$ as well as $p$ are general points in $X_8$. Note that the diagonal through $q$ is the exceptional divisor of the link.
  Since the diagonal through the three geometric points of $q$ is contracted by the link, it does not contain any of the $p_i$.
  Lemma~\ref{lem:X8-2And3General} implies the statement.
\end{proof}

\begin{lemma}[Case~\ref{item:loop}, Lemma \ref{lem:loop}]
\label{lem:ExistenceX5general-12}
Let $p\in X_5$ be a general $2$-point, where $X_5\in\Dl_5$.
Then any rational point $q\in X_5$ is general with $p$.
\end{lemma}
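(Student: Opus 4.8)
The plan is to blow up the $2$-point $p$ and reduce the whole question to a statement about the $27$ lines of a cubic surface. Since $p$ is general, the blow-up $Y\to X_5$ of $p$ is a del Pezzo surface of degree $3$; it is the middle rank $2$ fibration of the link $\Dl_5\link{2}{5}\Dl_8$, so $Y$ also admits a contraction $Y\to X_8=\p^1\times\p^1$ of a $5$-point. As $q$ is rational it is distinct from the two components of $p$, hence it lifts to a rational point $\tilde q\in Y$. Using the general-position description of degree $2$ del Pezzo surfaces (blow-up of $7$ points with no $3$ collinear and no $6$ on a conic), the point $q$ is general with $p$ if and only if $\tilde q$ lies on none of the $27$ $(-1)$-curves of $Y$. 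So it suffices to prove that no rational point of $Y$ lies on a $(-1)$-curve. The guiding principle is that if a rational point lies on a $(-1)$-curve $C$, then it lies on every curve in the Galois orbit of $C$; since at most $3$ lines of a cubic surface pass through a point, such an orbit must be concurrent and of size at most $3$.

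First I would organise the $27$ lines by their intersection with the Galois-invariant class $\mathcal{E}=E_{p_1}+E_{p_2}$ of the exceptional divisor of $Y\to X_5$. A direct computation splits them into four Galois-stable families, of $\mathcal{E}$-degrees $0,-1,1,2$ and sizes $10,2,10,5$. The ten $\mathcal{E}$-degree $0$ curves are exactly the $(-1)$-curves of $X_5$; since $X_5$ has Picard rank $1$ the Galois group permutes the five conic-bundle structures of $X_5$ transitively, hence acts on these ten lines (the two-element subsets of the corresponding five-element set) with all orbits of size at least $5>3$, so no rational point lies on them. The two $\mathcal{E}$-degree $-1$ curves $E_{p_1},E_{p_2}$ are disjoint, and the five $\mathcal{E}$-degree $2$ curves are pairwise disjoint (they are the $5$-point contracted by $Y\to X_8$); for such a family a concurrent orbit would have size $1$, giving a $\k$-rational $(-1)$-curve whose class is independent of $-K_Y$ and $\mathcal{E}$, contradicting that $Y$ has Picard rank $2$. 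Hence none of these carry a rational point either.

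The remaining $\mathcal{E}$-degree $1$ family is the heart of the matter and, I expect, the main obstacle. These ten curves map under $Y\to X_5$ to the fibres through $p_1$ and through $p_2$ of the five conic bundles of $X_5$ (two per bundle). Suppose a rational point $q$ lay on one of them. Applying transitivity of the Galois action on the five conic bundles, $q$ would then lie, in every one of the five bundles, on the fibre through $p_1$ or through $p_2$. Specialising to the four pencils of lines through the base points $r_1,\dots,r_4$ of the $\bk$-model $X_5=\mathrm{Bl}_{r_1,\dots,r_4}\p^2$, this means that for each $i$ the points $q,r_i,p_{j_i}$ are collinear for some $j_i\in\{1,2\}$; equivalently each $r_i$ lies on the line $qp_1$ or on the line $qp_2$. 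By the pigeonhole principle two of the $r_i$ lie on the same of these two lines, say $r_a,r_b,q,p_1$ are collinear, whence $r_a,r_b,p_1$ are three collinear points among $\{r_1,\dots,r_4,p_1,p_2\}$, contradicting the generality of $p$.

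Putting the four families together shows that no rational point of $Y$ meets a $(-1)$-curve, which is the desired statement. I would emphasise that, in contrast with Lemma~\ref{lem:ExistenceX6general-13}, this argument is uniform in the field and needs no separate point count, the role of the ground field being entirely absorbed into the Picard-rank (equivalently, transitivity) hypotheses. The only genuinely delicate point is the $\mathcal{E}$-degree $1$ case above; the other three families are ruled out formally by orbit-size and disjointness considerations.
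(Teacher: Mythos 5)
Your proof is correct, but it follows a genuinely different route from the paper's. The paper blows up the rational point $q$ first: by Lemma~\ref{l:D5-11} this gives a link $X_5 \link15 \P^2$ contracting a general $5$-point $r$, the $2$-point $p$ is transported to a $2$-point of $\P^2$ off the conic through $r$, and the conclusion is then read off from Lemma~\ref{lem:P2-5And1Or2}, which itself rests on an external reference (\cite[Lemma 4.9]{SchneiderF2}). You instead blow up $p$ first and analyse the rational points of the resulting Picard rank $2$ cubic surface $Y$ directly, splitting the $27$ lines into the four Galois-stable families of $\mathcal{E}$-degrees $0,-1,1,2$ and sizes $10,2,10,5$. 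All your steps check out: the identification of the degree $0$ family with the lines of $X_5$, labelled by $2$-subsets of the five conic bundle classes, is the standard Weyl group picture, and the incidence count $2|O|=5k$ for a transitive action on five objects does force every orbit of $2$-subsets to have size $5$ or $10$, which combined with the bound of three concurrent lines on a smooth cubic settles that family; the disjointness and Picard-rank arguments dispose of the degree $-1$ and $2$ families (indeed no $(-1)$-class lies in the span of $K_Y$ and $\mathcal{E}$); and the pigeonhole argument for the degree $1$ family is valid, though you could shortcut it by observing that the Galois orbit of such a fibre surjects onto the five conic bundles, so a rational point on one of them would already lie on at least five lines of the cubic. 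What the paper's route buys is brevity and reuse of lemmas already needed elsewhere (\ref{l:D5-11} and \ref{lem:P2-5And1Or2}); what yours buys is a self-contained argument on $Y$ that never leaves the surface, is uniform in the field with no separate finite-field count, and makes transparent exactly which configuration of lines would have to exist for the statement to fail.
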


\begin{proof}
  Consider the link $X_5\link{1}{5}\p^2$ given by the rational point $q$, denote by $r$ the $5$-point in $\p^2$ (this is a general point), and denote the image of $p$ in $\p^2$ again by $p$.
  Note that $p$ on $X_5$ is general with $q$ if and only if $p$ on $\p^2$ is general with $r$, because the surface obtained by blowing up $p$ and $q$ on $X_5$ is isomorphic to the one obtained by blowing up $p$ and $r$ on $\p^2$.
  Note that since the exceptional divisor of $q$ is sent onto the conic through the five geometric components of $r$, $p$ does not lie on this conic.
  Lemma~\ref{lem:P2-5And1Or2} implies the statement.
\end{proof}

\begin{lemma}[Case~\ref{item:linkP2-66}, Lemma \ref{lem:linkP2-66}\ref{item:linkP2-66--fibering}]\label{lem:P2-6WithRationalPoint}
Let $p\in\p^2$ be a general $6$-point.
Then, all rational points except finitely many are general with $p$.
Moreover, if the ground field $\k$ is finite, then all rational points except at most one are general with $p$.
\end{lemma}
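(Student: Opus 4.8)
The plan is to translate the generality condition into a statement about the lines and conics through the geometric components of $p$, and then to bound the rational points on that configuration. Write $p=\{p_1,\dots,p_6\}\subset\P^2(\bk)$ for the six geometric components, which form a single Galois orbit. A rational point $q$ is general with $p$ exactly when the seven points $p_1,\dots,p_6,q$ are in general position, that is, no three are collinear and no six lie on a conic (these are precisely the conditions for the blow-up of the seven points to be a del Pezzo surface of degree~$2$). As $p$ is already general, the only new constraints involve $q$: letting $L_{ij}$ be the line through $p_i,p_j$ and $C_i$ the conic through the five points $\{p_j\mid j\neq i\}$, the point $q$ fails to be general with $p$ precisely when it lies on one of the fifteen lines $L_{ij}$ or on one of the six conics $C_i$. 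So I would reduce everything to counting the rational points on these curves.

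First I would dispose of the conics. Since the components form a single Galois orbit, the Galois group $\Gal(\bk/\k)$ permutes the indices transitively, hence permutes the conics transitively as well, because $g(C_i)=C_{g(i)}$. A rational point $q$ lying on some $C_i$ is Galois-fixed, so it lies on the whole orbit, i.e.\ on all six conics. But $C_1\neq C_2$ (otherwise all six points would lie on a conic), and they share the four distinct points $p_3,p_4,p_5,p_6$; by Bézout these are their only common points, so $C_1\cap C_2=\{p_3,p_4,p_5,p_6\}$. As $p_k\notin C_k$ for each $k$, no point lies on all six conics, i.e.\ $\bigcap_i C_i=\emptyset$. Hence no rational point lies on any $C_i$, over any field.

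Next I would treat the lines. Transitivity again shows no pair $\{i,j\}$ is Galois-stable, so every line $L_{ij}$ has a Galois orbit of size at least~$2$; distinct pairs give distinct lines (no three $p_i$ are collinear), so a Galois-fixed point $q$ on some $L_{ij}$ lies on at least two of the fifteen lines and is therefore one of their finitely many pairwise intersection points. This already gives the first assertion. For the finite-field refinement, the absolute Galois group is topologically generated by Frobenius, acting transitively on the six components, hence as a single $6$-cycle, say $(1\,2\,3\,4\,5\,6)$. The crucial remark is that the pairs $\{i,j\}$ for which $q\in L_{ij}$ are pairwise disjoint, since two such lines already meet at $q$ and so cannot also share a component $p_i$. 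The $6$-cycle has three orbits on pairs: two of size~$6$ (neighbours and distance-two pairs) and one of size~$3$, the ``diameters'' $\{1,4\},\{2,5\},\{3,6\}$; among these only the diameter orbit is a family of pairwise disjoint pairs. A bad rational point must therefore lie on all three diameter lines $L_{14},L_{25},L_{36}$, and so is their unique common point when they are concurrent; thus there is at most one bad rational point.

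The hard part is the finite-field refinement: the qualitative finiteness follows softly from transitivity and Bézout, whereas the bound ``at most one'' needs the combinatorial input that the only Galois-stable family of pairwise disjoint pairs of a transitive $6$-cycle is the diameter matching. The disjointness observation is what makes this manageable, as it caps any admissible family of pairs at size~$3$ and, over a finite field, forces it to be exactly the diameter orbit.
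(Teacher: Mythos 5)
Your proposal is correct and follows essentially the same approach as the paper's proof: Galois transitivity rules out rational points on the conics $C_i$, each line $L_{ij}$ contributes only finitely many (in the paper, at most one) rational points, and over a finite field the Frobenius $6$-cycle forces any bad rational point onto the three ``diameter'' lines $L_{14}, L_{25}, L_{36}$, hence to be their unique common point. Your packaging of the finite-field step via the observation that the pairs $\{i,j\}$ with $q\in L_{ij}$ must be pairwise disjoint is just a clean reformulation of the paper's case-by-case check that $q\in L_{12}$ or $q\in L_{13}$ would force three components of $p$ to be collinear.
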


\begin{proof}
  Let $r$ be a rational point. We first show that it does not lie on any conic through five components of $p$.
  Assume that $r$ lies on the conic through all components of $p$ except $p_i$, and call it $C_i$.
  Let $\sigma\in\Gal(\bk/\k)$ such that $\sigma(p_i)=p_j\neq p_i$. So $\sigma(C_i)=C_j$ contains $q$ and all components of $p$ except $p_j$. So $C_i$ and $C_j$ have the five points $q$ and $p\setminus\{p_i,p_j\}$ in common and are therefore equal. This is a contradiction to the generality of $p$.

  The lines $L_{ij}$ through two components $p_i$, $p_j$ of $p$ contain at most one rational point (if a line would contain two then it is defined over $\k$, not possible).
  Therefore, there are only finitely many rational points that are collinear with two components of $p$. (In fact, $\binom{6}{2}=15$ gives an upper bound.)

  Assume now that $\k$ is finite. We order the components of $p$ cyclically and denote by $\sigma$ the generator of $\Gal(L/\k)$ where $L$ is the extension of $\k$ of degree $6$.
  The lines $L_{ij}$ form three orbits, namely the ones of $L_{12}$ and $L_{13}$ are both of size $6$, and the one of $L_{14}$ is of size $3$.
  If $q\in L_{12}$ then $q\in\sigma(L_{12})=L_{23}$, so $L_{12}=L_{23}$ and so $p_1$, $p_2$ and $p_3$ are collinear, a contradiction to the generality of $p$.
  If $q\in L_{13}$ then $q\in\sigma^2(L_{13})=L_{35}$ and so $p_1$, $p_3$ and $p_5$ are collinear, again a contradiction.
  If $q\in L_{14}$, then $q\in L_{25}\cap L_{36}$, and so $q$ is the unique intersection point of these three lines.
\end{proof}

\subsection{Proof of Theorem \ref{t:main}}
\label{sec:proof}

We start from Proposition \ref{prop:CremonaGeneratedByDelPezzoAndFiberTypes}, and we want to show that each of the generators are contained in the subgroup of $\Bir_\k(\P^2)$ generated by involutions.

This is Lemma \ref{l:PGL3_and_jonq} for $\Aut_\k(\P^2)$ and $\Bir_\k(\P^2, \pi_\times)$, and Corollary \ref{c:Jonq_type_4} for a Jonquières group $\Bir_\k(\p^2,\pi_F)$ of type $4$.

For a Jonquières group $\Bir_\k(\p^2,\pi_{L,L'})$ of type $2+2$,
this follows from Propositions \ref{prop:KernelGeneratedByInvo} and \ref{prop:DescriptionOfJonq22}.

We are left with the case of an irreducible elements $f$ of Del Pezzo type with $1 \le \sl(f) \le 5$.
By contradiction, assume there exists such an element that is not a composition of involutions, and assume that $\sl(f)$ is minimal for this property.

Then the factorization type of a minimal factorization of $f$ is not one of the 10 cases covered by Corollary \ref{c:quad_involution} and Proposition \ref{prop:BertiniGeiserSimplification}, and we are left with the seven cases studied in sections \ref{sec:remaining} and \ref{sec:existence}:

\ref{item:linkD5} $\p^2\link{5}{1} \Dl_5\link{1}{5} \p^2$, see Lemmas~\ref{lem:linkD5}\ref{item:linkD5--fibering} and \ref{lem:ExistenceXgeneral-11}.

\ref{item:linkD6} $\p^2\link{2}{1} \Dl_8\link{3}{1} \Dl_6\link{1}{3}\Dl_8\link{1}{2}\p^2$, see Lemmas \ref{lem:linkD6} and \ref{lem:ExistenceXgeneral-11}.

\ref{item:linkD8-44} $\p^2\link{2}{1} \Dl_8\link{4}{4}\Dl_8\link{1}{2}\p^2$, see Lemmas \ref{lem:linkD8-44}\ref{item:linkD8-44--fibering} and \ref{lem:ExistenceX8general-14}.

\ref{item:linkD6-33} $\p^2\link{2}{1} \Dl_8\link{3}{1} \Dl_6\link{3}{3} \Dl_6\link{1}{3}\Dl_8\link{1}{2}\p^2$,
see Lemmas \ref{lem:linkD6-33} and \ref{lem:ExistenceX6general-13} if $|\k|\geq 3$. For $\k=\F_2$ see \cite[Corollary 1.3]{SchneiderF2}.

\ref{item:linkD6-22} $\p^2\link{2}{1} \Dl_8\link{3}{1} \Dl_6\link{2}{2} \Dl_6\link{1}{3}\Dl_8\link{1}{2}\p^2$,
see Lemmas \ref{lem:linkD6-22}\ref{item:linkD6-22--fibering} and \ref{lem:ExistenceX6general-12}.

\ref{item:loop} $\p^2\link{5}{1} \Dl_5\link{2}{5}\Dl_8\link{1}{2}\p^2$,
see Lemmas \ref{lem:loop} and \ref{lem:ExistenceX5general-12}

\ref{item:linkP2-66} $\p^2\link{6}{6} \p^2$, see Lemmas \ref{lem:linkP2-66}\ref{item:linkP2-66--fibering} and \ref{lem:P2-6WithRationalPoint}.

In each case we obtain a factorization into involutions (using automorphisms, elements of Jonqui\`eres type, quadratic, Geiser and Bertini involutions), which gives the expected contradiction and finishes the proof of the theorem.
\qed
\bigskip

We can be slightly more precise and give the following involutive generating set:

\begin{proposition}
  $\Bir_\k(\p^2)$ is generated by linear involutions, involutions of Jonquières type $1$, $2+2$ and $4$, quadratic involutions, and maps conjugated to a Geiser or Bertini involution on a Del Pezzo surface of Picard rank $2$ or $3$.
\end{proposition}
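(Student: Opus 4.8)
The plan is to go back through the proof of Theorem~\ref{t:main} and, for each generator produced by Proposition~\ref{prop:CremonaGeneratedByDelPezzoAndFiberTypes}, to record the precise \emph{type} of the involutions into which it is decomposed; the statement is then just a matter of checking that every involution so produced belongs to one of the five announced families. Since Proposition~\ref{prop:CremonaGeneratedByDelPezzoAndFiberTypes} already reduces the generation of $\Bir_\k(\P^2)$ to the automorphism group, the three Jonquières groups, and the irreducible elements of Del Pezzo type, I would treat these in turn.

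First, for $\Aut_\k(\P^2)=\PGL_3(\k)$ and the Jonquières group $\Bir_\k(\P^2,\pi_\times)$ of type~$1$, I would reread the proof of Lemma~\ref{l:PGL3_and_jonq}: the generating involutions inside $\PSL_3(\k)$ are linear involutions, those inside $\PSL_2(\k)\ltimes\PSL_2(\k(x))$ are involutions of Jonquières type~$1$, and the auxiliary birational involutions used to realise the dilatations, namely $(x,y)\rat(1/x,y)$ and $(x,y)\rat(x,1/y)$ up to scaling, are respectively quadratic involutions and involutions of Jonquières type~$1$. For the Jonquières group of type~$4$, Corollary~\ref{c:Jonq_type_4} writes every element as a product of involutions in the kernel $\Bir_\k(\P^2/\pi_F)$ and of automorphisms in $\PGL_3(\k)$; by Proposition~\ref{prop:KernelGeneratedByInvo} the former are involutions of Jonquières type~$4$, and the latter fall back to the previous case. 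For the type~$2+2$ group, the exact sequence~\eqref{exact_sequence} together with Proposition~\ref{prop:KernelGeneratedByInvo} (for the kernel) and Proposition~\ref{prop:DescriptionOfJonq22} (a family of involutions surjecting onto $\Aut_\k^{\pi_{L,L'}}(\P^1)$) expresses every element as a product of involutions of Jonquières type~$2+2$.

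It remains to treat an irreducible element $f$ of Del Pezzo type with $1\le\sl(f)\le5$, which I would do by following verbatim the minimal-$\sl$ induction of \S\ref{sec:proof}. The ten easy factorization types are absorbed by Corollary~\ref{c:quad_involution}, which after composing with an element of $\PGL_3(\k)$ yields a quadratic involution, and by Proposition~\ref{prop:BertiniGeiserSimplification}, which peels off a map conjugate to a Geiser or Bertini involution on a Del Pezzo surface of Picard rank~$2$ (the rank~$2$ fibration dominating a single link) together with a map of strictly smaller Sarkisov length. The seven remaining types are handled by the lemmas of \S\ref{sec:remaining}: each decomposition uses only quadratic involutions, maps of Jonquières type~$1$, $2+2$ or~$4$, other maps of strictly smaller length, and central symmetries of the $2$-dimensional pieces, which by Corollary~\ref{c:central_symmetry} are exactly Geiser or Bertini involutions on Del Pezzo surfaces of Picard rank~$3$. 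Every stray factor in $\PGL_3(\k)$ is in turn reduced to linear and quadratic involutions by Lemma~\ref{l:PGL3_and_jonq}.

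The proof is therefore essentially a matter of bookkeeping, and the \emph{only} point that needs genuine care is to confirm that the Del Pezzo surfaces carrying the Geiser and Bertini involutions that actually occur have Picard rank exactly~$2$ or~$3$: rank~$2$ for the simplifications of Proposition~\ref{prop:BertiniGeiserSimplification}, and rank~$3$ for the central symmetries of \S\ref{sec:remaining}. Assembling the involutions collected at each stage gives precisely the five families in the statement, which therefore generate $\Bir_\k(\P^2)$.
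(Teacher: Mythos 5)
Your overall strategy --- rereading the proof of Theorem \ref{t:main} and recording the type of each involution it produces --- is exactly the strategy of the paper, and your treatment of $\Aut_\k(\P^2)$, of the three Jonquières groups, and of the seven ``hard'' Del Pezzo cases of \S\ref{sec:remaining} is correct. However, you have misplaced the one point that genuinely needs an argument. You claim the only delicate step is checking the Picard ranks of the Del Pezzo surfaces carrying the Geiser and Bertini involutions; those ranks are immediate from the constructions (rank $2$ for the rank $2$ fibration dominating a single type II link over a point in Proposition \ref{prop:BertiniGeiserSimplification}, rank $3$ for the surfaces defining the $2$-pieces). The real issue, which is the entire content of the paper's proof, concerns the residual factor $g$ in the simplifications $f=g\circ\iota$ of Proposition \ref{prop:BertiniGeiserSimplification}: by construction $g$ comes with a factorization of type \ref{Q21}, \ref{item:linkD5} or \ref{item:linkD6} and satisfies $\sl(g)<\sl(f)$, but the results you would apply to it (Corollary \ref{c:quad_involution}, Lemmas \ref{lem:linkD5} and \ref{lem:linkD6}) all assume that the given factorization is \emph{minimal}, which is not automatic. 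Saying that $g$ has ``strictly smaller Sarkisov length'' does not by itself yield a decomposition into the five named families: the induction of \S\ref{sec:proof} is a proof by contradiction establishing generation by involutions, not by involutions of prescribed types, so it cannot simply be cited here.

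To close the gap one must argue, as the paper does, that such a $g$ is either an automorphism or admits a minimal factorization of one of the controlled types \ref{Q33}, \ref{Q21}, \ref{item:linkD5}, \ref{item:linkD6}. For instance, if $g$ has factorization type \ref{Q21} but $\sl(g)=1$, its minimal factorization would be a single link $\P^2\link dd\P^2$ with $d\in\{3,6,7,8\}$ by Figure \ref{diag:links}, which is impossible because $g$ has no such $d$-point among its base points; a similar inspection of base points handles type \ref{item:linkD6}. (Alternatively one could set up a genuine strong induction on $\sl$ over all elements, not only irreducible ones, but the same minimality issue resurfaces for auxiliary factors in the hard-case lemmas whose length is \emph{not} strictly smaller --- for example the quadratic map of type \ref{Q21} appearing in Lemma \ref{lem:linkD5}\ref{item:linkD5--delPezzo} has $\sl\le 2=\sl(f)$ --- so the base-point argument cannot be avoided.) Everything else in your proposal is correct bookkeeping.
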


\begin{proof}
  In the proof of Theorem~\ref{t:main} it is enough to give the composition into involutions of the irreducible maps of Del Pezzo type, which the mentioned lemmas provide except in the case of \ref{G1}-\ref{B4}.

  For the Geiser and Bertini simplifications $f=g\circ \iota$ in Proposition~\ref{prop:BertiniGeiserSimplification}, the map $\iota\in\Bir_\k(\p^2)$ is conjugate to a a Geiser \parent{or Bertini} involution on a del Pezzo surface of degree $2$ or $1$ that has Picard rank $2$, and $g$ has factorization type~\ref{Q21},~\ref{item:linkD5}, or~\ref{item:linkD6}.
  It is enough to observe that $g$ is either an automorphism or has a minimal factorization \ref{Q33}, \ref{Q21}, \ref{item:linkD5}, or \ref{item:linkD6}:

  First, assume $g$ has $\sl(g)\neq0$ and a factorization type \ref{Q21} or \ref{item:linkD5}. If it were not minimal, then $\sl(g)=1$ and hence $g$ has a minimal factorization $\p^2\link dd\p^2$ with a $d$-point as base point.
  By Figure~\ref{diag:links}, we have $d\in\{ 3,6,7,8\}$, but $g$ does not have any such $d$-point as base point, a contradiction.
  Moreover, if $g$ with $\sl(g)\neq 0$ has factorization type \ref{item:linkD6}, then $g$ has as base points at most one $2$-point, one $3$-point, and two rational points. Considering Figure~\ref{diag:links}, the factorization is either minimal, or $g$ has a minimal factorization as either \ref{Q33} or as \ref{Q21}.
\end{proof}

\begin{figure}[t]
\[
\includegraphics[scale=1]{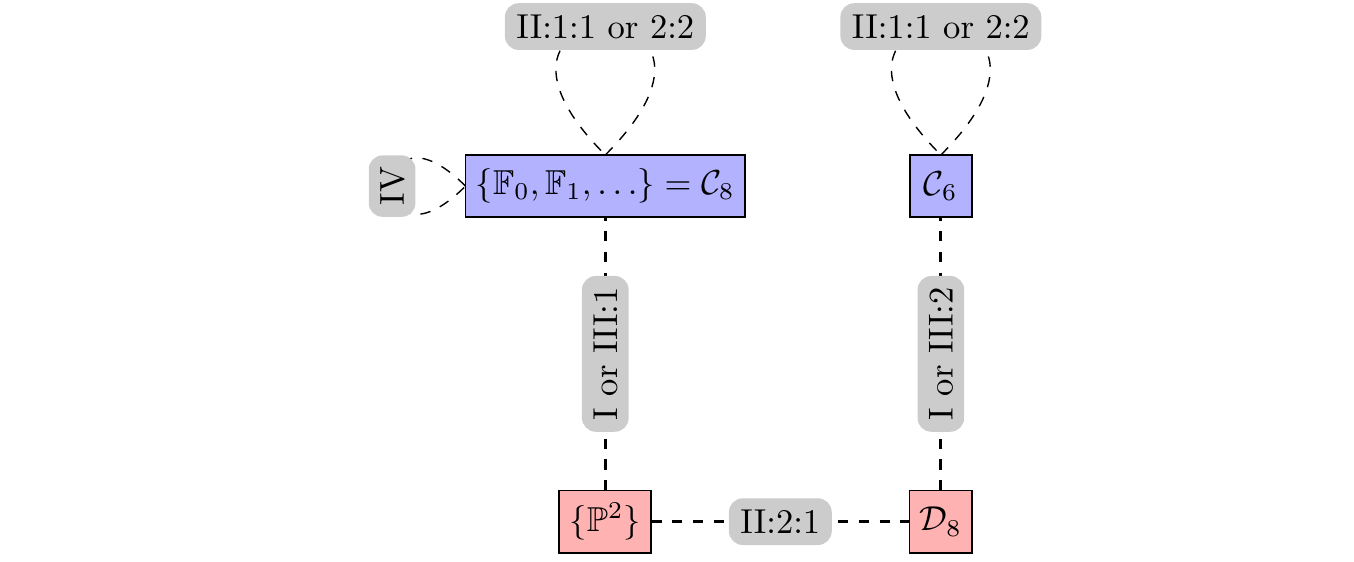}
\]
\caption{Sarkisov links between rational surfaces over $\R$}
\label{diag:linksR}
\end{figure}

\begin{remark}
 For $\k = \R$, or more generally for any field $\k$ with $[\alg \k:\k] = 2$, the graph of Sarkisov links reduces to Figure \ref{diag:linksR}, and Proposition \ref{prop:CremonaGeneratedByDelPezzoAndFiberTypes} becomes (compare with \cite[Corollary 4.2]{Zimmermann}): The Cremona group is generated by $\Aut_\k(\P^2)$, by the Jonquières group of type 1 and by the Jonquières group $\Bir_\k(\p^2,\pi_{L,L})$ of type 2 + 2, with $L = \alg \k$.
So to prove that $\Bir_\k(\P^2)$ is generated by involutions there is no need to consider sporadic cases of Del Pezzo type, and then our proof reduces essentially to the one given in \cite[Corollary II.4.12]{ZimmermannPhD}.
\end{remark}

\begin{remark}
 Over $\k = \FF_2$, some miracles occur with the sporadic cases.
For instance it is shown in \cite{SchneiderF2} that over $\FF_2$, a link $\P^2 \link66 \P^2$ is always an involution (up to $\PGL_3$), but this is certainly not the case for  general fields.
In fact it can be observed from \cite{DolgachevDuncan} that if the link is an involution then the cubic surface dominating the link must admit an Eckardt point (three $(-1)$-curve meeting at a same point), and it turns out that over $\FF_2$ all rational cubic surfaces of Picard rank 2 admit Eckardt points.
\end{remark}

\FloatBarrier

\appendix

\section{Sarkisov links between rational surfaces over a perfect field}
\label{app:russian_bis}

In this appendix we provide a mostly self-contained proof of Theorem \ref{t:russian}.
Precisely, we use some references such as \cite{SchneiderRelations, SchneiderZimmermann}, but we avoid any reference to the paper \cite{Iskovskikh96}.

One should keep in mind that the theorem does not say anything about the existence of the links.
In particular:
\begin{itemize}
\item Over an algebraically closed field, the graph of Figure \ref{diag:links} reduces to only two vertices $\{\P^2\}$, $\Cl_8$.
\item Over the field $\R$, or more generally a field such that $[\alg \k : \k] = 2$, the graph of Figure \ref{diag:links} reduces to only four vertices $\{\P^2\}$, $\Cl_8$, $\Dl_8$, $\Cl_6$ (see Figure \ref{diag:linksR}).
\item Over the field $\F_2$, it was shown in \cite{SchneiderF2}  that all vertices appear, and all edges appear except the loop $\Dl_8 \link44  \Dl_8$.
\end{itemize}

The following result holds in greater generality, with a proof almost as short \cite[A.6]{KollarSzabo}.
However our argument allows a precise counting in the case of a finite field.

\begin{lemma}
\label{l:one_rational_point}
Let $X \rat Y$ be a birational map between surfaces defined over $\k$.
Then $X$ admits at least one rational point if and only if the same holds for $Y$.
\end{lemma}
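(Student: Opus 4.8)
The plan is to reduce the statement to the single case of blowing up a closed point, where it becomes an immediate verification, and then to bootstrap via factorization of birational maps. Since the hypothesis and the conclusion are symmetric under replacing $X \rat Y$ by its inverse $Y \rat X$, it suffices to treat one implication. Over a perfect field any birational map between smooth projective surfaces can be resolved by a smooth projective surface $Z$ equipped with birational morphisms $Z \to X$ and $Z \to Y$, each of which factors as a finite sequence of blow-ups of closed points (that is, of Galois orbits). Thus it is enough to show that for the blow-up $\pi \colon \tilde{X} \to X$ of a single $d$-point $p$, the surface $X$ has a rational point if and only if $\tilde{X}$ does; the general claim then follows by induction on the number of blow-ups, applied to $Z \to X$ and $Z \to Y$ separately, giving $X(\k) \neq \emptyset \iff Z(\k) \neq \emptyset \iff Y(\k) \neq \emptyset$.

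For a blow-up, one direction is free: $\pi$ is a $\k$-morphism, so any $q \in \tilde{X}(\k)$ has image $\pi(q) \in X(\k)$. Conversely, suppose $x \in X(\k)$. If $d \ge 2$, the center $p$ has residue field of degree $d>1$, hence no rational geometric component, so $x \neq p$ and $\pi^{-1}(x)$ is a rational point of $\tilde X$. If $d = 1$, then either $X$ has a rational point distinct from $p$, which lifts isomorphically, or $p$ is the unique rational point of $X$, in which case the exceptional divisor $E \simeq \P^1_\k$ itself supplies rational points of $\tilde X$. This settles the equivalence for a single blow-up.

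The step requiring the most care, and the one I would flag as the main obstacle, is the reduction: one must invoke that a birational morphism between smooth projective surfaces over a \emph{perfect} field is a composition of blow-ups of smooth closed points, and that the resolving surface $Z$ exists over $\k$ and not merely over $\bk$. This is classical but genuinely uses perfectness, so I would cite it rather than reprove it.

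Finally, to obtain the sharper count over a finite field $\k = \FF_q$ alluded to after the statement, I would record the effect of each blow-up on $|X(\FF_q)|$. Blowing up a rational point ($d=1$) removes one point and inserts the $q+1$ points of $E \simeq \P^1_{\FF_q}$, changing the count by $+q$; blowing up a $d$-point with $d \ge 2$ changes nothing, since neither the center nor its exceptional divisor carries an $\FF_q$-point. In all cases $|\tilde{X}(\FF_q)| \equiv |X(\FF_q)| \pmod q$, so birational smooth projective surfaces over $\FF_q$ have the same number of rational points modulo $q$; as $\P^2$ has $q^2+q+1 \equiv 1$, every rational surface over $\FF_q$ satisfies $|X(\FF_q)| \equiv 1 \pmod q$, which is in particular nonzero and re-proves the lemma in this case.
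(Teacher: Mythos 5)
Your argument is correct and follows the same route as the paper: reduce to a single blow-up of a $d$-point via resolution of the birational map over the perfect field $\k$, then observe that for $d\ge 2$ the rational points are in bijection while for $d=1$ the exceptional divisor $\P^1_\k$ carries rational points. Your closing paragraph on the count over $\FF_q$ reproduces what the paper records separately in Remark~\ref{r:at_least_3}.
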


\begin{proof}
Since any birational map is a composition of blow-ups, it is sufficient to consider the case of the blow-up of a $d$-point $X \stackrel{_d}{\to} Y$.
If $d \ge 2$ the rational points of $X$ and $Y$ are in bijection, and if $d = 1$ the exceptional divisor is isomorphic to $\P^1$ over $\k$ and so contains rational points.
\end{proof}

\begin{remark}
\label{r:at_least_3}
Over a finite field $\k = \FF_q$, the above argument shows that the blow-up of a rational point produces an exceptional divisor with $q+1$ rational points. In particular, given a link $X \link{d}1 X'$ of type II with $d \ge 2$, we have $|X'(\k)| = |X(\k)| - q$.
This shows
\begin{align*}
|\P^2(\k)| &= q^2 + q + 1, & |X_8(\k)| &= q^2 + 1, \\
 |X_6(\k)| &= q^2 - q + 1, & |X_5(\k)| &= q^2 + 1,
\end{align*}
for any $X_i \in \Dl_i$.
Observe also that all these numbers are greater or equal than $3$.
\end{remark}

In the next proposition we gather some characterizations of the rank 1 fibrations involved in our links (except for the class $\Dl_5$ where we could not find a reference):

\begin{proposition}
\label{p:delpezzo}
Let $X$ be a rational Del Pezzo surface defined over a perfect field $\k$.
Assume $X$ has degree $d$ and Picard rank $1$.
\begin{enumerate}
\item \label{delpezzoP2}
If $d = 9$ then $X = \P^2$.
\item \label{delpezzoD8}
If $d = 8$ then $X \in \Dl_8$.
\item \label{delpezzoD6}
If $d = 6$ then $X \in \Dl_6$.
\end{enumerate}
Let $X/\P^1$ be a rational conic bundle defined over a perfect field $\k$.
Assume $K_X^2 = d$ and the relative Picard rank of $X/\P^1$ is $1$.
\begin{enumerate}[resume]
\item \label{delpezzoC5}
If $d = 5$ then $X \in \Cl_5$.
\item \label{delpezzoC6}
If $d = 6$ then $X \in \Cl_6$.
\item \label{delpezzoC8}
If $d = 8$ then $X \in \Cl_8$.
\end{enumerate}
\end{proposition}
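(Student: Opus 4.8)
The plan is to handle the six cases by a single mechanism and to order them by \emph{decreasing} degree, so that \ref{delpezzoP2} is available when treating \ref{delpezzoD8} and \ref{delpezzoC5}, and \ref{delpezzoD8} is available for \ref{delpezzoD6} and \ref{delpezzoC6}. In each case I would first base-change to $\bk$ to pin down $X_{\bk}$ from the classification of geometric Del Pezzo surfaces and conic bundles, then translate the Picard-rank (resp.\ relative-Picard-rank) $1$ hypothesis into a constraint on the Galois action on $\Pic(X_{\bk})$ and on the distinguished configuration of $(-1)$-curves, and finally use that $X$ is rational — hence carries a rational point by Lemma~\ref{l:one_rational_point} — to perform one explicit Sarkisov link landing on a surface of already-known type.

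For the Del Pezzo cases: in \ref{delpezzoP2} one has $X_{\bk}\cong\P^2$, so $X$ is a Severi--Brauer surface; being rational it has a rational point, and a Severi--Brauer variety with a rational point is split, whence $X\cong\P^2$. In \ref{delpezzoD8} the geometric surface is $\P^1\times\P^1$ or $\F_1$; the latter is excluded since its unique $(-1)$-curve would be Galois-invariant and force Picard rank $\ge2$, so $X_{\bk}\cong\P^1\times\P^1$ with the rulings exchanged by $\Gal$. Blowing up a rational point of $X$ and contracting the two conjugate fibre-lines through it yields a degree-$9$, rank-$1$ rational surface, i.e.\ $\P^2$ by \ref{delpezzoP2}; reversing this is exactly the defining construction of $\Dl_8$. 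In \ref{delpezzoD6} the six $(-1)$-curves form a hexagon and rank $1$ forces the Galois image in its dihedral symmetry group to fix no $(-1)$-curve and no class outside $\Z K_X$; after blowing up a rational point I would locate a Galois orbit of three pairwise disjoint $(-1)$-curves whose contraction gives a degree-$8$, rank-$1$ rational surface, hence an element of $\Dl_8$ by \ref{delpezzoD8}, and the reverse link $\Dl_8\link{3}{1}\Dl_6$ is the definition of $\Dl_6$.

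For the conic bundle cases the number of geometric singular fibres is $8-d$, so $0,2,3$ for $d=8,6,5$. The unifying point is that a smooth conic bundle carries an intrinsic, hence Galois-stable, set of $(-1)$-\emph{sections}, and relative Picard rank $1$ (equivalently $\k$-Picard rank $2$) forbids any of these sections, or any proper Galois-subset, from being defined over $\k$. For \ref{delpezzoC5} the four $(-1)$-sections thus form a single $4$-point; contracting it gives a degree-$9$, rank-$1$ rational surface, namely $\P^2$ by \ref{delpezzoP2}, exhibiting $X\in\Cl_5$. For \ref{delpezzoC6} the two $(-1)$-sections form a $2$-point; contracting it gives a degree-$8$, rank-$1$ rational surface, namely $\Dl_8$ by \ref{delpezzoD8}, with the geometric fibre class becoming the class of a diagonal, as in the construction of $\Cl_6$. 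For \ref{delpezzoC8} there are no singular fibres, so $X_{\bk}\cong\F_n$; when $n\ge1$ the unique negative section is Galois-invariant and is defined over $\k$, and when $n=0$ the fibration is $\k$-rational, so $X$ is a $\P^1$-bundle and a rational point provides a section (a conic bundle without section has a non-split generic fibre, obstructing rationality); either way $X$ is a Hirzebruch surface, i.e.\ $X\in\Cl_8$.

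The main obstacle I expect is the Galois-combinatorial bookkeeping: in each case one must verify that the configuration to be contracted is genuinely a single Galois orbit of \emph{pairwise disjoint} $(-1)$-curves of the asserted size, that its contraction drops the (relative) Picard rank by exactly one so the target is again rank $1$, and that the resulting point is in general position so that the reverse link matches the definition of the relevant $\Dl_i$ or $\Cl_j$. This is most delicate in \ref{delpezzoD6}, where the Galois action on the hexagon can be transitive on all six $(-1)$-curves, so no $2$-point of disjoint curves is directly available and one is forced to first blow up a rational point before a suitable $3$-point of disjoint $(-1)$-curves appears; the existence of such a rational point off these curves is precisely what the later existence results provide, and over finite fields it rests on the point counts of Remark~\ref{r:at_least_3}.
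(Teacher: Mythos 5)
Your handling of the Del Pezzo cases \ref{delpezzoP2}--\ref{delpezzoD6} is correct and essentially unfolds the references the paper leans on (Ch\^atelet for $d=9$, \cite{SchneiderZimmermann} for $d=8$ and $d=6$): the descending induction $\P^2 \Rightarrow \Dl_8 \Rightarrow \Dl_6$ by one explicit link each time is the right mechanism, and your worry about genericity in the $d=6$ case resolves itself, since the hexagon being a single Galois orbit containing pairs of disjoint opposite curves forces every rational point to lie off all six $(-1)$-curves.

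The genuine gap is in \ref{delpezzoC5} and \ref{delpezzoC6}: the set of $(-1)$-sections of a conic bundle of given degree is \emph{not} intrinsic, because it depends on the geometric model. Over $\bk$ a conic bundle with $K_X^2=5$ is the blow-up of some $\F_n$ at three points on distinct fibres, and for $n\geq 2$ (or for degenerate configurations, e.g.\ base points lying on one ruling of $\F_0$) there are \emph{not} four $(-1)$-sections --- $\F_2$ blown up at three general points has exactly one --- while there is instead a distinguished section of self-intersection $\leq -2$. So the sentence ``the four $(-1)$-sections thus form a single $4$-point'' assumes the conclusion. What relative Picard rank $1$ gives you directly is only that for each singular fibre some Galois element exchanges its two components (otherwise the orbit sums of components yield extra invariant classes); one must then show this is incompatible with a Galois-invariant section of self-intersection $\leq -2$ (such a section meets the two components of a singular fibre with different multiplicities, so no Galois element can swap them), and combine this with the existence of a rational point (Lemma \ref{l:one_rational_point}) to conclude that $X$ is the blow-up of $\P^2$ at four points in general position. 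That reduction is exactly the content of \cite[Lemma 6.5]{SchneiderRelations}, on which the paper's proof of \ref{delpezzoC5} and \ref{delpezzoC6} rests, and it is the step your outline omits. A smaller but real defect occurs in \ref{delpezzoC8} for $n=0$: ``a conic bundle without section has a non-split generic fibre, obstructing rationality'' is not an argument, since rationality only hands you a rational point on one fibre, not a section; to conclude that the generic fibre is split you need either a Brauer-group argument over $\k(t)$ or the elementary-transformation induction that the paper actually runs.
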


\begin{proof}
\begin{enumerate}
\item This is a result of Châtelet, see e.g. \cite[Theorem 5.1.3]{GilleSzamuely} or \cite[Corollary~13]{Kollar}.
\item \cite[Lemma 3.2]{SchneiderZimmermann}.
\item \cite[\S4]{SchneiderZimmermann}.
\item
We follow \cite[Lemma 6.13]{SchneiderRelations}.
$X$ admits $8 - K_X^2 = 3$ singular fibers.
Since the Picard rank of $X/\P^1$ is 1, for each singular fiber $C + C'$ there is an element $g \in \Gal(\alg\k/\k)$ that sends $C$ on $C'$, and since the intersection form is preserved the same $g$ sends $C'$ on $C$.
Since $X$ is rational, by Lemma \ref{l:one_rational_point} it contains a rational point, and so we can apply \cite[Lemma 6.5]{SchneiderRelations} and conclude that $X$ is the blow-up of four points in $\P^2$, invariant under $\Gal(\k^a/\k)$.
Moreover since the Picard rank of $X$ is 2, the four points form a single Galois orbit.

\item
$X$ admits $8 - K_X^2 = 2$ singular fibers.
A singular fiber contains at most one rational point (the meeting point of the two components), so by Remark \ref{r:at_least_3} we can pick a rational point $p$ outside the singular fibers.
Let $Y \to X$ be the blow-up of $p$.
Then as in the previous case we can apply \cite[Lemma 6.5]{SchneiderRelations} (with \cite[Observation~6.9]{SchneiderRelations}) to $Y$ and conclude that $Y$ is the blow-up of $\P^2$ at four points.
Since the Picard rank of $Y$ is $3$, the four points are divided in two Galois orbits.
The singular fibers correspond on $\p^2$ to pairs of lines going through all four points. If one of the points would be rational, none of these lines is defined over $\k$.
This is not possible since one of them corresponds to the exceptional curve $E$ of $p$, which is rational.
Therefore $Y\to\p^2$ is the blow-up at two $2$-points.
Let $F=\{F_1,F_2\}$ and $F'=\{F_1',F_2'\}$ be the orbits of $(-1)$-curves on $Y$ that can be contracted to $\p^2$.
By \cite[Lemma~6.5\parent{3}]{SchneiderRelations}, exactly one of them is pairwise disjoint with $E$, say $F$.
We obtain a commutative diagram
\[
\begin{tikzcd}
Y \ar[r, "F"] \ar[d, "E",swap] & X_7 \ar[r, "F'"] \ar[d, "E"] & \p^2 \\
X \ar[r,"F"] & X_8
\end{tikzcd}
\]
In particular, $X\in\Cl_6$.
\item
Over $\alg \k$, $X$ is isomorphic to a Hirzebruch surface $\F_n$ for some $n\geq0$.
If $n=1$, $X=\F_1$ because the $(-1)$-curve is rational and hence can be contracted to a Del Pezzo surface of degree $9$ with a rational point, which is $\p^2$.
For $n\geq 2$ we proceed by induction.
Since $X\to\p^1$ is defined over $\k$, there exist fibers defined over $\k$.
Pick a rational point on such a fiber, which is not the intersection point with the $(-n)$-curve.
Blowing it up and contracting the transform of the fiber gives a birational map $X\rat X'$ such that $X'$ is isomorphic to $\F_{n-1}$ over $\alg \k$, and with a morphism $X'\to\p^1$ over $\k$.
By induction, we arrive at $\F_1$ after $n-1$ steps. Therefore, each step was a map $\F_{i}\rat\F_{i-1}$ and so $X=\F_n$.
Similarly, in the case of $n = 0$, by blowing-up a rational point on a fiber defined over $\k$, and contracting the transform of the fiber, we obtain $\F_1$ and we conclude as above.
\qedhere
\end{enumerate}
\end{proof}

We say that a Sarkisov link of type II $X \link{d}{d'} Y$ is \emph{auto-similar} if $X$ and $Y$ are of the same type.
Comparing the Picard numbers of $X$ and $Y$ over $\alg \k$, we see that in this case $d = d'$.
On the graph of Figure \ref{diag:links}, a loop from one vertex to itself should be understood as the assertion that any corresponding Sarkisov link is auto-similar.
For instance, the loop $\P^2 \link88 \P^2$ means that blowing-up a general $8$-point on $\P^2$, we should always come back to $\P^2$ after contracting an orbit of $8$ disjoint $(-1)$-curves.
In the following lemma we check this assertion and other similar ones.

\begin{lemma}
Let $X \in \Dl$, $d \ge 1$  and $p \in X$ a general $d$-point.
Assume that $X$ and $d$ are such that the corresponding edge on the graph of Figure \ref{diag:links} is a loop.
Then the Sarkisov link starting with the blow-up of $p$ is indeed an auto-similar link.
\end{lemma}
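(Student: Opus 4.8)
The plan is to blow up the general $d$-point $p$ and run the two-rays game on the result. Write $e=K_X^2$ for the degree of $X$ and let $\pi\colon Y\to X$ be the blow-up of $p$; since $p$ is general, $Y$ is again a del Pezzo surface, now of degree $K_Y^2=e-d$ and of Picard rank $2$ over $\k$. The two-rays game produces a second extremal contraction, and to prove auto-similarity I must show it is a divisorial contraction $\pi'\colon Y\to X'$ onto a del Pezzo surface $X'$ of Picard rank $1$ of the same type as $X$ (that $d'=d$ then follows from comparing Picard numbers over $\bk$, as noted before the statement). Concretely, writing $\mathcal E$ for the Galois orbit of $(-1)$-curves contracted by $\pi$, it suffices to identify the second extremal ray of the two-dimensional cone $\overline{NE}(Y)^{\Gal(\bk/\k)}$ as being spanned by a Galois orbit $\mathcal E'$ of disjoint $(-1)$-curves with $\lvert\mathcal E'\rvert=d$: contracting such an orbit is a $\k$-morphism that drops the degree by $d$ and the Picard rank to $1$.

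The bulk of the loops have $K_Y^2\in\{1,2\}$, namely the four Geiser loops $\P^2\link77\P^2$, $\Dl_8\link66\Dl_8$, $\Dl_5\link33\Dl_5$, $\Dl_6\link44\Dl_6$ and the four Bertini loops $\P^2\link88\P^2$, $\Dl_8\link77\Dl_8$, $\Dl_5\link44\Dl_5$, $\Dl_6\link55\Dl_6$. For these I would use the canonical involution $\sigma$ of $Y$ defined over $\k$ — the Geiser involution (deck transformation of $|-K_Y|$) when $K_Y^2=2$, and the Bertini involution (deck transformation of $|-2K_Y|$) when $K_Y^2=1$. Since $\sigma$ fixes $K_Y$, it preserves $\overline{NE}(Y)^{\Gal(\bk/\k)}$ and permutes its two extremal rays; Lemma~\ref{l:bertini_geiser} gives $\sigma(\mathcal E)\cap\mathcal E=\emptyset$, so $\sigma$ does not fix the ray spanned by $\mathcal E$ and therefore swaps the two rays. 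Hence the second ray is spanned by $\mathcal E'=\sigma(\mathcal E)$, again a Galois orbit of $d$ disjoint $(-1)$-curves, and $\sigma$ descends to an isomorphism $X\simeq X'$. The single loop $\P^2\link33\P^2$ (here $K_Y^2=6$) is handled identically, with $\sigma$ the standard quadratic involution — the central symmetry of the hexagon of $(-1)$-curves on $Y$ — using Lemma~\ref{l:quadratic}.

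The remaining loops are $\Dl_8\link44\Dl_8$ and $\Dl_6\link22\Dl_6$ ($K_Y^2=4$) and $\P^2\link66\P^2$ and $\Dl_6\link33\Dl_6$ ($K_Y^2=3$), where a general $Y$ admits no automorphism and the involution trick is unavailable. Here I would instead exhibit the complementary orbit $\mathcal E'$ by hand inside $\Pic(Y_{\bk})$ and invoke Proposition~\ref{p:delpezzo} (whose degrees $6,8,9$ cover all these $X$). For example, for $\Dl_8\link44$, writing $f_1,f_2$ for the two rulings and $E_1,\dots,E_4$ for the exceptional curves, the four classes $f_1+f_2-\sum_{m\neq i}E_m$ are pairwise disjoint $(-1)$-curves forming a Galois orbit whose sum is $-2K_Y-\sum_i E_i$; this lies on the opposite side of the ample class $-K_Y$ from $\sum_i E_i$, so it spans the second extremal ray, and contracting it yields a del Pezzo surface of degree $8$ and Picard rank $1$, that is an element of $\Dl_8$. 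For the two cubic loops one takes $\mathcal E'$ to be the second sextuple of the double-six determined by $\mathcal E$ (the strict transforms of the conics through five of the six blown-up points), and similarly for $\Dl_6\link22$.

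The main obstacle is precisely this last paragraph: without an automorphism of $Y$ to transport $\mathcal E$ to the opposite ray, one must check directly, using the generality of $p$ and the transitivity of the Galois action forced by $X$ having Picard rank $1$ over $\k$, that the complementary configuration $\mathcal E'$ really is a single Galois orbit of $d$ pairwise disjoint $(-1)$-curves spanning the second extremal ray — rather than, say, the second contraction being a conic bundle. Once $\mathcal E'$ is in hand the conclusion is immediate from Proposition~\ref{p:delpezzo}.
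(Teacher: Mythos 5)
Your proposal follows essentially the same route as the paper's proof: the eight Geiser/Bertini loops are dispatched via Lemma \ref{l:bertini_geiser} exactly as in the text, and the remaining loops by exhibiting the complementary Galois-invariant orbit of pairwise disjoint $(-1)$-curves and contracting it, with Proposition \ref{p:delpezzo} identifying the target (your $\Dl_8\link44\Dl_8$ computation is precisely the paper's ``four diagonals through three of the $p_i$'', and your use of the quadratic involution for $\P^2\link33\P^2$ is an equivalent variant of directly contracting the three lines). The one slip is in the case $\Dl_6\link33\Dl_6$, where $\mathcal{E}$ has only three elements so the complementary configuration is not the second sextuple of a double-six but the residual triple of lines cut out on the cubic surface $Y$ by the unique quadric containing the three lines of $\mathcal{E}$ \parent{so that the six curves sum to $-2K_Y$}, which is the description the paper uses; your framework goes through unchanged once this is corrected.
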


\begin{proof}
Let $Z \to X$ be the blow-up of the $d$-point $p$.
If $Z$ is a Del Pezzo surface of degree 1 or 2, then by Lemma~\ref{l:bertini_geiser} up to an automorphism the link is induced by the Bertini or Geiser involution on $Z$ and so the conclusion is clear (and in fact we get a link $X \rat X$ between two isomorphic surfaces).

The Bertini case corresponds to the links:
\begin{align*}
\P^2 \link{8}{8} \P^2, &&
\Dl_8 \link{7}{7} \Dl_8, &&
\Dl_5 \link{4}{4} \Dl_5, &&
\Dl_6 \link{5}{5} \Dl_6.
\end{align*}

The Geiser case corresponds to the links:
\begin{align*}
\P^2 \link{7}{7} \P^2, &&
\Dl_8 \link{6}{6} \Dl_8, &&
\Dl_5 \link{3}{3} \Dl_5, &&
\Dl_6 \link{4}{4} \Dl_6.
\end{align*}

We now do a case by case analysis for the five remaining loops:

\begin{itemize}
\item
$\P^2 \link33 \P^2$.
We blow-down the orbit of three lines through two of the $p_i$, by Proposition~\ref{p:delpezzo}\ref{delpezzoP2} the resulting surface is $\P^2$.
\item
$\P^2 \link66 \P^2$.
We blow-down the orbit of six conics through five of the $p_i$, again by Proposition~\ref{p:delpezzo}\ref{delpezzoP2} the resulting surface is $\P^2$.
\item
$\Dl_8 \link44 \Dl_8$.
We blow-down the orbit of four diagonals through three of the $p_i$, by Proposition~\ref{p:delpezzo}\ref{delpezzoD8} the resulting surface is in $\Dl_8$.


\item
$\Dl_6 \link22 \Dl_6$.
On a Del Pezzo surface $X$ of degree 4, given two disjoint $(-1)$-curves $E_1, E_2$, there exists a unique pair of two other disjoint $(-1)$-curves $E_3, E_4$ such that $\sum_{i=1}^4 E_i = - K_X$.
We apply this remark to the exceptional divisor of the blow-up of the $2$-point, and we find an orbit of two curves that we can contract.
By Proposition~\ref{p:delpezzo}\ref{delpezzoD6} the resulting surface is in $\Dl_6$.


\item
$\Dl_6 \link33 \Dl_6$.
On a Del Pezzo surface $X$ of degree 3, given three pairwise disjoint $(-1)$-curves $E_1$, $E_2$, $E_3$, there exists a unique triple of other pairwise disjoint $(-1)$-curves $E_4$, $E_5$, $E_6$ such that $\sum_{i=1}^6 E_i = -2K_X$: intersect the cubic surface $X$ with the unique quadric surface containing $E_1, E_2, E_3$.
We apply this remark to the exceptional divisor of the blow-up of the $3$-point, and we find an orbit of three curves that we can contract.
Again, by Proposition~\ref{p:delpezzo}\ref{delpezzoD6} the resulting surface is in $\Dl_6$.
\qedhere
\end{itemize}
\end{proof}

We consider now the remaining edges between the Del Pezzo type vertices.

\begin{lemma}
\label{lem:P2-5And1Or2}
Let $r\in\p^2$ be a general $5$-point and let $p\in\p^2$ be either a rational point, or a $2$-point. Assume that $p$ does not lie on the conic through $r$.
Then $p$ is general with~$r$.
\end{lemma}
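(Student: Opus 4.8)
The plan is to unwind the statement ``$p$ is general with $r$'' to mean that the blow-up of $\P^2$ at the union of the $5$-point $r$ and the point $p$ (of degree $1$ or $2$) is a Del Pezzo surface, as in the applications of this lemma. By the usual characterization this amounts to checking that the $6$ (resp. $7$) geometric points $r_1,\dots,r_5,p_1(,p_2)$ lie in general position, i.e. no three are collinear and no six lie on a conic. First I would record the consequences of $r$ being general: no three of the $r_i$ are collinear, and the unique conic $C$ through $r_1,\dots,r_5$ is smooth; moreover each component $p_k$ of $p$ differs from every $r_i$, since $r_i\in C$ while $p_k\notin C$ by hypothesis. In particular the points involved are distinct.

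Next I would dispose of the ``six on a conic'' conditions. Any conic through all five $r_i$ must equal $C$, hence cannot pass through a component of $p$; this settles the rational case completely, and also the subcase ``five $r$'s plus one component of $p$'' in the $2$-point case. The only remaining possibility is a conic $C'$ through four of the $r_i$ and through both $p_1,p_2$, say omitting $r_5$. Here I would use Galois invariance: choosing $g\in\Gal(\bk/\k)$ with $g(r_5)\in\{r_1,\dots,r_4\}$ (possible since $\Gal(\bk/\k)$ acts transitively on $r$), the conics $C'$ and $C'^{g}$ both pass through the five points obtained from $r_2,r_3,r_4$ and $p_1,p_2$; uniqueness of the conic through these five then forces $C'=C'^{g}$, whence $r_5\in C'$, a contradiction.

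Finally I would treat the collinearity conditions. A collinear triple inside $r$ is excluded by generality of $r$; a triple $r_i,p_1,p_2$ would place $r_i$ on the Galois-stable line $\overline{p_1p_2}$, hence by transitivity all of $r$ on that line, contradicting that no three $r_i$ are collinear. The delicate case is a component $p_k$ lying on a secant $L_{ij}=\overline{r_ir_j}$. The geometric input is that $p_k$ lies on at most two of the secants $L_{ij}$: the lines through $p_k$ cut out an involution on the smooth conic $C$, so the $r_i$ collinear with $p_k$ occur in disjoint pairs, of which there are at most two among five points. On the group-theoretic side, writing $H=\mathrm{Stab}_{G}(p_k)$ for the stabilizer in $G=\Gal(\bk/\k)$ (of index at most $2$, as the orbit of $p_k$ has size $\le 2$), one checks that $H$ is still transitive on the five indices; then the set of secants through $p_k$ being $H$-stable and containing $L_{ij}$ would force the $H$-orbit of $\{i,j\}$ to have size at most $2$. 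But for a transitive group on five points every index occurs in this orbit, so it meets all five indices and hence has size at least $3$, a contradiction.

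The main obstacle is precisely this secant case, since the component $p_k$ need not be Galois-fixed: the argument must combine the geometric bound ``at most two secants of $C$ through $p_k$'' with the fact that the index-at-most-$2$ subgroup $H$ remains transitive on the five $r_i$. I expect the cleanest way to justify the latter is the elementary remark that a transitive permutation group on five points has, in the orbit of any pair, every index appearing (so that orbit has size $\ge 3$), which one can alternatively confirm by listing the transitive subgroups of $S_5$ admitting an index-$2$ subgroup.
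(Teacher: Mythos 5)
Your proof is correct, but it takes a genuinely different and more self-contained route than the paper. The paper splits the two cases unevenly: for a $2$-point $p$ it simply invokes \cite[Lemma 4.9]{SchneiderF2}, which already states that a general $5$-point is either general with a given $2$-point or all seven geometric components lie on one conic, so the hypothesis $p\notin C$ finishes that case in one line; for a rational point $p$ it gives exactly the Galois argument you use (a $\sigma$ with $\sigma(r_i)=r_j$, $\sigma(r_j)\neq r_i$ forces the secant $L_{ij}$ to contain a third component of $r$). Your proposal instead reproves the $2$-point case from scratch, and the two ingredients you add are genuinely the content of the cited lemma: the Bezout/Galois argument ruling out a conic through four components of $r$ and both components of $p$, and the combination of ``at most two secants of the smooth conic $C$ pass through a point off $C$'' with the observation that the index-$\le 2$ stabilizer of a component $p_k$ still acts transitively on the five $r_i$ (so the orbit of a secant through $p_k$ would produce at least three such secants). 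Both steps are sound; the stabilizer argument is a nice uniform way to treat the rational and $2$-point cases together. Two small presentational points: in the conic step, the conclusion $C'=C'^{g}$ from five common points needs either that $C'$ is irreducible or a remark that a common line component would already violate the (separately established) collinearity conditions, so the two halves of your argument should be ordered or cross-referenced accordingly; and the final ``contradiction'' there is really that $C'$ would then contain all of $r$, hence equal $C$, contradicting $p\notin C$. Neither affects correctness. What the paper's route buys is brevity by citation; what yours buys is a proof readable without opening \cite{SchneiderF2}.
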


\begin{proof}
  In \cite[Lemma 4.9]{SchneiderF2} it was proved that any general $5$-point on $\p^2$ is either general with a $2$-point, or all seven components of the two points lie on the same conic.
  This implies the statement when $p$ is a $2$-point.

  We show now that a rational point on $\p^2$ is never collinear with two geometric components of $r$, which then implies the other part of the statement.
  The lines $L_{ij}$ through $r_i, r_j$ do not contain any rational point: There exists $\sigma\in \Gal(\k'/\k)$ such that $\sigma(r_i)=r_j$ and $\sigma(r_j)\neq r_i$, where $\k'/\k$ is the splitting field of $r$. Hence, if a rational point $p$ would lie on $L_{ij}$ then $p=\sigma(p)\in L_{j,\sigma(j)}$ and so $L_{ij}=L_{j,\sigma(j)}$ contains three components of $r$, a contradiction to the generality of $r$.
\end{proof}

\begin{lemma}
\label{l:D5-11}
Let $X \in \Dl_5$, $p \in X$ a rational point such that the blow-up of $p$ corresponds to a link $X \link15 \P^2$, and $q \in X$ an arbitrary rational point, distinct from $p$. Then:
\begin{enumerate}
\item \label{D5-11:1}
The points $p$ and $q$ are in general position.
\item \label{D5-11:2}
The Sarkisov link associated to the blow-up of $q$ also is of type $\Dl_5 \link15 \P^2$.
\end{enumerate}
In particular, any two rational points on $X$ are in general position.
\end{lemma}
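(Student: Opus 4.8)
The plan is to transport the entire question to $\P^2$ through the given link and then to invoke the already established Lemma~\ref{lem:P2-5And1Or2}. First I would blow up $p$ to obtain the degree-$4$ del Pezzo surface $Y$ of Picard rank $2$ sitting in the link $X \link15 \P^2$: the morphism $Y \to X$ contracts a single $(-1)$-curve $E_p$, while $Y \to \P^2$ contracts a general $5$-point $r=\{r_1,\dots,r_5\}$. The crucial identification is that $E_p$ is the strict transform $\tilde C$ of the conic $C$ through $r$; this is exactly what makes the link of type $\Dl_5 \link15 \P^2$, since $\tilde C$ is the unique $(-1)$-curve one contracts to pass from the blow-up of a general $5$-point back to a degree-$5$ del Pezzo surface.

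The key geometric step is to lift $q$. Since $q \neq p$ and $Y \to X$ is an isomorphism away from $E_p$, the point $q$ has a unique preimage $q' \in Y$, which is again a rational point and does not lie on $E_p$. I would then observe that $q'$ cannot lie on $\bigcup_i E_{r_i}$: these are pairwise disjoint curves forming a single Galois orbit, so any rational (that is, Galois-fixed) point lying on one of them would, by transitivity of the orbit, have to lie on all of them, which is impossible as they are disjoint. Hence $q'$ lies outside the exceptional locus of $Y \to \P^2$ and maps to a genuine rational point $q'' \in \P^2$, distinct from every $r_i$, with $q'' \notin C$ precisely because $q' \notin E_p = \tilde C$.

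At this point Lemma~\ref{lem:P2-5And1Or2} applies directly and yields that $q''$ is general with the $5$-point $r$. Unwinding the blow-ups gives
\[
\mathrm{Bl}_{q''}(\P^2) = \mathrm{Bl}_{\{r,\,q''\}}(\P^2) = \mathrm{Bl}_{q'}(Y) = \mathrm{Bl}_{\{p,q\}}(X),
\]
which is therefore a del Pezzo surface of degree $3$; this is exactly the assertion that $p$ and $q$ are in general position, proving~\ref{D5-11:1}. For~\ref{D5-11:2}, once $q$ is known to be a general rational point, Theorem~\ref{t:russian}\ref{russian:2} fixes the link type from the data (source of type $\Dl_5$, orbit size $1$), which reads off the graph of Figure~\ref{diag:links} as $\Dl_5 \link15 \P^2$. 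Finally, for the last sentence I would bootstrap: by the remark following Theorem~\ref{t:russian} there exists at least one rational point $p_0$ whose blow-up realizes a link $X \link15 \P^2$; applying~\ref{D5-11:2} with $p=p_0$ shows that \emph{every} rational point enjoys this property, and then applying~\ref{D5-11:1} to an arbitrary pair (taking the first point in the role of $p$) shows that any two rational points are in general position.

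The main obstacle I anticipate is not the general-position argument itself, which is outsourced to Lemma~\ref{lem:P2-5And1Or2}, but rather the careful bookkeeping: correctly identifying $E_p$ with the conic transform $\tilde C$ (so that ``$q''$ avoids $C$'' becomes equivalent to the trivial condition $q\neq p$), and rigorously verifying that the lift $q'$ meets none of the exceptional curves, so that $q''$ is an honest point of $\P^2$ rather than an infinitely near one. Once these identifications are pinned down, the rest is a formal chain of blow-up equalities.
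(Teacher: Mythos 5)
Your argument for part~\ref{D5-11:1} is correct and is essentially the paper's: transport $q$ through the link to a rational point $q''\in\P^2$, observe that $q''$ avoids the conic through the $5$-point $r$ because the strict transform of that conic is exactly the curve contracted to $p$, apply Lemma~\ref{lem:P2-5And1Or2}, and identify $\mathrm{Bl}_{\{r,q''\}}\P^2$ with $\mathrm{Bl}_{\{p,q\}}X$. Your extra care in checking that the lift of $q$ meets none of the exceptional curves $E_{r_i}$ (the Galois-orbit argument) is a detail the paper leaves implicit, and it is fine.

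The problem is part~\ref{D5-11:2}. You deduce the target of the link from Theorem~\ref{t:russian}\ref{russian:2} and Figure~\ref{diag:links}, but this is circular in the paper's logical architecture: Lemma~\ref{l:D5-11} lives in Appendix~\ref{app:russian_bis}, whose purpose is to \emph{prove} Theorem~\ref{t:russian}, and the edge $\Dl_5\link15\P^2$ of the graph is established there precisely by invoking Lemma~\ref{l:D5-11}\ref{D5-11:2} (see the lemma on type~II non-loop edges, whose proof reads ``the existence of a link $\Dl_5 \link15 \P^2$ starting with the blow-up of $p$ is given by Lemma~\ref{l:D5-11}''). So you cannot read the answer off the graph here; the correctness of that edge is exactly what is at stake. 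The paper instead determines the link directly from the combinatorics of the rank~$3$ fibration $Z=\mathrm{Bl}_{\{p,q\}}X$: Lemma~\ref{l:pieceD511} lists all seven extremal rays of $Z$ and shows the piece $\piece{\P^2}15$ is a heptagon in which the vertex $X_5$ is flanked by two edges (the surfaces $X_4$, $X_4'$) each leading to a vertex $\P^2$; equivalently, one identifies the second extremal contraction of $\mathrm{Bl}_qX$ as the contraction of the orbit of five conics through $q''$ and four of the $r_i$, landing on a degree~$9$ surface which is $\P^2$ by Proposition~\ref{p:delpezzo}\ref{delpezzoP2}. Your concluding bootstrap for the last assertion is fine once \ref{D5-11:2} is repaired (the existence of at least one rational point $p_0$ with a link $X\link15\P^2$ follows from the definition of $\Dl_5$, not from Theorem~\ref{t:russian}).
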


\begin{proof}
Let $r \in \P^2$ be the general $5$-point that is contracted by the Sarkisov link  $X \link15 \P^2$ associated with $p$.
The point $q \in X$ corresponds to a rational point $q' \in \P^2$, which does not lie on the conic through $r$.
By Lemma \ref{lem:P2-5And1Or2} the blow-up of $r$ and $q'$ is a Del Pezzo surface $Z$, and since this is the same as the blow-up of $X$ at $p$ and $q$, this gives \ref{D5-11:1}.

The piece associated to the Del Pezzo surface $Z$ is the piece
$\hyperref[D5_11]{\piece{\Dl_5}11} = \hyperref[P2_15]{\piece{\P^2}15}$, which is described in Lemma \ref{l:pieceD511}.
In particular we get that the blow-up of $q$ also gives a link $X \link15 \P^2$.

The last assertion is immediate.
\end{proof}

\begin{lemma}
\label{l:D5-12}
Let $X \in \Dl_5$, $p \in X$ a rational point and $q \in X$ a $2$-point. Then:
\begin{enumerate}
\item \label{D5-12:1}
The points $p$ and $q$ are in general position.
\item \label{D5-12:2}
The Sarkisov link associated to the blow-up of $q$ is of type $\Dl_5 \link25 \Dl_8$.
\end{enumerate}
\end{lemma}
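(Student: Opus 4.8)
The plan is to mirror the proof of Lemma~\ref{l:D5-11}, transporting the configuration to $\P^2$ via the link determined by the rational point $p$, and then invoking Lemma~\ref{lem:P2-5And1Or2} in its $2$-point incarnation. First I would use Lemma~\ref{l:D5-11} to fix the Sarkisov link $X \link15 \P^2$ associated with $p$, and denote by $r = \{r_1,\dots,r_5\}$ the general $5$-point of $\P^2$ that it contracts, so that $Y := \mathrm{Bl}_p X = \mathrm{Bl}_r \P^2$ is a Del Pezzo surface of degree $4$, with the exceptional curve $E_p$ of $p$ corresponding to the strict transform of the conic $C$ through $r$.

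The key preliminary step is to transport $q$ to a genuine $2$-point $q'$ on $\P^2$. Since $Y \to X$ is an isomorphism away from $p$ and $q \neq p$, the point $q$ lifts to a $2$-point $\tilde q$ on $Y$ disjoint from $E_p$. I would then check that $\tilde q$ does not meet the five contracted $(-1)$-curves $F_1,\dots,F_5$ of $Y \to \P^2$: if a component of $\tilde q$ lay on some $F_i$, then applying the Galois element exchanging the two components of $\tilde q$ would force both to lie on curves of the orbit $\{F_1,\dots,F_5\}$, hence to map to two of the points $r_i$; this would exhibit a Galois-invariant subset of size $2$ inside the single orbit $r$ of size $5$, which is impossible. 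Hence $\tilde q$ maps isomorphically to a $2$-point $q'$ on $\P^2$, and $q'$ does not lie on $C$ because $\tilde q$ avoids $E_p$, whose image is exactly $C$.

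With $q' \notin C$ in hand, Lemma~\ref{lem:P2-5And1Or2} applied to the $2$-point $q'$ yields that $q'$ is general with $r$, that is, $Z := \mathrm{Bl}_{r,q'}\P^2$ is a Del Pezzo surface of degree $2$. Since $Z = \mathrm{Bl}_{p,q}X$, this proves that $p$ and $q$ are in general position, which is assertion~\ref{D5-12:1}. For assertion~\ref{D5-12:2}, the surface $Z$ has Picard rank $3$ and hence determines a $2$-dimensional piece, namely $\piece{\Dl_5}{1}{2} = \piece{\P^2}{2}{5}$; reading off the edge of this piece incident to the vertex $X$ along the blow-up of the $2$-point $q$ (see the description of $\piece{\P^2}{2}{5}$ in Appendix~\ref{app:relations}) shows that the corresponding Sarkisov link is of type $\Dl_5 \link25 \Dl_8$.

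The routine parts above are standard; the main obstacle is assertion~\ref{D5-12:2}, where one must correctly identify the relevant edge of the piece as a type~II link contracting a $5$-point to a degree~$8$ surface. A direct verification, exhibiting on $V := \mathrm{Bl}_q X$ a Galois orbit of five disjoint $(-1)$-curves whose contraction lands in $\Dl_8$, is possible but delicate: after contracting $E_p = 2H - F_1 - \cdots - F_5$ the natural candidates $F_1,\dots,F_5$ acquire self-intersection $0$ rather than $-1$, so they no longer form a contractible orbit. This is precisely why routing the argument through the piece $\piece{\P^2}{2}{5}$ is the cleaner path.
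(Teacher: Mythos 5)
Your proof is correct and follows essentially the same route as the paper's: transport $q$ to a $2$-point $q'$ of $\P^2$ via the link $X \link15 \P^2$ from Lemma~\ref{l:D5-11}, apply Lemma~\ref{lem:P2-5And1Or2} to get generality, and read off assertion~\ref{D5-12:2} from the piece $\piece{\P^2}25$. The only difference is that you supply a justification (via the Galois-orbit argument) for why $q$ descends to a genuine $2$-point off the conic through $r$, a step the paper asserts without comment; this is a correct and welcome elaboration, not a gap.
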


\begin{proof}
By Lemma \ref{l:D5-11} the blow-up of $p$ yields a link $X \link15 \P^2$.
Let $r \in \P^2$ be the general $5$-point that is blown-up by the inverse of this link.
The $2$-point $q \in X$ corresponds to a $2$-point $q' \in \P^2$, which does not lie on the conic through $r$.
By Lemma \ref{lem:P2-5And1Or2} the blow-up of $r$ and $q'$ is a Del Pezzo surface $Z$, and since this is the same as the blow-up of $X$ at $p$ and $q$, this gives \ref{D5-12:1}.

The piece associated to the Del Pezzo surface $Z$ is the piece
$\hyperref[D5_12]{\piece{\Dl_5}12} = \hyperref[P2_25]{\piece{\P^2}25}$, which is described in Lemma \ref{l:pieceP225}.
In particular we get that the blow-up of $q$ gives a link $X \link25 X' \in \Dl_8$.
\end{proof}

\begin{lemma}
\label{l:D6-11}
Let $X \in \Dl_6$, $p \in X$ a rational point such that the blow-up of $p$ corresponds to a link $X \link13 X'$ with $X' \in \Dl_8$, and $q \in X$ an arbitrary rational point, distinct from $p$. Then:
\begin{enumerate}
\item \label{D6-11:1}
The points $p$ and $q$ are in general position.
\item \label{D6-11:2}
The Sarkisov link associated to the blow-up of $q$ also is of type $\Dl_6 \link13 \Dl_8$.
\end{enumerate}
In particular, any two rational points on $X$ are in general position.
\end{lemma}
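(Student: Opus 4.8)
The plan is to follow the proof of Lemma~\ref{l:D5-11} essentially line by line, replacing the intermediate surface $\P^2$ by a surface in $\Dl_8$ and the conic through a $5$-point by the diagonal through a $3$-point.

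First I would record the link data attached to $p$. Blowing up $p$ gives the link $X \link{1}{3} X'$ with $X' \in \Dl_8$; writing $Z$ for the intermediate rank~$2$ fibration, the map $Z \to X$ is the blow-up of $p$ and $Z \to X'$ contracts a $3$-point. Reading the inverse link $X' \link{3}{1} X$, there is a general $3$-point $r \in X'$ with $Z$ the blow-up of $r$, and the diagonal through $r$ is precisely the curve contracted by $Z \to X$ onto $p$. The point $q$, being distinct from $p$, then has a well-defined image $q' \in X'$ under $X \rat X'$, and $q'$ cannot lie on the diagonal through $r$: that diagonal corresponds to $p$ under $X' \rat X$, so a point on it would map to $p$, not to $q$.

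The heart of the argument is a general-position statement on $\Dl_8$, namely that a general $3$-point $r$ together with a rational point $q'$ lying off the diagonal through $r$ are in general position. This is the $\Dl_8$-analogue of Lemma~\ref{lem:P2-5And1Or2}, and it is proved just as Lemma~\ref{lem:X8-2And3General}, only more easily because one of the two points is rational. Concretely, working over $\bk$ on $\P^1 \times \P^1$ one checks the two conditions for the blow-up of $r_1,r_2,r_3,q'$ to be a Del Pezzo surface of degree~$4$: no two of these points share a ruling, and the four do not lie on a common $(1,1)$-curve. The second condition is exactly the off-diagonal condition just established. For the first, if $q'$ shared a ruling with some $r_i$ then, applying Galois elements carrying $r_i$ to the remaining components (and recalling that on a Picard-rank-$1$ surface in $\Dl_8$ the two rulings are exchanged by Galois), the rational point $q'$ would lie on a ruling through each of $r_1,r_2,r_3$; but only two rulings pass through $q'$, and each meets $\{r_1,r_2,r_3\}$ in at most one point by generality of $r$, a contradiction. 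Hence the blow-up $W$ of $r$ and $q'$ is a degree~$4$, Picard-rank-$3$ Del Pezzo surface. Since $W$ is equally the blow-up of $X$ at $p$ and $q$, this gives part~\ref{D6-11:1}.

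Finally, the piece of $W$ is $\piece{\Dl_6}{1}{1} = \piece{\Dl_8}{1}{3} = \piece{\P^2}{2}{3}$, the relation already used in Lemma~\ref{lem:linkD6}; its symmetry in the two rational points $p$ and $q$ shows that the blow-up of $q$ gives a link of the same type $\Dl_6 \link{1}{3} \Dl_8$, which is part~\ref{D6-11:2} (alternatively this is immediate from Theorem~\ref{t:russian}\ref{russian:2} and Figure~\ref{diag:links}). The same theorem shows that every rational point on a surface in $\Dl_6$ produces such a link, so the hypothesis on $p$ holds for all rational points, and the last assertion follows by applying part~\ref{D6-11:1} to an arbitrary one. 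The only genuine difficulty is the ruling-exclusion step on $\Dl_8$, where the Galois swapping of the two rulings must be exploited rather than a naive genericity count; everything else is a direct transcription of the degree~$5$ case.
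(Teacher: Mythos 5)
Your argument is essentially the paper's: the printed proof of this lemma simply says it is the same as that of Lemma~\ref{l:D5-11}, now using the piece $\piece{\Dl_8}13$ described in Lemma~\ref{l:pieceD611}, and your transcription — pass to $X'\in\Dl_8$ via the link at $p$, observe that $q'$ avoids the diagonal through the contracted $3$-point $r$, verify general position on $\Dl_8$, and read off part~\ref{D6-11:2} from the pentagon $\piece{\Dl_8}13=\piece{\P^2}23$ — is exactly that. You in fact supply a detail the paper leaves implicit: the general-position statement on $\Dl_8$ (a rational point off the diagonal through a general $3$-point is general with it) is part of Lemma~\ref{l:pieceD611}, whose proof the paper omits, and your ruling-exclusion argument exploiting the Galois swap of the two rulings, together with the diagonal condition, is a correct and complete verification of it. One caveat: do not invoke Theorem~\ref{t:russian} here, even parenthetically — Lemma~\ref{l:D6-11} is an input to the proof of that theorem in Appendix~\ref{app:russian_bis}, so the appeal is circular. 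It is also unnecessary: the definition of the class $\Dl_6$ already provides one rational point whose blow-up gives a link to $\Dl_8$, and your part~\ref{D6-11:2} then propagates this property to every other rational point, after which the final assertion follows by applying part~\ref{D6-11:1}.
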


\begin{proof}
The proof is similar to the one of Lemma \ref{l:D5-11}, using now the piece
$\hyperref[D6_11]{\piece{\Dl_6}11} = \hyperref[D8_13]{\piece{\Dl_8}13}$, which is described in Lemma \ref{l:pieceD611}.
\end{proof}

\begin{lemma}
Let $X \in \Dl$, $d \ge 1$  and $p \in X$ a general $d$-point.
Assume the corresponding edge on the graph of Figure \textup{\ref{diag:links}} is of type II and is not a loop.
Then the Sarkisov link starting with the blow-up of $p$ is as prescribed by Figure \textup{\ref{diag:links}}.
\end{lemma}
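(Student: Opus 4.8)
The plan is to run through exactly those non-loop type~II edges of Figure~\ref{diag:links} that have not yet been treated. The loops were handled in the preceding lemma, and the links whose source is $\Dl_5$ or $\Dl_6$, namely $\Dl_5 \link15 \P^2$, $\Dl_5 \link25 \Dl_8$ and $\Dl_6 \link13 \Dl_8$, were settled in Lemmas~\ref{l:D5-11}, \ref{l:D5-12} and \ref{l:D6-11}. Inspecting the graph, the only remaining non-loop type~II edges are those issued from $\P^2$ and from $\Dl_8$:
\[
\P^2 \link21 \Dl_8, \quad \P^2 \link51 \Dl_5, \quad \Dl_8 \link12 \P^2, \quad \Dl_8 \link31 \Dl_6, \quad \Dl_8 \link52 \Dl_5.
\]

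For each of these I would blow up the general $d$-point $p$ to obtain $Y \to X$, which is a rank~$2$ fibration over a point (Picard rank $2$, Del Pezzo since $p$ is general), and then run the two-rays game. One extremal ray is the blow-down back to $X$; for the second ray I would exhibit, over $\bk$, an explicit Galois orbit of $d'$ pairwise disjoint $(-1)$-curves. This simultaneously confirms that the second contraction $Y \to X'$ is divisorial (so the link is genuinely of type~II) and yields $K_{X'}^2 = K_X^2 - d + d'$ with $X'$ a Del Pezzo surface of Picard rank~$1$. The relevant orbits are: the strict transform of the line through $p$ for $\P^2 \link21$ (so $d'=1$, degree $8$); the two Galois-conjugate rulings through $p$ for $\Dl_8 \link12$ (so $d'=2$, degree $9$, using that a Picard rank~$1$ form of $\P^1\times\P^1$ has its two rulings swapped by Galois); the unique diagonal through $p$ for $\Dl_8 \link31$ (so $d'=1$, degree $6$); and the conic through $p$ for $\P^2 \link51$ (so $d'=1$, degree $5$). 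The target is then read off from its degree and Picard rank via Proposition~\ref{p:delpezzo}: parts~\ref{delpezzoP2}, \ref{delpezzoD8}, \ref{delpezzoD6} give respectively $X' = \P^2$, $X' \in \Dl_8$ and $X' \in \Dl_6$.

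The two cases landing on $\Dl_5$ require a separate argument, and this is where I expect the only real difficulty, since Proposition~\ref{p:delpezzo} provides no characterization of Del Pezzo surfaces of degree~$5$ and Picard rank~$1$. For $\P^2 \link51 \Dl_5$ this is harmless: the construction just described (blow up the general $5$-point, contract the transform of the conic) is, by definition, the one defining the class $\Dl_5$, so $X' \in \Dl_5$. For $\Dl_8 \link52 \Dl_5$ I would instead observe that this link is the inverse of the link $\Dl_5 \link25 \Dl_8$ produced in Lemma~\ref{l:D5-12}; the degree count gives $K_{X'}^2 = 8 - 5 + 2 = 5$ with Picard rank~$1$ and a contracted $2$-point, and invoking the invertibility of that link together with the generality of the blown-up $5$-point identifies $X'$ as a member of $\Dl_5$.

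Together with the loop lemma and Lemmas~\ref{l:D5-11}, \ref{l:D5-12}, \ref{l:D6-11}, this exhausts all non-loop type~II edges and thereby establishes assertion~\ref{russian:2} of Theorem~\ref{t:russian} for links of type~II. The single delicate point throughout is the absence of a degree-$5$ entry in Proposition~\ref{p:delpezzo}, which I circumvent by the very definition of $\Dl_5$ in the case $\P^2 \link51 \Dl_5$ and by running Lemma~\ref{l:D5-12} in reverse in the case $\Dl_8 \link52 \Dl_5$.
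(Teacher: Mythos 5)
Your overall plan coincides with the paper's: the edges $\P^2 \link21 \Dl_8$, $\P^2 \link51 \Dl_5$, $\Dl_8 \link31 \Dl_6$ come from the definitions of the classes (or, equivalently, your explicit orbits plus Proposition \ref{p:delpezzo}), $\Dl_8 \link12 \P^2$ from contracting the two conjugate rulings and Proposition \ref{p:delpezzo}\ref{delpezzoP2}, and $\Dl_5 \link15 \P^2$, $\Dl_5 \link25 \Dl_8$, $\Dl_6 \link13 \Dl_8$ from Lemmas \ref{l:D5-11}, \ref{l:D5-12}, \ref{l:D6-11}. All of that is fine.

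The gap is exactly where you flag the "only real difficulty", in the case $\Dl_8 \link52 \Dl_5$, and your proposed fix does not close it. Lemma \ref{l:D5-12} tells you that \emph{some} links $\Dl_8 \link52 \Dl_5$ exist, namely the inverses of the links it constructs; it does not tell you that for an \emph{arbitrary} general $5$-point $p$ on an arbitrary $X \in \Dl_8$ the two-rays game lands in $\Dl_5$. Since Proposition \ref{p:delpezzo} has no degree-$5$ entry, knowing $K_{X'}^2 = 5$ and $\rho(X') = 1$ does not identify $X'$ as a member of $\Dl_5$ (recall $\Dl_5$ is \emph{defined} by a specific construction from $\P^2$, not as the set of all rank-$1$ degree-$5$ Del Pezzo surfaces), so "invoking the invertibility of that link" is circular: you would need to know in advance that your particular $5$-point arises as the exceptional locus of a link coming from $\Dl_5$. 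The paper closes this by a genuinely separate argument: it first proves that any rational point $r \in X$ (which exists by Lemma \ref{l:one_rational_point}) is in general position with $p$, using the irreducible bidegree $(2,2)$ curve through $p$ singular at $r$, the link $X \link12 \P^2$ based at $r$, and Lemma \ref{lem:P2-5And1Or2}; it then blows up both $p$ and $r$ and identifies the resulting rank-$3$ fibration with the piece $\piece{\Dl_8}15 = \piece{\P^2}25$ of Lemma \ref{l:pieceP225}, from which one reads off that the edge issued from the blow-up of $p$ terminates at a vertex joined to $\P^2$ by a $5$-point blow-up, hence lies in $\Dl_5$. You need some argument of this kind (an auxiliary point in general position plus the elementary relation, or an equivalent explicit identification of the degree-$5$ target) to complete this case.
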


\begin{proof}
For the edges $\P^2 \link21 \Dl_8$, $\P^2 \link51 \Dl_5$ and $\Dl_8 \link31 \Dl_6$, this follows from the definition of the classes $\Dl_i$.
Now we consider the inverses of these links.

If $X \in \Dl_8$ and $p \in X$ is a rational point, after the blow-up of $X$ at $p$ we can contract the transform of the horizontal and vertical rulings through $p$.
By Proposition \ref{p:delpezzo}\ref{delpezzoP2}, the resulting surface is $\P^2$, so we get a link $\Dl_8 \link12 \P^2$.

If $X \in \Dl_5$ and $p \in X$ is a rational point, the existence of a link $\Dl_5 \link15 \P^2$ starting with the blow-up of $p$ is given by Lemma \ref{l:D5-11}.
If $X \in \Dl_6$ and $p \in X$ is a rational point, the existence of a link $\Dl_6 \link13 \Dl_8$ is given by Lemma \ref{l:D6-11}.
Similarly, if $X \in \Dl_5$ and $p \in X$ is a general $2$-point,
the existence of a link $\Dl_5 \link25 \Dl_8$ is given by Lemma \ref{l:D5-12}.

Finally let $X \in \Dl_8$ and $p \in X$ a general $5$-point.
We claim that any rational point $r \in X$ is general with $p$.
Indeed, let $C$ be the curve of bidegree $(2,2)$ passing through $p,r$, and with a double point at $r$.
Since $p$ is general, $C$ is irreducible.
Consider the link $X \link12 \P^2$ starting with the blow-up of $r$, and let $q \in \P^2$ be the $2$-point image of the horizontal and vertical rulings through $r$.
Then the image of $C$ is a conic which does not contain $q$.
We know by Lemma \ref{lem:P2-5And1Or2} that $q$ and the image of $p$ are general, so $q$ and $r$ are also general as claimed.
We conclude to the existence of a link $\Dl_8 \link52 \Dl_5$ by using the piece $\hyperref[D8_15]{\piece{\Dl_8}15} = \hyperref[P2_25]{\piece{\P^2}25}$, which is described in Lemma \ref{l:pieceP225}
\end{proof}

We now turn to links involving the classes $\Cl_i$.
First recall the following lemma from \cite[Lemma 6.12]{SchneiderRelations}:

\begin{lemma}
\label{l:Julia_4pts}
Let $\Pl$, $\Ql \subset \P^2(\k^a)$ be two sets of four points, and $X_\Pl \to \P^2$, $X_\Ql \to \P^2$ the corresponding blow-ups.
Assume $\Pl$ is either a general $4$-point, or the union of two $2$-points that are general.
If there exists a birational map $X_\Pl \rat X_\Ql$ preserving
the fibrations associated with the pencils of conics through $\Pl$ and $\Ql$ respectively, then there exists an automorphism $\alpha \in \Aut_\k(\P^2)$ such that $\alpha(\Pl) = \Ql$, and in particular $X_\Pl$ and $X_\Ql$ are isomorphic.
\end{lemma}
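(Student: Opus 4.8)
The plan is to reduce the statement to the purely Galois-theoretic criterion of \cite[Lemma 6.11]{SchneiderRelations}, which produces an automorphism of $\P^2$ in $\PGL_3(\k)$ out of any Galois-equivariant bijection between two general four-point sets. Thus the whole task becomes: extract from the fibration-preserving map $\phi\colon X_\Pl \rat X_\Ql$ a bijection $\psi\colon \Pl \to \Ql$ commuting with the action of $\Gal(\bk/\k)$. To that end I would first record the intrinsic geometry of $X_\Pl$. Since $\Pl$ is a general $4$-point (or two general $2$-points), $X_\Pl$ lies in $\Cl_5$, so $K^2=5$ and it has exactly $8-5=3$ singular fibres, namely the strict transforms of the three reducible conics through $\Pl$, that is the pairs of lines $L_{12}+L_{34}$, $L_{13}+L_{24}$, $L_{14}+L_{23}$. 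The six components of these singular fibres are the strict transforms of the six lines $L_{ij}$, and they are exactly the irreducible curves $C$ on $X_\Pl$ with $C^2=-1$ contained in a fibre (equivalently $-K_{X_\Pl}\cdot C=1$ and $C\cdot F=0$). The four points can then be read off intrinsically: on $\P^2$ the six lines $L_{ij}$ meet in four triple points (the $p_i$, each on three lines) and three double points (the nodes of the degenerate conics), so the $p_i$ are precisely the triple points of the arrangement of the six lines, which are the images of the six fibral $(-1)$-curves under $X_\Pl\to\P^2$. The same description holds for $X_\Ql$.

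Next I would use $\phi$ to transport this structure. A fibration-preserving birational map induces an isomorphism $\bar\phi\colon \P^1\to\P^1$ of the bases over $\k$, and by \cite[Corollary 3.2]{SchneiderRelations} it carries the three critical values of $X_\Pl$ bijectively onto those of $X_\Ql$. Refining this, I would argue that $\phi$ induces a bijection between the six fibral $(-1)$-curves of $X_\Pl$ and those of $X_\Ql$ respecting the pairing into singular fibres and their mutual incidences; reading this bijection back down via $X_\Pl\to\P^2$ and $X_\Ql\to\P^2$ matches the six lines $L_{ij}$ with the six lines $L'_{ij}$, hence matches their triple points, and yields the desired $\psi\colon \Pl\to\Ql$. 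Since $\phi$ and $\bar\phi$ are defined over $\k$, the induced correspondence of $(-1)$-curves commutes with $\Gal(\bk/\k)$, so $\psi$ is Galois-equivariant, and \cite[Lemma 6.11]{SchneiderRelations} furnishes $\alpha\in\PGL_3(\k)$ with $\alpha(\Pl)=\Ql$, whence $X_\Pl\simeq X_\Ql$. The case of two general $2$-points is identical: only the Galois action on the four points and on the six curves changes, not the combinatorics. Alternatively, one can phrase the conclusion through Lemma~\ref{l:residue}, since the matched data at the critical values is exactly the residue-field information of the four-point, and the implication \ref{it:residue}$\Rightarrow$\ref{it:aut} of that lemma directly produces $\alpha$.

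The main obstacle is the refinement step, namely showing that the merely birational map $\phi$ genuinely induces a well-defined bijection of the fibral $(-1)$-curves. The difficulty is that $\phi$ need not be an isomorphism: fibration-preserving birational self-maps of a conic bundle in $\Cl_5$ form a large group, containing the orthogonal group $\SO(\k(t)^3,q)$ of Proposition~\ref{prop:KernelGeneratedByInvo} together with the base permutations, so $\phi$ typically performs elementary transformations on the smooth fibres. Here the plan is to resolve $\phi$ into a chain of elementary links and check that their centres lie on smooth fibres, so that $\phi$ restricts to an isomorphism over a neighbourhood of each critical value and therefore matches the singular fibres together with their components; any centre lying on a reducible fibre is tracked by $\bar\phi$ and still preserves the combinatorial type, since relative Picard rank $1$ forces the two components of each singular fibre to be Galois-conjugate. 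Once this structural matching is secured, the rest is bookkeeping with the incidence combinatorics and the Galois action.
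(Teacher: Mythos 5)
First, note what you are up against: the paper does not prove this lemma at all, it is imported verbatim as \cite[Lemma 6.12]{SchneiderRelations}, so the only ``proof'' in the text is a citation. Your strategy --- extract from the fibration-preserving map a Galois-equivariant bijection $\Pl\to\Ql$ and feed it to \cite[Lemma 6.11]{SchneiderRelations} --- is the strategy of the cited source and of the closely analogous argument the paper does spell out, namely the proof of Lemma \ref{lem:image_Jonq_4}. However, the step you single out as the ``main obstacle'' is actually the easy part. The centre of a link of type II over $\P^1$ is by definition a $d$-point whose blow-up is again a conic bundle, which forces its geometric components to lie on distinct \emph{smooth} fibres: a point on a reducible fibre produces a fibre containing a $(-2)$-curve, so the anticanonical divisor is no longer relatively ample. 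Hence the three reducible fibres and their six components are carried isomorphically by strict transform through the whole factorization; this is precisely \cite[Corollary 3.2]{SchneiderRelations}, which you already invoke, and your fallback discussion of centres on reducible fibres is not needed.

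The genuine gap is at the step ``matches the six lines $L_{ij}$ with the six lines $L'_{ij}$, hence matches their triple points, and yields the desired $\psi$''. A bijection of the six fibral $(-1)$-curves respecting the partition into the three singular fibres need not be induced by a bijection of the four points: such bijections form a group of order $3!\cdot 2^3=48$, in which the image of $S_4$ has index $2$. Moreover, on $X_\Pl$ two of these curves meet if and only if they lie in the same fibre, so the incidence structure upstairs does not remember the points; you must descend to $\P^2$ and argue that the images of the six lines are concurrent in one of exactly two patterns --- either the three lines through each $p_i$ map to three concurrent lines, or the three lines avoiding each $p_i$ do --- and that in either pattern the concurrency points are the $q_j$, yielding a Galois-equivariant assignment $p_i\mapsto q_j$. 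This dichotomy is \cite[Lemma 6.5(4)]{SchneiderRelations} and is exactly what the paper's proof of Lemma \ref{lem:image_Jonq_4} works through; without it your $\psi$ is not well defined. Once that case analysis is inserted the argument closes. A last small point: Lemma \ref{l:residue} and the residue-field reformulation apply only when $\Pl$ is a single Galois orbit of size $4$, so in the two-$2$-points case you must use the equivariance criterion \ref{it:property} of \cite[Lemma 6.11]{SchneiderRelations} directly, as your main route does.
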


Let $p, p'$ be two distinct $2$-points in general position in $\P^2$, and let $Y \to \P^2$ be the blow-up of $p$ and $p'$.
Then the transform of the pencil of conics through $p,p'$ gives a structure of rank $2$ fibration $Y/\P^1$.
Contracting either the transform of the line through $p$ or through $p'$, we obtain two distinct surfaces $X$, $X' \in \Cl_6$.
We say that $X$ and $X'$ are \emph{twin} elements in $\Cl_6$, and that $Y$ is their \emph{parent}.
Observe that $X$ and $X'$ are not necessarily isomorphic, and that they are uniquely defined by $Y$.

\begin{lemma}
\label{l:XisoX'}
Let $X \in \Cl_5 \cup \Cl_6$, and $p \in X$ a general $d$-point
\parent{here $d$ can be arbitrary large}.
Let $\chi\colon X \link{d}{d} X'$ be the Sarkisov link of type II over $\P^1$ constructed from the blow-up of $p$.
Then
\begin{enumerate}
\item \label{XisoX':C5}
If $X \in \Cl_5$, then $X'$ is isomorphic to $X$.
\item \label{XisoX':C6}
If $X \in \Cl_6$, then $X'$ is isomorphic to $X$ or to the twin of $X$.
\end{enumerate}
\end{lemma}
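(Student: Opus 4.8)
The plan is to reduce both parts to Lemma~\ref{l:Julia_4pts}, which recognizes when two blow-ups of $\P^2$ at four points are isomorphic, once one has a birational map between them respecting the two associated pencils of conics. The first step, common to both parts, is to pin down the type of $X'$. Since $\chi$ is a Sarkisov link of type II over $\P^1$, the surface $X'$ is again a rational conic bundle of relative Picard rank $1$ over $\P^1$; moreover such a link blows up and contracts $d$-points of the same size $d$ (see Table~\ref{tab:links}), so $K_{X'}^2 = K_X^2$. Proposition~\ref{p:delpezzo}\ref{delpezzoC5} (resp.~\ref{delpezzoC6}) then gives $X' \in \Cl_5$ (resp.~$X' \in \Cl_6$), without appealing to Theorem~\ref{t:russian}, which is what this appendix is establishing.

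For \ref{XisoX':C5}, write $X = X_\Pl$ and $X' = X_\Ql$, where $\Pl, \Ql$ are the general $4$-points defining these elements of $\Cl_5$, the conic bundle structures being exactly the pencils of conics through $\Pl$, resp.~$\Ql$. As a link of type II over $\P^1$, $\chi$ carries the fibration of $X$ to that of $X'$, so it is a birational map $X_\Pl \rat X_\Ql$ preserving these two pencils. Lemma~\ref{l:Julia_4pts} (the general $4$-point case) then yields $\alpha \in \Aut_\k(\P^2)$ with $\alpha(\Pl) = \Ql$, whence $X \cong X'$.

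For \ref{XisoX':C6}, $X$ is no longer itself a blow-up of $\P^2$, so I pass to parents. Choose a parent $Y = X_\Pl$ of $X$ and a parent $Y' = X_\Ql$ of $X'$, where $\Pl, \Ql$ are unions of two general $2$-points and $Y \to X$, $Y' \to X'$ contract the transform of a line through one of the two $2$-points; such parents exist by the construction in Proposition~\ref{p:delpezzo}\ref{delpezzoC6}. Composing $Y \to X$, the link $\chi$, and the inverse of $Y' \to X'$ gives a birational map $Y \rat Y'$. Since the fibration of $Y$ (resp.~$Y'$) is the pullback of the conic bundle on $X$ (resp.~$X'$) and $\chi$ preserves conic bundles, this induced map preserves the pencils of conics through $\Pl$ and $\Ql$. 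Lemma~\ref{l:Julia_4pts}, now in the case of two $2$-points, produces $\alpha \in \Aut_\k(\P^2)$ with $\alpha(\Pl) = \Ql$, and hence an isomorphism $Y \cong Y'$. Being defined over $\k$, the map $\alpha$ sends the two $2$-points of $\Pl$ to the two $2$-points of $\Ql$, hence the two special lines of $Y$ to those of $Y'$; so the isomorphism matches the two twin contractions of $Y$ with those of $Y'$. As $X$ is one twin of $Y$ and $X'$ one twin of $Y'$, this forces $X'$ to be isomorphic to $X$ or to the twin of $X$.

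The delicate point is this last paragraph: one must check that the induced map $Y \rat Y'$ really does preserve the conic pencils (so that Lemma~\ref{l:Julia_4pts} applies), and then follow the two line-contractions through the isomorphism $Y \cong Y'$. The potential swap of the two $2$-points by $\alpha$ is exactly the mechanism by which the twin, rather than $X$ itself, can appear.
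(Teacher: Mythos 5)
Your proposal is correct and follows essentially the same route as the paper: determine that $X'$ lies in the same class $\Cl_5$ or $\Cl_6$ via Proposition~\ref{p:delpezzo}, then apply Lemma~\ref{l:Julia_4pts} to the induced pencil-preserving map (directly for $\Cl_5$, and after passing to the parents for $\Cl_6$). The paper's proof is just a terser version of yours; your added care in the $\Cl_6$ case --- checking that the map between parents preserves the two pencils and tracking the two line-contractions through the resulting isomorphism $Y \cong Y'$ --- is exactly what the paper's phrase ``it gives that $X, X'$ have the same parent, hence the result'' leaves implicit.
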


\begin{proof}
We know from Proposition \ref{p:delpezzo} that $X$ and $X'$ belong to the same class $\Cl_i$, $i = 5$ or $6$.
Then Lemma \ref{l:Julia_4pts} gives the result when $X \in \Cl_5$, and when $X \in \Cl_6$ it gives that $X, X'$ have the same parent, hence the result.
\end{proof}

\begin{lemma}
Let $\F_n \in \Cl_8$ be a Hirzebruch surface.
Then any Sarkisov link from $\F_n$ is one of the following:
\begin{itemize}
\item A link $\F_n \rat \F_m$ of type II over $\P^1$.
\item When $n = 1$, a link $\F_1 \to \P^2$ of type III.
\item When $n = 0$, a link $\F_0 \to \F_0$ of type IV.
\end{itemize}
\end{lemma}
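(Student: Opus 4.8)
The plan is to trace every Sarkisov link issuing from $\F_n$ back to the rank $2$ fibration $Y/B_Y$ that produces it via the two-rays game, and to pin down $Y$ and its second contraction using Lemma~\ref{l:2_domination_type} and Proposition~\ref{p:delpezzo}. Recall that by definition a Sarkisov link from $\F_n$ is the birational map $\F_n \rat X'$ induced by the two rank $1$ fibrations $\F_n/\P^1$ and $X'/B'$ dominated by some rank $2$ fibration $Y/B_Y$. So first I would apply Lemma~\ref{l:2_domination_type} to the domination of $\F_n/\P^1$ by $Y$. This splits into two cases: either $Y \to \F_n$ is the blow-up of a general $d$-point, in which case $B_Y = \P^1$ and $Y$ is a conic bundle over $\P^1$ carrying a single orbit of $d$ singular fibers; or $Y \to \F_n$ is an isomorphism exhibiting $Y$ as a rank $2$ fibration over a point, which forces $\F_n$ to be a Del Pezzo surface of Picard rank $2$, i.e. $n \in \{0,1\}$ (for $n \ge 2$ the negative section $C$ satisfies $-K_{\F_n}\cdot C = 2-n \le 0$, so $-K_{\F_n}$ is not ample).

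In the blow-up case I would show the induced link is of type II over $\P^1$. Since $B_Y = \P^1$, applying Lemma~\ref{l:2_domination_type} to the second contraction $Y \to X'$ rules out the change-of-base possibility (which would require a point as base of $Y$), so $Y \to X'$ is again a blow-up and $X'$ is a conic bundle over $\P^1$. Concretely, each of the $d$ singular fibers of $Y$ is a pair of transverse $(-1)$-curves, an exceptional curve $E_i$ together with the strict transform $\tilde F_i$ of a fiber of $\F_n$; the two extremal rays are the orbits $\{E_i\}$ (contracting back to $\F_n$) and $\{\tilde F_i\}$ (contracting to $X'$). After this second contraction the singular fibers become irreducible, so $X'$ is a smooth conic bundle of relative Picard rank $1$ with $K_{X'}^2 = K_{\F_n}^2 = 8$; by Proposition~\ref{p:delpezzo}\ref{delpezzoC8} it is a Hirzebruch surface $\F_m$, and $\F_n \rat \F_m$ is of type II over $\P^1$, as in Table~\ref{tab:links}.

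In the isomorphism case $Y = \F_n$ is a Del Pezzo surface of Picard rank $2$ with $n \in \{0,1\}$, and I would simply read off the second ray of the two-rays game. For $n = 0$, $\F_0 = \P^1 \times \P^1$ has as its two extremal rays the two projections, so both dominated rank $1$ fibrations are conic bundles and the induced link is the change of ruling $\F_0 \to \F_0$ of type IV. For $n = 1$, $\F_1$ is the blow-up of $\P^2$ at a rational point: one ray is the ruling $\F_1 \to \P^1$, recovering the starting fibration $\F_n/\P^1$, while the other contracts the $(-1)$-curve to the Del Pezzo surface $\P^2$ (Proposition~\ref{p:delpezzo}\ref{delpezzoP2}); the resulting link $\F_1 \to \P^2$ goes from a conic bundle to a Del Pezzo surface, hence is of type III, with contracted orbit a single rational point. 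These three cases exhaust the possibilities.

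Each step is short, and I expect the only point requiring genuine care to be the bookkeeping in the blow-up case: one must verify that $B_Y = \P^1$ truly prevents the second contraction from changing the base, so that no blow-up of $\F_n$ can accidentally produce a type III or IV link, and that both extremal rays consist of orbits of pairwise disjoint $(-1)$-curves whose contraction yields a Hirzebruch surface. Beyond this straightforward case analysis I anticipate no serious obstacle.
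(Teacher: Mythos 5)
Your proof is correct and follows essentially the same route as the paper: type II links over $\P^1$ stay among Hirzebruch surfaces by Proposition \ref{p:delpezzo}\ref{delpezzoC8}, and links of type III or IV are excluded for $n \ge 2$ by examining the second extremal ray of the Picard rank $2$ surface $\F_n$. The only cosmetic difference is that you rule out $n \ge 2$ by noting that $-K_{\F_n}$ is not ample (so $\F_n/\pt$ is not a rank $2$ fibration), whereas the paper observes that the $(-n)$-section generating the second ray cannot be contracted to a smooth point; these are two faces of the same obstruction.
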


\begin{proof}
If $\F_n \link{d}{d} X$ is a link of type II over $\P^1$, the surface $X/\P^1$ is again a Hirzebruch surface by \ref{p:delpezzo}\ref{delpezzoC8}.
Since $\F_n$ has Picard rank 2, it admits exactly two extremal rays, with one corresponding to the given structure of rank 1 fibration $\F_n/\P^1$.
If $n \ge 2$, the second extremal ray is generated by the exceptional section, which cannot be contracted to a smooth point.
So the only possibilities for a link of type III or IV are the cases $n \in \{0,1\}$, as expected.
\end{proof}

\begin{proof}[Proof of Theorem \textup{\ref{t:russian}}]
\ref{russian:1}.
Let $(X/B, \phi)$ be a marked rank 1 fibration.
By Proposition~\ref{p:Sarkisov}, we know that the birational map $\phi\colon X \rat \P^2$ admits a factorization into Sarkisov links.
In the above lemmas, we systematically explored all possible sequences of Sarkisov links starting from $\P^2$, and we showed that we never leave the seven classes $\{\P^2\}$, $\Dl_5$, $\Dl_6$, $\Dl_8$, $\Cl_5$, $\Cl_6$, $\Cl_8$.

Similarly, \ref{russian:2} and \ref{russian:3} follow from the exhaustive description of links.
A link of type IV must occur on a rank 1 fibration $X/\P^1$ such that the second extremal ray also corresponds to a fibration, and $\F_0$ is the only candidate.
\end{proof}

\section{Elementary relations}
\label{app:relations}

In this appendix we describe all elementary relations associated with a rank $3$ fibrations $X_d/\pt$, where $X_d$ is a Del Pezzo surface of Picard rank 3 and degree $d$.
In some sense this gives a modern version of the results in \cite{IKT}.
However the point of view is slightly different, since \cite{IKT} describes a set of relations for the Cremona group $\Bir_\k(\P^2)$, whereas by \cite[Theorem 3.1]{LamyZimmermann} our elementary relations constitute a set of relations for the groupoid $\BirMori_\k(\P^2)$, with respect to the Sarkisov links as generators.

Recall from page~\pageref{def:piece} that to any rank $3$ fibration we can associate a $2$-piece, which we represent as a polygon with vertices corresponding to some Mori fiber spaces, and edges to some Sarkisov links.
There are two possibilities for the surface $X_d$:
\begin{itemize}
\item Either $X_d$  is the blow-up of general $a$-point and $b$-point on $X_i \in \Dl_i \subset \Dl$, with $a + b + d = i$. We write $\piece{\Dl_i}ab$ the corresponding $2$-piece;
\item Or $X_d$  is the blow-up of a general $a$-point on $\F_0$, with $a + d = 8$.
We write $\pieceF{a}$ the corresponding $2$-piece.
\end{itemize}
In total there are $27$ distinct such $2$-pieces, see Table \ref{tab:pieces} and the figures below.
We do not include here the elementary relations associated to a rank 3 fibrations $X_d/\P^1$, since these are always of the same form: the corresponding piece is a square with all four vertices corresponding to conic bundles in the same subset $\Cl_i \subset \Cl$, $i = 5,6$ or $8$.

In the pictures, we use the following convention.
An edge labeled with $d$ is the blow-up of a general $d$-point, with one color associated to each $d$ (from $d = 1$ to $d = 7$).
A black edge without label is a change of base, as in Lemma \ref{l:2_domination_type}\ref{domination:base}.
A surface $X_i$ is a Del Pezzo surface of degree $i$.
In particular when a surface $X_i$ corresponds to a vertex, it has Picard rank $1$ and so belongs to the class $\Dl_i$, with its structure of fibration to the point. Similarly, a surface  $X_i/\P^1$ at a vertex denotes a surface in $\Cl_i$, with its structure of conic bundle.
We put some prime such as $X_i'$ when several surfaces with the same degree appear in the diagram, and we see no good reason why they should be isomorphic (a typical good reason is that they are related by a Geiser or Bertini involution, or that we can apply Lemma \ref{l:XisoX'}\ref{XisoX':C5}).

The proof that each piece has the form given in the pictures below can be done as follows.
Over $\alg \k$, we know the number of $(-1)$-curves and of rational fibrations on $X_d$.
Then we can study the Galois action on these curves, and find which orbits correspond to pairwise disjoint $(-1)$-curves and so correspond to a blow-down defined over $\k$.
We now give detailed statements for the pieces  $\piece{\P^2}15$, $\piece{\P^2}23$, $\piece{\P^2}25$, since these were used in our proof of Theorem \ref{t:russian} given in Appendix \ref{app:russian_bis}.

\begin{table}[t]
\begin{minipage}[t]{0.32\linewidth}
\strut\vspace*{-\baselineskip}\newline
\begin{tabular}{Cl}
\toprule
\text{Piece} & Figure \\
\midrule
\piece{\p^2}11 & Fig. \ref{P2_11}  \\
\piece{\p^2}12 & Fig. \ref{P2_12}  \\
\piece{\p^2}13 & Fig. \ref{P2_13}  \\
\piece{\p^2}14 & Fig. \ref{P2_14}  \\
\piece{\p^2}15 & Fig. \ref{P2_15}  \\
\piece{\p^2}16 & Fig. \ref{P2_16}  \\
\piece{\p^2}17 & Fig. \ref{P2_17}  \\
\piece{\p^2}22 & Fig. \ref{P2_22}  \\
\piece{\p^2}23 & Fig. \ref{P2_23}  \\
\piece{\p^2}24 & Fig. \ref{P2_24}  \\
\piece{\p^2}25 & Fig. \ref{P2_25}  \\
\piece{\p^2}26 & Fig. \ref{P2_26}  \\
\piece{\p^2}33 & Fig. \ref{P2_33}  \\
\piece{\p^2}34 & Fig. \ref{P2_34}  \\
\piece{\p^2}35 & Fig. \ref{P2_35}  \\
\piece{\p^2}44 & Fig. \ref{P2_44}  \\
\bottomrule \\
\end{tabular}
\end{minipage}
\begin{minipage}[t]{0.32\linewidth}
\strut\vspace*{-\baselineskip}\newline
\begin{tabular}{Cl}
\toprule
\text{Piece} & Figure \\
\midrule
\piece{\Dl_8}11 & Fig. \ref{P2_12} \\
\piece{\Dl_8}12 & Fig. \ref{P2_22} \\
\piece{\Dl_8}13 & Fig. \ref{P2_23}  \\
\piece{\Dl_8}14 & Fig. \ref{P2_24}  \\
\piece{\Dl_8}15 & Fig. \ref{P2_25}  \\
\piece{\Dl_8}16 & Fig. \ref{P2_26}  \\
\piece{\Dl_8}22 & Fig. \ref{D8_22}  \\
\piece{\Dl_8}23 & Fig. \ref{D8_23}  \\
\piece{\Dl_8}24 & Fig. \ref{D8_24}  \\
\piece{\Dl_8}25 & Fig. \ref{D8_25}  \\
\piece{\Dl_8}33 & Fig. \ref{D8_33}  \\
\piece{\Dl_8}34 & Fig. \ref{D8_34}  \\
\midrule
\piece{\Dl_5}11 & Fig. \ref{P2_15}  \\
\piece{\Dl_5}12 & Fig. \ref{P2_25}  \\
\piece{\Dl_5}13 & Fig. \ref{P2_35}  \\
\piece{\Dl_5}22 & Fig. \ref{D8_25}  \\
\bottomrule \\
\end{tabular}
\end{minipage}
\begin{minipage}[t]{0.32\linewidth}
\strut\vspace*{-\baselineskip}\newline
\begin{tabular}{Cl}
\toprule
\text{Piece} & Figure \\
\midrule
\pieceF{1} & Fig. \ref{P2_12} \\
\pieceF{2} & Fig. \ref{F0_2} \\
\pieceF{3} & Fig. \ref{P2_13} \\
\pieceF{4} & Fig. \ref{F0_4} \\
\pieceF{5} & Fig. \ref{P2_15} \\
\pieceF{6} & Fig. \ref{F0_6} \\
\pieceF{7} & Fig. \ref{P2_17} \\
\midrule
\piece{\Dl_6}11 & Fig. \ref{P2_23} \\
\piece{\Dl_6}12 & Fig. \ref{D8_23}  \\
\piece{\Dl_6}13 & Fig. \ref{D8_33}  \\
\piece{\Dl_6}14 & Fig. \ref{D8_34} \\
\piece{\Dl_6}22 & Fig. \ref{D6_22}  \\
\piece{\Dl_6}23 & Fig. \ref{D6_23} \\
\bottomrule \\
\end{tabular}
\end{minipage}
\caption{The 27 elementary relations over a point}
\label{tab:pieces}
\end{table}

\begin{lemma}[{piece $\hyperref[P2_15]{\piece{\P^2}{1}{5}}$}]
\label{l:pieceD511}
Let $p \in \P^2$ be a rational point, and $q \in \P^2$ a $5$-point such that $p,q$ are in general position.
Let $X_3 \to \P^2$ be the blow-up of $p$ and $q$.
Then $X_3$ is a Del Pezzo surface of degree $3$ and Picard rank $3$, and admits exactly seven  extremal rays described as follows \parent{we order them such that the intersection product of two consecutive rays is zero}:
\begin{itemize}
\item The exceptional divisor $E_1$ from $p$;
\item The exceptional divisor $E_5$ from $q$;
\item The pencil of lines through $p$, giving a structure of rank $2$ fibration $X_3/\P^1$;
\item  The transform $L_5$ of the five lines  through $p$ and one of the $q_i$;
\item The pencil of cubics through $p, q$ and with a double point at $p$, corresponding to a second structure of rank $2$ fibration $X_3/\P^1$;
\item The transform $O_5$ of the five conics through $p$ and four of the $q_i$;
\item The transform $O_1$ of the conic through $q$.
\end{itemize}
In consequence, the piece $\hyperref[P2_15]{\piece{\P^2}15}$ does not depend on the particular choice of the blown-up points, and coincides with the piece $\hyperref[D5_11]{\piece{\Dl_5}11}$.
\end{lemma}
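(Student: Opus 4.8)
The plan is to fix the blow-up model of the cubic surface over $\bk$ and then read off the rank $1$ and rank $2$ fibrations dominated by $X_3$ purely from the Galois action. First I would record the numerical data: blowing up the $1$-point $p$ and the $5$-point $q$ subtracts $6$ from the degree, so $K_{X_3}^2 = 3$, while the Galois-invariant Néron--Severi group is spanned by the hyperplane class $L$, the class $e = [E_1]$ of the exceptional divisor over $p$, and the class $s = [E_2] + \cdots + [E_6]$ of the orbit over $q$; hence the Picard rank over $\k$ is $3$. The general position hypothesis ensures that the six geometric points $p_1 = p, p_2, \dots, p_6$ (with $q = \{p_2,\dots,p_6\}$) satisfy no collinearity or conic relation, so $X_3$ is a smooth Del Pezzo surface of degree $3$.

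Second, and this is the heart of the argument, I would enumerate over $\bk$ the $27$ lines (the $E_i$, the lines $L_{ij}$, and the conics $C_i = 2L - \sum_{k\ne i}E_k$) together with the $27$ conic-bundle fibre classes ($L - E_i$, the $2L-E_i-E_j-E_k-E_l$, and the $3L - 2E_i - \sum_{j\ne i}E_j$), and then keep only the Galois-stable configurations. Using that the Galois group fixes the index $1$ and acts transitively on $\{2,\dots,6\}$, the Galois-stable orbits of pairwise disjoint $(-1)$-curves are exactly $E_1$, the orbit $\{E_2,\dots,E_6\}$, the five lines $L_{1j} = L - E_1 - E_j$ through $p$ (pairwise disjoint since they share the blown-up point $p$), the five conics $C_2,\dots,C_6$ through $p$ and four of the $q_i$, and the single conic $C_1$ through all of $q$; while the only Galois-invariant conic-bundle classes are $L - E_1$ and $3L - 2E_1 - s$. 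This produces the five blow-down rays $E_1, E_5, L_5, O_5, O_1$ together with the two conic-bundle rays, i.e. exactly the seven rays of the statement. The delicate point, which I expect to be the main obstacle, is \emph{completeness}: one must rule out any further invariant contractible configuration, in particular among the ten lines $L_{ij}$ with $i,j \in \{2,\dots,6\}$ and among the fibre classes $2L - E_i - E_j - E_k - E_l$. Here I would argue combinatorially that a transitive action on five elements admits no invariant $4$-subset (eliminating the conics-through-four-points fibrations) and no invariant family of pairwise intersecting index-pairs (a ``star'' or a ``triangle'' would have to be preserved, contradicting transitivity), so the lines $L_{ij}$ never form a contractible orbit.

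Third, I would pin down the cyclic (heptagonal) structure by intersection products in the basis $\{L, e, s\}$, where $L^2 = 1$, $e^2 = -1$, $s^2 = -5$ and $L\cdot e = L \cdot s = e \cdot s = 0$. With the representatives $e,\ s,\ L-e,\ 5L-5e-s,\ 3L-2e-s,\ 10L-5e-4s,\ 2L-s$ taken in the order listed, a direct check shows that the product of any two cyclically consecutive classes vanishes (for instance $(L-e)\cdot(5L-5e-s) = 5 + 5e^2 = 0$ and $(3L-2e-s)\cdot(10L-5e-4s) = 30 - 10 - 20 = 0$), whereas non-consecutive products are positive; this is precisely the condition that the seven rays bound a heptagon in the given cyclic order.

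Finally, for the last assertion I would observe that $E_1$ and the conic $C_1$ through $q$ are disjoint $(-1)$-curves defined over $\k$ (indeed $E_1 \cdot C_1 = e\cdot(2L - s) = 0$). Contracting $E_1$ yields $Y = \mathrm{Bl}_q\,\P^2$, on which $C_1 = 2L - s$ is a $(-1)$-curve, and contracting it is exactly the construction defining $\Dl_5$; thus $X_3$ is the blow-up of two rational points on a surface $X_5 \in \Dl_5$, which identifies $\piece{\P^2}15$ with $\piece{\Dl_5}11$. Since every ingredient above -- the seven rays, their types, the sizes of the blown-up orbits, and their adjacencies -- depends only on the Galois data ``$\{1\}$ fixed and $\{2,\dots,6\}$ a single orbit'', which is common to all admissible choices of $p$ and $q$, the resulting piece is independent of the chosen points.
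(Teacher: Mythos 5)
Your proposal is correct and follows essentially the same route as the paper: the published proof invokes exactly the strategy you carry out (enumerate the $27$ lines and the $27$ conic classes over $\bk$, determine which Galois orbits are pairwise disjoint $(-1)$-curves and which conic classes are invariant, and read off the heptagon), merely declaring the verification ``routine'' where you supply the completeness argument and the intersection-number check of the cyclic adjacencies. The only cosmetic difference is that the paper closes the polygon by appealing to the known disk structure of $2$-pieces and identifies all vertex surfaces via Proposition \ref{p:delpezzo}, whereas you close it by computing that consecutive orbit sums in the basis $\{L,e,s\}$ have product zero and only identify the $\Dl_5$ vertex explicitly for the final assertion; both suffice.
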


\begin{proof}
It is routine to check that the listed curves become smooth rational curves on $X_3$:
\begin{itemize}
\item Either of self-intersection $0$, and so moving in a pencil;
\item Or a disjoint union of $(-1)$-curves, forming a single Galois orbit.
\end{itemize}
At this point we know that the piece is a heptagon, and that the surfaces corresponding to vertices have respective degree $9$, $5$, $9$, $8$, $8$, $8$, $8$.
By Proposition \ref{p:delpezzo}, the surfaces of degree 9 are $\P^2$.
Then since the surface of degree $5$ is obtained by a Sarkisov link from $\P^2$, it belongs to $\Dl_5$.
For the four vertices corresponding to surfaces in $\Cl_8$, two of them come from the blow-up of a rational point on $\P^2$ and so are $\F_1$, and the two other are related by a link of type IV and so are $\F_0$.
\end{proof}

Proofs for the next two lemmas are similar, so we omit them.

\begin{lemma}[{$\hyperref[P2_23]{\piece{\P^2}23} = \hyperref[D8_13]{\piece{\Dl_8}13} = \hyperref[D6_11]{\piece{\Dl_6}11}$}]
\label{l:pieceD611} \label{l:pieceP223} \label{l:pieceD813}
Let $X_8 \in \Dl_8$, and $q \in X_8$ a general $3$-point.
Then:
\begin{enumerate}
\item
For any rational point $p$ not on the diagonal passing through $q$, the points $p,q$ are in general position.
\item Let $X_4 \to X_8$ be the blow-up of such points $p,q$.
Then $X_4$ is a Del Pezzo surface of degree $4$ and Picard rank $3$, and admits exactly five  extremal rays described as follows \parent{we order them such that the intersection product of two consecutive rays is zero}:
\begin{itemize}
\item The exceptional divisor $E_3$ from $q$;
\item The exceptional divisor $E_1$ from $p$;
\item The transform $D_1$ of the diagonal through $q$;
\item The transform $D_3$ of the three diagonals through $p$ and two of the $q_i$;
\item The transform $R_2$ of the horizontal and vertical rulings through $p$.
\end{itemize}
\end{enumerate}
As a consequence the piece $\hyperref[D8_13]{\piece{\Dl_8}13}$ does not depend on the particular choice of the blown-up points, and coincides with the pieces $\hyperref[P2_23]{\piece{\P^2}23}$ and $\hyperref[D6_11]{\piece{\Dl_6}11}$.
\end{lemma}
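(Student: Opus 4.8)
The plan is to follow verbatim the strategy used for the piece $\piece{\P^2}{1}{5}$ in Lemma~\ref{l:pieceD511}: exhibit the boundary of the piece as an explicit cyclic sequence of Galois orbits of pairwise disjoint $(-1)$-curves on $X_4$, check that consecutive orbits are disjoint, and then read off the vertex surfaces via Proposition~\ref{p:delpezzo}.

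First I would prove the general position claim~(1). Over $\alg\k$ we have $X_8 \cong \P^1 \times \P^1$, so I would use the criterion that the blow-up of $p, q_1, q_2, q_3$ is a Del Pezzo surface if and only if no two of these points lie on a common vertical or horizontal ruling and the four points do not all lie on a single diagonal (a curve of bidegree $(1,1)$); this is the criterion of Lemma~\ref{lem:ExistenceX8general-14}, and the argument parallels the proof of Lemma~\ref{lem:X8-2And3General}. The diagonal condition is precisely the hypothesis that $p$ avoids the unique diagonal through $q$. For the ruling condition, the key point is that, because $X_8$ has Picard rank $1$, the two ruling classes are exchanged by $\Gal(\alg\k/\k)$; hence the two rulings through the rational point $p$ form a single Galois orbit of size $2$. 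If some component $q_i$ lay on a ruling through $p$, then applying the Galois action (which permutes $q_1, q_2, q_3$ transitively) would distribute the three $q_i$ onto these two rulings, forcing two of them onto a common ruling and contradicting the generality of $q$.

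Next I would analyse $X_4$. Its degree is $8 - 1 - 3 = 4$ and its Picard rank is $1 + 1 + 1 = 3$, so $X_4/\pt$ is a rank~$3$ fibration whose piece is a polygon. I would then check that each of the five listed classes is a single Galois orbit of pairwise disjoint $(-1)$-curves by a self-intersection count: $E_3$ and $E_1$ are the exceptional divisors; the diagonal through $q$ passes through the three $q_i$, so its transform has self-intersection $2 - 3 = -1$ and is defined over $\k$; each of the three diagonals through $p$ and two of the $q_i$ has transform of self-intersection $2 - 1 - 1 - 1 = -1$, and the three form one orbit; and each of the two rulings through $p$ has transform of self-intersection $0 - 1 = -1$, the two forming one orbit by the previous paragraph. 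The cyclic adjacency is then a short intersection computation: two diagonals meet in the two points they share and a diagonal meets a ruling in their common point $p$, and in each case these intersection points are exactly the ones blown up, so the transforms become disjoint; the remaining adjacencies follow from disjointness of the exceptional loci together with~(1).

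Having closed up this cyclic sequence of five edges, the piece is a pentagon, and I would identify the vertex surfaces by contracting adjacent (hence disjoint) pairs of orbits and recording degrees: $E_3 \cup E_1$ yields degree $8$, $E_1 \cup D_1$ yields degree $6$, $D_1 \cup D_3$ yields degree $8$, and $D_3 \cup R_2$ and $R_2 \cup E_3$ each yield degree $9$. By Proposition~\ref{p:delpezzo} these are $\Dl_8, \Dl_6, \Dl_8, \P^2, \P^2$ (the degree $6$ and $8$ surfaces lie in $\Dl_6$ and $\Dl_8$ because they are reached from $\P^2$ by a Sarkisov link). Reading off the piece from each of the three vertex types gives the identifications $\piece{\Dl_8}{1}{3} = \piece{\P^2}{2}{3} = \piece{\Dl_6}{1}{1}$, and independence of the choice of points is automatic since the entire combinatorial structure was forced by the configuration of $(-1)$-curves. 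The only genuinely non-formal step is the general position argument of paragraph two; once generality is established, the self-intersection and adjacency computations are routine, exactly as in the heptagon case of Lemma~\ref{l:pieceD511}.
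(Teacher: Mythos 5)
Your proposal is correct and follows exactly the template the paper intends: the paper omits this proof, stating only that it is ``similar'' to that of Lemma~\ref{l:pieceD511}, and your argument (general position via the ruling/diagonal criterion on $\P^1\times\P^1$ with the Galois orbit of the two rulings through $p$, then the self-intersection and adjacency computations closing up the pentagon, then identification of the vertices via Proposition~\ref{p:delpezzo}) is precisely that strategy carried out in detail. The computed cyclic sequence of vertex degrees $8,6,8,9,9$ matches Figure~\ref{P2_23}, so nothing is missing.
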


\begin{lemma}[{$\hyperref[P2_25]{\piece{\P^2}{2}{5}}$}]
\label{l:pieceD815} \label{l:pieceP225} \label{l:pieceD515}
Let $q \in \P^2$ be a $2$-point.
Let $p\in \P^2$ be a $5$-point in general position with $q$, and let
$X_2 \to \P^2$ be the blow-up of $p,q$.
Then $X_2$ is a Del Pezzo surface of degree $2$ and Picard rank $3$, and admits exactly six extremal rays described as follows
\begin{itemize}
\item The exceptional divisor $E_5$ from $p$;
\item The line $L_1$ through $q$;
\item the cubics $C_2$ passing through $p,q$ and singular at one of the $q_i$;
\item the cubics $C_5$ passing through $p,q$ and singular at one of the $p_i$;
\item the conic $O_1$ through $p$;
\item the exceptional divisor $E_2$.
\end{itemize}
Let $\P^2 \link21 X_8$ be the Sarkisov link starting with the blow-up of $q$, let $r \in X_8$ be the rational point image of the line through $q$, and still denote by $p$ the image of the $5$-point on $X_8$. Then we can see the surface $X_2$ as the blow-up of $r$ and $p$ on $X_8$, and in term of transform of curves coming from $X_8$ the above list of extremal rays becomes:
\begin{itemize}
\item The exceptional divisor $E_5$ from $p$;
\item the exceptional divisor $E_1$ from $r$;
\item the curves $C_2$ of bidegree $(1,2)$ and $(2,1)$ through $p$;
\item the curves $C_5$ of bidegree $(2,2)$ through $p$ and $r$, and singular at one of the $p_i$;
\item the curves $O_1$ of bidegree $(2,2)$ through $p$ and $r$, and singular at $r$;
\item the vertical and horizontal rulings through $r$.
\end{itemize}
As a consequence, the piece $\hyperref[P2_25]{\piece{\P^2}25}$ does not depend on the particular choice of the blown-up points, and coincides with the pieces $\hyperref[D8_25]{\piece{\Dl_8}25}$.
\end{lemma}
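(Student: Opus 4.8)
The plan is to mirror, degree by degree, the argument given for the piece $\hyperref[P2_15]{\piece{\P^2}{1}{5}}$ in Lemma~\ref{l:pieceD511}; the only structural change is that the central surface now has degree $2$, so the resulting polygon is a hexagon rather than a heptagon. First I would invoke Lemma~\ref{lem:P2-5And1Or2} to see that the general position hypothesis makes the blow-up $X_2$ of the seven geometric points a del Pezzo surface, of degree $9-7=2$ and arithmetic Picard rank $1+2=3$ (one contribution from the hyperplane class $H$ and one from each of the Galois orbits $p$ and $q$). Thus $X_2/\pt$ is a rank $3$ fibration and its piece is a $2$-dimensional polygon.

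The core of the proof is to show that the six classes in the statement are precisely the extremal rays of the Galois-invariant cone of curves. Unlike the heptagon $\hyperref[P2_15]{\piece{\P^2}{1}{5}}$, here no class is a pencil: each of the six is a Galois orbit of pairwise disjoint $(-1)$-curves. I would verify this by computing self-intersections, writing the classes as $E_5=\{E_{p_k}\}$, $L_1=H-E_{q_1}-E_{q_2}$, $C_2=\{3H-2E_{q_i}-E_{q_j}-\sum_k E_{p_k}\}$, $C_5=\{3H-E_{q_1}-E_{q_2}-2E_{p_i}-\sum_{k\neq i}E_{p_k}\}$, $O_1=2H-\sum_k E_{p_k}$ and $E_2=\{E_{q_1},E_{q_2}\}$, all of square $-1$, with orbit sizes $5,1,2,5,1,2$ in this cyclic order; $L_1$ and $O_1$ are defined over $\k$ and the remaining four are single Galois orbits, so all six are contractible over $\k$. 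A short computation done once per adjacent pair shows that consecutive classes have intersection product $0$, which is what makes the piece a hexagon with these six edges. This verification is the main obstacle: it amounts to checking that the Galois action on the $56$ exceptional curves of a degree $2$ del Pezzo surface organises exactly into these six orbits of pairwise disjoint $(-1)$-curves and no others, precisely the ``routine'' bookkeeping that the authors suppress by declaring the proof similar.

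Once the hexagon is in place I would identify its six vertices by contracting each pair of adjacent (hence disjoint) orbits and recording the degree, which is $2$ plus the number of geometric curves contracted. Going around the hexagon this gives degrees $2+(5+1)=8$, $2+(1+2)=5$, $2+(2+5)=9$, $2+(5+1)=8$, $2+(1+2)=5$, $2+(2+5)=9$. By Proposition~\ref{p:delpezzo} the degree $9$ vertices are $\P^2$ and the degree $8$ vertices lie in $\Dl_8$; the degree $5$ vertices are not covered directly by that proposition, but each is adjacent to a $\P^2$ vertex through an edge realising a link $\P^2\link51\Dl_5$, so it belongs to $\Dl_5$ by the very definition of that class, exactly as in the template. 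The six edges are then the links $\P^2\link21\Dl_8$, $\Dl_8\link52\Dl_5$ and $\Dl_5\link15\P^2$, each appearing twice, so the hexagon carries the central symmetry responsible for the Geiser involution invoked in Lemmas~\ref{lem:linkD5} and~\ref{lem:loop}.

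Finally, since this degree computation forces the combinatorial type of the hexagon regardless of the chosen points, the piece is independent of all choices; and because $X_2$ is literally the same surface whether presented as the blow-up of $\P^2$ at a $2$-point and a $5$-point or, through the edge $\P^2\link21\Dl_8$, as the blow-up of $X_8$ at a rational point $r$ and the $5$-point $p$, the two descriptions yield the same hexagon. Translating the six classes above into the curve classes on $X_8$ listed in the second half of the statement makes this identification explicit and completes the proof.
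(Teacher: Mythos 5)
Your proposal is correct and follows exactly the template that the paper itself invokes: the paper omits this proof as ``similar'' to that of Lemma~\ref{l:pieceD511}, and your argument reproduces that scheme (verify the six classes are Galois orbits of pairwise disjoint $(-1)$-curves with consecutive orbits orthogonal, read off the vertex degrees $8,5,9,8,5,9$, and identify the vertices via Proposition~\ref{p:delpezzo} together with the definition of $\Dl_5$). The only point you flag as an obstacle --- ruling out further extremal rays among the $56$ exceptional classes --- is in fact handled for free once the six orbits close up into a cycle, since the piece is known to be a polygon by \cite[Proposition 2.6]{LamyZimmermann}.
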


\begin{remark}
Over the field $\k = \R$, a similar list of $2$-pieces was used by Zimmermann in \cite{ZimmermannAmalgam}, where she calls ``disc of type 1 to 6'' our pieces involving only rational points or $2$-points. The correspondence is as follows:
\begin{itemize}
\item Disc of type 1: $\hyperref[P2_12]{\piece{\P^2}12}$;
\item Disc of type 2: square relation between conic bundles in $\Cl_6$;
\item Disc of type 3: $\hyperref[P2_22]{\piece{\P^2}22}$;
\item Disc of type 4: $\hyperref[D8_22]{\piece{\Dl_8}22}$;
\item Disc of type 5: $\hyperref[P2_11]{\piece{\P^2}11}$;
\item Disc of type 6: square relation between Hirzebruch surfaces.
\end{itemize}
\end{remark}

\clearpage
\newgeometry{bottom=1cm}
\renewcommand\thefigure{\thesection.\arabic{figure}}
\setcounter{figure}{0}

\begin{figure}[ht]
\begin{minipage}[c]{0.49\linewidth}
\[
\begin{tikzpicture}[font=\footnotesize]
\node[name=s, regular polygon, rotate=180, regular polygon sides=5,inner sep=1.242cm] at (0,0) {};
\draw[-,ultra thick,RoyalBlue] (s.center) to["1"] (s.side 1);
\draw[-,ultra thick,black] (s.center) to[""] (s.side 2);
\draw[-,ultra thick,RoyalBlue] (s.center) to["1"] (s.side 3);
\draw[-,ultra thick,black] (s.center) to[""] (s.side 4);
\draw[-,ultra thick,RoyalBlue] (s.center) to["1"] (s.side 5);
\draw[-,ultra thick,black] (s.side 1) to[swap,""] (s.corner 2);
\draw[-,ultra thick,RoyalBlue] (s.side 1) to["1"] (s.corner 1);
\draw[-,ultra thick,RoyalBlue] (s.side 2) to[swap,"1"] (s.corner 3);
\draw[-,ultra thick,RoyalBlue] (s.side 2) to["1"] (s.corner 2);
\draw[-,ultra thick,black] (s.side 3) to[swap,""] (s.corner 4);
\draw[-,ultra thick,black] (s.side 3) to[""] (s.corner 3);
\draw[-,ultra thick,RoyalBlue] (s.side 4) to[swap,"1"] (s.corner 5);
\draw[-,ultra thick,RoyalBlue] (s.side 4) to["1"] (s.corner 4);
\draw[-,ultra thick,RoyalBlue] (s.side 5) to[swap,"1"] (s.corner 1);
\draw[-,ultra thick,black] (s.side 5) to[""] (s.corner 5);
\node[fill=white,circle,inner sep=2] at (s.center) {${X_7}$};
\node[fill=white,inner sep=2] at (s.corner 1) {${\p^2}$};
\node[fill=white,inner sep=2] at (s.side 1) {${\F_1}$};
\node[fill=white,inner sep=2] at (s.corner 2) {${\F_1}_{\!/\p^1}$};
\node[fill=white,inner sep=2] at (s.side 2) {${X_7}_{/\p^1}$};
\node[fill=white,inner sep=2] at (s.corner 3) {${\F_0}_{\!/\p^1}$};
\node[fill=white,inner sep=2] at (s.side 3) {${\F_0}$};
\node[fill=white,inner sep=2] at (s.corner 4) {${\F_0}_{\!/\p^1}$};
\node[fill=white,inner sep=2] at (s.side 4) {${X_7}_{/\p^1}$};
\node[fill=white,inner sep=2] at (s.corner 5) {${\F_1}_{\!/\p^1}$};
\node[fill=white,inner sep=2] at (s.side 5) {${\F_1}$};
\end{tikzpicture}
\]
\caption{$\piece{\P^2}11$, $\pieceF1$}
\label{P2_11}
\end{minipage}
\begin{minipage}[c]{0.49\linewidth}
\[
\begin{tikzpicture}[font=\footnotesize]
\node[name=s, regular polygon, rotate=180, regular polygon sides=5,inner sep=1.242cm] at (0,0) {};
\draw[-,ultra thick,FireBrick] (s.center) to["2"] (s.side 1);
\draw[-,ultra thick,black] (s.center) to[""] (s.side 2);
\draw[-,ultra thick,FireBrick] (s.center) to["2"] (s.side 3);
\draw[-,ultra thick,RoyalBlue] (s.center) to["1"] (s.side 4);
\draw[-,ultra thick,RoyalBlue] (s.center) to["1"] (s.side 5);
\draw[-,ultra thick,black] (s.side 1) to[swap,""] (s.corner 2);
\draw[-,ultra thick,RoyalBlue] (s.side 1) to["1"] (s.corner 1);
\draw[-,ultra thick,FireBrick] (s.side 2) to[swap,"2"] (s.corner 3);
\draw[-,ultra thick,FireBrick] (s.side 2) to["2"] (s.corner 2);
\draw[-,ultra thick,RoyalBlue] (s.side 3) to[swap,"1"] (s.corner 4);
\draw[-,ultra thick,black] (s.side 3) to[""] (s.corner 3);
\draw[-,ultra thick,RoyalBlue] (s.side 4) to[swap,"1"] (s.corner 5);
\draw[-,ultra thick,FireBrick] (s.side 4) to["2"] (s.corner 4);
\draw[-,ultra thick,FireBrick] (s.side 5) to[swap,"2"] (s.corner 1);
\draw[-,ultra thick,RoyalBlue] (s.side 5) to["1"] (s.corner 5);
\node[fill=white,circle,inner sep=2] at (s.center) {${X_6}$};
\node[fill=white,inner sep=2] at (s.corner 1) {${\p^2}$};
\node[fill=white,inner sep=2] at (s.side 1) {${\F_1}$};
\node[fill=white,inner sep=2] at (s.corner 2) {${\F_1}_{\!/\p^1}$};
\node[fill=white,inner sep=2] at (s.side 2) {${X_6}_{/\p^1}$};
\node[fill=white,inner sep=2] at (s.corner 3) {${\F_1}_{\!/\p^1}$};
\node[fill=white,inner sep=2] at (s.side 3) {${\F_1}$};
\node[fill=white,inner sep=2] at (s.corner 4) {${\p^2}$};
\node[fill=white,inner sep=2] at (s.side 4) {${X_7}$};
\node[fill=white,inner sep=2] at (s.corner 5) {${X_8}$};
\node[fill=white,inner sep=2] at (s.side 5) {${X_7'}$};
\end{tikzpicture}
\]
\caption{$\piece{\P^2}12$, $\piece{\Dl_8}11$}
\label{P2_12}
\end{minipage}
\end{figure}

\begin{figure}[ht]
\begin{minipage}[b]{0.49\linewidth}
\[
\begin{tikzpicture}[font=\footnotesize]
\node[name=s, regular polygon, rotate=180, regular polygon sides=6,inner sep=1.557cm] at (0,0) {};
\draw[-,ultra thick,ForestGreen] (s.center) to["3"] (s.side 1);
\draw[-,ultra thick,black] (s.center) to[""] (s.side 2);
\draw[-,ultra thick,ForestGreen] (s.center) to["3"] (s.side 3);
\draw[-,ultra thick,black] (s.center) to[""] (s.side 4);
\draw[-,ultra thick,ForestGreen] (s.center) to["3"] (s.side 5);
\draw[-,ultra thick,RoyalBlue] (s.center) to["1"] (s.side 6);
\draw[-,ultra thick,black] (s.side 1) to[swap,""] (s.corner 2);
\draw[-,ultra thick,RoyalBlue] (s.side 1) to["1"] (s.corner 1);
\draw[-,ultra thick,ForestGreen] (s.side 2) to[swap,"3"] (s.corner 3);
\draw[-,ultra thick,ForestGreen] (s.side 2) to["3"] (s.corner 2);
\draw[-,ultra thick,black] (s.side 3) to[swap,""] (s.corner 4);
\draw[-,ultra thick,black] (s.side 3) to[""] (s.corner 3);
\draw[-,ultra thick,ForestGreen] (s.side 4) to[swap,"3"] (s.corner 5);
\draw[-,ultra thick,ForestGreen] (s.side 4) to["3"] (s.corner 4);
\draw[-,ultra thick,RoyalBlue] (s.side 5) to[swap,"1"] (s.corner 6);
\draw[-,ultra thick,black] (s.side 5) to[""] (s.corner 5);
\draw[-,ultra thick,ForestGreen] (s.side 6) to[swap,"3"] (s.corner 1);
\draw[-,ultra thick,ForestGreen] (s.side 6) to["3"] (s.corner 6);
\node[fill=white,circle,inner sep=2] at (s.center) {${X_5}$};
\node[fill=white,inner sep=2] at (s.corner 1) {${\p^2}$};
\node[fill=white,inner sep=2] at (s.side 1) {${\F_1}$};
\node[fill=white,inner sep=2] at (s.corner 2) {${\F_1}_{\!/\p^1}$};
\node[fill=white,inner sep=2] at (s.side 2) {${X_5}_{/\p^1}$};
\node[fill=white,inner sep=2] at (s.corner 3) {${\F_0}_{\!/\p^1}$};
\node[fill=white,inner sep=2] at (s.side 3) {${\F_0}$};
\node[fill=white,inner sep=2] at (s.corner 4) {${\F_0}_{\!/\p^1}$};
\node[fill=white,inner sep=2] at (s.side 4) {${X_5}_{/\p^1}$};
\node[fill=white,inner sep=2] at (s.corner 5) {${\F_1}_{\!/\p^1}$};
\node[fill=white,inner sep=2] at (s.side 5) {${\F_1}$};
\node[fill=white,inner sep=2] at (s.corner 6) {${\p^2}$};
\node[fill=white,inner sep=2] at (s.side 6) {${X_6}$};
\end{tikzpicture}
\]
\caption{$\piece{\P^2}13$, $\pieceF3$}
\label{P2_13}
\end{minipage}
\begin{minipage}[b]{0.49\linewidth}
\[
\begin{tikzpicture}[font=\footnotesize]
\node[name=s, regular polygon, rotate=180, regular polygon sides=6,inner sep=1.557cm] at (0,0) {};
\draw[-,ultra thick,Orange] (s.center) to["4"] (s.side 1);
\draw[-,ultra thick,black] (s.center) to[""] (s.side 2);
\draw[-,ultra thick,Orange] (s.center) to["4"] (s.side 3);
\draw[-,ultra thick,RoyalBlue] (s.center) to["1"] (s.side 4);
\draw[-,ultra thick,black] (s.center) to[""] (s.side 5);
\draw[-,ultra thick,RoyalBlue] (s.center) to["1"] (s.side 6);
\draw[-,ultra thick,black] (s.side 1) to[swap,""] (s.corner 2);
\draw[-,ultra thick,RoyalBlue] (s.side 1) to["1"] (s.corner 1);
\draw[-,ultra thick,Orange] (s.side 2) to[swap,"4"] (s.corner 3);
\draw[-,ultra thick,Orange] (s.side 2) to["4"] (s.corner 2);
\draw[-,ultra thick,RoyalBlue] (s.side 3) to[swap,"1"] (s.corner 4);
\draw[-,ultra thick,black] (s.side 3) to[""] (s.corner 3);
\draw[-,ultra thick,black] (s.side 4) to[swap,""] (s.corner 5);
\draw[-,ultra thick,Orange] (s.side 4) to["4"] (s.corner 4);
\draw[-,ultra thick,RoyalBlue] (s.side 5) to[swap,"1"] (s.corner 6);
\draw[-,ultra thick,RoyalBlue] (s.side 5) to["1"] (s.corner 5);
\draw[-,ultra thick,Orange] (s.side 6) to[swap,"4"] (s.corner 1);
\draw[-,ultra thick,black] (s.side 6) to[""] (s.corner 6);
\node[fill=white,circle,inner sep=2] at (s.center) {${X_4}$};
\node[fill=white,inner sep=2] at (s.corner 1) {${\p^2}$};
\node[fill=white,inner sep=2] at (s.side 1) {${\F_1}$};
\node[fill=white,inner sep=2] at (s.corner 2) {${\F_1}_{\!/\p^1}$};
\node[fill=white,inner sep=2] at (s.side 2) {${X_4}_{/\p^1}$};
\node[fill=white,inner sep=2] at (s.corner 3) {${\F_1}_{\!/\p^1}$};
\node[fill=white,inner sep=2] at (s.side 3) {${\F_1}$};
\node[fill=white,inner sep=2] at (s.corner 4) {${\p^2}$};
\node[fill=white,inner sep=2] at (s.side 4) {${X_5}$};
\node[fill=white,inner sep=2] at (s.corner 5) {${X_5}_{\!/\p^1}$};
\node[fill=white,inner sep=2] at (s.side 5) {${X_4}_{/\p^1}$};
\node[fill=white,inner sep=2] at (s.corner 6) {${X_5}_{\!/\p^1}$};
\node[fill=white,inner sep=2] at (s.side 6) {${X_5}$};
\end{tikzpicture}
\]
\caption{$\piece{\P^2}14$}
\label{P2_14}
\end{minipage}
\end{figure}

\begin{figure}[ht]
\begin{minipage}[c]{0.50\linewidth}
\mbox{}\\[-\baselineskip]
\[
\begin{tikzpicture}[font=\footnotesize]
\node[name=s, regular polygon, rotate=180, regular polygon sides=7,inner sep=1.872cm] at (0,0) {};
\draw[-,ultra thick,HotPink] (s.center) to["5"] (s.side 1);
\draw[-,ultra thick,black] (s.center) to[""] (s.side 2);
\draw[-,ultra thick,HotPink] (s.center) to["5"] (s.side 3);
\draw[-,ultra thick,black] (s.center) to[""] (s.side 4);
\draw[-,ultra thick,HotPink] (s.center) to["5"] (s.side 5);
\draw[-,ultra thick,RoyalBlue] (s.center) to["1"] (s.side 6);
\draw[-,ultra thick,RoyalBlue] (s.center) to["1"] (s.side 7);
\draw[-,ultra thick,black] (s.side 1) to[swap,""] (s.corner 2);
\draw[-,ultra thick,RoyalBlue] (s.side 1) to["1"] (s.corner 1);
\draw[-,ultra thick,HotPink] (s.side 2) to[swap,"5"] (s.corner 3);
\draw[-,ultra thick,HotPink] (s.side 2) to["5"] (s.corner 2);
\draw[-,ultra thick,black] (s.side 3) to[swap,""] (s.corner 4);
\draw[-,ultra thick,black] (s.side 3) to[""] (s.corner 3);
\draw[-,ultra thick,HotPink] (s.side 4) to[swap,"5"] (s.corner 5);
\draw[-,ultra thick,HotPink] (s.side 4) to["5"] (s.corner 4);
\draw[-,ultra thick,RoyalBlue] (s.side 5) to[swap,"1"] (s.corner 6);
\draw[-,ultra thick,black] (s.side 5) to[""] (s.corner 5);
\draw[-,ultra thick,RoyalBlue] (s.side 6) to[swap,"1"] (s.corner 7);
\draw[-,ultra thick,HotPink] (s.side 6) to["5"] (s.corner 6);
\draw[-,ultra thick,HotPink] (s.side 7) to[swap,"5"] (s.corner 1);
\draw[-,ultra thick,RoyalBlue] (s.side 7) to["1"] (s.corner 7);
\node[fill=white,circle,inner sep=2] at (s.center) {${X_3}$};
\node[fill=white,inner sep=2] at (s.corner 1) {${\p^2}$};
\node[fill=white,inner sep=2] at (s.side 1) {${\F_1}$};
\node[fill=white,inner sep=2] at (s.corner 2) {${\F_1}_{\!/\p^1}$};
\node[fill=white,inner sep=2] at (s.side 2) {${X_3}_{/\p^1}$};
\node[fill=white,inner sep=2] at (s.corner 3) {${\F_0}_{\!/\p^1}$};
\node[fill=white,inner sep=2] at (s.side 3) {${\F_0}$};
\node[fill=white,inner sep=2] at (s.corner 4) {${\F_0}_{\!/\p^1}$};
\node[fill=white,inner sep=2] at (s.side 4) {${X_3}_{/\p^1}$};
\node[fill=white,inner sep=2] at (s.corner 5) {${\F_1}_{\!/\p^1}$};
\node[fill=white,inner sep=2] at (s.side 5) {${\F_1}$};
\node[fill=white,inner sep=2] at (s.corner 6) {${\p^2}$};
\node[fill=white,inner sep=2] at (s.side 6) {${X_4}$};
\node[fill=white,inner sep=2] at (s.corner 7) {${X_5}$};
\node[fill=white,inner sep=2] at (s.side 7) {${X_4'}$};
\end{tikzpicture}
\]
\end{minipage}
\begin{minipage}[c]{0.49\linewidth}
\mbox{}\\[-\baselineskip]
\caption{$\piece{\p^2}15$, $\piece{\Dl_5}11$, $\pieceF5$}
\label{P2_15} \label{D5_11}
\end{minipage}
\end{figure}

\begin{figure}
\begin{minipage}[t]{0.49\linewidth}
\[
\begin{tikzpicture}[font=\footnotesize]
\node[name=s, regular polygon, rotate=180, regular polygon sides=8,inner sep=2.169cm] at (0,0) {};
\draw[-,ultra thick,Turquoise] (s.center) to["6"] (s.side 1);
\draw[-,ultra thick,black] (s.center) to[""] (s.side 2);
\draw[-,ultra thick,Turquoise] (s.center) to["6"] (s.side 3);
\draw[-,ultra thick,RoyalBlue] (s.center) to["1"] (s.side 4);
\draw[-,ultra thick,Turquoise] (s.center) to["6"] (s.side 5);
\draw[-,ultra thick,black] (s.center) to[""] (s.side 6);
\draw[-,ultra thick,Turquoise] (s.center) to["6"] (s.side 7);
\draw[-,ultra thick,RoyalBlue] (s.center) to["1"] (s.side 8);
\draw[-,ultra thick,black] (s.side 1) to[swap,""] (s.corner 2);
\draw[-,ultra thick,RoyalBlue] (s.side 1) to["1"] (s.corner 1);
\draw[-,ultra thick,Turquoise] (s.side 2) to[swap,"6"] (s.corner 3);
\draw[-,ultra thick,Turquoise] (s.side 2) to["6"] (s.corner 2);
\draw[-,ultra thick,RoyalBlue] (s.side 3) to[swap,"1"] (s.corner 4);
\draw[-,ultra thick,black] (s.side 3) to[""] (s.corner 3);
\draw[-,ultra thick,Turquoise] (s.side 4) to[swap,"6"] (s.corner 5);
\draw[-,ultra thick,Turquoise] (s.side 4) to["6"] (s.corner 4);
\draw[-,ultra thick,black] (s.side 5) to[swap,""] (s.corner 6);
\draw[-,ultra thick,RoyalBlue] (s.side 5) to["1"] (s.corner 5);
\draw[-,ultra thick,Turquoise] (s.side 6) to[swap,"6"] (s.corner 7);
\draw[-,ultra thick,Turquoise] (s.side 6) to["6"] (s.corner 6);
\draw[-,ultra thick,RoyalBlue] (s.side 7) to[swap,"1"] (s.corner 8);
\draw[-,ultra thick,black] (s.side 7) to[""] (s.corner 7);
\draw[-,ultra thick,Turquoise] (s.side 8) to[swap,"6"] (s.corner 1);
\draw[-,ultra thick,Turquoise] (s.side 8) to["6"] (s.corner 8);
\node[fill=white,circle,inner sep=2] at (s.center) {${X_2}$};
\node[fill=white,inner sep=2] at (s.corner 1) {${\p^2}$};
\node[fill=white,inner sep=2] at (s.side 1) {${\F_1}$};
\node[fill=white,inner sep=2] at (s.corner 2) {${\F_1}_{\!/\p^1}$};
\node[fill=white,inner sep=2] at (s.side 2) {${X_2}_{/\p^1}$};
\node[fill=white,inner sep=2] at (s.corner 3) {${\F_1}_{\!/\p^1}$};
\node[fill=white,inner sep=2] at (s.side 3) {${\F_1}$};
\node[fill=white,inner sep=2] at (s.corner 4) {${\p^2}$};
\node[fill=white,inner sep=2] at (s.side 4) {${X_3}$};
\node[fill=white,inner sep=2] at (s.corner 5) {${\p^2}$};
\node[fill=white,inner sep=2] at (s.side 5) {${\F_1}$};
\node[fill=white,inner sep=2] at (s.corner 6) {${\F_1}_{\!/\p^1}$};
\node[fill=white,inner sep=2] at (s.side 6) {${X_2}_{/\p^1}$};
\node[fill=white,inner sep=2] at (s.corner 7) {${\F_1}_{\!/\p^1}$};
\node[fill=white,inner sep=2] at (s.side 7) {${\F_1}$};
\node[fill=white,inner sep=2] at (s.corner 8) {${\p^2}$};
\node[fill=white,inner sep=2] at (s.side 8) {${X_3}$};
\end{tikzpicture}
\]
\caption{$\piece{\p^2}16$}
\label{P2_16}
\end{minipage}
\begin{minipage}[t]{0.49\linewidth}
\[
\begin{tikzpicture}[font=\footnotesize]
\node[name=s, regular polygon, rotate=180, regular polygon sides=8,inner sep=2.169cm] at (0,0) {};
\draw[-,ultra thick,Turquoise] (s.center) to["6"] (s.side 1);
\draw[-,ultra thick,RoyalBlue] (s.center) to["1"] (s.side 2);
\draw[-,ultra thick,Turquoise] (s.center) to["6"] (s.side 3);
\draw[-,ultra thick,FireBrick] (s.center) to["2"] (s.side 4);
\draw[-,ultra thick,Turquoise] (s.center) to["6"] (s.side 5);
\draw[-,ultra thick,RoyalBlue] (s.center) to["1"] (s.side 6);
\draw[-,ultra thick,Turquoise] (s.center) to["6"] (s.side 7);
\draw[-,ultra thick,FireBrick] (s.center) to["2"] (s.side 8);
\draw[-,ultra thick,RoyalBlue] (s.side 1) to[swap,"1"] (s.corner 2);
\draw[-,ultra thick,FireBrick] (s.side 1) to["2"] (s.corner 1);
\draw[-,ultra thick,Turquoise] (s.side 2) to[swap,"6"] (s.corner 3);
\draw[-,ultra thick,Turquoise] (s.side 2) to["6"] (s.corner 2);
\draw[-,ultra thick,FireBrick] (s.side 3) to[swap,"2"] (s.corner 4);
\draw[-,ultra thick,RoyalBlue] (s.side 3) to["1"] (s.corner 3);
\draw[-,ultra thick,Turquoise] (s.side 4) to[swap,"6"] (s.corner 5);
\draw[-,ultra thick,Turquoise] (s.side 4) to["6"] (s.corner 4);
\draw[-,ultra thick,RoyalBlue] (s.side 5) to[swap,"1"] (s.corner 6);
\draw[-,ultra thick,FireBrick] (s.side 5) to["2"] (s.corner 5);
\draw[-,ultra thick,Turquoise] (s.side 6) to[swap,"6"] (s.corner 7);
\draw[-,ultra thick,Turquoise] (s.side 6) to["6"] (s.corner 6);
\draw[-,ultra thick,FireBrick] (s.side 7) to[swap,"2"] (s.corner 8);
\draw[-,ultra thick,RoyalBlue] (s.side 7) to["1"] (s.corner 7);
\draw[-,ultra thick,Turquoise] (s.side 8) to[swap,"6"] (s.corner 1);
\draw[-,ultra thick,Turquoise] (s.side 8) to["6"] (s.corner 8);
\node[fill=white,circle,inner sep=2] at (s.center) {${X_1}$};
\node[fill=white,inner sep=2] at (s.corner 1) {${\p^2}$};
\node[fill=white,inner sep=2] at (s.side 1) {${X_7}$};
\node[fill=white,inner sep=2] at (s.corner 2) {${X_8}$};
\node[fill=white,inner sep=2] at (s.side 2) {${X_2}$};
\node[fill=white,inner sep=2] at (s.corner 3) {${X_8}$};
\node[fill=white,inner sep=2] at (s.side 3) {${X_7}$};
\node[fill=white,inner sep=2] at (s.corner 4) {${\p^2}$};
\node[fill=white,inner sep=2] at (s.side 4) {${X_3}$};
\node[fill=white,inner sep=2] at (s.corner 5) {${\p^2}$};
\node[fill=white,inner sep=2] at (s.side 5) {${X_7}$};
\node[fill=white,inner sep=2] at (s.corner 6) {${X_8}$};
\node[fill=white,inner sep=2] at (s.side 6) {${X_2}$};
\node[fill=white,inner sep=2] at (s.corner 7) {${X_8}$};
\node[fill=white,inner sep=2] at (s.side 7) {${X_7}$};
\node[fill=white,inner sep=2] at (s.corner 8) {${\p^2}$};
\node[fill=white,inner sep=2] at (s.side 8) {${X_3}$};
\end{tikzpicture}
\]
\caption{$\piece{\P^2}26$, $\piece{\Dl_8}16$}
\label{P2_26} \label{D8_16}
\end{minipage}
\end{figure}

\begin{figure}
\[
\begin{tikzpicture}[font=\footnotesize]
\node[name=s, regular polygon, rotate=180, regular polygon sides=12,inner sep=3.357cm] at (0,0) {};
\draw[-,ultra thick,Salmon] (s.center) to["7"] (s.side 1);
\draw[-,ultra thick,black] (s.center) to[""] (s.side 2);
\draw[-,ultra thick,Salmon] (s.center) to["7"] (s.side 3);
\draw[-,ultra thick,black] (s.center) to[""] (s.side 4);
\draw[-,ultra thick,Salmon] (s.center) to["7"] (s.side 5);
\draw[-,ultra thick,RoyalBlue] (s.center) to["1"] (s.side 6);
\draw[-,ultra thick,Salmon] (s.center) to["7"] (s.side 7);
\draw[-,ultra thick,black] (s.center) to[""] (s.side 8);
\draw[-,ultra thick,Salmon] (s.center) to["7"] (s.side 9);
\draw[-,ultra thick,black] (s.center) to[""] (s.side 10);
\draw[-,ultra thick,Salmon] (s.center) to["7"] (s.side 11);
\draw[-,ultra thick,RoyalBlue] (s.center) to["1"] (s.side 12);
\draw[-,ultra thick,black] (s.side 1) to[swap,""] (s.corner 2);
\draw[-,ultra thick,RoyalBlue] (s.side 1) to["1"] (s.corner 1);
\draw[-,ultra thick,Salmon] (s.side 2) to[swap,"7"] (s.corner 3);
\draw[-,ultra thick,Salmon] (s.side 2) to["7"] (s.corner 2);
\draw[-,ultra thick,black] (s.side 3) to[swap,""] (s.corner 4);
\draw[-,ultra thick,black] (s.side 3) to[""] (s.corner 3);
\draw[-,ultra thick,Salmon] (s.side 4) to[swap,"7"] (s.corner 5);
\draw[-,ultra thick,Salmon] (s.side 4) to["7"] (s.corner 4);
\draw[-,ultra thick,RoyalBlue] (s.side 5) to[swap,"1"] (s.corner 6);
\draw[-,ultra thick,black] (s.side 5) to[""] (s.corner 5);
\draw[-,ultra thick,Salmon] (s.side 6) to[swap,"7"] (s.corner 7);
\draw[-,ultra thick,Salmon] (s.side 6) to["7"] (s.corner 6);
\draw[-,ultra thick,black] (s.side 7) to[swap,""] (s.corner 8);
\draw[-,ultra thick,RoyalBlue] (s.side 7) to["1"] (s.corner 7);
\draw[-,ultra thick,Salmon] (s.side 8) to[swap,"7"] (s.corner 9);
\draw[-,ultra thick,Salmon] (s.side 8) to["7"] (s.corner 8);
\draw[-,ultra thick,black] (s.side 9) to[swap,""] (s.corner 10);
\draw[-,ultra thick,black] (s.side 9) to[""] (s.corner 9);
\draw[-,ultra thick,Salmon] (s.side 10) to[swap,"7"] (s.corner 11);
\draw[-,ultra thick,Salmon] (s.side 10) to["7"] (s.corner 10);
\draw[-,ultra thick,RoyalBlue] (s.side 11) to[swap,"1"] (s.corner 12);
\draw[-,ultra thick,black] (s.side 11) to[""] (s.corner 11);
\draw[-,ultra thick,Salmon] (s.side 12) to[swap,"7"] (s.corner 1);
\draw[-,ultra thick,Salmon] (s.side 12) to["7"] (s.corner 12);
\node[fill=white,circle,inner sep=2] at (s.center) {${X_1}$};
\node[fill=white,inner sep=2] at (s.corner 1) {${\p^2}$};
\node[fill=white,inner sep=2] at (s.side 1) {${\F_1}$};
\node[fill=white,inner sep=2] at (s.corner 2) {${\F_1}_{\!/\p^1}$};
\node[fill=white,inner sep=2] at (s.side 2) {${X_1}_{/\p^1}$};
\node[fill=white,inner sep=2] at (s.corner 3) {${\F_0}_{\!/\p^1}$};
\node[fill=white,inner sep=2] at (s.side 3) {${\F_0}$};
\node[fill=white,inner sep=2] at (s.corner 4) {${\F_0}_{\!/\p^1}$};
\node[fill=white,inner sep=2] at (s.side 4) {${X_1}_{/\p^1}$};
\node[fill=white,inner sep=2] at (s.corner 5) {${\F_1}_{\!/\p^1}$};
\node[fill=white,inner sep=2] at (s.side 5) {${\F_1}$};
\node[fill=white,inner sep=2] at (s.corner 6) {${\p^2}$};
\node[fill=white,inner sep=2] at (s.side 6) {${X_2}$};
\node[fill=white,inner sep=2] at (s.corner 7) {${\p^2}$};
\node[fill=white,inner sep=2] at (s.side 7) {${\F_1}$};
\node[fill=white,inner sep=2] at (s.corner 8) {${\F_1}_{\!/\p^1}$};
\node[fill=white,inner sep=2] at (s.side 8) {${X_1}_{/\p^1}$};
\node[fill=white,inner sep=2] at (s.corner 9) {${\F_0}_{\!/\p^1}$};
\node[fill=white,inner sep=2] at (s.side 9) {${\F_0}$};
\node[fill=white,inner sep=2] at (s.corner 10) {${\F_0}_{\!/\p^1}$};
\node[fill=white,inner sep=2] at (s.side 10) {${X_1}_{/\p^1}$};
\node[fill=white,inner sep=2] at (s.corner 11) {${\F_1}_{\!/\p^1}$};
\node[fill=white,inner sep=2] at (s.side 11) {${\F_1}$};
\node[fill=white,inner sep=2] at (s.corner 12) {${\p^2}$};
\node[fill=white,inner sep=2] at (s.side 12) {${X_2}$};
\end{tikzpicture}
\]
\caption{$\piece{\p^2}17$, $\pieceF7$}
\label{P2_17}
\end{figure}

\begin{figure}[ht]
\begin{minipage}[b]{0.49\linewidth}
\[
\begin{tikzpicture}[font=\footnotesize]
\node[name=s, regular polygon, rotate=180, regular polygon sides=5,inner sep=1.242cm] at (0,0) {};
\draw[-,ultra thick,FireBrick] (s.center) to["2"] (s.side 1);
\draw[-,ultra thick,RoyalBlue] (s.center) to["1"] (s.side 2);
\draw[-,ultra thick,black] (s.center) to[""] (s.side 3);
\draw[-,ultra thick,RoyalBlue] (s.center) to["1"] (s.side 4);
\draw[-,ultra thick,FireBrick] (s.center) to["2"] (s.side 5);
\draw[-,ultra thick,RoyalBlue] (s.side 1) to[swap,"1"] (s.corner 2);
\draw[-,ultra thick,FireBrick] (s.side 1) to["2"] (s.corner 1);
\draw[-,ultra thick,black] (s.side 2) to[swap,""] (s.corner 3);
\draw[-,ultra thick,FireBrick] (s.side 2) to["2"] (s.corner 2);
\draw[-,ultra thick,RoyalBlue] (s.side 3) to[swap,"1"] (s.corner 4);
\draw[-,ultra thick,RoyalBlue] (s.side 3) to["1"] (s.corner 3);
\draw[-,ultra thick,FireBrick] (s.side 4) to[swap,"2"] (s.corner 5);
\draw[-,ultra thick,black] (s.side 4) to[""] (s.corner 4);
\draw[-,ultra thick,FireBrick] (s.side 5) to[swap,"2"] (s.corner 1);
\draw[-,ultra thick,RoyalBlue] (s.side 5) to["1"] (s.corner 5);
\node[fill=white,circle,inner sep=2] at (s.center) {${X_5}$};
\node[fill=white,inner sep=2] at (s.corner 1) {${\p^2}$};
\node[fill=white,inner sep=2] at (s.side 1) {${X_7}$};
\node[fill=white,inner sep=2] at (s.corner 2) {${X_8}$};
\node[fill=white,inner sep=2] at (s.side 2) {${X_6}$};
\node[fill=white,inner sep=2] at (s.corner 3) {${X_6}_{\!/\p^1}$};
\node[fill=white,inner sep=2] at (s.side 3) {${X_5}_{/\p^1}$};
\node[fill=white,inner sep=2] at (s.corner 4) {${X_6'}_{\!/\p^1}$};
\node[fill=white,inner sep=2] at (s.side 4) {${X_6'}$};
\node[fill=white,inner sep=2] at (s.corner 5) {${X_8'}$};
\node[fill=white,inner sep=2] at (s.side 5) {${X_7'}$};
\end{tikzpicture}
\]
\caption{$\piece{\P^2}22$, $\piece{\Dl_8}12$}
\label{P2_22}
\end{minipage}
\begin{minipage}[b]{0.49\linewidth}
\[
\begin{tikzpicture}[font=\footnotesize]
\node[name=s, regular polygon, rotate=180, regular polygon sides=5,inner sep=1.242cm] at (0,0) {};
\draw[-,ultra thick,ForestGreen] (s.center) to["3"] (s.side 1);
\draw[-,ultra thick,RoyalBlue] (s.center) to["1"] (s.side 2);
\draw[-,ultra thick,RoyalBlue] (s.center) to["1"] (s.side 3);
\draw[-,ultra thick,ForestGreen] (s.center) to["3"] (s.side 4);
\draw[-,ultra thick,FireBrick] (s.center) to["2"] (s.side 5);
\draw[-,ultra thick,RoyalBlue] (s.side 1) to[swap,"1"] (s.corner 2);
\draw[-,ultra thick,FireBrick] (s.side 1) to["2"] (s.corner 1);
\draw[-,ultra thick,RoyalBlue] (s.side 2) to[swap,"1"] (s.corner 3);
\draw[-,ultra thick,ForestGreen] (s.side 2) to["3"] (s.corner 2);
\draw[-,ultra thick,ForestGreen] (s.side 3) to[swap,"3"] (s.corner 4);
\draw[-,ultra thick,RoyalBlue] (s.side 3) to["1"] (s.corner 3);
\draw[-,ultra thick,FireBrick] (s.side 4) to[swap,"2"] (s.corner 5);
\draw[-,ultra thick,RoyalBlue] (s.side 4) to["1"] (s.corner 4);
\draw[-,ultra thick,ForestGreen] (s.side 5) to[swap,"3"] (s.corner 1);
\draw[-,ultra thick,ForestGreen] (s.side 5) to["3"] (s.corner 5);
\node[fill=white,circle,inner sep=2] at (s.center) {${X_4}$};
\node[fill=white,inner sep=2] at (s.corner 1) {${\p^2}$};
\node[fill=white,inner sep=2] at (s.side 1) {${X_7}$};
\node[fill=white,inner sep=2] at (s.corner 2) {${X_8}$};
\node[fill=white,inner sep=2] at (s.side 2) {${X_5}$};
\node[fill=white,inner sep=2] at (s.corner 3) {${X_6}$};
\node[fill=white,inner sep=2] at (s.side 3) {${X_5}$};
\node[fill=white,inner sep=2] at (s.corner 4) {${X_8}$};
\node[fill=white,inner sep=2] at (s.side 4) {${X_7}$};
\node[fill=white,inner sep=2] at (s.corner 5) {${\p^2}$};
\node[fill=white,inner sep=2] at (s.side 5) {${X_6}$};
\end{tikzpicture}
\]
\caption{$\piece{\P^2}23$, $\piece{\Dl_8}13$, $\piece{\Dl_6}11$}
\label{P2_23} \label{D8_13} \label{D6_11}
\end{minipage}
\end{figure}

\begin{figure}[ht]
\begin{minipage}[b]{0.49\linewidth}
\[
\begin{tikzpicture}[font=\footnotesize]
\node[name=s, regular polygon, rotate=180, regular polygon sides=6,inner sep=1.557cm] at (0,0) {};
\draw[-,ultra thick,Orange] (s.center) to["4"] (s.side 1);
\draw[-,ultra thick,RoyalBlue] (s.center) to["1"] (s.side 2);
\draw[-,ultra thick,Orange] (s.center) to["4"] (s.side 3);
\draw[-,ultra thick,FireBrick] (s.center) to["2"] (s.side 4);
\draw[-,ultra thick,black] (s.center) to[""] (s.side 5);
\draw[-,ultra thick,FireBrick] (s.center) to["2"] (s.side 6);
\draw[-,ultra thick,RoyalBlue] (s.side 1) to[swap,"1"] (s.corner 2);
\draw[-,ultra thick,FireBrick] (s.side 1) to["2"] (s.corner 1);
\draw[-,ultra thick,Orange] (s.side 2) to[swap,"4"] (s.corner 3);
\draw[-,ultra thick,Orange] (s.side 2) to["4"] (s.corner 2);
\draw[-,ultra thick,FireBrick] (s.side 3) to[swap,"2"] (s.corner 4);
\draw[-,ultra thick,RoyalBlue] (s.side 3) to["1"] (s.corner 3);
\draw[-,ultra thick,black] (s.side 4) to[swap,""] (s.corner 5);
\draw[-,ultra thick,Orange] (s.side 4) to["4"] (s.corner 4);
\draw[-,ultra thick,FireBrick] (s.side 5) to[swap,"2"] (s.corner 6);
\draw[-,ultra thick,FireBrick] (s.side 5) to["2"] (s.corner 5);
\draw[-,ultra thick,Orange] (s.side 6) to[swap,"4"] (s.corner 1);
\draw[-,ultra thick,black] (s.side 6) to[""] (s.corner 6);
\node[fill=white,circle,inner sep=2] at (s.center) {${X_3}$};
\node[fill=white,inner sep=2] at (s.corner 1) {${\p^2}$};
\node[fill=white,inner sep=2] at (s.side 1) {${X_7}$};
\node[fill=white,inner sep=2] at (s.corner 2) {${X_8}$};
\node[fill=white,inner sep=2] at (s.side 2) {${X_4}$};
\node[fill=white,inner sep=2] at (s.corner 3) {${X_8'}$};
\node[fill=white,inner sep=2] at (s.side 3) {${X_7'}$};
\node[fill=white,inner sep=2] at (s.corner 4) {${\p^2}$};
\node[fill=white,inner sep=2] at (s.side 4) {${X_5}$};
\node[fill=white,inner sep=2] at (s.corner 5) {${X_5}_{\!/\p^1}$};
\node[fill=white,inner sep=2] at (s.side 5) {${X_3}_{/\p^1}$};
\node[fill=white,inner sep=2] at (s.corner 6) {${X_5}_{\!/\p^1}$};
\node[fill=white,inner sep=2] at (s.side 6) {${X_5}$};
\end{tikzpicture}
\]
\caption{$\piece{\P^2}24$, $\piece{\Dl_8}14$}
\label{P2_24}
\end{minipage}
\begin{minipage}[b]{0.49\linewidth}
\[
\begin{tikzpicture}[font=\footnotesize]
\node[name=s, regular polygon, rotate=180, regular polygon sides=6,inner sep=1.557cm] at (0,0) {};
\draw[-,ultra thick,HotPink] (s.center) to["5"] (s.side 1);
\draw[-,ultra thick,RoyalBlue] (s.center) to["1"] (s.side 2);
\draw[-,ultra thick,FireBrick] (s.center) to["2"] (s.side 3);
\draw[-,ultra thick,HotPink] (s.center) to["5"] (s.side 4);
\draw[-,ultra thick,RoyalBlue] (s.center) to["1"] (s.side 5);
\draw[-,ultra thick,FireBrick] (s.center) to["2"] (s.side 6);
\draw[-,ultra thick,RoyalBlue] (s.side 1) to[swap,"1"] (s.corner 2);
\draw[-,ultra thick,FireBrick] (s.side 1) to["2"] (s.corner 1);
\draw[-,ultra thick,FireBrick] (s.side 2) to[swap,"2"] (s.corner 3);
\draw[-,ultra thick,HotPink] (s.side 2) to["5"] (s.corner 2);
\draw[-,ultra thick,HotPink] (s.side 3) to[swap,"5"] (s.corner 4);
\draw[-,ultra thick,RoyalBlue] (s.side 3) to["1"] (s.corner 3);
\draw[-,ultra thick,RoyalBlue] (s.side 4) to[swap,"1"] (s.corner 5);
\draw[-,ultra thick,FireBrick] (s.side 4) to["2"] (s.corner 4);
\draw[-,ultra thick,FireBrick] (s.side 5) to[swap,"2"] (s.corner 6);
\draw[-,ultra thick,HotPink] (s.side 5) to["5"] (s.corner 5);
\draw[-,ultra thick,HotPink] (s.side 6) to[swap,"5"] (s.corner 1);
\draw[-,ultra thick,RoyalBlue] (s.side 6) to["1"] (s.corner 6);
\node[fill=white,circle,inner sep=2] at (s.center) {${X_2}$};
\node[fill=white,inner sep=2] at (s.corner 1) {${\p^2}$};
\node[fill=white,inner sep=2] at (s.side 1) {${X_7}$};
\node[fill=white,inner sep=2] at (s.corner 2) {${X_8}$};
\node[fill=white,inner sep=2] at (s.side 2) {${X_3}$};
\node[fill=white,inner sep=2] at (s.corner 3) {${X_5}$};
\node[fill=white,inner sep=2] at (s.side 3) {${X_4}$};
\node[fill=white,inner sep=2] at (s.corner 4) {${\p^2}$};
\node[fill=white,inner sep=2] at (s.side 4) {${X_7}$};
\node[fill=white,inner sep=2] at (s.corner 5) {${X_8}$};
\node[fill=white,inner sep=2] at (s.side 5) {${X_3}$};
\node[fill=white,inner sep=2] at (s.corner 6) {${X_5}$};
\node[fill=white,inner sep=2] at (s.side 6) {${X_4}$};
\end{tikzpicture}
\]
\caption{$\piece{\P^2}25$, $\piece{\Dl_8}15$, $\piece{\Dl_5}12$}
\label{P2_25} \label{D8_15} \label{D5_12}
\end{minipage}
\end{figure}

\begin{figure}[ht]
\begin{minipage}[t]{0.49\linewidth}
\[
\begin{tikzpicture}[font=\footnotesize]
\node[name=s, regular polygon, rotate=180, regular polygon sides=6,inner sep=1.557cm] at (0,0) {};
\draw[-,ultra thick,ForestGreen] (s.center) to["3"] (s.side 1);
\draw[-,ultra thick,ForestGreen] (s.center) to["3"] (s.side 2);
\draw[-,ultra thick,ForestGreen] (s.center) to["3"] (s.side 3);
\draw[-,ultra thick,ForestGreen] (s.center) to["3"] (s.side 4);
\draw[-,ultra thick,ForestGreen] (s.center) to["3"] (s.side 5);
\draw[-,ultra thick,ForestGreen] (s.center) to["3"] (s.side 6);
\draw[-,ultra thick,ForestGreen] (s.side 1) to[swap,"3"] (s.corner 2);
\draw[-,ultra thick,ForestGreen] (s.side 1) to["3"] (s.corner 1);
\draw[-,ultra thick,ForestGreen] (s.side 2) to[swap,"3"] (s.corner 3);
\draw[-,ultra thick,ForestGreen] (s.side 2) to["3"] (s.corner 2);
\draw[-,ultra thick,ForestGreen] (s.side 3) to[swap,"3"] (s.corner 4);
\draw[-,ultra thick,ForestGreen] (s.side 3) to["3"] (s.corner 3);
\draw[-,ultra thick,ForestGreen] (s.side 4) to[swap,"3"] (s.corner 5);
\draw[-,ultra thick,ForestGreen] (s.side 4) to["3"] (s.corner 4);
\draw[-,ultra thick,ForestGreen] (s.side 5) to[swap,"3"] (s.corner 6);
\draw[-,ultra thick,ForestGreen] (s.side 5) to["3"] (s.corner 5);
\draw[-,ultra thick,ForestGreen] (s.side 6) to[swap,"3"] (s.corner 1);
\draw[-,ultra thick,ForestGreen] (s.side 6) to["3"] (s.corner 6);
\node[fill=white,circle,inner sep=2] at (s.center) {${X_3}$};
\node[fill=white,inner sep=2] at (s.corner 1) {${\p^2}$};
\node[fill=white,inner sep=2] at (s.side 1) {${X_6}$};
\node[fill=white,inner sep=2] at (s.corner 2) {${\p^2}$};
\node[fill=white,inner sep=2] at (s.side 2) {${X_6}$};
\node[fill=white,inner sep=2] at (s.corner 3) {${\p^2}$};
\node[fill=white,inner sep=2] at (s.side 3) {${X_6}$};
\node[fill=white,inner sep=2] at (s.corner 4) {${\p^2}$};
\node[fill=white,inner sep=2] at (s.side 4) {${X_6}$};
\node[fill=white,inner sep=2] at (s.corner 5) {${\p^2}$};
\node[fill=white,inner sep=2] at (s.side 5) {${X_6}$};
\node[fill=white,inner sep=2] at (s.corner 6) {${\p^2}$};
\node[fill=white,inner sep=2] at (s.side 6) {${X_6}$};
\end{tikzpicture}
\]
\caption{$\piece{\P^2}33$}
\label{P2_33}
\end{minipage}
\begin{minipage}[t]{0.49\linewidth}
\[
\begin{tikzpicture}[font=\footnotesize]
\node[name=s, regular polygon, rotate=180, regular polygon sides=6,inner sep=1.557cm] at (0,0) {};
\draw[-,ultra thick,ForestGreen] (s.center) to["3"] (s.side 1);
\draw[-,ultra thick,RoyalBlue] (s.center) to["1"] (s.side 2);
\draw[-,ultra thick,ForestGreen] (s.center) to["3"] (s.side 3);
\draw[-,ultra thick,ForestGreen] (s.center) to["3"] (s.side 4);
\draw[-,ultra thick,RoyalBlue] (s.center) to["1"] (s.side 5);
\draw[-,ultra thick,ForestGreen] (s.center) to["3"] (s.side 6);
\draw[-,ultra thick,RoyalBlue] (s.side 1) to[swap,"1"] (s.corner 2);
\draw[-,ultra thick,ForestGreen] (s.side 1) to["3"] (s.corner 1);
\draw[-,ultra thick,ForestGreen] (s.side 2) to[swap,"3"] (s.corner 3);
\draw[-,ultra thick,ForestGreen] (s.side 2) to["3"] (s.corner 2);
\draw[-,ultra thick,ForestGreen] (s.side 3) to[swap,"3"] (s.corner 4);
\draw[-,ultra thick,RoyalBlue] (s.side 3) to["1"] (s.corner 3);
\draw[-,ultra thick,RoyalBlue] (s.side 4) to[swap,"1"] (s.corner 5);
\draw[-,ultra thick,ForestGreen] (s.side 4) to["3"] (s.corner 4);
\draw[-,ultra thick,ForestGreen] (s.side 5) to[swap,"3"] (s.corner 6);
\draw[-,ultra thick,ForestGreen] (s.side 5) to["3"] (s.corner 5);
\draw[-,ultra thick,ForestGreen] (s.side 6) to[swap,"3"] (s.corner 1);
\draw[-,ultra thick,RoyalBlue] (s.side 6) to["1"] (s.corner 6);
\node[fill=white,circle,inner sep=2] at (s.center) {${X_2}$};
\node[fill=white,inner sep=2] at (s.corner 1) {${X_8}$};
\node[fill=white,inner sep=2] at (s.side 1) {${X_5}$};
\node[fill=white,inner sep=2] at (s.corner 2) {${X_6}$};
\node[fill=white,inner sep=2] at (s.side 2) {${X_3}$};
\node[fill=white,inner sep=2] at (s.corner 3) {${X_6'}$};
\node[fill=white,inner sep=2] at (s.side 3) {${X_5'}$};
\node[fill=white,inner sep=2] at (s.corner 4) {${X_8}$};
\node[fill=white,inner sep=2] at (s.side 4) {${X_5}$};
\node[fill=white,inner sep=2] at (s.corner 5) {${X_6}$};
\node[fill=white,inner sep=2] at (s.side 5) {${X_3}$};
\node[fill=white,inner sep=2] at (s.corner 6) {${X_6'}$};
\node[fill=white,inner sep=2] at (s.side 6) {${X_5'}$};
\end{tikzpicture}
\]
\caption{$\piece{\Dl_8}33$, $\piece{\Dl_6}13$}
\label{D8_33} \label{D6_13}
\end{minipage}
\end{figure}

\begin{figure}
\begin{minipage}[t]{0.49\linewidth}
\[
\begin{tikzpicture}[font=\footnotesize]
\node[name=s, regular polygon, rotate=180, regular polygon sides=8,inner sep=2.169cm] at (0,0) {};
\draw[-,ultra thick,Orange] (s.center) to["4"] (s.side 1);
\draw[-,ultra thick,ForestGreen] (s.center) to["3"] (s.side 2);
\draw[-,ultra thick,black] (s.center) to[""] (s.side 3);
\draw[-,ultra thick,ForestGreen] (s.center) to["3"] (s.side 4);
\draw[-,ultra thick,Orange] (s.center) to["4"] (s.side 5);
\draw[-,ultra thick,ForestGreen] (s.center) to["3"] (s.side 6);
\draw[-,ultra thick,black] (s.center) to[""] (s.side 7);
\draw[-,ultra thick,ForestGreen] (s.center) to["3"] (s.side 8);
\draw[-,ultra thick,ForestGreen] (s.side 1) to[swap,"3"] (s.corner 2);
\draw[-,ultra thick,ForestGreen] (s.side 1) to["3"] (s.corner 1);
\draw[-,ultra thick,black] (s.side 2) to[swap,""] (s.corner 3);
\draw[-,ultra thick,Orange] (s.side 2) to["4"] (s.corner 2);
\draw[-,ultra thick,ForestGreen] (s.side 3) to[swap,"3"] (s.corner 4);
\draw[-,ultra thick,ForestGreen] (s.side 3) to["3"] (s.corner 3);
\draw[-,ultra thick,Orange] (s.side 4) to[swap,"4"] (s.corner 5);
\draw[-,ultra thick,black] (s.side 4) to[""] (s.corner 4);
\draw[-,ultra thick,ForestGreen] (s.side 5) to[swap,"3"] (s.corner 6);
\draw[-,ultra thick,ForestGreen] (s.side 5) to["3"] (s.corner 5);
\draw[-,ultra thick,black] (s.side 6) to[swap,""] (s.corner 7);
\draw[-,ultra thick,Orange] (s.side 6) to["4"] (s.corner 6);
\draw[-,ultra thick,ForestGreen] (s.side 7) to[swap,"3"] (s.corner 8);
\draw[-,ultra thick,ForestGreen] (s.side 7) to["3"] (s.corner 7);
\draw[-,ultra thick,Orange] (s.side 8) to[swap,"4"] (s.corner 1);
\draw[-,ultra thick,black] (s.side 8) to[""] (s.corner 8);
\node[fill=white,circle,inner sep=2] at (s.center) {${X_2}$};
\node[fill=white,inner sep=2] at (s.corner 1) {${\p^2}$};
\node[fill=white,inner sep=2] at (s.side 1) {${X_6}$};
\node[fill=white,inner sep=2] at (s.corner 2) {${\p^2}$};
\node[fill=white,inner sep=2] at (s.side 2) {${X_5}$};
\node[fill=white,inner sep=2] at (s.corner 3) {${X_5}_{\!/\p^1}$};
\node[fill=white,inner sep=2] at (s.side 3) {${X_2}_{/\p^1}$};
\node[fill=white,inner sep=2] at (s.corner 4) {${X_5}_{\!/\p^1}$};
\node[fill=white,inner sep=2] at (s.side 4) {${X_5}$};
\node[fill=white,inner sep=2] at (s.corner 5) {${\p^2}$};
\node[fill=white,inner sep=2] at (s.side 5) {${X_6}$};
\node[fill=white,inner sep=2] at (s.corner 6) {${\p^2}$};
\node[fill=white,inner sep=2] at (s.side 6) {${X_5}$};
\node[fill=white,inner sep=2] at (s.corner 7) {${X_5}_{\!/\p^1}$};
\node[fill=white,inner sep=2] at (s.side 7) {${X_2}_{/\p^1}$};
\node[fill=white,inner sep=2] at (s.corner 8) {${X_5}_{\!/\p^1}$};
\node[fill=white,inner sep=2] at (s.side 8) {${X_5}$};
\end{tikzpicture}
\]
\caption{$\piece{\P^2}34$}
\label{P2_34}
\end{minipage}
\begin{minipage}[t]{0.49\linewidth}
\[
\begin{tikzpicture}[font=\footnotesize]
\node[name=s, regular polygon, rotate=180, regular polygon sides=8,inner sep=2.169cm] at (0,0) {};
\draw[-,ultra thick,HotPink] (s.center) to["5"] (s.side 1);
\draw[-,ultra thick,ForestGreen] (s.center) to["3"] (s.side 2);
\draw[-,ultra thick,RoyalBlue] (s.center) to["1"] (s.side 3);
\draw[-,ultra thick,ForestGreen] (s.center) to["3"] (s.side 4);
\draw[-,ultra thick,HotPink] (s.center) to["5"] (s.side 5);
\draw[-,ultra thick,ForestGreen] (s.center) to["3"] (s.side 6);
\draw[-,ultra thick,RoyalBlue] (s.center) to["1"] (s.side 7);
\draw[-,ultra thick,ForestGreen] (s.center) to["3"] (s.side 8);
\draw[-,ultra thick,ForestGreen] (s.side 1) to[swap,"3"] (s.corner 2);
\draw[-,ultra thick,ForestGreen] (s.side 1) to["3"] (s.corner 1);
\draw[-,ultra thick,RoyalBlue] (s.side 2) to[swap,"1"] (s.corner 3);
\draw[-,ultra thick,HotPink] (s.side 2) to["5"] (s.corner 2);
\draw[-,ultra thick,ForestGreen] (s.side 3) to[swap,"3"] (s.corner 4);
\draw[-,ultra thick,ForestGreen] (s.side 3) to["3"] (s.corner 3);
\draw[-,ultra thick,HotPink] (s.side 4) to[swap,"5"] (s.corner 5);
\draw[-,ultra thick,RoyalBlue] (s.side 4) to["1"] (s.corner 4);
\draw[-,ultra thick,ForestGreen] (s.side 5) to[swap,"3"] (s.corner 6);
\draw[-,ultra thick,ForestGreen] (s.side 5) to["3"] (s.corner 5);
\draw[-,ultra thick,RoyalBlue] (s.side 6) to[swap,"1"] (s.corner 7);
\draw[-,ultra thick,HotPink] (s.side 6) to["5"] (s.corner 6);
\draw[-,ultra thick,ForestGreen] (s.side 7) to[swap,"3"] (s.corner 8);
\draw[-,ultra thick,ForestGreen] (s.side 7) to["3"] (s.corner 7);
\draw[-,ultra thick,HotPink] (s.side 8) to[swap,"5"] (s.corner 1);
\draw[-,ultra thick,RoyalBlue] (s.side 8) to["1"] (s.corner 8);
\node[fill=white,circle,inner sep=2] at (s.center) {${X_1}$};
\node[fill=white,inner sep=2] at (s.corner 1) {${\p^2}$};
\node[fill=white,inner sep=2] at (s.side 1) {${X_6}$};
\node[fill=white,inner sep=2] at (s.corner 2) {${\p^2}$};
\node[fill=white,inner sep=2] at (s.side 2) {${X_4}$};
\node[fill=white,inner sep=2] at (s.corner 3) {${X_5}$};
\node[fill=white,inner sep=2] at (s.side 3) {${X_2}$};
\node[fill=white,inner sep=2] at (s.corner 4) {${X_5}$};
\node[fill=white,inner sep=2] at (s.side 4) {${X_4}$};
\node[fill=white,inner sep=2] at (s.corner 5) {${\p^2}$};
\node[fill=white,inner sep=2] at (s.side 5) {${X_6}$};
\node[fill=white,inner sep=2] at (s.corner 6) {${\p^2}$};
\node[fill=white,inner sep=2] at (s.side 6) {${X_4}$};
\node[fill=white,inner sep=2] at (s.corner 7) {${X_5}$};
\node[fill=white,inner sep=2] at (s.side 7) {${X_2}$};
\node[fill=white,inner sep=2] at (s.corner 8) {${X_5}$};
\node[fill=white,inner sep=2] at (s.side 8) {${X_4}$};
\end{tikzpicture}
\]
\caption{$\piece{\P^2}35$, $\piece{\Dl_5}13$}
\label{P2_35}
\end{minipage}
\end{figure}

\begin{figure}
\[
\begin{tikzpicture}[font=\footnotesize]
\node[name=s, regular polygon, rotate=180, regular polygon sides=12,inner sep=3.357cm] at (0,0) {};
\draw[-,ultra thick,Orange] (s.center) to["4"] (s.side 1);
\draw[-,ultra thick,black] (s.center) to[""] (s.side 2);
\draw[-,ultra thick,Orange] (s.center) to["4"] (s.side 3);
\draw[-,ultra thick,Orange] (s.center) to["4"] (s.side 4);
\draw[-,ultra thick,black] (s.center) to[""] (s.side 5);
\draw[-,ultra thick,Orange] (s.center) to["4"] (s.side 6);
\draw[-,ultra thick,Orange] (s.center) to["4"] (s.side 7);
\draw[-,ultra thick,black] (s.center) to[""] (s.side 8);
\draw[-,ultra thick,Orange] (s.center) to["4"] (s.side 9);
\draw[-,ultra thick,Orange] (s.center) to["4"] (s.side 10);
\draw[-,ultra thick,black] (s.center) to[""] (s.side 11);
\draw[-,ultra thick,Orange] (s.center) to["4"] (s.side 12);
\draw[-,ultra thick,black] (s.side 1) to[swap,""] (s.corner 2);
\draw[-,ultra thick,Orange] (s.side 1) to["4"] (s.corner 1);
\draw[-,ultra thick,Orange] (s.side 2) to[swap,"4"] (s.corner 3);
\draw[-,ultra thick,Orange] (s.side 2) to["4"] (s.corner 2);
\draw[-,ultra thick,Orange] (s.side 3) to[swap,"4"] (s.corner 4);
\draw[-,ultra thick,black] (s.side 3) to[""] (s.corner 3);
\draw[-,ultra thick,black] (s.side 4) to[swap,""] (s.corner 5);
\draw[-,ultra thick,Orange] (s.side 4) to["4"] (s.corner 4);
\draw[-,ultra thick,Orange] (s.side 5) to[swap,"4"] (s.corner 6);
\draw[-,ultra thick,Orange] (s.side 5) to["4"] (s.corner 5);
\draw[-,ultra thick,Orange] (s.side 6) to[swap,"4"] (s.corner 7);
\draw[-,ultra thick,black] (s.side 6) to[""] (s.corner 6);
\draw[-,ultra thick,black] (s.side 7) to[swap,""] (s.corner 8);
\draw[-,ultra thick,Orange] (s.side 7) to["4"] (s.corner 7);
\draw[-,ultra thick,Orange] (s.side 8) to[swap,"4"] (s.corner 9);
\draw[-,ultra thick,Orange] (s.side 8) to["4"] (s.corner 8);
\draw[-,ultra thick,Orange] (s.side 9) to[swap,"4"] (s.corner 10);
\draw[-,ultra thick,black] (s.side 9) to[""] (s.corner 9);
\draw[-,ultra thick,black] (s.side 10) to[swap,""] (s.corner 11);
\draw[-,ultra thick,Orange] (s.side 10) to["4"] (s.corner 10);
\draw[-,ultra thick,Orange] (s.side 11) to[swap,"4"] (s.corner 12);
\draw[-,ultra thick,Orange] (s.side 11) to["4"] (s.corner 11);
\draw[-,ultra thick,Orange] (s.side 12) to[swap,"4"] (s.corner 1);
\draw[-,ultra thick,black] (s.side 12) to[""] (s.corner 12);
\node[fill=white,circle,inner sep=2] at (s.center) {${X_1}$};
\node[fill=white,inner sep=2] at (s.corner 1) {${\p^2}$};
\node[fill=white,inner sep=2] at (s.side 1) {${X_5}$};
\node[fill=white,inner sep=2] at (s.corner 2) {${X_5}_{\!/\p^1}$};
\node[fill=white,inner sep=2] at (s.side 2) {${X_1}_{/\p^1}$};
\node[fill=white,inner sep=2] at (s.corner 3) {${X_5}_{\!/\p^1}$};
\node[fill=white,inner sep=2] at (s.side 3) {${X_5}$};
\node[fill=white,inner sep=2] at (s.corner 4) {${\p^2}$};
\node[fill=white,inner sep=2] at (s.side 4) {${X_5}$};
\node[fill=white,inner sep=2] at (s.corner 5) {${X_5}_{\!/\p^1}$};
\node[fill=white,inner sep=2] at (s.side 5) {${X_1}_{/\p^1}$};
\node[fill=white,inner sep=2] at (s.corner 6) {${X_5}_{\!/\p^1}$};
\node[fill=white,inner sep=2] at (s.side 6) {${X_5}$};
\node[fill=white,inner sep=2] at (s.corner 7) {${\p^2}$};
\node[fill=white,inner sep=2] at (s.side 7) {${X_5}$};
\node[fill=white,inner sep=2] at (s.corner 8) {${X_5}_{\!/\p^1}$};
\node[fill=white,inner sep=2] at (s.side 8) {${X_1}_{/\p^1}$};
\node[fill=white,inner sep=2] at (s.corner 9) {${X_5}_{\!/\p^1}$};
\node[fill=white,inner sep=2] at (s.side 9) {${X_5}$};
\node[fill=white,inner sep=2] at (s.corner 10) {${\p^2}$};
\node[fill=white,inner sep=2] at (s.side 10) {${X_5}$};
\node[fill=white,inner sep=2] at (s.corner 11) {${X_5}_{\!/\p^1}$};
\node[fill=white,inner sep=2] at (s.side 11) {${X_1}_{/\p^1}$};
\node[fill=white,inner sep=2] at (s.corner 12) {${X_5}_{\!/\p^1}$};
\node[fill=white,inner sep=2] at (s.side 12) {${X_5}$};
\end{tikzpicture}
\]
\caption{$\piece{\P^2}44$}
\label{P2_44}
\end{figure}

\begin{figure}[ht]
\begin{minipage}[t]{0.49\linewidth}
\[
\begin{tikzpicture}[font=\footnotesize]
\node[name=s, regular polygon, rotate=180, regular polygon sides=6,inner sep=1.557cm] at (0,0) {};
\draw[-,ultra thick,FireBrick] (s.center) to["2"] (s.side 1);
\draw[-,ultra thick,black] (s.center) to[""] (s.side 2);
\draw[-,ultra thick,FireBrick] (s.center) to["2"] (s.side 3);
\draw[-,ultra thick,FireBrick] (s.center) to["2"] (s.side 4);
\draw[-,ultra thick,black] (s.center) to[""] (s.side 5);
\draw[-,ultra thick,FireBrick] (s.center) to["2"] (s.side 6);
\draw[-,ultra thick,black] (s.side 1) to[swap,""] (s.corner 2);
\draw[-,ultra thick,FireBrick] (s.side 1) to["2"] (s.corner 1);
\draw[-,ultra thick,FireBrick] (s.side 2) to[swap,"2"] (s.corner 3);
\draw[-,ultra thick,FireBrick] (s.side 2) to["2"] (s.corner 2);
\draw[-,ultra thick,FireBrick] (s.side 3) to[swap,"2"] (s.corner 4);
\draw[-,ultra thick,black] (s.side 3) to[""] (s.corner 3);
\draw[-,ultra thick,black] (s.side 4) to[swap,""] (s.corner 5);
\draw[-,ultra thick,FireBrick] (s.side 4) to["2"] (s.corner 4);
\draw[-,ultra thick,FireBrick] (s.side 5) to[swap,"2"] (s.corner 6);
\draw[-,ultra thick,FireBrick] (s.side 5) to["2"] (s.corner 5);
\draw[-,ultra thick,FireBrick] (s.side 6) to[swap,"2"] (s.corner 1);
\draw[-,ultra thick,black] (s.side 6) to[""] (s.corner 6);
\node[fill=white,circle,inner sep=2] at (s.center) {${X_4}$};
\node[fill=white,inner sep=2] at (s.corner 1) {${X_8}$};
\node[fill=white,inner sep=2] at (s.side 1) {${X_6}$};
\node[fill=white,inner sep=2] at (s.corner 2) {${X_6}_{\!/\p^1}$};
\node[fill=white,inner sep=2] at (s.side 2) {${X_4}_{/\p^1}$};
\node[fill=white,inner sep=2] at (s.corner 3) {${X_6'}_{\!/\p^1}$};
\node[fill=white,inner sep=2] at (s.side 3) {${X_6'}$};
\node[fill=white,inner sep=2] at (s.corner 4) {${X_8'}$};
\node[fill=white,inner sep=2] at (s.side 4) {${X_6''}$};
\node[fill=white,inner sep=2] at (s.corner 5) {${X_6''}_{\!/\p^1}$};
\node[fill=white,inner sep=2] at (s.side 5) {${X_4'}$};
\node[fill=white,inner sep=2] at (s.corner 6) {${X_6'''}_{\!/\p^1}$};
\node[fill=white,inner sep=2] at (s.side 6) {${X_6'''}$};
\end{tikzpicture}
\]
\caption{$\piece{\Dl_8}22$}
\label{D8_22}
\end{minipage}
\begin{minipage}[t]{0.49\linewidth}
\[
\begin{tikzpicture}[font=\footnotesize]
\node[name=s, regular polygon, rotate=180, regular polygon sides=6,inner sep=1.557cm] at (0,0) {};
\draw[-,ultra thick,ForestGreen] (s.center) to["3"] (s.side 1);
\draw[-,ultra thick,black] (s.center) to[""] (s.side 2);
\draw[-,ultra thick,ForestGreen] (s.center) to["3"] (s.side 3);
\draw[-,ultra thick,FireBrick] (s.center) to["2"] (s.side 4);
\draw[-,ultra thick,RoyalBlue] (s.center) to["1"] (s.side 5);
\draw[-,ultra thick,FireBrick] (s.center) to["2"] (s.side 6);
\draw[-,ultra thick,black] (s.side 1) to[swap,""] (s.corner 2);
\draw[-,ultra thick,FireBrick] (s.side 1) to["2"] (s.corner 1);
\draw[-,ultra thick,ForestGreen] (s.side 2) to[swap,"3"] (s.corner 3);
\draw[-,ultra thick,ForestGreen] (s.side 2) to["3"] (s.corner 2);
\draw[-,ultra thick,FireBrick] (s.side 3) to[swap,"2"] (s.corner 4);
\draw[-,ultra thick,black] (s.side 3) to[""] (s.corner 3);
\draw[-,ultra thick,RoyalBlue] (s.side 4) to[swap,"1"] (s.corner 5);
\draw[-,ultra thick,ForestGreen] (s.side 4) to["3"] (s.corner 4);
\draw[-,ultra thick,FireBrick] (s.side 5) to[swap,"2"] (s.corner 6);
\draw[-,ultra thick,FireBrick] (s.side 5) to["2"] (s.corner 5);
\draw[-,ultra thick,ForestGreen] (s.side 6) to[swap,"3"] (s.corner 1);
\draw[-,ultra thick,RoyalBlue] (s.side 6) to["1"] (s.corner 6);
\node[fill=white,circle,inner sep=2] at (s.center) {${X_3}$};
\node[fill=white,inner sep=2] at (s.corner 1) {${X_8}$};
\node[fill=white,inner sep=2] at (s.side 1) {${X_6}$};
\node[fill=white,inner sep=2] at (s.corner 2) {${X_6}_{\!/\p^1}$};
\node[fill=white,inner sep=2] at (s.side 2) {${X_3}_{/\p^1}$};
\node[fill=white,inner sep=2] at (s.corner 3) {${X_6'}_{\!/\p^1}$};
\node[fill=white,inner sep=2] at (s.side 3) {${X_6'}$};
\node[fill=white,inner sep=2] at (s.corner 4) {${X_8'}$};
\node[fill=white,inner sep=2] at (s.side 4) {${X_5'}$};
\node[fill=white,inner sep=2] at (s.corner 5) {${X_6''}$};
\node[fill=white,inner sep=2] at (s.side 5) {${X_4}$};
\node[fill=white,inner sep=2] at (s.corner 6) {${X_6'''}$};
\node[fill=white,inner sep=2] at (s.side 6) {${X_5}$};
\end{tikzpicture}
\]
\caption{$\piece{\Dl_8}23$, $\piece{\Dl_6}12$}
\label{D8_23} \label{D6_12}
\end{minipage}
\end{figure}

\begin{figure}[ht]
\begin{minipage}[b]{0.49\linewidth}
\[
\begin{tikzpicture}[font=\footnotesize]
\node[name=s, regular polygon, rotate=180, regular polygon sides=4,inner sep=0.9cm] at (0,0) {};
\draw[-,ultra thick,FireBrick] (s.center) to["2"] (s.side 1);
\draw[-,ultra thick,FireBrick] (s.center) to["2"] (s.side 2);
\draw[-,ultra thick,FireBrick] (s.center) to["2"] (s.side 3);
\draw[-,ultra thick,FireBrick] (s.center) to["2"] (s.side 4);
\draw[-,ultra thick,FireBrick] (s.side 1) to[swap,"2"] (s.corner 2);
\draw[-,ultra thick,FireBrick] (s.side 1) to["2"] (s.corner 1);
\draw[-,ultra thick,FireBrick] (s.side 2) to[swap,"2"] (s.corner 3);
\draw[-,ultra thick,FireBrick] (s.side 2) to["2"] (s.corner 2);
\draw[-,ultra thick,FireBrick] (s.side 3) to[swap,"2"] (s.corner 4);
\draw[-,ultra thick,FireBrick] (s.side 3) to["2"] (s.corner 3);
\draw[-,ultra thick,FireBrick] (s.side 4) to[swap,"2"] (s.corner 1);
\draw[-,ultra thick,FireBrick] (s.side 4) to["2"] (s.corner 4);
\node[fill=white,circle,inner sep=2] at (s.center) {${X_2}$};
\node[fill=white,inner sep=2] at (s.corner 1) {${X_6}$};
\node[fill=white,inner sep=2] at (s.side 1) {${X_4}$};
\node[fill=white,inner sep=2] at (s.corner 2) {${X_6'}$};
\node[fill=white,inner sep=2] at (s.side 2) {${X_4'}$};
\node[fill=white,inner sep=2] at (s.corner 3) {${X_6}$};
\node[fill=white,inner sep=2] at (s.side 3) {${X_4}$};
\node[fill=white,inner sep=2] at (s.corner 4) {${X_6'}$};
\node[fill=white,inner sep=2] at (s.side 4) {${X_4'}$};
\end{tikzpicture}
\]
\caption{$\piece{\Dl_6}22$}
\label{D6_22}
\end{minipage}
\begin{minipage}[b]{0.49\linewidth}
\[
\begin{tikzpicture}[font=\footnotesize]
\node[name=s, regular polygon, rotate=180, regular polygon sides=4,inner sep=0.9cm] at (0,0) {};
\draw[-,ultra thick,ForestGreen] (s.center) to["3"] (s.side 1);
\draw[-,ultra thick,FireBrick] (s.center) to["2"] (s.side 2);
\draw[-,ultra thick,ForestGreen] (s.center) to["3"] (s.side 3);
\draw[-,ultra thick,FireBrick] (s.center) to["2"] (s.side 4);
\draw[-,ultra thick,FireBrick] (s.side 1) to[swap,"2"] (s.corner 2);
\draw[-,ultra thick,FireBrick] (s.side 1) to["2"] (s.corner 1);
\draw[-,ultra thick,ForestGreen] (s.side 2) to[swap,"3"] (s.corner 3);
\draw[-,ultra thick,ForestGreen] (s.side 2) to["3"] (s.corner 2);
\draw[-,ultra thick,FireBrick] (s.side 3) to[swap,"2"] (s.corner 4);
\draw[-,ultra thick,FireBrick] (s.side 3) to["2"] (s.corner 3);
\draw[-,ultra thick,ForestGreen] (s.side 4) to[swap,"3"] (s.corner 1);
\draw[-,ultra thick,ForestGreen] (s.side 4) to["3"] (s.corner 4);
\node[fill=white,circle,inner sep=2] at (s.center) {${X_1}$};
\node[fill=white,inner sep=2] at (s.corner 1) {${X_6}$};
\node[fill=white,inner sep=2] at (s.side 1) {${X_4}$};
\node[fill=white,inner sep=2] at (s.corner 2) {${X_6'}$};
\node[fill=white,inner sep=2] at (s.side 2) {${X_3}$};
\node[fill=white,inner sep=2] at (s.corner 3) {${X_6}$};
\node[fill=white,inner sep=2] at (s.side 3) {${X_4}$};
\node[fill=white,inner sep=2] at (s.corner 4) {${X_6'}$};
\node[fill=white,inner sep=2] at (s.side 4) {${X_3}$};
\end{tikzpicture}
\]
\caption{$\piece{\Dl_6}23$}
\label{D6_23}
\end{minipage}
\end{figure}

\begin{figure}
\begin{minipage}[t]{0.49\linewidth}
\[
\begin{tikzpicture}[font=\footnotesize]
\node[name=s, regular polygon, rotate=180, regular polygon sides=8,inner sep=2.169cm] at (0,0) {};
\draw[-,ultra thick,Orange] (s.center) to["4"] (s.side 1);
\draw[-,ultra thick,black] (s.center) to[""] (s.side 2);
\draw[-,ultra thick,Orange] (s.center) to["4"] (s.side 3);
\draw[-,ultra thick,FireBrick] (s.center) to["2"] (s.side 4);
\draw[-,ultra thick,Orange] (s.center) to["4"] (s.side 5);
\draw[-,ultra thick,black] (s.center) to[""] (s.side 6);
\draw[-,ultra thick,Orange] (s.center) to["4"] (s.side 7);
\draw[-,ultra thick,FireBrick] (s.center) to["2"] (s.side 8);
\draw[-,ultra thick,black] (s.side 1) to[swap,""] (s.corner 2);
\draw[-,ultra thick,FireBrick] (s.side 1) to["2"] (s.corner 1);
\draw[-,ultra thick,Orange] (s.side 2) to[swap,"4"] (s.corner 3);
\draw[-,ultra thick,Orange] (s.side 2) to["4"] (s.corner 2);
\draw[-,ultra thick,FireBrick] (s.side 3) to[swap,"2"] (s.corner 4);
\draw[-,ultra thick,black] (s.side 3) to[""] (s.corner 3);
\draw[-,ultra thick,Orange] (s.side 4) to[swap,"4"] (s.corner 5);
\draw[-,ultra thick,Orange] (s.side 4) to["4"] (s.corner 4);
\draw[-,ultra thick,black] (s.side 5) to[swap,""] (s.corner 6);
\draw[-,ultra thick,FireBrick] (s.side 5) to["2"] (s.corner 5);
\draw[-,ultra thick,Orange] (s.side 6) to[swap,"4"] (s.corner 7);
\draw[-,ultra thick,Orange] (s.side 6) to["4"] (s.corner 6);
\draw[-,ultra thick,FireBrick] (s.side 7) to[swap,"2"] (s.corner 8);
\draw[-,ultra thick,black] (s.side 7) to[""] (s.corner 7);
\draw[-,ultra thick,Orange] (s.side 8) to[swap,"4"] (s.corner 1);
\draw[-,ultra thick,Orange] (s.side 8) to["4"] (s.corner 8);
\node[fill=white,circle,inner sep=2] at (s.center) {${X_2}$};
\node[fill=white,inner sep=2] at (s.corner 1) {${X_8}$};
\node[fill=white,inner sep=2] at (s.side 1) {${X_6}$};
\node[fill=white,inner sep=2] at (s.corner 2) {${X_6}_{\!/\p^1}$};
\node[fill=white,inner sep=2] at (s.side 2) {${X_2}_{/\p^1}$};
\node[fill=white,inner sep=2] at (s.corner 3) {${X_6}_{\!/\p^1}$};
\node[fill=white,inner sep=2] at (s.side 3) {${X_6}$};
\node[fill=white,inner sep=2] at (s.corner 4) {${X_8}$};
\node[fill=white,inner sep=2] at (s.side 4) {${X_4}$};
\node[fill=white,inner sep=2] at (s.corner 5) {${X_8}$};
\node[fill=white,inner sep=2] at (s.side 5) {${X_6}$};
\node[fill=white,inner sep=2] at (s.corner 6) {${X_6}_{\!/\p^1}$};
\node[fill=white,inner sep=2] at (s.side 6) {${X_2}_{/\p^1}$};
\node[fill=white,inner sep=2] at (s.corner 7) {${X_6}_{\!/\p^1}$};
\node[fill=white,inner sep=2] at (s.side 7) {${X_6}$};
\node[fill=white,inner sep=2] at (s.corner 8) {${X_8}$};
\node[fill=white,inner sep=2] at (s.side 8) {${X_4}$};
\end{tikzpicture}
\]
\caption{$\piece{\Dl_8}24$}
\label{D8_24}
\end{minipage}
\begin{minipage}[t]{0.49\linewidth}
\[
\begin{tikzpicture}[font=\footnotesize]
\node[name=s, regular polygon, rotate=180, regular polygon sides=8,inner sep=2.169cm] at (0,0) {};
\draw[-,ultra thick,Orange] (s.center) to["4"] (s.side 1);
\draw[-,ultra thick,RoyalBlue] (s.center) to["1"] (s.side 2);
\draw[-,ultra thick,Orange] (s.center) to["4"] (s.side 3);
\draw[-,ultra thick,ForestGreen] (s.center) to["3"] (s.side 4);
\draw[-,ultra thick,Orange] (s.center) to["4"] (s.side 5);
\draw[-,ultra thick,RoyalBlue] (s.center) to["1"] (s.side 6);
\draw[-,ultra thick,Orange] (s.center) to["4"] (s.side 7);
\draw[-,ultra thick,ForestGreen] (s.center) to["3"] (s.side 8);
\draw[-,ultra thick,RoyalBlue] (s.side 1) to[swap,"1"] (s.corner 2);
\draw[-,ultra thick,ForestGreen] (s.side 1) to["3"] (s.corner 1);
\draw[-,ultra thick,Orange] (s.side 2) to[swap,"4"] (s.corner 3);
\draw[-,ultra thick,Orange] (s.side 2) to["4"] (s.corner 2);
\draw[-,ultra thick,ForestGreen] (s.side 3) to[swap,"3"] (s.corner 4);
\draw[-,ultra thick,RoyalBlue] (s.side 3) to["1"] (s.corner 3);
\draw[-,ultra thick,Orange] (s.side 4) to[swap,"4"] (s.corner 5);
\draw[-,ultra thick,Orange] (s.side 4) to["4"] (s.corner 4);
\draw[-,ultra thick,RoyalBlue] (s.side 5) to[swap,"1"] (s.corner 6);
\draw[-,ultra thick,ForestGreen] (s.side 5) to["3"] (s.corner 5);
\draw[-,ultra thick,Orange] (s.side 6) to[swap,"4"] (s.corner 7);
\draw[-,ultra thick,Orange] (s.side 6) to["4"] (s.corner 6);
\draw[-,ultra thick,ForestGreen] (s.side 7) to[swap,"3"] (s.corner 8);
\draw[-,ultra thick,RoyalBlue] (s.side 7) to["1"] (s.corner 7);
\draw[-,ultra thick,Orange] (s.side 8) to[swap,"4"] (s.corner 1);
\draw[-,ultra thick,Orange] (s.side 8) to["4"] (s.corner 8);
\node[fill=white,circle,inner sep=2] at (s.center) {${X_1}$};
\node[fill=white,inner sep=2] at (s.corner 1) {${X_8}$};
\node[fill=white,inner sep=2] at (s.side 1) {${X_5}$};
\node[fill=white,inner sep=2] at (s.corner 2) {${X_6}$};
\node[fill=white,inner sep=2] at (s.side 2) {${X_2}$};
\node[fill=white,inner sep=2] at (s.corner 3) {${X_6}$};
\node[fill=white,inner sep=2] at (s.side 3) {${X_5}$};
\node[fill=white,inner sep=2] at (s.corner 4) {${X_8}$};
\node[fill=white,inner sep=2] at (s.side 4) {${X_4}$};
\node[fill=white,inner sep=2] at (s.corner 5) {${X_8}$};
\node[fill=white,inner sep=2] at (s.side 5) {${X_5}$};
\node[fill=white,inner sep=2] at (s.corner 6) {${X_6}$};
\node[fill=white,inner sep=2] at (s.side 6) {${X_2}$};
\node[fill=white,inner sep=2] at (s.corner 7) {${X_6}$};
\node[fill=white,inner sep=2] at (s.side 7) {${X_5}$};
\node[fill=white,inner sep=2] at (s.corner 8) {${X_8}$};
\node[fill=white,inner sep=2] at (s.side 8) {${X_4}$};
\end{tikzpicture}
\]
\caption{$\piece{\Dl_8}34$, $\piece{\Dl_6}14$}
\label{D8_34} \label{D6_14}
\end{minipage}
\end{figure}

\begin{figure}
\[
\begin{tikzpicture}[font=\footnotesize]
\node[name=s, regular polygon, rotate=180, regular polygon sides=10,inner sep=2.7720000000000002cm] at (0,0) {};
\draw[-,ultra thick,HotPink] (s.center) to["5"] (s.side 1);
\draw[-,ultra thick,black] (s.center) to[""] (s.side 2);
\draw[-,ultra thick,HotPink] (s.center) to["5"] (s.side 3);
\draw[-,ultra thick,FireBrick] (s.center) to["2"] (s.side 4);
\draw[-,ultra thick,FireBrick] (s.center) to["2"] (s.side 5);
\draw[-,ultra thick,HotPink] (s.center) to["5"] (s.side 6);
\draw[-,ultra thick,black] (s.center) to[""] (s.side 7);
\draw[-,ultra thick,HotPink] (s.center) to["5"] (s.side 8);
\draw[-,ultra thick,FireBrick] (s.center) to["2"] (s.side 9);
\draw[-,ultra thick,FireBrick] (s.center) to["2"] (s.side 10);
\draw[-,ultra thick,black] (s.side 1) to[swap,""] (s.corner 2);
\draw[-,ultra thick,FireBrick] (s.side 1) to["2"] (s.corner 1);
\draw[-,ultra thick,HotPink] (s.side 2) to[swap,"5"] (s.corner 3);
\draw[-,ultra thick,HotPink] (s.side 2) to["5"] (s.corner 2);
\draw[-,ultra thick,FireBrick] (s.side 3) to[swap,"2"] (s.corner 4);
\draw[-,ultra thick,black] (s.side 3) to[""] (s.corner 3);
\draw[-,ultra thick,FireBrick] (s.side 4) to[swap,"2"] (s.corner 5);
\draw[-,ultra thick,HotPink] (s.side 4) to["5"] (s.corner 4);
\draw[-,ultra thick,HotPink] (s.side 5) to[swap,"5"] (s.corner 6);
\draw[-,ultra thick,FireBrick] (s.side 5) to["2"] (s.corner 5);
\draw[-,ultra thick,black] (s.side 6) to[swap,""] (s.corner 7);
\draw[-,ultra thick,FireBrick] (s.side 6) to["2"] (s.corner 6);
\draw[-,ultra thick,HotPink] (s.side 7) to[swap,"5"] (s.corner 8);
\draw[-,ultra thick,HotPink] (s.side 7) to["5"] (s.corner 7);
\draw[-,ultra thick,FireBrick] (s.side 8) to[swap,"2"] (s.corner 9);
\draw[-,ultra thick,black] (s.side 8) to[""] (s.corner 8);
\draw[-,ultra thick,FireBrick] (s.side 9) to[swap,"2"] (s.corner 10);
\draw[-,ultra thick,HotPink] (s.side 9) to["5"] (s.corner 9);
\draw[-,ultra thick,HotPink] (s.side 10) to[swap,"5"] (s.corner 1);
\draw[-,ultra thick,FireBrick] (s.side 10) to["2"] (s.corner 10);
\node[fill=white,circle,inner sep=2] at (s.center) {${X_1}$};
\node[fill=white,inner sep=2] at (s.corner 1) {${X_8}$};
\node[fill=white,inner sep=2] at (s.side 1) {${X_6}$};
\node[fill=white,inner sep=2] at (s.corner 2) {${X_6}_{\!/\p^1}$};
\node[fill=white,inner sep=2] at (s.side 2) {${X_1}_{/\p^1}$};
\node[fill=white,inner sep=2] at (s.corner 3) {${X_6}_{\!/\p^1}$};
\node[fill=white,inner sep=2] at (s.side 3) {${X_6}$};
\node[fill=white,inner sep=2] at (s.corner 4) {${X_8}$};
\node[fill=white,inner sep=2] at (s.side 4) {${X_3}$};
\node[fill=white,inner sep=2] at (s.corner 5) {${X_5}$};
\node[fill=white,inner sep=2] at (s.side 5) {${X_3}$};
\node[fill=white,inner sep=2] at (s.corner 6) {${X_8}$};
\node[fill=white,inner sep=2] at (s.side 6) {${X_6}$};
\node[fill=white,inner sep=2] at (s.corner 7) {${X_6}_{\!/\p^1}$};
\node[fill=white,inner sep=2] at (s.side 7) {${X_1}_{/\p^1}$};
\node[fill=white,inner sep=2] at (s.corner 8) {${X_6}_{\!/\p^1}$};
\node[fill=white,inner sep=2] at (s.side 8) {${X_6}$};
\node[fill=white,inner sep=2] at (s.corner 9) {${X_8}$};
\node[fill=white,inner sep=2] at (s.side 9) {${X_3}$};
\node[fill=white,inner sep=2] at (s.corner 10) {${X_5}$};
\node[fill=white,inner sep=2] at (s.side 10) {${X_3}$};
\end{tikzpicture}
\]
\caption{$\piece{\Dl_8}25$, $\piece{\Dl_5}22$}
\label{D8_25} \label{D5_22}
\end{figure}

\begin{figure}[ht]
\begin{minipage}[c]{0.50\linewidth}
\mbox{}\\[-\baselineskip]
\[
\begin{tikzpicture}[font=\footnotesize]
\node[name=s, regular polygon, rotate=180, regular polygon sides=6,inner sep=1.557cm] at (0,0) {};
\draw[-,ultra thick,FireBrick] (s.center) to["2"] (s.side 1);
\draw[-,ultra thick,black] (s.center) to[""] (s.side 2);
\draw[-,ultra thick,FireBrick] (s.center) to["2"] (s.side 3);
\draw[-,ultra thick,black] (s.center) to[""] (s.side 4);
\draw[-,ultra thick,FireBrick] (s.center) to["2"] (s.side 5);
\draw[-,ultra thick,black] (s.center) to[""] (s.side 6);
\draw[-,ultra thick,black] (s.side 1) to[swap,""] (s.corner 2);
\draw[-,ultra thick,black] (s.side 1) to[""] (s.corner 1);
\draw[-,ultra thick,FireBrick] (s.side 2) to[swap,"2"] (s.corner 3);
\draw[-,ultra thick,FireBrick] (s.side 2) to["2"] (s.corner 2);
\draw[-,ultra thick,black] (s.side 3) to[swap,""] (s.corner 4);
\draw[-,ultra thick,black] (s.side 3) to[""] (s.corner 3);
\draw[-,ultra thick,FireBrick] (s.side 4) to[swap,"2"] (s.corner 5);
\draw[-,ultra thick,FireBrick] (s.side 4) to["2"] (s.corner 4);
\draw[-,ultra thick,black] (s.side 5) to[swap,""] (s.corner 6);
\draw[-,ultra thick,black] (s.side 5) to[""] (s.corner 5);
\draw[-,ultra thick,FireBrick] (s.side 6) to[swap,"2"] (s.corner 1);
\draw[-,ultra thick,FireBrick] (s.side 6) to["2"] (s.corner 6);
\node[fill=white,circle,inner sep=2] at (s.center) {${X_6}$};
\node[fill=white,inner sep=2] at (s.corner 1) {${\F_0}_{\!/\p^1}$};
\node[fill=white,inner sep=2] at (s.side 1) {${\F_0}$};
\node[fill=white,inner sep=2] at (s.corner 2) {${\F_0}_{\!/\p^1}$};
\node[fill=white,inner sep=2] at (s.side 2) {${X_6}_{/\p^1}$};
\node[fill=white,inner sep=2] at (s.corner 3) {${\F_0}_{\!/\p^1}$};
\node[fill=white,inner sep=2] at (s.side 3) {${\F_0}$};
\node[fill=white,inner sep=2] at (s.corner 4) {${\F_0}_{\!/\p^1}$};
\node[fill=white,inner sep=2] at (s.side 4) {${X_6}_{/\p^1}$};
\node[fill=white,inner sep=2] at (s.corner 5) {${\F_0}_{\!/\p^1}$};
\node[fill=white,inner sep=2] at (s.side 5) {${\F_0}$};
\node[fill=white,inner sep=2] at (s.corner 6) {${\F_0}_{\!/\p^1}$};
\node[fill=white,inner sep=2] at (s.side 6) {${X_6}_{/\p^1}$};
\end{tikzpicture}
\]
\end{minipage}
\begin{minipage}[c]{0.49\linewidth}
\mbox{}\\[-\baselineskip]
\caption{$\pieceF2$}
\label{F0_2}
\end{minipage}
\end{figure}

\begin{figure}[ht]
\begin{minipage}[c]{0.49\linewidth}
\[
\begin{tikzpicture}[font=\footnotesize]
\node[name=s, regular polygon, rotate=180, regular polygon sides=8,inner sep=2.169cm] at (0,0) {};
\draw[-,ultra thick,Orange] (s.center) to["4"] (s.side 1);
\draw[-,ultra thick,black] (s.center) to[""] (s.side 2);
\draw[-,ultra thick,Orange] (s.center) to["4"] (s.side 3);
\draw[-,ultra thick,black] (s.center) to[""] (s.side 4);
\draw[-,ultra thick,Orange] (s.center) to["4"] (s.side 5);
\draw[-,ultra thick,black] (s.center) to[""] (s.side 6);
\draw[-,ultra thick,Orange] (s.center) to["4"] (s.side 7);
\draw[-,ultra thick,black] (s.center) to[""] (s.side 8);
\draw[-,ultra thick,black] (s.side 1) to[swap,""] (s.corner 2);
\draw[-,ultra thick,black] (s.side 1) to[""] (s.corner 1);
\draw[-,ultra thick,Orange] (s.side 2) to[swap,"4"] (s.corner 3);
\draw[-,ultra thick,Orange] (s.side 2) to["4"] (s.corner 2);
\draw[-,ultra thick,black] (s.side 3) to[swap,""] (s.corner 4);
\draw[-,ultra thick,black] (s.side 3) to[""] (s.corner 3);
\draw[-,ultra thick,Orange] (s.side 4) to[swap,"4"] (s.corner 5);
\draw[-,ultra thick,Orange] (s.side 4) to["4"] (s.corner 4);
\draw[-,ultra thick,black] (s.side 5) to[swap,""] (s.corner 6);
\draw[-,ultra thick,black] (s.side 5) to[""] (s.corner 5);
\draw[-,ultra thick,Orange] (s.side 6) to[swap,"4"] (s.corner 7);
\draw[-,ultra thick,Orange] (s.side 6) to["4"] (s.corner 6);
\draw[-,ultra thick,black] (s.side 7) to[swap,""] (s.corner 8);
\draw[-,ultra thick,black] (s.side 7) to[""] (s.corner 7);
\draw[-,ultra thick,Orange] (s.side 8) to[swap,"4"] (s.corner 1);
\draw[-,ultra thick,Orange] (s.side 8) to["4"] (s.corner 8);
\node[fill=white,circle,inner sep=2] at (s.center) {${X_4}$};
\node[fill=white,inner sep=2] at (s.corner 1) {${\F_0}_{\!/\p^1}$};
\node[fill=white,inner sep=2] at (s.side 1) {${\F_0}$};
\node[fill=white,inner sep=2] at (s.corner 2) {${\F_0}_{\!/\p^1}$};
\node[fill=white,inner sep=2] at (s.side 2) {${X_4}_{/\p^1}$};
\node[fill=white,inner sep=2] at (s.corner 3) {${\F_0}_{\!/\p^1}$};
\node[fill=white,inner sep=2] at (s.side 3) {${\F_0}$};
\node[fill=white,inner sep=2] at (s.corner 4) {${\F_0}_{\!/\p^1}$};
\node[fill=white,inner sep=2] at (s.side 4) {${X_4}_{/\p^1}$};
\node[fill=white,inner sep=2] at (s.corner 5) {${\F_0}_{\!/\p^1}$};
\node[fill=white,inner sep=2] at (s.side 5) {${\F_0}$};
\node[fill=white,inner sep=2] at (s.corner 6) {${\F_0}_{\!/\p^1}$};
\node[fill=white,inner sep=2] at (s.side 6) {${X_4}_{/\p^1}$};
\node[fill=white,inner sep=2] at (s.corner 7) {${\F_0}_{\!/\p^1}$};
\node[fill=white,inner sep=2] at (s.side 7) {${\F_0}$};
\node[fill=white,inner sep=2] at (s.corner 8) {${\F_0}_{\!/\p^1}$};
\node[fill=white,inner sep=2] at (s.side 8) {${X_4}_{/\p^1}$};
\end{tikzpicture}
\]
\caption{$\pieceF4$}
\label{F0_4}
\end{minipage}
\begin{minipage}[c]{0.49\linewidth}
\[
\begin{tikzpicture}[font=\footnotesize]
\node[name=s, regular polygon, rotate=180, regular polygon sides=8,inner sep=2.169cm] at (0,0) {};
\draw[-,ultra thick,Turquoise] (s.center) to["6"] (s.side 1);
\draw[-,ultra thick,black] (s.center) to[""] (s.side 2);
\draw[-,ultra thick,Turquoise] (s.center) to["6"] (s.side 3);
\draw[-,ultra thick,black] (s.center) to[""] (s.side 4);
\draw[-,ultra thick,Turquoise] (s.center) to["6"] (s.side 5);
\draw[-,ultra thick,black] (s.center) to[""] (s.side 6);
\draw[-,ultra thick,Turquoise] (s.center) to["6"] (s.side 7);
\draw[-,ultra thick,black] (s.center) to[""] (s.side 8);
\draw[-,ultra thick,black] (s.side 1) to[swap,""] (s.corner 2);
\draw[-,ultra thick,black] (s.side 1) to[""] (s.corner 1);
\draw[-,ultra thick,Turquoise] (s.side 2) to[swap,"6"] (s.corner 3);
\draw[-,ultra thick,Turquoise] (s.side 2) to["6"] (s.corner 2);
\draw[-,ultra thick,black] (s.side 3) to[swap,""] (s.corner 4);
\draw[-,ultra thick,black] (s.side 3) to[""] (s.corner 3);
\draw[-,ultra thick,Turquoise] (s.side 4) to[swap,"6"] (s.corner 5);
\draw[-,ultra thick,Turquoise] (s.side 4) to["6"] (s.corner 4);
\draw[-,ultra thick,black] (s.side 5) to[swap,""] (s.corner 6);
\draw[-,ultra thick,black] (s.side 5) to[""] (s.corner 5);
\draw[-,ultra thick,Turquoise] (s.side 6) to[swap,"6"] (s.corner 7);
\draw[-,ultra thick,Turquoise] (s.side 6) to["6"] (s.corner 6);
\draw[-,ultra thick,black] (s.side 7) to[swap,""] (s.corner 8);
\draw[-,ultra thick,black] (s.side 7) to[""] (s.corner 7);
\draw[-,ultra thick,Turquoise] (s.side 8) to[swap,"6"] (s.corner 1);
\draw[-,ultra thick,Turquoise] (s.side 8) to["6"] (s.corner 8);
\node[fill=white,circle,inner sep=2] at (s.center) {${X_2}$};
\node[fill=white,inner sep=2] at (s.corner 1) {${\F_0}_{\!/\p^1}$};
\node[fill=white,inner sep=2] at (s.side 1) {${\F_0}$};
\node[fill=white,inner sep=2] at (s.corner 2) {${\F_0}_{\!/\p^1}$};
\node[fill=white,inner sep=2] at (s.side 2) {${X_2}_{/\p^1}$};
\node[fill=white,inner sep=2] at (s.corner 3) {${\F_0}_{\!/\p^1}$};
\node[fill=white,inner sep=2] at (s.side 3) {${\F_0}$};
\node[fill=white,inner sep=2] at (s.corner 4) {${\F_0}_{\!/\p^1}$};
\node[fill=white,inner sep=2] at (s.side 4) {${X_2}_{/\p^1}$};
\node[fill=white,inner sep=2] at (s.corner 5) {${\F_0}_{\!/\p^1}$};
\node[fill=white,inner sep=2] at (s.side 5) {${\F_0}$};
\node[fill=white,inner sep=2] at (s.corner 6) {${\F_0}_{\!/\p^1}$};
\node[fill=white,inner sep=2] at (s.side 6) {${X_2}_{/\p^1}$};
\node[fill=white,inner sep=2] at (s.corner 7) {${\F_0}_{\!/\p^1}$};
\node[fill=white,inner sep=2] at (s.side 7) {${\F_0}$};
\node[fill=white,inner sep=2] at (s.corner 8) {${\F_0}_{\!/\p^1}$};
\node[fill=white,inner sep=2] at (s.side 8) {${X_2}_{/\p^1}$};
\end{tikzpicture}
\]
\caption{$\pieceF6$}
\label{F0_6}
\end{minipage}
\end{figure}

\FloatBarrier

\bibliographystyle{myalpha}
\bibliography{biblio}

\end{document}